\long\def\metanote#1#2{{\color{#1}\
		\ifmmode\hbox\fi{\sffamily\mdseries\upshape [#2]}\ }}
\patchcmd{\subsubsection}{\itshape}{\bfseries}{}{}
\newcommand\xleftrightarrow[2][]{%
	\ext@arrow 9999{\longleftrightarrowfill@}{#1}{#2}}
\newcommand\longleftrightarrowfill@{%
	\arrowfill@\leftarrow\relbar\rightarrow}
\newcommand{\xRightarrow}[2][]{\ext@arrow 0359\Rightarrowfill@{#1}{#2}}
\newcounter{smalllist}
\numberwithin{equation}{section}
\newcommand{\al}{\alpha}
\newcommand{\be}{\beta}
\newcommand{\ga}{\gamma}
\newcommand{\Ga}{\Gamma}
\newcommand{\de}{\delta}
\newcommand{\De}{\Delta}
\newcommand{\ep}{\epsilon}
\newcommand{\ta}{\theta}
\newcommand{\Ta}{\Theta}
\newcommand{\ka}{\kappa}
\newcommand{\la}{\lambda}
\newcommand{\La}{\Lambda}
\newcommand{\si}{\sigma}
\newcommand{\Si}{\Sigma}
\newcommand{\om}{\omega}
\newcommand{\Om}{\Omega}
\newcommand{\rb}[1]{\left(#1\right)}
\newcommand{\sqb}[1]{\left[#1\right]}
\newcommand{\cb}[1]{\left\{#1\right\}}
\newcommand{\abs}[1]{\left|#1\right|}
\newcommand{\norm}[1]{\left\|#1\right\|}
\newcommand{\inn}[1]{\left\langle #1 \right\rangle}
\newcommand{\scb}[1]{\{#1\}}
\newcommand{\mrb}[1]{\mleft(#1\mright)}
\newcommand{\msqb}[1]{\mleft[#1\mright]}
\newcommand{\mcb}[1]{\mleft\{#1\mright\}}
\newcommand{\supp}{\rm{supp}}
\newcommand{\Var}{\mathrm{Var}}
\newcommand{\tr}{\mathrm{tr}}
\newcommand{\mfk}{\mathfrak}
\newcommand{\mcl}{\mathcal}
\newcommand{\mbe}{\mathbb{E}}
\newcommand{\mbo}{\mathbbm{1}} 
\newcommand{\mcA}{\mathcal{A}}
\newcommand{\mcB}{\mathcal{B}}
\newcommand{\mcC}{\mathcal{C}}
\newcommand{\mcD}{\mathcal{D}}
\newcommand{\mcF}{\mathcal{F}}
\newcommand{\mcG}{\mathcal{G}}
\newcommand{\mcM}{\mathcal{M}}
\newcommand{\mcP}{\mathcal{P}}
\newcommand{\mcS}{\mathcal{S}}
\newcommand{\mcW}{\mathcal{W}}
\newcommand{\mcc}{\mathcal{C}}
\newcommand{\mcg}{\mathcal{G}}
\newcommand{\mcp}{\mathcal{P}}
\newcommand{\mA}{\mathcal{A}}
\newcommand{\mB}{\mathcal{B}}
\newcommand{\mbn}{\mathbb N}
\newcommand{\mbr}{\mathbb R}
\newcommand{\f}{\frac}
\newcommand{\drv}[2]{\frac{d #1}{d #2}}
\newcommand{\drvev}[2]{\left.\frac{d #1}{d #2}\right|}
\newcommand{\bs}{\boldsymbol}
\newcommand{\Bx}{\boldsymbol{x}}
\newcommand{\By}{\boldsymbol{y}}
\newcommand{\Bc}{\boldsymbol{c}}
\newcommand{\BX}{\boldsymbol{X}}
\newcommand{\Bmu}{\boldsymbol{\mu}}
\newcommand{\Bnu}{\boldsymbol{\nu}}
\newcommand{\Brho}{\boldsymbol{\rho}}
\newcommand{\Bbr}{\boldsymbol{\bar{\rho}}}
\newcommand{\BMM}{\boldsymbol{\mathcal{M}}}
\newcommand{\BHA}{\boldsymbol{\hat{\mathcal{A}}}}
\newcommand{\BHAK}{\boldsymbol{\hat{\mathcal{A}}}^{(k)}}
\newcommand{\br}{\bar{\rho}}
\newcommand{\bX}{\bar{X}}
\newcommand{\Phitn}{\Phi_t^n}
\newcommand{\Psitn}{\Psi_t^n}
\newcommand{\Phitzn}{\Phi_{t_0}^n}
\newcommand{\Psitzn}{\Psi_{t_0}^n}
\newcommand{\Phikn}{\tilde{\Phi}_t^n}
\newcommand{\Psikn}{\tilde{\Psi}_t^n}
\newcommand{\Phikzn}{\tilde{\Phi}_{t_0}^n}
\newcommand{\Psikzn}{\tilde{\Psi}_{t_0}^n}
\newcommand{\lafn}{\la_n^F}
\newcommand{\lagn}{\la_n^G}
\newcommand{\mbs}{\mathbb S}
\newcommand{\RPC}{\mathrm{RPC}}
\newcommand{\Exp}{\mathrm{Exp}_c}
\newcommand{\CEXP}{C_{\mathrm{Exp}_c}}
\newcommand{\CTPC}{C([0,T];\mcP_c(\Pi))}
\newcommand{\oN}{\otimes N}
\newcommand{\GCZP}{\mathcal{G}_c^0(\Pi)}
\newcommand{\PCP}{\mathcal{P}_c(\Pi)}
\newcommand{\CBC}{C_{b,c}(\Pi)}
\newcommand{\MFG}{\mathcal{A}:\mathcal{P}_c(\Pi)\to\mathcal{G}_c^0(\Pi)}
\newcommand{\ExpN}{\mathrm{Exp}_{\boldsymbol{c}}}
\newcommand{\CTPCN}{C([0,T];\mcP_{\boldsymbol{c}}(\Pi^N))}
\newcommand{\GCZPN}{\mathcal{G}_{\boldsymbol{c}}^0(\Pi^N)}
\newcommand{\PCPN}{\mathcal{P}_{\boldsymbol{c}}(\Pi^N)}
\newcommand{\Expt}{\mathrm{Exp}_2}
\newcommand{\GCZR}{\mathcal{G}_2^0(\mathbb{R}^d)}
\newcommand{\PCR}{\mathcal{P}_2(\mathbb{R}^d)}
\newcommand{\SPSD}{\mathcal{S}_d^{\ge 0}}
\newcommand{\WLA}{\mathcal{W}_{\Lambda}}
\newcommand{\BWD}{\mathcal{W}_{\mathcal{S}}}
\newcommand{\WGLV}{\mathcal{W}_{\mathcal{G}}}
\newcommand{\GLVRnt}{\mathcal{G}^{\Lambda}(\mathbb{R}^d)}
\newcommand{\GLVR}{\mathcal{G}_2^{\Lambda}(\mathbb{R}^d)}
\newcommand{\LVMnt}{\Lambda(\mathbb{R}^d)}
\newcommand{\LVM}{\Lambda_2(\mathbb{R}^d)}
\newcommand{\PCRN}{\mathcal{P}_2((\mathbb{R}^d)^N)}
\newcommand{\refC}{(\hyperref[hypoC]{C})}
\newcommand{\refA}{(\hyperref[hypoA]{A})}
\newcommand{\refAp}{(\hyperref[hypoA']{A, $\Xi$})}
\newcommand{\refApp}{(\hyperref[hypoApp]{A, $\Si$, $p$})}
\newcommand{\refAppt}{(\hyperref[hypoApp]{A, $\Si$, $2$})}
\newtheorem{theorem}{Theorem}[section]
\newtheorem{proposition}[theorem]{Proposition}
\newtheorem{lemma}[theorem]{Lemma}
\newtheorem{corollary}[theorem]{Corollary}
\theoremstyle{definition}
\newtheorem{definition}[theorem]{Definition}
\newtheorem{notation}[theorem]{Notation}
\newtheorem{hypothesis}[theorem]{Hypothesis}
\newtheorem{example}[theorem]{Example}
\newtheorem{remark}[theorem]{Remark}
\newtheorem*{hypothesisC}{Hypothesis (C)}
\newtheorem*{hypothesisA}{Hypothesis (A)}
\newtheorem*{hypothesisAp}{Hypothesis (A, $\Xi$)}
\newtheorem*{hypothesisApp}{Hypothesis (A, $\Si$, $p$)}
\begin{document}
\title[]{Abstract Formulation of Mean-Field Models and Propagation of Chaos}

\author{Lim Tau Shean \and Teoh Chao Dun}

\address{\noindent Department of Mathematics \\ Xiamen University Malaysia}
\email{taushean.lim@xmu.edu.my \\ cd.teoh@psu.edu }

\begin{abstract}
    
    In this work, we formulate an abstract framework to study mean-field systems. In contrast to most approaches in the available literature which primarily rely on the analysis of SDEs, ours is based on optimal transport and semigroup theory. This allows for the inclusion of a wider range of mean-field particle systems within a unified structure. This new approach involves: (1) constructing an abstract framework using semigroups and generators; (2) formulating a corresponding mean-field evolution problem, and proving its well-posedness; (3) demonstrating the propagation of chaos for a class of $N$-particle systems associated with the mean-field model. Our results are readily applicable to various mean-field models. To demonstrate this, we apply our findings to obtain a new result for L\'{e}vy-type mean-field systems, which encompass the McKean-Vlasov diffusion.
	
    \vspace{6ex}
    
    \noindent {\bf Keywords}: Abstract mean-field models, propagation of chaos, optimal transport, semigroup theory, L\'evy-type mean-field systems. 
\end{abstract}

\maketitle
\tableofcontents
\section{Introduction}

An \emph{$N$-particle system} is typically modelled as a Markov process $\scb{\BX_t}_{t\ge0}$ on the $N$-fold product $\Pi^N$ of a given state space $\Pi$, where
\begin{align*}
	\BX_t=(X_t^1,X_t^2,\cdots,X_t^N),\qquad X_t^i\in\Pi,
\end{align*}
with each $\scb{X_t^i}_{t\ge0}$ representing the evolution of the $i$-th particle in the system. The process is often assumed to be \emph{permutation-invariant}, meaning that for any permutation $\si$ (i.e., a bijective map) on $\scb{1,2,\cdots,N}$, the distribution of the permuted process is the same as that of the original process:
\begin{align*}
	(\si\BX)_t = (X_t^{\si(1)},X_t^{\si(2)},\cdots,X_t^{\si(N)}) \sim^d (X_t^1,X_t^2,\cdots,X_t^N) = \BX_t.
\end{align*}

Informally, a \emph{mean-field $N$-particle system} is an $N$-particle system in which the evolution of each particle is influenced by a \emph{mean field}, an averaged effect that summarizes the interactions among all particles. This mean field can take various forms, with the empirical measure of the system being the most common choice. These systems are of particular interest for study because they often exhibit the property of propagation of chaos.

In an $N$-particle system, if there exists a single-particle process such that each particle's behavior ``converges'' to it as $N$ increases, this limiting process is called the \emph{mean-field limit} of the system. The time-evolving distribution of this limiting process is also referred to as the mean-field limit and is typically described by \emph{mean-field equations}. Comprehensive reviews of mean-field limits in large particle systems can be found in \cite{jabin2014review}, \cite{golse2016dynamics}, and \cite{jabin2017mean}.

Rigorous justification for such mean-field limits was lacking until Mark Kac's work in 1956 \cite{kac1956foundations}, where he provided the first rigorous mathematical definition of \emph{chaos}. Informally, chaos means that as the number of particles in a system increases, any randomly selected particle becomes statistically independent of the others. He also introduced the concept of \emph{propagation of chaos}, which states that this chaotic behaviour should propagate in time for time-evolving systems.

The paper \cite{hauray2014kac} by Hauray and Mischler is a comprehensive reference on Kac's chaos (without propagation of chaos). For the topic of propagation of chaos, the review paper \cite{sznitman1991topics} by Sznitman is a classical reference. A more recent review is provided by Chaintron and Deiz in two papers \cite{Chaintron_2022_1, Chaintron_2022_2}, which we frequently refer to. The first paper focuses on models and methods, while the second covers applications, where the probabilistic models studied include McKean-Vlasov diffusion, mean-field jump models and Boltzmann models. 

\subsection{McKean-Vlasov diffusion}
In his paper \cite{kac1956foundations}, Kac introduced a stochastic prototype model for the Vlasov equation. Later in 1966, McKean \cite{mckean1966class} started the systematic study of such an $N$-particle system, known as \emph{McKean-Vlasov diffusion}. Let us further illustrate the concepts introduced above using the concrete example of McKean-Vlasov diffusion. Let $\mcP(\mbr^d)$ be the space of (Borel) probability measures on $\mbr^d$ and $\mcM_{d}(\mbr)$ be the space of $d\times d$ square matrices with real entries. Consider a measurable vector field $b:\mbr^d\times\mcP(\mbr^d)\to\mbr^d$ and a measurable matrix field $\si:\mbr^d\times\mcP(\mbr^d)\to\mcM_{d}(\mbr)$. The McKean-Vlasov diffusion $\scb{\bs{X}_t^N}_{t\ge0} = \cb{(X_t^1,X_t^2,\cdots,X_t^N)}_{t\ge0}$ is defined by the $\mbr^d$-valued solutions to the following system of (weakly coupled) stochastic differential equations (SDEs):
\begin{align}\label{eq:mckean-vlasov-diffusion}
    dX_t^i = b(X_t^i,\mu_{\bs X_t^N})\,dt + \si(X_t^i,\mu_{\bs X_t^N})\,dB_t^i,\quad 1\le i\le N,
\end{align}
where $\scb{\scb{B_t^i}_{t\ge0}}_{1\le i\le N}$ are independent and identically distributed (i.i.d.) copies of the standard Brownian motion, and $\mu_{\bs X_t^N}= \frac{1}{N} \sum_{i=1}^N \de_{X_t^i}$ is the empirical measure. The McKean-Vlasov diffusion is an example of \emph{mean-field $N$-particle systems} or \emph{models}, where the interaction depends on the mean field given by the empirical measure. Physically, this means each particle interacts with an average field generated by other particles, each contributing a weight of $1/N$.

As the number of particles $N$ grows to infinity, each of the individual particle process $\scb{X_t^i}_{t\ge0}$, for $1\le i\le N$, is expected to ``converge'' to the \emph{nonlinear McKean-Vlasov process}
$\scb{\bX_t}_{t\ge 0}$ which solves the following nonlinear SDE:
\begin{align}\label{eq:mv-mf-sde}
    \begin{cases}
        d\bX_t = b(\bX_t,\br_t)\,dt + \si(\bX_t,\br_t)\,dB_t,\\
    \br_t=\mathrm{law}(\bX_t),
    \end{cases}
\end{align}
where $\scb{B_t}_{t\ge 0}$ is the standard Brownian motion. As mentioned earlier, this property is known as \emph{propagation of chaos}, and the process $\scb{\bX_t}_{t\ge 0}$ is known as the \emph{mean-field limit} of the $N$-particle process $\scb{\BX_t^N}_{t\ge0}$. We also use the term \emph{mean-field limit} to refer to the law $\scb{\br_t}_{t\ge0}$ followed by $\scb{\bX_t}_{t\ge0}$. In a functional analysis approach, the existence/uniqueness of solutions to \eqref{eq:mv-mf-sde} translates to that of the evolution problem:
\begin{align}\label{eq:mv-mf-pde}
    \partial_t\br_t(x) = -\nabla_x \cdot \msqb{b(x,\br_t)\br_t} + \frac{1}{2}\sum_{i,j=1}^d \partial_{ij} \msqb{a_{ij}(x,\br_t)\br_t},
\end{align}
where the positive definite matrix $a$ is given by $a(x,\mu)=\cb{a_{ij}(x,\mu)}_{ij}=\si(x,\mu)\si(x,\mu)^T$. We remark that we are using a slight abuse of notation here: the $\br_t$ in \eqref{eq:mv-mf-pde} should be understood as the \emph{density} of the law $\br_t$ in \eqref{eq:mv-mf-sde} with respect to the Lebesgue measure. Equations \eqref{eq:mv-mf-sde} and \eqref{eq:mv-mf-pde} are known as the SDE and PDE versions of the \emph{mean-field equation} for the McKean-Vlasov diffusion, respectively.

\subsection{Other mean-field models and methods}
Other than the McKean-Vlasov diffusion, there are other mean-field models of interest. For example, chemical reaction models (see for instance \cite{peter1986law}, \cite{albi2019leader}, \cite{lim2020quan}) describe systems in which particles change states after interacting with other particles (undergoing a reaction). Mean-field jump processes involve particles that change states via bounded jumps at discrete points in time, according to rates depending on the overall distribution of the system. Some references for mean-field jump processes include \cite{DAWSON1991293} and \cite{leonard1995large}.

Another class of mean-field models is the L\'{e}vy-type mean-field models \cite{fournier2013pathwise}, \cite{frikha2021well}, \cite{Cavallazzi2023}, which generalize the McKean-Vlasov diffusion by incorporating L\'{e}vy-driven jumps along with drift and diffusion, that depends on the overall distribution of the system. Finally, the Nanbu-particle system serves as a stochastic model for the Boltzmann equation, which  captures the dynamics of particle collisions in a rarefied gas. For detailed discussions on Nanbu-particle systems, see \cite{meleard1996asymptotic}, \cite{graham1997stochastic}, and \cite{fournier2016nanbu}.

The Wasserstein distance \cite{MR2459454}, a metric on the space of probability measures, serves as a useful tool in proving propagation of chaos. It could effectively quantifies the distance between probability measures, such as the empirical measures, of even the laws, of the $N$-particle system and the limiting independent $N$-particle system. Several methods are commonly used to establish propagation of chaos, including, but not limited to, coupling methods, entropy methods and martingale methods.

Coupling methods involve constructing a coupling between the trajectories of the $N$-particle system $\scb{\BX_t}_{t\ge0}$ and an independent system $\scb{\bs\bX_t}_{t\ge0}$, suspected to be the limiting system. The propagation of chaos is then analyzed by comparing the distance between $\BX_t$ and $\bs\bX_t$. Entropy methods focus on comparing the laws of the systems, which are probability measures, using quantities such as the \emph{relative entropy}. This approach utilizes entropy inequalities to quantify how the distributions deviate from one another. Finally, martingale methods involve establishing tightness for the sequence of empirical measures, identifying the limit points as solutions of the corresponding limit martingale problem or weak PDE, and proving uniqueness of these solutions, thereby ensuring that the empirical measures converge to the mean-field law. Interested readers may refer to \cite[Section 4]{Chaintron_2022_1} for more details.

\subsection{Abstract framework via semigroup approach}
An $N$-particle system is typically described using a system of SDEs, such as in the case of McKean-Vlasov diffusion \eqref{eq:mckean-vlasov-diffusion}. However, such a description has its limitations, as it may not encompass abstract systems that do not necessarily have an SDE representation. An alternative approach is the \emph{semigroup approach}, where an $N$-particle system is modelled as a Feller process on the $N$-fold state space $\Pi^N$, which possesses a unique semigroup representation. Instead of focusing directly on the stochastic processes, one can investigate them at the level of semigroups using functional analysis.

Each mean-field system can be actually be associated with a mean-field generator. A \emph{mean-field generator} on a state space $\Pi$ is a map $\mathcal{A}: \mathcal{P}(\Pi) \to \mathcal{G}(\Pi)$, where $\mathcal{P}(\Pi)$ and $\mathcal{G}(\Pi)$ denote the space of probability measures and probability generators on $\Pi$, respectively. Formally, given $\mu \in \mathcal{P}(\Pi)$, which represents the mean-field distribution of particles, the generator $\mathcal{A}(\mu)$ provides the infinitesimal description of the evolution of a single particle under the influence of the mean-field measure $\mu$. We remark the McKean-Vlasov diffusion is a special case of this framework. In particular, the mean-field generator of the McKean-Vlasov $N$-particle system is the map $\mathcal{A}: \mathcal{P}(\mbr^d) \to \mathcal{G}(\mbr^d)$ given by
\begin{align}\label{eq:mv-mf-generator}
	\mcA_\mu\phi(x) := b(x,\mu)\cdot\nabla\phi(x) + \frac{1}{2}\sum_{i,j=1}^d \si_{ij}(x,\mu)\si_{ij}^T(x,\mu) \partial_{ij}\phi(x).
\end{align}
Note that the operator appearing in \eqref{eq:mv-mf-pde} is the dual (in the distributional sense) of the one defined above.
In this paper, we reverse this process by first identifying a general mean-field generator. We then construct a class of mean-field $N$-particle systems associated with it, and establish the propagation of chaos for these systems.


Finally, we point out that the framework of mean-field generators is not new, as a similar concept has been discussed, for instance, in \cite[Section 2.2]{Chaintron_2022_1}. However, their discussion on mean-field models focuses on specific examples such as McKean-Vlasov diffusion and mean-field jump processes, where It\^{o} calculus and coupling techniques are applicable. In contrast, we aim to investigate the conditions under which a general abstract mean-field model exhibits propagation of chaos, yielding results that can be readily applied to any mean-field model.


We should briefly mention here that abstract frameworks that avoid conventional SDE methods have been explored by various authors. For instance, Jabin and Wang \cite{jabin2016mean} developed an abstract framework for studying McKean-Vlasov diffusion. They considered the Liouville equation, which governs the law of the $N$-particle system, and examined the mean-field generator for McKean-Vlasov diffusion  \eqref{eq:mv-mf-generator}, referred to as the Liouville operator in their work. Using PDE methods, they directly compared the distribution solving the Liouville equation with the tensor product of the limit law by employing relative entropy as a measure of distance between these distributions. By establishing bounds on the relative entropy, they were able to derive a quantitative propagation of chaos result.
Building on this relative entropy method used by Jabin and Wang, \cite{lim2020quan} obtained a quantitative propagation of chaos result for a model of bimolecular chemical reaction-diffusion.

Additionally, inspired by a concept in \cite{grunbaum1971propagation}, an even more abstract semigroup framework has been established in \cite{mischler2013kac}, \cite{mischler2013kac2} and \cite{mischler2015new}. This abstract framework introduced the notion of an \emph{empirical semigroup} $\scb{\widehat{T}_{N,t}}_{t\ge0}$, which is a family of linear operators on $C_b(\widehat{\mcP}_N(\Pi))$, associated to an $N$-particle (Markov) process. Under certain conditions on the semigroup, \cite{mischler2015new} showed propagation of chaos for the law of the empirical process.

\subsubsection{Coupling method in McKean-Vlasov diffusion}\label{subsubsec1.1.1}
Although the proof of propagation of chaos in McKean-Vlasov diffusion was originally by McKean \cite{McKean1969}, the proof by Sznitman \cite{sznitman1991topics} is more popular. The proof of Sznitman was for the case where matrix field $\si$ is constant and it was adapted by \cite[Proposition 2.3]{jourdain1998propagation} for more general cases. Recall the McKean-Vlasov diffusion $\scb{\scb{X_t^i}_{t\ge0}}_{1\le i\le N}$, as well as the Brownian motions $\scb{\scb{B_t^i}_{t\ge0}}_{1\le i\le N}$, from \eqref{eq:mckean-vlasov-diffusion}. For each $1\le i\le N$, let $\scb{\bX_t^i}_{t\ge0}$ be the solution of \eqref{eq:mv-mf-sde}, with $\scb{B_t^i}_{t\ge0}$ in place of $\scb{B_t}_{t\ge0}$. The idea of the proof is to use It\^{o} calculus to write $\scb{\BX_t^N}_{t\ge0}=\scb{(X_t^1,\cdots,X_t^N)}_{t\ge0}$ and i.i.d. copies of its mean-field limit $\scb{\bs \bX_t^N}_{t\ge0}=\scb{(\bX_t^1,\cdots,\bX_t^N)}_{t\ge0}$ as stochastic integrals:
\begin{align}
	X_t^i &= X_0^i + \int_0^t b(X_s^i,\mu_{\bs X_s^N})\,ds + \int_0^t \si(X_s^i,\mu_{\bs X_s^N})\,dB_s^i,\label{eq1.11:temp}\\
	\bX_t^i &= \bX_0^i + \int_0^t b(\bX_s^i,\br_s)\,ds + \int_0^t \si(\bX_s^i,\br_s)\,dB_s^i.\label{eq1.12:temp}
\end{align}
Note that for each $i$, the Brownian motions $\scb{B_s^i}_{s\ge0}$ in \eqref{eq1.11:temp} and \eqref{eq1.12:temp} are exactly the same. This gives a \emph{synchronous coupling} between $\scb{\BX_t^N}_{t\ge0}$ and $\scb{\bs\bX_t^N}_{t\ge0}$.

A preliminary step in this approach is to establish the well-posedness of the SDE \eqref{eq:mv-mf-sde} (and the PDE \eqref{eq:mv-mf-pde}). Under appropriate assumptions on the vector/matrix fields $b$ and $\si$, the SDE (and the PDE) admits a unique solution. For further details, we refer the reader to \cite[Proposition 1]{Chaintron_2022_1}. Let us assume that $\BX_0^N=\bs\bX_0^N$ for simplicity. Calculating the expected squared difference of \eqref{eq1.11:temp} and \eqref{eq1.12:temp}, using the Burkholder-Davis-Gundy inequality, and applying Lipschitz assumptions on $b$ and $\si$ leads to an integral inequality:
\begin{align}\label{eq1.14:temp}
	\mbe\msqb{\sup_{t\in[0,T]}|X_t^i-\bX_t^i|^2} &\le \frac{c_1(b,\si,T)}{N}+ c_2(b,\si,T) \int_0^T \mbe\msqb{\sup_{s\in[0,t]}|X_s^i-\bX_s^i|^2}\,dt,
\end{align}
where $c_2$ and $c_2$ are constants depending on $b,\si$ and $T$.
Applying Gr\"{o}nwall's inequality then yields
\begin{align}\label{eq1.15:temp}
	\mbe\msqb{\sup_{t\in[0,T]}|X_t^i-\bX_t^i|^2} &\le \rb{\frac{c_1(b,\si,T)}{N}}e^{c_2(b,\si,T) T}.
\end{align}
This estimate will leads to (pathwise) propagation of chaos.

The proof above is a \emph{coupling method}, in the sense that one constructs a coupling process between the $N$-particle system $\scb{\BX_t^N}_{t\ge0}=\{(X_t^1,X_t^2,\dots, X_t^N)\}_{t \geq 0}$ and the i.i.d. processes $\scb{\bs\bX_t^N}_{t\ge0}=\{(\bar{X}_t^1, \bar{X}_t^2, \dots, \bar{X}_t^N)\}_{t \geq 0}$. Once such a coupling is established, a bound on the transport cost can be deduced. The main challenge in applying the coupling method lies in constructing an exact coupling that yields an appropriate bound. If the $N$-particle system admits an SDE description, a coupling can be relatively straightforward to construct by solving the SDEs with identical driving processes. However, in cases where an SDE description is not available, the coupling method becomes more difficult to implement.

\subsubsection{An approach parallel to the coupling method}

The main objective of this paper is to develop a theory for abstract mean-field systems and their propagation of chaos, parallel to the coupling method that is commonly found in the literature, such as the synchronous coupling for McKean-Vlasov diffusion outlined above in Section \ref{subsubsec1.1.1}. First, we will formulate an abstract mean-field model and derive the corresponding mean-field equation, which will provide a candidate for the mean-field limit. We will then demonstrate the well-posedness of this mean-field equation. Subsequently, we will establish an integral inequality, bound the integral to get analogue of \eqref{eq1.14:temp}, and apply Gr\"{o}nwall's inequality to obtain an exponential estimate of the form: 
\begin{align}\label{eq:pointwise}
    \sup_{t\in[0,T]}\mcC_c(\Brho_t^N,\br_t^{\otimes N})\le \mcC_c(\Brho_0^N,\br_0^{\otimes N})e^{K T} + \ep(N,T),
\end{align}
where $K>0$, $\ep(N,T)\to0$ as $N\to\infty$, and $\mcC_c$ is the \emph{optimal transport cost} on the space of probability measures (w.r.t. a certain cost function $c$). This estimate will then lead to (pointwise) quantitative propagation of chaos.
In summary, the objectives of this paper are threefold:
\begin{enumerate}
    \item To provide an abstract framework for mean-field evolution models from the perspective of semigroups and generators;
    \item To establish the well-posedness of the abstract mean-field evolution problems;
    \item To prove the propagation of chaos for a class of $N$-particle systems associated with the mean-field model.
\end{enumerate}
We will achieve each of these goals in the abstract mean-field model, parallel (in terms of proofs and techniques) to those established via coupling techniques.

\subsubsection{Pathwise vs. pointwise propagation of chaos}

Let us clarify the terms \emph{pathwise} and \emph{pointwise} as they appear in the discussion above. In the context of propagation of chaos, a \emph{pathwise} result---such as the estimate in \eqref{eq1.15:temp}---refers to convergence or estimates at the level of entire trajectories of the particle system, typically constructed on a common probability space. In contrast, a \emph{pointwise} (in time) result---such as \eqref{eq:pointwise}---concerns the convergence of the law of the particle system at each fixed time \( t \). Clearly, a pathwise result implies the corresponding pointwise convergence and is therefore stronger. However, pointwise estimates are often more flexible and better suited for abstract formulations, such as those based on semigroup or PDE methods.

In this paper, we adopt the \emph{pointwise} perspective throughout. Most results in the literature are established in the pathwise setting and rely on coupling techniques. The main distinction between our approach and such methods lies in the level at which the comparison is made. Rather than constructing a coupling at the path level between the $N$-particle system $\{\BX_t\}$ and the i.i.d.\ mean-field process $\{\bar{\BX}_t\}$, we instead perform the coupling at each fixed time \( t \geq 0 \). Specifically, if $\Brho_t$ and $\bar{\Brho}_t$ denote the laws of $\BX_t$ and $\bar{\BX}_t$, respectively, we establish a bound on the optimal transport cost between $\Brho_t$ and $\bar{\Brho}_t$. This pointwise-in-time coupling framework naturally leads to pointwise propagation of chaos estimates and allows us to leverage tools from functional analysis and semigroup theory.

\subsection{Novelty of the present work}
A main novelty of this present work is the synthesis of the optimal transport and semigroup approach to mean-field models. Specifically, we develop an abstract semigroup framework and establish a \emph{quantitative} propagation of chaos in terms of optimal transport costs. This synthesis necessitated the construction of an entirely new theory for the well-posedness of the abstract mean-field evolution problem, including the introduction of a new notion of solution that integrates with optimal transport theory.

Another novelty of this work is Theorem \ref{thm4.3:chap4-main-result}, which establishes the well-posedness of the mean-field equations for abstract mean-field models using this newly defined solution concept. This result applies to any mean-field model, making it the first of its kind to achieve such generality. Importantly, this well-posedness theory is specifically adapted to optimal transport theory, ensuring compatibility with our framework.

Other than that, Theorem \ref{thm:chap5-main-thm} demonstrates the conditions under which propagation of chaos occurs in this general setting, yielding results that are readily applicable to a wide range of mean-field models. As far as we are aware, this result (Theorem \ref{thm:chap5-main-thm}) will be the first proof of quantitative propagation of chaos for a general abstract mean-field model.

While Section 3 serves mainly as a preliminary, it also contains significant novelties. In particular, this section introduces new ideas and results regarding the stability of Markovian flows under perturbations of probability generators, specifically focusing on the stability of optimal transport costs. A key innovation is the identification of the notion $\omega_c$, defined as the (Dini) derivative of the transport cost between two Markovian flows. This concept plays a critical role in our analysis, with Theorem \ref{thm:chap3-main-thm} providing equivalent conditions --- some involving $\omega_c$ --- for stability estimates of optimal transport costs between Markov flows.

To demonstrate the generality and broad applicability of our framework, we will apply our results to a general model of L\'{e}vy-type mean-field systems on $\mathbb{R}^d$, leading to the propagation of chaos in a case that has not been previously studied (see Theorem \ref{thm:levy-poc-aleph}).

\subsection{Organization and plan}
The paper is organized as follows. In Section \ref{chap2}, we begin with some preliminaries that serve as a foundational preparation for the subsequent sections of the paper. Among these, we explore the study of optimal transport under the framework of a semimetric. In the study of optimal transport, the state space is typically assumed to be a Polish space, which is a separable completely metrizable topological space. Our approach investigates optimal transport using only a semimetric that satisfies certain conditions, without (explicitly) relying on the underlying metric structure.

Section \ref{chap3} focuses on establishing stability estimates for transport costs between two Markov flows, which are curves of probability measures $\cb{\mu_t}_{t\ge0}$ given by $\mu_t = \mu e^{t\mcA}$, where $\cb{e^{t\mcA}}_{t\ge0}$ is a probability semigroup and $\mu\in\mcP(\Pi)$. These stability estimates will play a crucial role later in proving the well-posedness of mean-field equations and establishing propagation of chaos. We are particularly interested in the role of $\om_c$ in these stability estimates.

In Section \ref{chap4}, we formulate the mean-field evolution problem associated with an abstract mean-field model. We then develop the well-posedness of the evolution problem, introducing a notion of solution that is compatible with this framework.
Section \ref{chap5} then identifies a class of $N$-particle systems related to the mean-field model and establishes an exponential estimate for the transport cost, leading to the (pointwise) propagation of chaos.

Finally, in Section \ref{chap6}, we apply the results from the preceding sections to a class of L\'{e}vy-type mean-field systems and demonstrate that propagation of chaos holds in this context. This provides a pointwise propagation of chaos result for a setting that, to the best of our knowledge, has not been previously addressed in the literature.

The main results of this paper are the following:
\begin{itemize}
	\item Theorem \ref{thm:chap3-main-thm} (equivalent conditions for the stability estimates of optimal transport costs between Markovian flows),
	\item Theorem \ref{linear-fokker-planck-thm} (well-posedness of nonhomogeneous linear Fokker-Planck equations),
	\item Theorem \ref{thm4.3:chap4-main-result} (well-posedness of abstract mean-field equations),
	\item Theorems \ref{thm:chap5-main-thm}, \ref{thm:main-wass} (exponential estimate in terms of optimal transport cost and Wasserstein-$p$ distance, respectively, that leads to quantitative propagation of chaos),
    \item Theorems \ref{thm:levy-poc-aleph}, \ref{thm:levy-main2} (well-posedness of mean-field equations and propagation of chaos for L\'evy-type mean-field systems).
\end{itemize}

\subsection{List of symbols and notations}\,
\begingroup
    \footnotesize

\newcommand{\sym}[2]{#1 &:& #2 \\[1ex]} 

\begin{longtable}{lll}
	\sym{$\oplus$}{Direct sum of functions, e.g., $f\oplus g= f(x)+g(y)$}
	\sym{$\otimes$}{Tensor product (of functions, measures, operators, semigroups)}
	\sym{$\inn{\cdot,\cdot}$}{Natural pairing of $\mcP(\Pi)\times C_b(\Pi)$}
	\sym{$\|\cdot\|_\infty$}{Supremum norm on $C_b(\Pi)$}
	\sym{$\|\cdot\|_{\mcl F}$}{Frobenius norm on matrices}
	\sym{$\xrightarrow[]{c}$}{$c$-semimetric convergence of points, or $\mcC_c$-semimetric convergence of measures}
    \sym{$\mbo_B$}{Function that is $1$ on $B$, $0$ elsewhere}
	\sym{$(b,a,\Ta)$}{L\'{e}vy triplet}
	\sym{$\mcA,\mcB$}{Probability generators on $\Pi$}
	\sym{$\{\mcA(\mu)\}_{\mu}$}{Mean-field generator}
    \sym{$\{\mcA(x,\mu)\}_{x,\mu}$}{L\'evy-type Mean-field generator}
	\sym{$\mcA^\nabla$}{Drift operator on $\mbr^d$}
	\sym{$\mcA^\De$}{Diffusion operator on $\mbr^d$}
	\sym{$\mcA^J$}{L\'{e}vy jump operator on $\mbr^d$}
	\sym{$\bs\mcA$}{Probability generator on $\Pi^N$}
	\sym{$\BHA=\BHA_N$}{$N$-particle generator (on $\Pi^N$) associated to a mean-field generator $\mcA$}
	\sym{$\aleph_N(\br;\Xi)$}{$\int_{\Pi^N} \Xi(y_1,\mu(\By_1'),\br)\,d\br^{\otimes N}(\By)$}
	\sym{$c$}{Semimetric, cost function on $\Pi$}
	\sym{$\Bc=\Bc_N$}{Tensorized semimetric of $c$ on $\Pi^N$}
	\sym{$\mcC_c$}{$c$-optimal cost}
	\sym{$\mcC_{\Bc}$}{$\Bc$-optimal cost}
	\sym{$\mcC_p,p\ge 1$}{$p$-optimal cost}
	\sym{$C(\Pi)$}{Space of continuous functions on $\Pi$}
	\sym{$C_0(\Pi)$}{Space of continuous functions vanishing at infinity on $\Pi$}
	\sym{$C_0^\sim(\Pi)$}{Space of continuous functions $\phi:\Pi\to\mbr$ having a constant limit at infinity}
	\sym{$C_b(\Pi)$}{Space of bounded continuous functions}
	\sym{$C_b^c(\Pi)$}{Space of $c$-bounded continuous functions}
	\sym{$C([0,T];\mcP_c(\Pi))$}{Space of continuous curves of $\mcP_c(\Pi)$-measures}
	\sym{$d$}{Metric}
	\sym{$D(\mcA)$}{Domain of the generator $\mcA$}
	\sym{$D^\sim(\mcA)$}{Extended domain of the generator $\mcA$, $D(\mcA)+a$}
	\sym{$D^+F$}{(Right-hand upper) Dini derivative of $F$}
	\sym{$\de_x$}{Dirac delta measure at $x\in\Pi$}
	\sym{$\{e^{t\mcA}\}_{t\ge 0}$}{Probability semigroup generated by $\mA$}
	\sym{$\mathrm{Exp}_c(\al,\be)$}{$c$-exponential stability condition}
	\sym{$f,g,\phi,\psi$}{Real-valued functions on $\Pi$}
	\sym{$\bs F,\bs G,\bs\Phi,\bs\Psi$}{Real-valued functions on $\Pi^N$}
	\sym{$\ga$}{Optimal coupling of $\mu,\nu$}
	\sym{$\bs\ga$}{Optimal coupling of $\bs\mu,\bs\nu$}
	\sym{$\Ga(\mu,\nu)$}{Set of all couplings of $\mu,\nu$}
	\sym{$\mcG(\Pi)$}{Space of probability generators on $C_0(\Pi)$}
	\sym{$\mcG_c^0(\Pi)$}{Space of $c$-continuous probability generators on $\Pi$}
	\sym{$\mcG_{\Bc}^0(\Pi^N)$}{Space of $\Bc$-continuous probability generators on $\Pi^N$}
	\sym{$\mcG_2^0(\mbr^d)$}{Space of $c$-continuous probability generators on $\mbr^d$, where $c(x,y)=\frac{1}{2}|x-y|^2$}
        \sym{$\GLVRnt$}{Space of L\'evy generators on $\mbr^d$}
        \sym{$\GLVR$}{Space of L\'evy generators on $\mbr^d$ with finite $2$nd-moment L\'evy measures}
	\sym{$K,L$}{Markov operators}
	\sym{$\kappa,\la$}{Markov kernels}
	\sym{$\{\kappa_t\}_{t\ge 0}$}{Transition kernel}
        \sym{$\LVMnt$}{Space of L\'evy measures on $\mbr^d$}
        \sym{$\LVM$}{Space of L\'evy measures on $\mbr^d$ with finite $2$nd-moment}
	\sym{$\mu,\nu,\rho$}{Probability measures on $\Pi$}
	\sym{$\{\mu_t\}_{t\in[0,T]}$}{Curve of probability measures}
	\sym{$\mu(\Bx)$}{Empirical measure of $\Bx\in \Pi^N$}
	\sym{$\bs\mu,\bs\nu,\bs\rho$}{Probability measures on $\Pi^N$}
	\sym{$\om_c(\cdot,\cdot;\mcA,\mcB)$}{Dini derivative of $c$-optimal cost between flows generated by $\mcA,\mcB$}
        \sym{$\mcM_d(\mbr)$}{Space of $d\times d$ real-valued matrices}
	\sym{$\mcP(\Pi)$}{Space of probability measures on $\Pi$}
	\sym{$\mcP_c(\Pi)$}{Space of probability measures with a finite $c$-moment on $\Pi$}
	\sym{$\mcP_{\Bc}(\Pi^N)$}{Space of probability measures with a finite $\Bc$-moment on $\Pi^N$}
	\sym{$\mcP_p(\mbr^d)$}{Space of probability measures on $\mbr^d$ with finite $p$-th moment} 
	\sym{$\Pi$}{State space (locally compact Polish space)}
	\sym{$(\Pi,c)$}{Semimetric space}
	\sym{$\Pi^N$}{$N$-fold product space of $\Pi$}
	\sym{$\{\br_t\}_{t\ge 0}$}{Solution of mean-field evolution problem}
        \sym{$\SPSD(\mbr)$}{Space of symmetric nonnegative semidefinite $d\times d$ real-valued matrices}
    \sym{$\Si(x,\mu,\nu)$}{Nonnegative-valued function on $\Pi\times\PCP^2$ in Hypothesis \refApp}
	\sym{$\mathrm{tr}(a)$}{Trace of the matrix $a$}
	\sym{$\mcW_p$}{Wasserstein-$p$ metric}
        \sym{$\BWD$}{Bures-Wasserstein distance between nonnegative definite matrices}
        \sym{$\WLA$}{Transport cost between L\'evy measures}
        \sym{$\WGLV$}{$\WGLV(\mA,\mB)^2= \frac 12 |b-\tilde b|^2+ \BWD(a,\tilde a)^2+\WLA(\Ta,\tilde \Ta)^2$}
	\sym{$x,y,z$}{Variables on $\Pi$}
	\sym{$\Bx,\By,\bs{z}$}{Variables on $\Pi^N$}
	\sym{$\Bx_k'$}{$k$-th truncated variable of $\Bx$}
	\sym{$\{X_t\}_{t\ge 0},\{Y_t\}_{t\ge 0}$}{Feller or Markov processes on $\Pi$}
	\sym{$\{\bs{Y}_t\}_{t\ge 0},\{\bs{Y}_t\}_{t\ge 0}$}{Feller or Markov processes on $\Pi^N$}
	\sym{$\Xi(x,\mu,\nu)$}{Nonnegative-valued function on $\Pi\times\PCP^2$ in Hypothesis \refAp}
	\sym{$\zeta_\be(t)$}{$\frac 1\be(e^{\be t}-1)$ (or $=t$ if $\be=0$)}
\end{longtable}
\endgroup

\section{Preliminaries}\label{chap2}

\subsection{Continuous functions, probability measures and probability semigroups}
In this present work, the state space $\Pi$ is always assumed to be a \emph{locally compact Polish space}. We will use the following standard notations for spaces defined on the state space $\Pi$:
\begin{itemize}
	\item $C(\Pi)$: the space of continuous (real-valued) functions on $\Pi$;
	\item $C_b(\Pi)$: the space of bounded continuous functions $\phi:\Pi\to\mbr$, which is a Banach space equipped with the \emph{supremum norm}
	\begin{align*}
		\norm{\phi}_{\infty} = \sup_{x\in\Pi}\abs{\phi(x)}<\infty;
	\end{align*}
	\item $C_0(\Pi)$: the space of continuous functions $\phi:\Pi\to\mbr$ that vanish at infinity, that is, for all $\ep>0$, there exists a compact set $K\subset \Pi$ such that
	\[|\phi(x)|<\ep,\quad\mbox{for $x\notin K$.}\]
	$(C_0(\Pi),\norm{\cdot}_{\infty})$ is a closed subspace of $C_b(\Pi)$, and is hence also a Banach space;
	\item $C_0^\sim(\Pi)$: the space of continuous functions $\phi:\Pi\to\mbr$ having a constant limit at infinity, that is, there exists $a\in\mbr$ such that $\phi-a\in C_0(\Pi)$.
	\item $\mfk{B}(\Pi)$: the Borel $\si$-algebra on $\Pi$, which is the smallest $\si$-algebra on $\Pi$ containing all open sets of $\Pi$;
	\item $\mcP(\Pi)$: the space of (Borel) probability measures on $\Pi$;
	\item $\mcG(\Pi)$: the space of probability generators on $\Pi$, see \eqref{eq:prob-gen}.
\end{itemize}
We denote $\inn{\cdot,\cdot}:\mcP(\Pi)\times C_b(\Pi)\to [-\infty,\infty]$ as the \emph{natural pairing} between a probability measure and a continuous function, which is given by
\begin{align*}
	\inn{\mu,\phi} = \int_\Pi \phi(x)\,d\mu(x).
\end{align*}
This notion is naturally extended to those unbounded measurable functions $\phi$ such that the Lebesgue integral is well-defined. Given functions $\phi,\psi\in C(\Pi)$, we denote $\phi\oplus\psi$ as a function in $C(\Pi^2)$ given by
\begin{align*}
	(\phi\oplus\psi)(x,y)= \phi(x)+\psi(y).
\end{align*}

\subsubsection{Probability semigroups and generators}
A \emph{(continuous) Markov kernel} is a map $\ka:\Pi\to\mcP(\Pi)$, where $\ka(x)$ is a probability measure for each $x\in\Pi$, such that the following holds: if $x_n\to x$, then $\ka(x_n)\to\ka(x)$ weakly. A Markov kernel can also be viewed as a map $\ka:\Pi\times\mfk{B}(\Pi)\to[0,1]$, where $\ka(x,E)\in[0,1]$ is the probability of the event $E\in\mfk{B}(\Pi)$, measured by the probability measure $\ka(x)$. A Markov kernel naturally gives a bounded operator $K:C_b(\Pi)\to C_b(\Pi)$ with $\norm{K}_{\mathrm{op}}\le1$ by: for any $\phi\in C_b(\Pi)$,
\begin{align}\label{eq2.2:temp}
	(K\phi)(x) = \inn{\ka(x),\phi} = \int_\Pi \phi(z)\,\ka(x,dz).
\end{align}
The operator $K$ will be called the \emph{Markov operator associated to the kernel $\ka$}. Note that the weak continuity of $\ka$ given by (i) implies $K\phi\in C_b(\Pi)$ for all $\phi\in C_b(\Pi)$.

The dual operator $K^*$ of a Markov operator $K$ acts on the space of (signed) measures on $\Pi$. We adopt the \emph{right multiplication convention}:
\begin{align}\label{eq:dual-operator-notation}
	\mu K := K^*\mu,\quad \text{for all }\mu\in\mcP(\Pi). 
\end{align}
This notation is understood in the context of duality, that is, $\mu K=K^*\mu$ represents the probability measure that satisfies
\begin{align*}
	\inn{\mu K,\phi} = \inn{\mu, K\phi},\qquad \mbox{for all } \phi\in C_b(\Pi). 
\end{align*}
This notational convention \eqref{eq:dual-operator-notation} will be extended to other classes of operators, such as probability semigroup operators or probability generators.
In particular, if $K$ is a Markov operator associated with the Markov kernel $\kappa$, then we have from \eqref{eq2.2:temp}:
$$\ka(x)=\de_x K.$$ 

A \emph{probability semigroup} is a family of time-indexed Markov operators $\cb{T_t}_{t\ge 0}$ satisfying: $T_0=I$, $T_{t+s}=T_tT_s$ and
\[\lim_{t\searrow 0}\norm{T_t\phi-\phi}_\infty=0\quad\mbox{for all $\phi\in C_0(\Pi).$}\]
The \emph{(infinitesimal) generator} of a probability semigroup $\{T_t\}_{t\ge 0}$ is a (closed) operator $\mcA:D(\mcA)\to C_0(\Pi)$ defined by
\begin{align}\label{eq:prob-gen}
	\mcA\phi = \lim_{t\searrow 0}\frac{1}{t}(T_t\phi-\phi),\qquad \phi\in D(\mcA),
\end{align}
where $D(\mcA)\subset C_0(\Pi)$ is the \emph{domain} of $\mcA$, consisting of functions $\phi$ where the limit (taken in supremum norm) in \eqref{eq:prob-gen} exists. An operator $(\mcA,D(\mcA))$ is called a \emph{probability generator} if it is the (infinitesimal) generator of some probability semigroup $\cb{T_t}_{t\ge0}$, which we denote $T_t=e^{t\mcA}$. For more detailed discussion on probability semigroups and generators, we refer the readers to \cite{liggett2010continuous}.

Every probability semigroup \( \{T_t\}_{t \ge 0} \) is associated with a \emph{transition kernel}, which is a family of time-indexed Markov operators \( \{\kappa_t\} \) such that
\begin{align*}
	T_t\phi(x) = \int_{\Pi} \phi(y)\,\kappa_t(x, dy), \qquad \text{for all } \phi \in C_b(\Pi).
\end{align*}
Since the right-hand side represents an integral with respect to a Markov kernel, the definition of \( T_t\phi \) can be extended to any unbounded measurable function \( \phi \), provided that the integral is well-defined.

Let us introduce also the \emph{extended domain} of $\mcA$, denoted by $D^\sim(\mcA)$, as the space of all functions $\phi\in C_0^\sim(\Pi)$ satisfying: there exists $a\in\mbr$ and $\phi_0\in D(\mcA)$ such that $\phi=\phi_0+a$. Whenever we write $\mcA\phi$, it is understood as
\begin{align*}
	\mcA\phi=\mcA(\phi_0+a)=\mcA\phi_0.
\end{align*}
This definition is natural from \eqref{eq:prob-gen}, as every probability semigroup preserves constant, i.e., $T_ta=a$ for any constant function $a\in \mbr$, and so
\begin{align*}
	\mA \phi&= \lim_{t\searrow 0} \frac 1 t (T_t\phi-\phi)= \lim_{t\searrow 0}\frac{1}{t}(T_t\phi_0-\phi_0)= \mA \phi_0. 
\end{align*}

\subsubsection{Connection between Feller processes, probability semigroups and probability generators}
There is an one-to-one correspondence between Feller processes, probability semigroups and probability generators, which we briefly discuss here. Interested readers can refer to \cite[Chapter 3]{liggett2010continuous} for a more detailed discussion.

A $\Pi$-valued Feller process $\cb{X_t}_{t\ge0}$ is naturally associated with a probability semigroup $\cb{T_t}_{t\ge0}$ on $\Pi$, given by
\begin{align}\label{eq2.5:temp}
	(T_t\phi)(x) = \mbe^x[\phi(X_t)].
\end{align}
Furthermore, its connection to the dual semigroup is given by $\mu T_t = \mathrm{law}(X_t)$, where $X_0\sim\mu$. This connection provides a bridge to study Feller processes using a functional analytic approach.
What is nontrivial is the converse of the above result: for every probability semigroup $\cb{T_t}_{t\ge0}$, there exists a Feller process $\cb{X_t}_{t\ge0}$ such that \eqref{eq2.5:temp} holds. In fact, a Feller process is recovered by solving the \emph{martingale problem} associated to the semigroup $\cb{T_t}_{t\ge0}$:
\begin{align*}
	\phi(X_t) - \int_0^t \mcA\phi(X_s)\,ds\quad\text{is a martingale, for all }\phi\in D(\mcA),
\end{align*}
where $(\mcA,D(\mcA))$ is the generator of $\cb{T_t}_{t\ge0}$. 

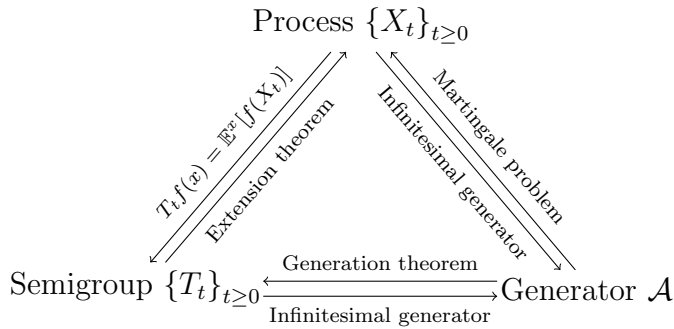
\begin{figure}[h]
	\centering
	\begin{tikzpicture}
		\node (semigroup) at (0,0) {Semigroup $\cb{T_t}_{t\ge 0}$};
		\node (process) at (3,3.5) {Process $\cb{X_t}_{t\ge 0}$};
		\node (generator) at (6,0) {Generator $\mcA$};
		\draw[->] (1.7,-0.1) -- (4.81,-0.1) node[midway, below, sloped, font=\fontsize{8}{10}\selectfont] {Infinitesimal generator};
		\draw[->] (4.81,0.1) -- (1.7,0.1) node[midway, above, sloped, font=\fontsize{8}{10}\selectfont] {Generation theorem};
		\draw[->] (2.6,3.15) -- (0.2,0.35) node[midway, above, sloped, font=\fontsize{8}{10}\selectfont] {$T_tf(x) = \mbe^x[f(X_t)]$};
		\draw[->] (0.4,0.35) -- (2.8,3.15) node[midway, below, sloped, font=\fontsize{8}{10}\selectfont] {Extension theorem};
		\draw[->] (5.85,0.25) -- (3.4,3.15) node[midway, above, sloped, font=\fontsize{8}{10}\selectfont] {Martingale problem};
		\draw[->] (3.2,3.15) -- (5.65,0.25) node[midway, below, sloped, font=\fontsize{8}{10}\selectfont] {Infinitesimal generator};
	\end{tikzpicture}
	\caption{Connection between Feller processes, probability semigroups and probability generators}
\end{figure}

\subsection{Semimetric spaces with Hypothesis (C)}\label{subsec2.3.2:hypoC}
We aim to develop our theory without relying on the underlying metric of the Polish space. Instead, we consider a set $\Pi$ with a \emph{semimetric} $c$ that satisfies certain conditions to be specified later, which turns out to be topologically equivalent to a locally compact Polish space.

Let us recall the definition of a semimetric space. We say $c:\Pi\times\Pi\to[0,\infty)$ is a \emph{semimetric} on a set $\Pi$, if it satisfies the following for all $x,y\in\Pi$:
\begin{enumerate}[(i)]
	\item (Positive definiteness) $c(x,y)= 0$ if and only if $x=y$;
	\item (Symmetry) $c(x,y)=c(y,x)$.
\end{enumerate}
The double $(\Pi,c)$ is called a \emph{semimetric space}. A \emph{$c$-ball} centered at $x\in\Pi$ of radius $r>0$ is the subset of $\Pi$ given by
\begin{align*}
	B^c(x,r)&= \cb{y\in \Pi: c(x,y)<r}. 
\end{align*}
The semimetric topology induced by a semimetric $c$ is the topology on $\Pi$ generated by $c$-balls. Finally, a sequence $\{x_n\}\subset \Pi$ converges to $x\in\Pi$ as $n\to\infty$ in the semimetric topology if and only if $\lim_{n\to\infty}c(x_n,x)=0.$
In this case we denote $$x_n\xrightarrow[]{c}x.$$ 
A semimetric $d:\Pi\times\Pi\to[0,\infty)$ that satisfies the triangle inequality:
\[d(x,z)\le d(x,y)+d(y,z)\quad\text{for all $x,y,z\in\Pi$,}\]
is called a \emph{metric}, and the double $(\Pi,d)$ is called a \emph{metric space}.

We will now provide a set of conditions for a semimetric space $(\Pi,c)$ that we refer to as Hypothesis (C), which will be used throughout this work. We will see in a moment that any semimetric space $(\Pi,c)$ that satisfies this hypothesis will be a Polish space.
\begin{hypothesisC}\label{hypoC}
	The semimetric space $(\Pi,c)$ satisfies the following.
	\begin{enumerate}[(C1)]
		\item $(\Pi,c)$ is complete and separable.
		\item For some $B\ge1$, $c$ satisfies the \emph{$B$-relaxed triangle inequality}, that is, for all $x,y,z\in\Pi$, it holds
		\[c(x,y)\le B\sqb{c(x,z)+c(z,y)}.\]
		\item $(x,y)\mapsto c(x,y)$ is continuous in the following sense: if $\cb{x_n}_n,\cb{y_n}_n$ are two sequences s.t. $x_n\xrightarrow{c}x$, $y_n\xrightarrow{c}y$, then
		\begin{align*}
			\lim_{n\to\infty} c(x_n,y_n)=c(x,y).
		\end{align*}
		\item $\Pi$ is locally compact.
	\end{enumerate}
\end{hypothesisC}
\begin{example}
	Suppose $(\Pi,d)$ is a metric space. If we define $c(x,y)=d(x,y)^p$, where $p\in(0,\infty)$, then $(\Pi,c)$ is a semimetric space that satisfies (C2) and (C3). In particular, when $p\in(0,1]$, $(\Pi,c)$ satisfies (C2) with $B=1$, and $c$ is a metric in this case. When $p\in(1,\infty)$, $(\Pi,c)$ satisfies (C2) with $B=2^{p-1}$. Additionally, if for every $x\in\Pi$, there exists $r>0$ such that the closed ball $B^c(x,r)$ is compact, then (C4) is satisfied.
\end{example}

We have the following result by \cite{chrzkaszcz2019two}, which asserts that any semimetric that satisfies the relaxed triangle inequality is comparable to some power of a metric.
\begin{proposition}[{\cite[Proposition 3.9]{chrzkaszcz2019two}}]\label{prop2.16:temp}
	If $(\Pi,c)$ is a semimetric space with $c$ satisfying the relaxed triangle inequality, then there exists a metric $d:\Pi^2\to[0,\infty)$ and $\ta\in(0,1]$ such that 
	\begin{align}\label{eq2.6:temp}
		d(x,y) \le c(x,y)^\ta \le 2d(x,y).
	\end{align}
\end{proposition}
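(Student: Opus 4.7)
The plan is to adapt the classical Aoki--Rolewicz / Frink metrization argument for quasi-metric spaces, which produces a true metric from a semimetric satisfying a relaxed triangle inequality after suitably rescaling by a power.

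First I would replace $c$ by its power $\tilde c := c^\theta$ for an appropriately small $\theta \in (0,1]$. The point is that by subadditivity of $t \mapsto t^\theta$ on $[0,\infty)$ together with the $B$-relaxed triangle inequality,
\begin{align*}
\tilde c(x,y) \;=\; c(x,y)^\theta \;\le\; B^\theta\bigl[c(x,z)+c(z,y)\bigr]^\theta \;\le\; B^\theta\bigl[\tilde c(x,z)+\tilde c(z,y)\bigr],
\end{align*}
so that $\tilde c$ satisfies a relaxed triangle inequality with constant $K := B^\theta$. Choosing $\theta$ small forces $K$ to be as close to $1$ as desired; I would fix $\theta$ so that $K$ is small enough for the chain lemma below to close with the factor $2$ (the Aoki--Rolewicz choice $\theta = \log 2/\log(2B)$ is a natural candidate).

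Next, I would define the \emph{chain pseudo-metric}
\begin{align*}
d(x,y) \;:=\; \inf \Bigl\{ \sum_{i=0}^{n-1}\tilde c(x_i,x_{i+1}) : n\ge 1,\; x_0=x,\; x_n=y \Bigr\}.
\end{align*}
By construction $d$ is symmetric and satisfies the (genuine) triangle inequality, since the concatenation of two chains is again a chain. The upper bound $d(x,y)\le\tilde c(x,y)$ is immediate from the trivial one-step chain. Positive definiteness of $d$ will follow a posteriori from the lower bound $\tilde c\le 2d$, once we know that $c$ (hence $\tilde c$) is positive definite.

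The core step, and the main obstacle, is the \emph{chain lemma}: for every finite chain $x_0,\dots,x_n$,
\begin{align*}
\tilde c(x_0,x_n) \;\le\; 2\sum_{i=0}^{n-1}\tilde c(x_i,x_{i+1}).
\end{align*}
I would prove this by strong induction on $n$. Given a chain with total weight $S$, one picks the maximal index $m$ with $\sum_{i<m}\tilde c(x_i,x_{i+1})\le S/2$; then automatically $\sum_{i>m}\tilde c(x_i,x_{i+1})\le S/2$, and also $\tilde c(x_m,x_{m+1})\le S$. Applying the relaxed triangle inequality for $\tilde c$ twice to split across the edge $(x_m,x_{m+1})$, and invoking the induction hypothesis on the two sub-chains, one obtains an estimate of the form $\tilde c(x_0,x_n) \le (K+2K^2)S$. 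Tightening this to the constant $2$ is precisely what forces the careful calibration of $\theta$ above; the delicate point is choosing the splitting (dyadic in the cumulative weight) and $\theta$ so that the recursion closes. With the chain lemma established, $d \le \tilde c \le 2d$ gives \eqref{eq2.6:temp}, and $d$ is a genuine metric.
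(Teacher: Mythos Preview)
The paper does not supply its own proof of this proposition: it is stated with a direct citation to \cite[Proposition~3.9]{chrzkaszcz2019two} and used as a black box. Your sketch follows the classical Frink/Aoki--Rolewicz chain-metrization argument, which is exactly the method underlying the cited result (indeed the cited paper is titled ``Two refinements of Frink's metrization theorem''), so there is no alternative approach in the paper to compare against.

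One caveat on your recursion: the bound $(K+2K^2)S$ you derive does \emph{not} close with constant $2$ even when $K=1$ (it gives $3S$), so the splitting you describe needs to be sharpened, not merely calibrated via $\theta$. The standard fix is to split at the index $m$ so that both $\sum_{i<m}$ and $\sum_{i\ge m+1}$ are at most $S/2$, and then apply the relaxed triangle inequality once as $\tilde c(x_0,x_n)\le K[\tilde c(x_0,x_m)+\tilde c(x_m,x_n)]$ together with a second application inside, but use the induction hypothesis more carefully (or, as in Frink's original lemma and its refinements, work with a condition of the form $\tilde c(x,z)\le 2\max\{\tilde c(x,y),\tilde c(y,z)\}$ rather than the additive $K$-inequality). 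You correctly flag this as ``the delicate point''; just be aware that the naive recursion you wrote out does not close, and one genuinely needs the sharper splitting/induction from the cited reference to land on the constant $2$.
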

As a result, we have the following observation:
\begin{corollary}\label{cor:hypoC-implies-polish}
	If $(\Pi,c)$ satisfies Hypothesis {\refC}, then it is a locally compact Polish space with some metric $d$ that satisfies \eqref{eq2.6:temp}. Particularly, the semimetric topology is homeomorphic to the metric $d$ topology.
\end{corollary}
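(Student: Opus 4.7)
The plan is to derive the corollary directly from Proposition \ref{prop2.16:temp} by using the two-sided comparison \eqref{eq2.6:temp} to transfer all the relevant structural properties (separability, completeness, local compactness) of $(\Pi,c)$ given by Hypothesis \refC\ onto the metric space $(\Pi,d)$. First I would apply Proposition \ref{prop2.16:temp} (whose hypothesis is exactly (C2)) to obtain a metric $d$ on $\Pi$ and an exponent $\theta\in(0,1]$ such that $d(x,y)\le c(x,y)^\theta\le 2d(x,y)$ for all $x,y\in\Pi$.

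Next, I would use this comparison to identify the two topologies. Since the semimetric topology is by definition generated by $c$-balls, and since (C2) ensures (via the standard argument with the relaxed triangle inequality) that the $c$-balls actually form a neighborhood base with $x_n\xrightarrow{c}x$ iff $c(x_n,x)\to0$, the inequalities $d(x_n,x)\le c(x_n,x)^\theta$ and $c(x_n,x)^\theta\le 2d(x_n,x)$ show that $c$-convergence and $d$-convergence coincide. Because a first-countable topology is determined by its convergent sequences, this yields that the semimetric topology and the $d$-metric topology agree, i.e.\ they are homeomorphic.

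Once the topologies are identified, verifying that $(\Pi,d)$ is Polish is routine. For completeness, I would take a $d$-Cauchy sequence $\{x_n\}$, observe that $c(x_n,x_m)^\theta\le 2d(x_n,x_m)\to 0$ makes it $c$-Cauchy, invoke (C1) to get a $c$-limit $x$, and then use $d(x_n,x)\le c(x_n,x)^\theta\to 0$ to conclude $d$-convergence to $x$. Separability follows because any countable $c$-dense set is automatically $d$-dense by the agreement of topologies (using (C1) again). Local compactness is a purely topological property, so it transfers from $(\Pi,c)$ to $(\Pi,d)$ directly from (C4).

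I do not expect any genuine obstacle here; the corollary is essentially a bookkeeping consequence of Proposition \ref{prop2.16:temp}. The only mildly delicate point is making the identification of topologies rigorous, since for a general semimetric the $c$-balls need not form a basis without further axioms — but this is handled cleanly by (C2), and once done, the remaining verifications (completeness, separability, local compactness) are immediate transfers through the equivalence of convergence in $c$ and $d$.
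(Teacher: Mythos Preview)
Your proposal is correct and follows the natural route the paper implicitly intends: the paper does not give a proof at all, stating the corollary simply as an ``observation'' following Proposition~\ref{prop2.16:temp}, and your argument spells out exactly the expected transfer of topological and metric properties via the two-sided comparison \eqref{eq2.6:temp}.
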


\subsection{Optimal transport and Wasserstein topology on a semimetric cost}
Optimal transport is the primary tool used in this work. We will provide a sufficient overview of this topic; for a more detailed discussion, please refer to \cite{MR3050280} or \cite{MR2459454}. Optimal transport theory is often developed under various assumptions on the cost function $c$, such as lower semicontinuity or the specific form of $p$-cost, where $c(x,y)=d(x,y)^p$ with $d$ being a metric and $p\ge1$. In the latter case, the $p$-cost induces a metric and thus a topology on the space of probability measures, known as the \emph{Wasserstein metric}. The Wasserstein metric has become one of the primary tools for studying propagation of chaos, tracing back to the seminal work of Henry P. McKean \cite{McKean1969}.

The goal of this subsection is to develop the \emph{Wasserstein topology} on the space of probability measures using a semimetric cost $c$ that satisfies Hypothesis {\refC}. Unlike the classical result, which yields a metric topology, we will instead obtain a semimetric topology. As indicated in Corollary \ref{cor:hypoC-implies-polish}, $(\Pi,c)$ is a Polish space with a metric $d$ such that
\[d(x,y)^p \le c(x,y) \le (2d(x,y))^p\]
for some fixed $p\ge1$. Consequently, the construction of the Wasserstein topology on the space of probability measures is nearly identical to the case where $c(x,y) = d(x,y)^p$. Rather than reiterating the construction details, which are the same as the $p$-cost case, we will present the relevant results and refer the reader to existing references for similar proofs, noting any differences where applicable.

\subsubsection{Optimal transport with semimetric cost}
The concept of $c$-optimal transport cost, a central notion in the theory of optimal transport, is defined as follows. Note that we do not assume Hypothesis {\refC} in the following.

\begin{definition}[$c$-optimal cost]\label{def:optimal-cost}
	Let $(\Pi,c)$ be a semimetric space.
	\begin{enumerate}[(i)]
		\item Let $\mu,\nu\in\mcP(\Pi)$ be two probability measures. A \emph{coupling measure} of $\mu,\nu$ is a measure $\ga\in\mcP(\Pi^2)$ that satisfies: for all measurable $E\subset\Pi$,
		\begin{align*}
			\ga(E\times\Pi) = \mu(E),\quad \ga(\Pi\times E) = \nu(E).
		\end{align*}
		We denote $\Ga(\mu,\nu)\subset\mcP(\Pi^2)$ to be the set of all coupling measure $\ga$ of $\mu,\nu$.
		\item Given a semimetric cost $c:\Pi^2\to[0,\infty)$, the \emph{$c$-optimal transport cost}, or simply \emph{$c$-optimal cost} $\mcC_c:\mcP(\Pi)\times\mcP(\Pi)\to[0,\infty]$ is defined by
		\begin{align}\label{eq2.8:temp}
			\mcC_c(\mu,\nu) = \inf_{\ga\in\Ga(\mu,\nu)} \int_{\Pi^2} c(x,y)\,d\ga(x,y).
		\end{align}
	\end{enumerate}
\end{definition}

\begin{remark}
	In the special case where $c(x,y)=d(x,y)^p$, where $d$ is a metric and $p\ge1$, the $p$-th root of the optimal cost $\mcW_p := \mcC_c^{1/p}$ is known as the \emph{Wasserstein-$p$ distance} associated to $d$.
\end{remark}

Suppose $\Pi$ is a Polish space, and $c:\Pi^2\to[0,\infty)$ is lower semicontinuous semimetric cost, that is, it holds
\begin{align}\label{eq2.9:temp}
	c(x,y) \le \liminf_{n\to\infty} c(x_n,y_n)
\end{align}
for every sequence $x_n\to x$ and $y_n\to y$. Then the $c$-optimal cost is lower semicontinuous (see \cite[Lemma 4.3 and Remark 6.12]{MR2459454}) in the sense: if $\mu_n\to\mu$, $\nu_n\to\nu$ weakly, then
\begin{align*}
	\mcC_c(\mu,\nu) \le \liminf_{n\to\infty} \mcC_c(\mu_n,\nu_n).
\end{align*}
Furthermore, there exists $\ga\in\Ga(\mu,\nu)$, which will be called a \emph{$c$-optimal coupling} of $\mu,\nu$ such that the minimum in \eqref{eq2.8:temp} is achieved (see \cite[Theorem 1.7]{santambrogio2015optimal}). That is,
\begin{align*}
	\mcC_c(\mu,\nu) = \int_{\Pi^2} c(x,y)\,d\ga(x,y).
\end{align*}
In particular, whenever $(\Pi,c)$ satisfies Hypothesis {\refC}, then (C3) and Proposition \ref{prop2.16:temp} imply that $c$ is lower semicontinuous, i.e., \eqref{eq2.9:temp} holds.

The definition of $c$-optimal cost is given by a minimization problem. Just as other optimization problems, it is often useful to consider the \emph{dual problem}, which is a maximization problem. In particular, we have the following classical result. Interested readers can refer to \cite[Theorem 1.42]{santambrogio2015optimal} for a proof.
\begin{theorem}[Kantorovich duality]\label{thm:kantorovich-duality}
	Let $\Pi$ be a Polish space, and $c:\Pi^2\to[0,\infty)$ be lower semicontinuous. Then
	\begin{align}\label{eq2.10:temp}
		\mcC_c(\mu,\nu) = \sup_{\phi,\psi} \sqb{\inn{\mu,\phi}+\inn{\nu,\psi}},
	\end{align}
	where the supremum is taken over all continuous and bounded functions $(\phi,\psi)$ satisfying
	\begin{align*}
		(\phi\oplus\psi)(x,y)=\phi(x)+\psi(y) \le c(x,y) \quad \text{for all $(x,y)\in \Pi^2$.}
	\end{align*}
\end{theorem}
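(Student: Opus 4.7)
The plan is to establish the two inequalities separately, with the easy direction coming almost for free.

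For the inequality $\sup\le\mcC_c(\mu,\nu)$, fix any $\ga\in\Ga(\mu,\nu)$ and any admissible pair $(\phi,\psi)\in C_b(\Pi)^2$ with $\phi\oplus\psi\le c$ pointwise. Integrating against $\ga$ gives
\begin{align*}
    \inn{\mu,\phi}+\inn{\nu,\psi}=\int_{\Pi^2}(\phi\oplus\psi)\,d\ga\le\int_{\Pi^2}c\,d\ga,
\end{align*}
and taking the supremum on the left and the infimum on the right yields the bound.

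For the reverse inequality $\mcC_c(\mu,\nu)\le\sup$, I would use the Fenchel--Rockafellar convex duality theorem in two stages, first assuming $c$ is \emph{bounded} and continuous. On the Banach space $E=C_b(\Pi^2)$ with supremum norm, define the convex functionals
\begin{align*}
    \Theta(u)=\begin{cases}0, & u\ge -c \text{ pointwise,}\\ +\infty, & \text{otherwise,}\end{cases}\qquad
    \Xi(u)=\begin{cases}-\inn{\mu,\phi}-\inn{\nu,\psi}, & u=-(\phi\oplus\psi),\ \phi,\psi\in C_b(\Pi),\\ +\infty, & \text{otherwise.}\end{cases}
\end{align*}
At $u_0\equiv 1$ one has $\Xi(u_0)<\infty$ and $\Theta$ is continuous (since $\|u-1\|_\infty<1/2$ implies $u\ge 1/2\ge -c$). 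Fenchel--Rockafellar then provides
\begin{align*}
    -\inf_{u\in E}[\Theta(u)+\Xi(u)]=\max_{\ell\in E^*}\sqb{-\Theta^*(-\ell)-\Xi^*(\ell)}.
\end{align*}
The left side equals the dual supremum appearing in \eqref{eq2.10:temp}. Computing the Legendre transforms identifies the finite regions: $\Theta^*(-\ell)=\int c\,d\ell$ when $\ell$ is a positive Radon measure, and $\Xi^*(\ell)=0$ exactly when $\ell$ has marginals $\mu,\nu$. This delivers the duality for bounded continuous $c$.

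To pass from bounded continuous to a general lower semicontinuous $c\ge 0$, I would approximate $c=\sup_n c_n$ pointwise by an increasing sequence of bounded continuous functions $c_n$ (built, e.g., as $c_n=\min(n,\inf_{z}\{c(z)+n\,d(\cdot,z)\})$ on $\Pi^2$ equipped with a compatible metric $d$). By monotone convergence on the primal side and compactness of $\Ga(\mu,\nu)$ in the weak topology (standard since $\mu,\nu$ are tight on a Polish space), one obtains $\mcC_{c_n}(\mu,\nu)\nearrow \mcC_c(\mu,\nu)$. Since every pair admissible for $c_n$ is admissible for $c$, combining this monotonicity with Stage 1 yields $\mcC_c(\mu,\nu)\le \sup_{\phi,\psi}[\inn{\mu,\phi}+\inn{\nu,\psi}]$, completing the proof.

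The main obstacle is the dual space identification inside the Fenchel--Rockafellar step: the norm dual of $C_b(\Pi^2)$ strictly contains the space of Radon measures (it consists of finitely additive set functions), so one must justify that any optimizing $\ell$ can be taken to be a \emph{countably additive} Radon measure. The standard way out is to exploit that the constraint $\Xi^*(\ell)<\infty$ forces $\ell$ to have marginals $\mu$ and $\nu$, both of which are tight on the Polish space $\Pi$; tightness of the marginals forces tightness of $\ell$ itself, and Prokhorov-type arguments then rule out purely finitely additive contributions. An alternative is to work initially on $C_0(\Pi^2)$ together with an approximation of $c$ from below by compactly supported continuous functions, bypassing the $C_b$-dual pathology.
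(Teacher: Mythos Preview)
The paper does not give its own proof of this theorem; it simply refers the reader to \cite[Theorem 1.42]{santambrogio2015optimal}. Your proposal is correct and follows the standard Fenchel--Rockafellar route, which is essentially the approach taken in the cited reference (and in Villani's treatment). The obstacle you flag---that $C_b(\Pi^2)^*$ contains finitely additive functionals---is genuine, and your resolution via tightness of the marginals is the right one: a positive $\ell\in C_b(\Pi^2)^*$ with tight marginals $\mu,\nu$ is itself tight, hence representable by a Radon measure. One small point worth making explicit is that $\Xi$ is well-defined on the subspace $\{-(\phi\oplus\psi)\}$ because if $\phi\oplus\psi=\phi'\oplus\psi'$ then $\phi-\phi'$ and $\psi'-\psi$ are the same constant, and the shifts cancel when paired with probability measures.
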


\begin{remark}
	If there exists $a\in L^1(\mu)$ and $b\in L^1(\nu)$ such that $c\le a\oplus b$, then the supremum of \eqref{eq2.10:temp} is attained.
\end{remark}

\subsubsection{The space \texorpdfstring{$\mcP_c(\Pi)$}{P\_c(Pi)} and its topological properties}
From now on, we assume that $(\Pi,c)$ satisfies Hypothesis {\refC}. Note that by Proposition \ref{prop2.16:temp}, there exists a metric $d$ and $p\ge1$ such that
\begin{align*}
	\mcW_p^p(\mu,\nu) \le \mcC_c(\mu,\nu) \le 2^p \mcW_p^p(\mu,\nu),
\end{align*}
where $\mcW_p$ is the Wasserstein-$p$ distance associated to $d$.
\begin{definition}[The space $\mcP_c(\Pi)$, $c$-convergence]\label{def:P_c(Pi)}
	Let $(\Pi,c)$ be a semimetric space that satisfies Hypothesis {\refC}. We define $\mcP_c(\Pi)\subset\mcP(\Pi)$ to be the set of all probability measures $\mu$ with finite $c$-moment: for some (equivalently, for all) $z\in\Pi$, it holds
	\begin{align*}
		\inn{\mu,c(z,\cdot)} = \int_\Pi c(z,x)\,d\mu(x) <\infty.
	\end{align*}
	Let $\cb{\mu_n}_n\subset\mcP_c(\Pi)$ be a sequence of probability measures and $\mu\in\mcP_c(\Pi)$. We say $\mu_n$ \emph{$c$-converges} to $\mu$, denoted $\mu_n\xrightarrow{c}\mu$, if
	\[\lim_{n\to\infty} \mcC_c(\mu_n,\mu)=0.\]
\end{definition}

Let us introduce the space $C_{b,c}(\Pi)$ of $c$-bounded functions, which will serve as the natural ``dual space'' of $\mcP_c(\Pi)$:
\begin{definition}[The space $C_{b,c}(\Pi)$]\label{def:cbc-functions}
	Let $C_{b,c}(\Pi)\subset C(\Pi)$ denote the space of all continuous functions $\phi:\Pi\to\mbr$ satisfying the following for some $M_0,M_1\ge 0$ and some (equivalently, all) $z\in \Pi$:
	\begin{align*}
		|\phi(x)|&\le M_0 + M_1c(z,x),\quad\text{for all }x\in \Pi.
	\end{align*}
\end{definition}

The following is an important characterization of $c$-convergence:
\begin{proposition}[Equivalence of $c$-convergence]\label{prop:equiv-of-c-convergence}
	Let $\cb{\mu_n}_n\subset\mcP_c(\Pi)$ be a sequence of probability measures and $\mu\in\mcP_c(\Pi)$. The following are equivalent:
	\begin{enumerate}[(a)]
		\item $\mu_n\xrightarrow{c}\mu$, that is, $\lim_{n\to\infty} \mcC_c(\mu_n,\mu)=0$;
		\item $\mu_n\to\mu$ weakly, and for some (equivalently, for all) $z\in\Pi$, it holds
		\begin{align*}
			\lim_{n\to\infty} \inn{\mu_n,c(z,\cdot)} = \inn{\mu,c(z,\cdot)};
		\end{align*}
		\item $\mu_n\to\mu$ weakly and it holds for all $\phi\in C_{b,c}(\Pi)$ that:
		\begin{align*}
			\lim_{n\to\infty} \inn{\mu_n,\phi} = \inn{\mu,\phi}.
		\end{align*}
	\end{enumerate}
\end{proposition}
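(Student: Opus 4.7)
The plan is to reduce the proof to the classical characterization of Wasserstein-$p$ convergence on a Polish metric space. By Proposition \ref{prop2.16:temp}, there exist a metric $d$ on $\Pi$ and $\theta \in (0,1]$ such that $d(x,y) \le c(x,y)^\theta \le 2d(x,y)$, and setting $p := 1/\theta \ge 1$ gives the two-sided bound $d(x,y)^p \le c(x,y) \le 2^p d(x,y)^p$. By Corollary \ref{cor:hypoC-implies-polish}, $(\Pi,d)$ is a Polish metric space, so the classical Wasserstein-$p$ machinery applies. Integrating the cost bounds against any coupling yields $\mcW_p^p(\mu,\nu) \le \mcC_c(\mu,\nu) \le 2^p \mcW_p^p(\mu,\nu)$, where $\mcW_p$ is the classical Wasserstein-$p$ distance associated with $d$. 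In particular, the classical characterization (see e.g.\ \cite[Theorem 6.9]{MR2459454})---namely that $\mcW_p(\mu_n,\mu) \to 0$ is equivalent to weak convergence together with $\inn{\mu_n, d(z,\cdot)^p} \to \inn{\mu, d(z,\cdot)^p}$, and also to $\inn{\mu_n,\phi} \to \inn{\mu,\phi}$ for every continuous $\phi$ with $|\phi(x)| \le M_0 + M_1 d(z,x)^p$---is at our disposal.

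For (a) $\Rightarrow$ (c), I will observe that $\mcC_c(\mu_n,\mu) \to 0$ forces $\mcW_p(\mu_n,\mu) \to 0$ via $\mcW_p^p \le \mcC_c$, and the classical theorem then delivers $\inn{\mu_n,\phi} \to \inn{\mu,\phi}$ for every continuous $\phi$ with $d^p$-growth. Every $\phi \in C_{b,c}(\Pi)$ satisfies $|\phi(x)| \le M_0 + M_1 c(z,x) \le M_0 + 2^p M_1 d(z,x)^p$, so (c) follows. The implication (c) $\Rightarrow$ (b) is immediate, since $c(z,\cdot) \in C_{b,c}(\Pi)$ (take $M_0=0$, $M_1=1$) and every bounded continuous function lies in $C_{b,c}(\Pi)$.

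The main content is (b) $\Rightarrow$ (a). The strategy is to upgrade the assumed $c$-moment convergence to $d^p$-moment convergence, apply the classical theorem to obtain $\mcW_p(\mu_n,\mu) \to 0$, and conclude via $\mcC_c \le 2^p \mcW_p^p$. The upgrade step is a Fatou sandwich: the functions $g := 2^p d(z,\cdot)^p - c(z,\cdot)$ and $h := c(z,\cdot) - d(z,\cdot)^p$ are both nonnegative and continuous on $(\Pi,d)$, so weak convergence together with the Portmanteau lemma yields $\inn{\mu,g} \le \liminf_n \inn{\mu_n,g}$ and $\inn{\mu,h} \le \liminf_n \inn{\mu_n,h}$. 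Since $\inn{\mu_n, c(z,\cdot)} \to \inn{\mu, c(z,\cdot)}$ by hypothesis, rearranging these two inequalities produces both $\inn{\mu, d(z,\cdot)^p} \le \liminf_n \inn{\mu_n, d(z,\cdot)^p}$ and $\limsup_n \inn{\mu_n, d(z,\cdot)^p} \le \inn{\mu, d(z,\cdot)^p}$, hence $d^p$-moment convergence. Finally, the ``for some (equivalently, for all) $z$'' clause in (b) is a free consequence of the overall equivalence, since conditions (a) and (c) are manifestly independent of $z$. The only genuinely delicate point is this Fatou sandwich; the rest is a bookkeeping reduction to the classical Polish-metric Wasserstein theory.
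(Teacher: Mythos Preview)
Your proposal is correct and follows exactly the reduction the paper indicates: the paper does not give a proof but simply states that the characterization ``resembles the case where $c(x,y)=d(x,y)^p$'' and refers to \cite[Definition 6.8 and Theorem 6.9]{MR2459454}. You have filled in the details of that reduction, and the Fatou sandwich argument for (b)~$\Rightarrow$~(a)---upgrading $c$-moment convergence to $d^p$-moment convergence---is precisely the step the paper leaves implicit.
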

This characterization resembles the case where $c(x,y)=d(x,y)^p$, where one can refer to \cite[Definition 6.8 and Theorem 6.9]{MR2459454}.

\begin{proposition}\label{prop:hypoC-for-P_c(Pi)}
	Let $(\Pi,c)$ be a semimetric space that satisfies Hypothesis {\refC}. Let $\mcP_c(\Pi)$ and $\mcC_c$ be given in Definitions \ref{def:P_c(Pi)} and \ref{def:optimal-cost}. Then the double $(\mcP_c(\Pi),\mcC_c)$ is a semimetric space satisfying (C1)--(C3) of Hypothesis {\refC}:
	\begin{enumerate}[(C1)]
		\item $(\mcP_c(\Pi),\mcC_c)$ is complete and separable;
		\item $\mcC_c$ satisfies the $B$-relaxed triangle inequality, with the same constant $B\ge1$ from Hypothesis {\refC} for $(\Pi,c)$;
		\item $\mcC_c$ is continuous in the sense: if $\mu_n\xrightarrow{c}\mu$ and $\nu_n\xrightarrow{c}\nu$, then
		\begin{align*}
			\lim_{n\to\infty} \mcC_c(\mu_n,\nu_n)=\mcC_c(\mu,\nu).
		\end{align*}
	\end{enumerate}
\end{proposition}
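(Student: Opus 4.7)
The plan is to verify (C1)--(C3) separately. The key reduction is Proposition \ref{prop2.16:temp}: there exist a metric $d$ on $\Pi$ and $p \ge 1$ with $d^p \le c \le 2^p d^p$, so that on probability measures $\mcW_p^p \le \mcC_c \le 2^p\mcW_p^p$. For (C1), this comparison gives $\mcP_c(\Pi) = \mcP_p(\Pi)$ with equivalent notions of Cauchy and convergent sequences; the completeness and separability of $(\mcP_p(\Pi),\mcW_p)$ over the Polish space $(\Pi,d)$ (standard, e.g.\ \cite{MR2459454}) then transfer to $(\mcP_c(\Pi),\mcC_c)$. For (C2), given $\mu,\nu,\rho\in\mcP_c(\Pi)$ with optimal couplings $\alpha\in\Gamma(\mu,\rho)$ and $\beta\in\Gamma(\rho,\nu)$, the gluing lemma yields $\pi\in\mcP(\Pi^3)$ with $\pi_{12}=\alpha$ and $\pi_{23}=\beta$; the projection $\gamma=\pi_{13}\in\Gamma(\mu,\nu)$ then satisfies, by integrating the pointwise inequality $c(x_1,x_3)\le B[c(x_1,x_2)+c(x_2,x_3)]$ against $\pi$,
\[
\mcC_c(\mu,\nu)\le \int c(x_1,x_3)\,d\pi \le B\mcC_c(\mu,\rho)+B\mcC_c(\rho,\nu).
\]

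For (C3), I split into two inequalities. Lower semicontinuity $\liminf \mcC_c(\mu_n,\nu_n)\ge \mcC_c(\mu,\nu)$ is the standard LSC of optimal transport: by Proposition \ref{prop:equiv-of-c-convergence}, $\mu_n\to\mu$ and $\nu_n\to\nu$ weakly, and $c$ is LSC by (C3) on $\Pi$ together with Proposition \ref{prop2.16:temp}. For the reverse inequality $\limsup \mcC_c(\mu_n,\nu_n)\le \mcC_c(\mu,\nu)$, I construct approximate couplings. Let $\bar\gamma\in\Gamma(\mu,\nu)$ be optimal and let $\alpha_n\in\Gamma(\mu_n,\mu)$, $\beta_n\in\Gamma(\nu,\nu_n)$ be optimal. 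Iterated gluing produces $\pi_n\in\mcP(\Pi^4)$ with $\pi_{n,12}=\alpha_n$, $\pi_{n,23}=\bar\gamma$, and $\pi_{n,34}=\beta_n$; let $\gamma_n:=\pi_{n,14}\in\Gamma(\mu_n,\nu_n)$. The upper bound reduces to showing $\int c\,d\gamma_n\to \int c\,d\bar\gamma=\mcC_c(\mu,\nu)$, which combined with $\mcC_c(\mu_n,\nu_n)\le \int c\,d\gamma_n$ closes the argument.

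This convergence has two ingredients. First, $\gamma_n\to\bar\gamma$ weakly: the family $\{\pi_n\}$ is tight, and since $\int c\,d\alpha_n=\mcC_c(\mu_n,\mu)\to 0$, any weak limit of $\alpha_n$ has zero $c$-cost (by LSC applied to $\int c\,d\cdot$ on $\Pi^2$), hence is concentrated on the diagonal of $\mu$; the same holds for $\beta_n$. Therefore any weak subsequential limit $\pi^\infty$ of $\pi_n$ is supported on $\{x_1=x_2,\,x_3=x_4\}$ with mid-marginal $\bar\gamma$, which uniquely determines $\pi^\infty$, so the full sequence converges and $\gamma_n=\pi_{n,14}\to\bar\gamma$. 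Second, uniform integrability of $c$ with respect to $\{\gamma_n\}$: fixing $z\in\Pi$, the bound $c(x,y)\le B[c(z,x)+c(z,y)]$ together with the convergence $\int c(z,\cdot)\,d\mu_n\to \int c(z,\cdot)\,d\mu$ (and likewise for $\nu_n$) yields UI of $\{c(z,\cdot)\}$ with respect to the marginal families $\{\mu_n\},\{\nu_n\}$, which lifts through the fixed marginals of $\gamma_n$ to UI of $c$ with respect to $\{\gamma_n\}$. Weak convergence plus UI then gives $\int c\,d\gamma_n\to \int c\,d\bar\gamma$.

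\textbf{Main obstacle.} The delicate step is (C3). The $B$-relaxed triangle inequality alone yields only the non-sharp estimate $\limsup \mcC_c(\mu_n,\nu_n)\le B^2\mcC_c(\mu,\nu)$, which is inadequate when $B>1$. Obtaining sharp continuity requires the weak-convergence-plus-UI argument above --- a subtlety absent from the classical Wasserstein-$\mcW_p$ setting, where $\mcW_p$ is a genuine metric and joint continuity is automatic from the triangle inequality.
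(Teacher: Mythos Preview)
Your proof is correct. For (C1) and (C2) you follow precisely the route the paper points to: the comparison $d^p\le c\le 2^pd^p$ from Proposition~\ref{prop2.16:temp} reduces completeness and separability to the classical Wasserstein case (Villani, Theorem~6.18), and the gluing lemma gives the $B$-relaxed triangle inequality exactly as in the paper's citation of Villani, page~106.

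The difference lies in (C3). The paper does not argue this point at all: it simply cites \cite[Theorem~3.4]{bogachev2021optimal} and moves on. You instead give a self-contained proof via iterated gluing, weak convergence of the glued couplings, and uniform integrability of $c$ against $\{\gamma_n\}$. Your identification of the obstacle is exactly right: the relaxed triangle inequality alone yields only $\limsup \mcC_c(\mu_n,\nu_n)\le B^2\mcC_c(\mu,\nu)$, which is useless for continuity when $B>1$, so some genuine work is required beyond the metric case. Your argument is a clean way to do this work, and it has the advantage of being transparent and not depending on an external reference whose hypotheses one would otherwise need to check against the present semimetric setting. The only minor omission is that you do not explicitly verify positive definiteness of $\mcC_c$ (i.e., $\mcC_c(\mu,\nu)=0\Rightarrow\mu=\nu$), but this is immediate from the comparison with $\mcW_p$ or directly from the fact that any zero-cost coupling is supported on the diagonal.
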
We shall call $\mcC_c$ the \emph{Wasserstein-$c$ semimetric}, and the associated topology the \emph{Wasserstein-$c$ topology}.
\begin{remark}
	The space $(\mcP_c(\Pi),\mcC_c)$ is not necessarily locally compact. That is, a bounded closed ball in $(\mcP_c(\Pi),\mcC_c)$ might not be $\mcC_c$-compact. However, a closed ball in $(\mcP_c(\Pi),\mcC_c)$ is weakly compact.
\end{remark}

The proof of this proposition is similar to the case $c(x,y)=d(x,y)^p$. Instead of reconstructing the theory again, we will point the reader to the result/proof of the existing literature, with some remark on the difference of the proof. The proof of Proposition \ref{prop:hypoC-for-P_c(Pi)} breaks into these steps:
\begin{itemize}
	\item For any $\mu,\nu\in\mcP_c(\Pi)$, $\mcC_c(\mu,\nu)<\infty$. See the proof after \cite[Definition 6.4]{MR2459454}.
	\item $\mcC_c$ is a semimetric. Trivial.
	\item $\mcC_c$ satisfies the relaxed triangle inequality. Similar to the proof of that Wasserstein-$1$ is a metric, see \cite[page 106]{MR2459454}.
	\item $(\mcP_c(\Pi),\mcC_c)$ is complete and separable. See the proof of \cite[Theorem 6.18]{MR2459454}.
	\item $(\mu,\nu)\mapsto\mcC_c(\mu,\nu)$ is continuous. \cite[Theorem 3.4]{bogachev2021optimal}
\end{itemize}

\subsubsection{The space of continuous curves in \texorpdfstring{$\mcP_c(\Pi)$}{P\_c(Pi)}}\label{subsec2.4.3}
Proposition \ref{prop:hypoC-for-P_c(Pi)} shows that $(\mcP_c(\Pi),\mcC_c)$ is a semimetric space that satisfies (C1)--(C3) of Hypothesis {\refC}. For $[0,T]\subset[0,\infty)$, let $$C([0,T];\mcP_c(\Pi)) = C([0,T];(\mcP_c(\Pi),\mcC_c))$$ 
be the set of all curves $\rho:[0,T]\to\mcP_c(\Pi)$ that is continuous w.r.t. $\mcC_c$, that is, $\rho$ satisfies
\[\lim_{s\to t}\mcC_c(\rho_t,\rho_s)=0.\]
For $\rho,\si\in C([0,T];\PCP)$, let
\[\mcD_c(\rho,\si)=\sup_{t\in[0,T]}\mcC_c(\rho_t,\si_t).\]
We shall show that this notion defines a semimetric on $C([0,T];\mcP_c(\Pi))$ that inherits regularity and completeness from the underlying semimetric $c$.

The space $C([0,T];\mcP_c(\Pi))$ plays an important role in Section \ref{chap4} of this work, as it contains the solutions to the mean-field equations associated with mean-field models, which give candidates for the mean-field limits. Specifically, Proposition \ref{prop:completeness-of-measure-curve} will be utilized in proving the well-posedness of the mean-field equation. Additionally, in the context of propagation of chaos, we are concerned with the distributions of (Feller) processes, which are curves in the space of probability measures.

\begin{proposition}\label{prop:completeness-of-measure-curve}
	Let $(\Pi,c)$ satisfy Hypothesis {\refC}. Then $C([0,T];\mcP_c(\Pi))$ equipped with the semimetric $\mcD_c$ is complete and satisfies the $B$-relaxed triangle inequality, with the same constant $B\ge1$ from (C2).
\end{proposition}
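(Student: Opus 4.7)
The plan is to verify the two assertions separately, with most of the work going into completeness. The $B$-relaxed triangle inequality for $\mcD_c$ is immediate from that of $\mcC_c$: for any $\rho,\sigma,\tau \in C([0,T];\mcP_c(\Pi))$ and each $t \in [0,T]$, Proposition \ref{prop:hypoC-for-P_c(Pi)}(C2) gives
\begin{align*}
    \mcC_c(\rho_t,\sigma_t) \le B\sqb{\mcC_c(\rho_t,\tau_t) + \mcC_c(\tau_t,\sigma_t)} \le B\sqb{\mcD_c(\rho,\tau) + \mcD_c(\tau,\sigma)},
\end{align*}
and taking the supremum over $t$ on the left yields the claim.

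For completeness, I would start with a Cauchy sequence $\scb{\rho^n}_n$ in $(C([0,T];\mcP_c(\Pi)), \mcD_c)$. Since $\mcC_c(\rho_t^n, \rho_t^m) \le \mcD_c(\rho^n, \rho^m)$ for each $t$, the sequence $\scb{\rho_t^n}_n$ is Cauchy in $(\mcP_c(\Pi), \mcC_c)$. By the completeness assertion in Proposition \ref{prop:hypoC-for-P_c(Pi)}(C1), there exists $\rho_t \in \mcP_c(\Pi)$ such that $\rho_t^n \xrightarrow{c} \rho_t$. This defines a candidate limit curve $t \mapsto \rho_t$.

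Next, I would upgrade this pointwise convergence to uniform convergence in $\mcD_c$. Given $\ep > 0$, pick $N$ such that $\mcD_c(\rho^n, \rho^m) < \ep$ for all $n, m \ge N$. For each fixed $t \in [0,T]$ and $n \ge N$, we have $\mcC_c(\rho_t^n, \rho_t^m) \le \ep$ for every $m \ge N$. Letting $m \to \infty$ and invoking the continuity of $\mcC_c$ under $c$-convergence (Proposition \ref{prop:hypoC-for-P_c(Pi)}(C3)), we pass to the limit to obtain $\mcC_c(\rho_t^n, \rho_t) \le \ep$. This bound is uniform in $t$, so $\mcD_c(\rho^n, \rho) \le \ep$ for all $n \ge N$.

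Finally, I would check that $\rho \in C([0,T];\mcP_c(\Pi))$, which I expect to be the main (though modest) subtlety, since the relaxed triangle inequality forces me to be careful about multiplicative constants when combining three points. Fix $t_0 \in [0,T]$ and $\ep > 0$, and pick $n$ large enough that $\sup_t \mcC_c(\rho_t^n, \rho_t) \le \ep$. Applying the $B$-relaxed triangle inequality twice,
\begin{align*}
    \mcC_c(\rho_t, \rho_{t_0}) \le B\,\mcC_c(\rho_t, \rho_t^n) + B^2\,\mcC_c(\rho_t^n, \rho_{t_0}^n) + B^2\,\mcC_c(\rho_{t_0}^n, \rho_{t_0}).
\end{align*}
The first and third terms are bounded by $B\ep$ and $B^2\ep$ respectively, and the middle term tends to zero as $t \to t_0$ since $\rho^n$ lies in $C([0,T];\mcP_c(\Pi))$. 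Hence $\limsup_{t\to t_0}\mcC_c(\rho_t,\rho_{t_0}) \le (B+B^2)\ep$, and letting $\ep \downarrow 0$ gives continuity of $\rho$ at $t_0$, completing the proof.
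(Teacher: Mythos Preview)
Your proof is correct and follows essentially the same approach as the paper's: both verify the relaxed triangle inequality directly, extract a pointwise limit from completeness of $(\mcP_c(\Pi),\mcC_c)$, upgrade to uniform convergence, and then prove continuity of the limit curve by a three-term relaxed-triangle splitting through an approximant $\rho^n$. The only cosmetic differences are that you make the passage to the limit in $m$ explicit via Proposition~\ref{prop:hypoC-for-P_c(Pi)}(C3), and your triangle-inequality constants come out as $(B,B^2,B^2)$ rather than the paper's $(B^2,B^2,B)$, depending on which intermediate point is inserted first.
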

\begin{proof}
	It is straightforward that $\mcD_c$ defines a semimetric on $C([0,T];\PCP)$ that satisfies $B$-relaxed triangle inequality. Let us show completeness.
	
	Let $\{\rho^n\}$ be a Cauchy sequence in $C([0,T];\PCP)$ w.r.t. $\mcD_c$. That is,
	\begin{align}\label{eq2.7:temp}
		0=\lim_{n\to\infty}\sup_{m\ge n}\mcD_c(\rho^n,\rho^m)&= \lim_{n\to\infty}\sup_{m\ge n} \sup_{t\in[0,T]} \mcC_c(\rho_t^n,\rho_t^m).
	\end{align}
	This implies for each $t\in[0,T]$, $\{\rho_t^n\}_n$  is a $\mcC_c$-Cauchy sequence in $\PCP$. By completeness, we find a pointwise limit $\rho_t=\lim_n \rho_t^n$, in the sense for each $t\in [0,T]$, $\lim_{n\to\infty} \mcC_c(\rho_t^n,\rho_t)=0$. Note that this convergence is uniform in $t$ by \eqref{eq2.7:temp}.
	
	We obtain a curve $\rho:[0,T]\to (\PCP,\mcC_c)$. It remains to show the curve is continuous. First, by the $B$-relaxed triangle inequality, the following holds for any $n\ge 1$:
	\begin{align*}
		\mcC_c(\rho_s,\rho_t)&\le B^2 \mcC_c(\rho_s,\rho_s^n) + B^2 \mcC_c(\rho_s^n,\rho_t^n) + B \mcC_c(\rho_t^n,\rho_t).
	\end{align*}
	Fix $n$ large enough so that $B^2 \mcD_c(\rho^n,\rho) < \ep$, then
	\begin{align*}
		\mcC_c(\rho_s,\rho_t)&\le 2\ep + B^2 \mcC_c(\rho_s^n,\rho_t^n).
	\end{align*}
	Since $\rho^n\in C([0,T];\PCP)$, $\mcC_c(\rho_s,\rho_t)\le 3\ep$ whenever $|t-s|$ is sufficiently small.
\end{proof}

\subsubsection{From pointwise bound to global bound}
We now present a simple preliminary lemma which will be useful in our discussion later.

\begin{lemma}\label{lem:pointwise-to-measure-lem}
	Let $K,L$ be two Markov operators. Then it holds for all $\mu,\nu\in\mcP(\Pi)$,
	\begin{align*}
		\mcC_c(\mu K,\nu L) \le \int_{\Pi^2} \mcC_c(\de_x K,\de_y L)\,d\gamma(x,y),
	\end{align*}
	where $\ga$ is any coupling measure $\ga\in\Ga(\mu,\nu)$ of $\mu,\nu$.
\end{lemma}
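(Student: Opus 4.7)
The plan is to use Kantorovich duality (Theorem \ref{thm:kantorovich-duality}), which neatly sidesteps the measurable-selection issues that would arise in a direct "gluing" construction. Fix any coupling $\gamma \in \Ga(\mu,\nu)$ and let $(\phi,\psi) \in C_b(\Pi)^2$ be an arbitrary admissible pair for the dual problem, i.e.\ $\phi \oplus \psi \le c$ on $\Pi^2$.

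First I would expand the dual pairings using $\inn{\mu K,\phi} = \inn{\mu, K\phi}$ together with $K\phi(x) = \inn{\de_x K, \phi}$ from \eqref{eq2.2:temp}. Invoking the marginal constraints on $\gamma$, this rewrites as
\begin{align*}
    \inn{\mu K, \phi} + \inn{\nu L, \psi}
    &= \int_\Pi K\phi(x)\,d\mu(x) + \int_\Pi L\psi(y)\,d\nu(y) \\
    &= \int_{\Pi^2} \bigl[\inn{\de_x K, \phi} + \inn{\de_y L, \psi}\bigr]\,d\gamma(x,y).
\end{align*}
For each fixed $(x,y) \in \Pi^2$ the pair $(\phi,\psi)$ is still admissible in the dual problem for $\mcC_c(\de_x K, \de_y L)$, so by Kantorovich duality the bracketed integrand is pointwise bounded above by $\mcC_c(\de_x K, \de_y L)$. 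Taking the supremum over admissible $(\phi,\psi)$ on the left and applying Theorem \ref{thm:kantorovich-duality} yields exactly the stated inequality.

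The main technical point, and the step I expect to require a moment of care, is ensuring that $(x,y) \mapsto \mcC_c(\de_x K, \de_y L)$ is measurable so that the right-hand side is a well-defined Lebesgue integral. Since Markov kernels are weakly continuous by definition, the maps $x \mapsto \de_x K$ and $y \mapsto \de_y L$ are continuous for weak convergence; combined with the lower semicontinuity of $\mcC_c$ under Hypothesis \refC{} (via \eqref{eq2.9:temp}), the function $(x,y) \mapsto \mcC_c(\de_x K, \de_y L)$ is lower semicontinuous, hence Borel measurable. Should one prefer a direct construction, an alternative route is to measurably select $c$-optimal couplings $\gamma_{x,y} \in \Ga(\de_x K, \de_y L)$ and glue them via $\Gamma(E) = \int_{\Pi^2} \gamma_{x,y}(E)\,d\gamma(x,y)$; one then verifies that $\Gamma \in \Ga(\mu K, \nu L)$ and integrates $c$ against $\Gamma$ to obtain the same bound, but this approach forces one to invoke a measurable-selection theorem, whereas the duality route does not.
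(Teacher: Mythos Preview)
Your proof is correct and follows essentially the same approach as the paper: fix an admissible pair $(\phi,\psi)$ with $\phi\oplus\psi\le c$, rewrite $\inn{\mu K,\phi}+\inn{\nu L,\psi}$ as an integral against $\gamma$, bound the integrand pointwise by $\mcC_c(\de_x K,\de_y L)$ via Kantorovich duality, and take the supremum. Your additional discussion of measurability (via lower semicontinuity of $\mcC_c$ and weak continuity of Markov kernels) is a welcome refinement that the paper omits.
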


\begin{remark}
	As a special case of the inequality above, if $\mu=\nu$, then the inequality simplifies to:
	\begin{align*}
		\mcC_c(\mu K,\mu L)&\le \int_{\Pi} \mcC_c(\de_x K,\de_x L)\,d\mu(x). 
	\end{align*}
\end{remark}

\begin{proof}[Proof of Lemma \ref{lem:pointwise-to-measure-lem}]
	Let $\ka$ and $\la$ be the Markov kernels associated to $K$ and $L$, respectively, i.e. $\ka(x)=\de_x K$ and $\la(y)=\de_y L$. Let $\phi,\psi\in C_b(\Pi)$ be a pair of functions such that $\phi\oplus\psi\le c$. 
	Observe that
	\begin{align*}
		K\phi(x)+L\psi(y) &= \int_\Pi \phi(z)\,\ka(x,dz) + \int_\Pi \psi(z)\,\la(y,dz) = \inn{\ka(x),\phi} + \inn{\la(y),\psi}\\
		&\le \mcC_c(\ka(x),\la(y)),
	\end{align*}
	where the last step is by Kantorovich duality. Then for any $\ga\in\Ga(\mu,\nu)$, it follows that
	\begin{align*}
		\inn{\mu K,\phi} + \inn{\nu L,\psi} &= \inn{\mu,K\phi} + \inn{\nu,L\psi} = \int_\Pi K\phi(x)\,d\mu(x) + \int_\Pi L\psi(y)\,d\nu(y)\\
		&= \int_{\Pi^2}\sqb{K\phi(x)+L\psi(y)}\,d\ga(x,y) \le \int_{\Pi^2}\mcC_c(\ka(x),\la(y))\,d\ga(x,y).
	\end{align*}
	Taking the supremum over all $\phi,\psi$ such that $\phi\oplus\psi\le c$, we obtain the desired bound by duality.
\end{proof}

\subsection{The space \texorpdfstring{$\mcG_c^0(\Pi)$}{G\_c\^{}0} of \texorpdfstring{$c$}{c}-continuous generators}
Throughout this research, we shall work on a subclass of probability generators where the Markov flow of probability measures in $\PCP$ remains continuous in the Wasserstein-$c$ topology. In particular, we define the following subclass of probability generators, which we refer to as \emph{$c$-continuous} probability generators, emphasizing their dependence on the semimetric $c$. 
\begin{definition}[The space $\mcG_c^0(\Pi)$]\label{def:gc0-generators}
	Let $\mcG_c^0(\Pi)\subset\mcG(\Pi)$ be the subclass of all probability generators $\mcA$ satisfying the following for all $\mu\in\PCP$:
	\begin{enumerate}[(i)]
		\item $\mu e^{t\mcA}\in\PCP$ for all $t\ge0$;
		\item $t\mapsto\mu e^{t\mcA}$ is continuous in the Wasserstein-$c$ topology, i.e., if $t_n\to t$, then $\mu e^{t_n\mcA}\xrightarrow{c}\mu e^{t\mcA}$.
	\end{enumerate}
\end{definition}

\begin{remark}\label{rmk3.4:temp}
	By Proposition \ref{prop:equiv-of-c-convergence}, $\mcA\in\mcG_c^0(\Pi)$ if and only if for all $\mu\in \mcP_c(\Pi)$ and some (equivalently, all) $z\in \Pi$, $t\mapsto \inn{\mu, e^{t\mcA}c(z,\cdot)}$ is continuous. 
\end{remark}

As a consequence of the definition of $\GCZP$ and Proposition \ref{prop:hypoC-for-P_c(Pi)}(C3), the optimal cost between two Markov flows under generators in $\GCZP$ is a continuous function of time.
\begin{proposition}
	If $\mcA,\mcB\in\GCZP$ and $\mu,\nu\in\PCP$, then $t\mapsto\mcC_c(\mu e^{t\mcA},\nu e^{t\mcB})$ is continuous.
\end{proposition}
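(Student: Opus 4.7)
The plan is to reduce the desired continuity to a composition of two continuities that are already available in the preceding material. Specifically, I would fix an arbitrary $t_0\ge 0$ and a sequence $t_n\to t_0$ with $t_n\ge 0$, and then argue that $\mcC_c(\mu e^{t_n\mcA},\nu e^{t_n\mcB})\to\mcC_c(\mu e^{t_0\mcA},\nu e^{t_0\mcB})$ as $n\to\infty$.

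First, I would invoke the hypothesis $\mcA,\mcB\in\GCZP$. By Definition \ref{def:gc0-generators}(ii), since $\mu\in\PCP$, the curve $s\mapsto \mu e^{s\mcA}$ is continuous from $[0,\infty)$ into $(\PCP,\mcC_c)$, so $\mu e^{t_n\mcA}\xrightarrow{c}\mu e^{t_0\mcA}$. The same argument applied to $\mcB$ and $\nu$ yields $\nu e^{t_n\mcB}\xrightarrow{c}\nu e^{t_0\mcB}$.

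Second, I would appeal to Proposition \ref{prop:hypoC-for-P_c(Pi)}(C3), which states that $\mcC_c$ is jointly continuous with respect to $c$-convergence in its two arguments. Applying it to the two convergent sequences above gives $\mcC_c(\mu e^{t_n\mcA},\nu e^{t_n\mcB})\to\mcC_c(\mu e^{t_0\mcA},\nu e^{t_0\mcB})$, establishing continuity at $t_0$. Since $t_0\ge 0$ was arbitrary, the map is continuous on $[0,\infty)$.

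There is no substantial obstacle here: the result is essentially bookkeeping, combining two ingredients that have already been set up. Hypothesis \refC{} was designed precisely so that $(\PCP,\mcC_c)$ inherits joint continuity of the cost functional from $(\Pi,c)$, while the definition of $\GCZP$ was engineered so that Markov flows generated by operators in this subclass stay continuous in the Wasserstein-$c$ topology rather than merely the weak one. The proposition asserts nothing beyond the compatibility of these two design choices.
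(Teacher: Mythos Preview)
Your proposal is correct and follows essentially the same approach as the paper's own proof: use the definition of $\GCZP$ to get $c$-convergence of both flows along a sequence $t_n\to t_0$, then apply Proposition~\ref{prop:hypoC-for-P_c(Pi)}(C3) to pass the limit through $\mcC_c$.
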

\begin{proof}
	Suppose $t_n\to t$, then $\mu e^{t_n\mcA}\xrightarrow{c}\mu e^{t\mcA}$ and $\nu e^{t_n\mcB}\xrightarrow{c}\nu e^{t\mcB}$.
	By Proposition \ref{prop:hypoC-for-P_c(Pi)}(C3), we find    
	\[\lim_{n\to\infty} \mcC_c(\mu e^{t_n\mcA},\nu e^{t_n\mcB}) = \mcC_c(\mu e^{t\mcA},\nu e^{t\mcB}).\qedhere\]
\end{proof}

The space $\GCZP$ deserves a more thorough discussion, such as an equivalent characterization in terms of $\mcA$, as it plays a central role in our analysis. However, we will not delve into these details here. Instead, we provide the following sufficient condition for a probability measure $\mcA$ to belong to $\GCZP$. 
\begin{lemma}\label{lem3.6:temp}
	Suppose $\mcA\in\mcG(\Pi)$ has the following property: there is $z\in\Pi$, $\al,\be\ge0$ and a continuous function $\rho:[0,\infty)\to[0,\infty)$ with $\lim_{t\searrow0}\rho(t)=0$ such that 
	\begin{align*}
		\mcC_c(\de_x e^{t\mcA},\de_x) = e^{t\mcA}c(x,\cdot)(x) \le \rho(t)\sqb{\al+\be c(z,x)},\quad\text{for all $t\ge0$, $x\in\Pi$.}
	\end{align*}
	Then $\mcA\in\GCZP$.
\end{lemma}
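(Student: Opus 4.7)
My plan is to verify the two conditions in Definition \ref{def:gc0-generators} by combining the pointwise hypothesis with the semigroup/Markov structure. Throughout, fix $\mu\in\PCP$ and the basepoint $z\in\Pi$, $\al,\be\ge 0$, and $\rho$ from the hypothesis. Note that for any probability measure $\nu$, the only coupling of $\nu$ with a Dirac mass is a product, hence
\[\mcC_c(\nu,\de_x)=\inn{\nu,c(\cdot,x)},\]
so by symmetry of $c$ the quantity $\mcC_c(\de_x e^{t\mcA},\de_x)$ coincides with $(e^{t\mcA}c(x,\cdot))(x)$ as asserted. I will work directly with $\mcC_c(\mu e^{s\mcA},\mu e^{t\mcA})$ and show it tends to $0$ as $s\to t$; along the way I will obtain the $c$-moment preservation required by (i).

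\textbf{Step 1 (moment bound).} First I would use the $B$-relaxed triangle inequality $c(z,y)\le B[c(z,x)+c(x,y)]$, pointwise in $y$, together with the fact that $e^{t\mcA}$ preserves constants (since it is a probability semigroup), to get
\begin{equation*}
	e^{t\mcA}c(z,\cdot)(x)\le Bc(z,x)+B\,e^{t\mcA}c(x,\cdot)(x)\le \bigl[B+B\be\rho(t)\bigr]c(z,x)+B\al\rho(t)
\end{equation*}
for all $x\in\Pi$ and $t\ge 0$. Integrating against $\mu$ (using the duality $\inn{\mu e^{t\mcA},\phi}=\inn{\mu,e^{t\mcA}\phi}$ extended to the nonnegative measurable $\phi=c(z,\cdot)$ via the transition kernel) yields
\begin{equation*}
	\inn{\mu e^{t\mcA},c(z,\cdot)}\le\bigl[B+B\be\rho(t)\bigr]\inn{\mu,c(z,\cdot)}+B\al\rho(t)<\infty,
\end{equation*}
since $\mu\in\PCP$. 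This establishes condition (i) of Definition \ref{def:gc0-generators}, and, crucially, gives a bound on the $c$-moment of $\mu e^{s\mcA}$ that is uniform for $s$ in any bounded interval (using continuity, hence local boundedness, of $\rho$).

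\textbf{Steps 2--3 (continuity via semigroup property and Lemma \ref{lem:pointwise-to-measure-lem}).} For continuity at any $t\ge 0$, I would use the semigroup identity to reduce to the small-increment regime. If $s\ge t$, set $\nu:=\mu e^{t\mcA}\in\PCP$ and write $\mu e^{s\mcA}=\nu e^{(s-t)\mcA}$; if $s<t$, set $\nu:=\mu e^{s\mcA}\in\PCP$ and write $\mu e^{t\mcA}=\nu e^{(t-s)\mcA}$. In either case the task reduces to estimating $\mcC_c(\nu e^{h\mcA},\nu)$ for $h=|s-t|\searrow 0$. Applying the ``diagonal'' case of Lemma \ref{lem:pointwise-to-measure-lem} (i.e.\ with coupling $(\mathrm{id}\times\mathrm{id})_\#\nu$) and then the hypothesis gives
\begin{equation*}
	\mcC_c(\nu e^{h\mcA},\nu)\le\int_\Pi \mcC_c(\de_x e^{h\mcA},\de_x)\,d\nu(x)\le\rho(h)\bigl[\al+\be\inn{\nu,c(z,\cdot)}\bigr].
\end{equation*}
Since $\rho(h)\to 0$ as $h\searrow 0$ and $\inn{\nu,c(z,\cdot)}$ stays bounded by Step 1 (uniformly for $s$ in a neighborhood of $t$ when $\nu=\mu e^{s\mcA}$), the right-hand side vanishes, giving condition (ii).

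\textbf{Main obstacle.} No genuine obstacle is anticipated: the proof is essentially a bookkeeping exercise combining three ingredients --- the $B$-relaxed triangle inequality (to control $c(z,\cdot)$ in terms of $c(x,\cdot)$), the semigroup identity (to localize to small increments), and Lemma \ref{lem:pointwise-to-measure-lem} (to lift the pointwise-in-$x$ bound to the measure level). The one point requiring mild care is verifying that the $c$-moment of $\mu e^{s\mcA}$ is \emph{uniformly} bounded near $s=t$ in the $s<t$ case, which is exactly what Step 1 delivers thanks to the local boundedness of $\rho$.
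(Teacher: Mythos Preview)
Your proposal is correct and follows essentially the same approach as the paper: both use the $B$-relaxed triangle inequality to obtain a locally uniform $c$-moment bound for $\mu e^{t\mcA}$, then invoke the semigroup property together with the diagonal case of Lemma~\ref{lem:pointwise-to-measure-lem} to estimate $\mcC_c(\mu e^{t\mcA},\mu e^{s\mcA})$ by $\rho(|t-s|)\bigl[\al+\be\inn{\mu e^{s\wedge t\,\mcA},c(z,\cdot)}\bigr]$. The paper packages the uniform moment bound as $M_T:=\sup_{t\in[0,T]}\inn{\mu e^{t\mcA},c(z,\cdot)}<\infty$, while you phrase it as local boundedness near $t$; these are equivalent.
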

\begin{remark}
	The converse is likely true, but we will not delve into proving it here.
\end{remark}
\begin{proof}[Proof of Lemma \ref{lem3.6:temp}]
	Choose any $\mu\in\mcP_c(\Pi)$. First, we check that $\mu e^{t\mcA}\in\mcP_c(\Pi)$ for all $t\ge0$. Fix any $z\in\Pi$. By the $B$-relaxed triangle inequality, $c(z,\cdot)\le B\msqb{c(z,x)+c(x,\cdot)}$. Hence, we find
	\begin{align*}
		e^{t\mcA} c(z,\cdot)(x) &= \int_\Pi c(z,x')\,\ka_t(x,dx') \le \int_\Pi B\msqb{c(z,x)+c(x,x')}\,\ka_t(x,dx')\\
		&= Bc(z,x) + B e^{t\mcA}c(x,\cdot)(x) \le \al\rho(t) + (B+\be\rho(t))c(z,x).
	\end{align*}
	Hence, it implies
	\begin{align*}
		\inn{\mu e^{t\mcA},c(z,\cdot)} = \inn{\mu,e^{t\mcA} c(z,\cdot)} = \al\rho(t) + (B+\be\rho(t))\inn{\mu,c(z,\cdot)}.
	\end{align*}
	Thus, $\mu\in\mcP_c(\Pi)$. In fact, we obtain local finiteness: for all $T\ge0$,
	\begin{align*}
		M_T:= \sup_{t\in[0,T]}\inn{\mu e^{t\mcA},c(z,\cdot)} <\infty.
	\end{align*}
	
	We now show that $t\mapsto\mu e^{t\mcA}$ is continuous in the Wasserstein-$c$ semimetric. Fix $T\ge0$ and let $0\le s\le t\le T$, $h=t-s$ and $\mu_s=\mu e^{s\mcA}$. Then by the local finiteness above and Lemma \ref{lem:pointwise-to-measure-lem},
	\begin{align*}
		\mcC_c(\mu e^{t\mcA},\mu e^{s\mcA}) &= \mcC_c(\mu e^{s\mcA}e^{h\mcA},\mu e^{s\mcA}) \le \int_\Pi \mcC_c(\de_x e^{h\mcA},\de_x)\,d\mu_s(x)\\
		&\le \rho(h)\sqb{\al+\be\int_\Pi c(z,x)\,d\mu_s(x)} \le (\al+\be M_T)\rho(t-s).
	\end{align*}
	This follows $\mcC_c(\mu e^{t\mcA},\mu e^{s\mcA})\to0$ as $|t-s|\to0$, for $s,t\in[0,T]$. Hence the curve $t\mapsto \mu e^{t\mcA}$ is continuous.
\end{proof}

In the coming proposition, we will explore some basic properties of operators in $\GCZP$ involving functions $C_{b,c}(\Pi)$, which will be useful later.
\begin{proposition}\label{prop3.8:temp}
	Let $\mcA\in\GCZP$, $\phi\in C_{b,c}(\Pi)$ and $\mu\in \mcP_c(\Pi)$. 
	\begin{enumerate}[(i)]
		\item The mapping $t\mapsto\inn{\mu,e^{t\mcA}\phi}$ is continuous; in particular, it holds
		\begin{align*}
			\lim_{t\searrow 0}\inn{\mu,e^{t\mcA}\phi}= \inn{\mu,\phi}.
		\end{align*}
		\item For each $T\ge 0$ and $x\in \Pi$, it holds
		\begin{align*}
			\sup_{t\in[0,T]} |e^{t\mcA}\phi(x)|<\infty.
		\end{align*}
	\end{enumerate}
\end{proposition}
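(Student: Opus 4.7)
My plan is to prove the two parts in order, with part (ii) bootstrapping off part (i) via a judicious choice of test function.

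For part (i), the strategy is to directly invoke the definition of $\GCZP$ together with the equivalence of $c$-convergence from Proposition \ref{prop:equiv-of-c-convergence}. Since $\mcA \in \GCZP$, the curve $t \mapsto \mu e^{t\mcA}$ lies in $\mcP_c(\Pi)$ and is continuous in the Wasserstein-$c$ topology. For any sequence $t_n \to t$, I would note that $\mu e^{t_n\mcA} \xrightarrow{c} \mu e^{t\mcA}$ and then apply Proposition \ref{prop:equiv-of-c-convergence}(c) with the test function $\phi \in C_{b,c}(\Pi)$ to conclude $\inn{\mu e^{t_n\mcA}, \phi} \to \inn{\mu e^{t\mcA}, \phi}$. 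The continuity of $t \mapsto \inn{\mu, e^{t\mcA}\phi}$ then follows from the duality identity $\inn{\mu, e^{t\mcA}\phi} = \inn{\mu e^{t\mcA}, \phi}$; evaluating at $t = 0$ gives the stated limit since $e^{0\mcA} = I$.

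For part (ii), the plan is to bootstrap from part (i) via a well-chosen test function. The key observation is that for any fixed $z \in \Pi$ the function $y \mapsto c(z, y)$ belongs to $C_{b,c}(\Pi)$: it is continuous by condition (C3) of Hypothesis \refC, and the pointwise bound $|c(z, y)| \le 0 + 1 \cdot c(z, y)$ is immediate. Moreover $\de_x \in \mcP_c(\Pi)$ since $\inn{\de_x, c(z, \cdot)} = c(z, x) < \infty$. Applying part (i) with $\mu = \de_x$ and the test function $c(z, \cdot)$ shows that $t \mapsto \inn{\de_x e^{t\mcA}, c(z, \cdot)}$ is continuous on the compact interval $[0, T]$, hence uniformly bounded there by some constant $C_T = C_T(x) > 0$. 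For a general $\phi \in C_{b,c}(\Pi)$ with $|\phi(y)| \le M_0 + M_1 c(z, y)$, the estimate
$$|e^{t\mcA}\phi(x)| = |\inn{\de_x e^{t\mcA}, \phi}| \le M_0 + M_1 \inn{\de_x e^{t\mcA}, c(z, \cdot)} \le M_0 + M_1 C_T$$
then delivers the desired uniform-in-$t$ bound.

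There is no genuine obstacle; the proposition is essentially an unpacking of the definition of $\GCZP$ via Proposition \ref{prop:equiv-of-c-convergence}, combined with compactness of $[0,T]$. The only point worth flagging is the bootstrapping trick in (ii)---namely, the recognition that $c(z, \cdot)$ itself is a legitimate test function in $C_{b,c}(\Pi)$---which upgrades the mere finiteness of $\inn{\de_x e^{t\mcA}, c(z, \cdot)}$ (given by $\de_x e^{t\mcA} \in \mcP_c(\Pi)$) into a locally uniform-in-$t$ bound sufficient to dominate an arbitrary $\phi \in C_{b,c}(\Pi)$.
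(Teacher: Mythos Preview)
Your proof is correct and part (i) is identical to the paper's argument. For part (ii), your bootstrapping approach---recognizing that $c(z,\cdot)$ itself lies in $C_{b,c}(\Pi)$ and applying part (i) with $\mu=\de_x$---is a slight variation on the paper's proof. The paper instead observes directly from the definition of $\GCZP$ that $t\mapsto\mcC_c(\de_x,\de_x e^{t\mcA})$ is continuous, hence bounded on $[0,T]$ by some $M_T$, and then uses the $B$-relaxed triangle inequality to write $\inn{\de_x e^{t\mcA},c(z,\cdot)}=\mcC_c(\de_x e^{t\mcA},\de_z)\le B\,c(z,x)+B\,\mcC_c(\de_x e^{t\mcA},\de_x)\le B\,c(z,x)+M_T$. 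Your route is marginally cleaner in that it avoids invoking (C2) and reuses part (i) directly; the paper's route keeps the dependence on $x$ slightly more explicit via the term $B\,c(z,x)$. Both arrive at the same conclusion by the same underlying mechanism: continuity on the compact interval $[0,T]$ forces boundedness.
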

\begin{proof}
	(i) Suppose $t_n\to t$, by the definition of $\mcG_c^0(\Pi)$,
	\begin{align*}
		\lim_{n\to\infty} \mcC_c(\mu e^{t_n\mcA},\mu e^{t\mcA}) =0.
	\end{align*}
	Then by Proposition \ref{prop:equiv-of-c-convergence}(c), for any $\phi\in C_{b,c}(\Pi)$, we have
	\[\lim_{n\to\infty} \inn{\mu e^{t_n\mcA},\phi} = \inn{\mu e^{t\mcA},\phi}.\]
	
	(ii) Since $\mcA\in\mcG_c^0(\Pi)$, $t\mapsto \mcC_c(\de_x,\de_x e^{t\mcA})$ is continuous. Particularly, for any $T\ge 0$, $M_T:=\sup_{t\in[0,T]} \mcC_c (\de_x,\de_x e^{t\mcA})<\infty$. We then find that for all $t\in[0,T]$:
	\begin{align*}
		|e^{t\mcA }\phi(x)|&= \abs{\inn{\de_x e^{t\mcA},\phi}}\le \inn{\de_x e^{t\mcA},M_0+M_1c(z,\cdot)}= M_0+M_1\inn{\de_x e^{t\mcA},c(z,\cdot)}\\
		&= M_0+ M_1\mcC_c(\de_x e^{t\mcA},\de_z)\le M_0+BM_1c(z,x)+ BM_1\mcC_c(\de_x e^{t\mcA}, \de_x)\\
		&\le M_0+BM_1 c(z,x)+M_T.\qedhere
	\end{align*}
\end{proof}
\begin{remark}
	Note that our condition of $\mcA$ in $\GCZP$ is only sufficient to guarantee that $t\mapsto e^{t\mcA}\phi(x)$ is continuous for each $x\in \Pi$, but not enough to guarantee that $x\mapsto e^{t\mcA}\phi(x)$ is continuous for each $t\ge 0$. However, by using Portmanteau lemma, one can show that it is lower semicontinuous in $x$ if $\phi$ is bounded below.
\end{remark}

\subsection{Upper semicontinuity functions as the downward monotone limit}
Let $(\Pi,c)$ be a semimetric space that satisfies Hypothesis {\refC}. We say a function $f:\Pi\to[-\infty,\infty]$ is \emph{upper semicontinuous at a point} $x\in\Pi$, if for every sequence $\cb{x_n}_{n\in\mbn}\subset\Pi$ that converges to $x$ in $c$, it holds
\begin{align*}
    \limsup_{n\to\infty} f(x_n) \le f(x).
\end{align*}
A function is said to be \emph{upper semicontinuous} if it is upper semicontinuous at each point in its domain. A function $f:\Pi\to[-\infty,\infty]$ is \emph{lower semicontinuous} if $-f$ is upper semicontinuous.

It is a classical result that an upper semicontinuous function $f:\Pi\to[-\infty,\infty]$ can be approximated from above by Lipschitz continuous functions. The following result is a similar approximation that will be useful in the coming section. However, there are two key differences. First, we consider functions $\ta:\Pi^2\to[-\infty,\infty)$ that are defined on the product space $\Pi^2$. Second, instead of Lipschitz functions, we approximate using decreasing sequences of sums of functions in $C_{b,c}(\Pi)$ (recall Definition \ref{def:cbc-functions}). This result shows that an upper semicontinuous function on $\Pi^2$ that satisfies certain bound can be approximated in this way. Although the proof follows a similar idea to the classical one, we are not aware of a reference in the literature. Therefore, we include the proof here for the sake of completeness.

\begin{proposition}\label{prop2.40:temp}
    Let $\ta:\Pi^2\to[-\infty,\infty)$ be an upper semicontinuous function satisfying: there exists $z_0,z_1\in\Pi$ and $\al_0,\be_0\ge0$ such that
    \begin{align}\label{eq2.14:temp}
        \ta(x,y) \le \al_0 + \be_0\sqb{c(z_0,x)+c(z_1,y)},\quad x,y\in\Pi.
    \end{align}
    Then it holds for all $x_0,y_0\in\Pi$ that
    \begin{align*}
        \ta(x_0,y_0) = \inf_{(f,g)}\sqb{f(x_0)+g(y_0)},
    \end{align*}
    where the infimum is taken over all pairs $(f,g)\in \CBC^2$ such that $\ta\le f\oplus g$.
\end{proposition}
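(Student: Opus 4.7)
The inequality $\theta(x_0,y_0) \le \inf_{(f,g)}[f(x_0)+g(y_0)]$ is immediate from the pointwise constraint $\theta \le f \oplus g$, so the content is the reverse inequality. The plan is, given $\varepsilon > 0$ (or, when $\theta(x_0,y_0) = -\infty$, given $N > 0$), to explicitly construct a pair $(f,g) \in \CBC^2$ satisfying $\theta \le f \oplus g$ and $f(x_0)+g(y_0) \le \theta(x_0,y_0)+\varepsilon$ (respectively, $f(x_0)+g(y_0) \le -N$).

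The first step is a reduction: absorb the growth on the right of \eqref{eq2.14:temp} into the answer. Set
$$\tilde\theta(x,y) := \theta(x,y) - \beta_0\,c(z_0,x) - \beta_0\,c(z_1,y),$$
which remains upper semicontinuous on $\Pi^2$ (each $c(z_i,\cdot)$ is continuous by (C3)) and now satisfies the uniform upper bound $\tilde\theta \le \alpha_0$. If one builds $(\tilde f,\tilde g) \in \CBC^2$ with $\tilde\theta \le \tilde f \oplus \tilde g$ and $\tilde f(x_0) + \tilde g(y_0)$ close to $m := \tilde\theta(x_0,y_0)$, then $f := \tilde f + \beta_0\,c(z_0,\cdot)$ and $g := \tilde g + \beta_0\,c(z_1,\cdot)$ are admissible approximants for $\theta$, since adding back the $\beta_0 c(z_i,\cdot)$ terms recovers exactly $\theta(x_0,y_0) = m + \beta_0[c(z_0,x_0)+c(z_1,y_0)]$ at the marked point.

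For the construction, since $(\Pi,c)$ is metrizable by Corollary \ref{cor:hypoC-implies-polish}, so is the product $\Pi^2$, and sequential upper semicontinuity of $\tilde\theta$ at $(x_0,y_0)$ becomes the neighborhood statement: there exists $\delta > 0$ such that $\tilde\theta(x,y) < m + \varepsilon$ whenever $c(x_0,x) + c(y_0,y) < \delta$. I would then define
$$\tilde f(x) := \tfrac{1}{2}(m+\varepsilon) + L\,c(x_0,x),\qquad \tilde g(y) := \tfrac{1}{2}(m+\varepsilon) + L\,c(y_0,y),$$
with $L := \max\{0,\,(\alpha_0 - m - \varepsilon)/\delta\}$. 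Continuity of $\tilde f,\tilde g$ follows from (C3), and their $c$-boundedness (hence membership in $\CBC$) from the $B$-relaxed triangle inequality applied to $c(x_0,\cdot)$ and $c(y_0,\cdot)$. To verify $\tilde f\oplus\tilde g \ge \tilde\theta$ pointwise, split $\Pi^2$ into two regimes: on the near set $\{c(x_0,x)+c(y_0,y) < \delta\}$ one has $\tilde\theta(x,y) < m+\varepsilon = \tilde f(x_0)+\tilde g(y_0) \le \tilde f(x)+\tilde g(y)$; on its complement the choice of $L$ forces $\tilde f(x)+\tilde g(y) \ge m+\varepsilon + L\delta \ge \alpha_0 \ge \tilde\theta(x,y)$. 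At the marked point $\tilde f(x_0)+\tilde g(y_0) = m+\varepsilon$ exactly.

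The subtle part, which I expect to be the main obstacle to handle cleanly, is the edge case $\theta(x_0,y_0) = -\infty$ (equivalently $m = -\infty$), where the formulas above degenerate. There, sequential upper semicontinuity yields, for each $N > 0$, a radius $\delta_N > 0$ with $\tilde\theta < -N$ on the corresponding near set, and running the identical construction with $\tilde f(x_0)+\tilde g(y_0) = -N$ and $L_N = (\alpha_0 + N)/\delta_N$ drives the infimum to $-\infty$, as required. Gluing these two cases yields the claimed identity.
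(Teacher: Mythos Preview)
Your proof is correct and follows essentially the same construction as the paper: build $f,g$ as affine functions of $c(x_0,\cdot)$ and $c(y_0,\cdot)$ with a slope large enough to dominate $\theta$ outside a $c$-ball around $(x_0,y_0)$, and use upper semicontinuity inside. Two minor differences worth noting: you first reduce to the bounded-above case $\tilde\theta\le\alpha_0$ by subtracting the growth term, which lets you verify the far-set bound with a single uniform constant, whereas the paper works directly with $\theta$ and instead invokes the $B$-relaxed triangle inequality on $c(z_0,\cdot),c(z_1,\cdot)$ in that step; and you explicitly treat the edge case $\theta(x_0,y_0)=-\infty$, which the paper's write-up does not address.
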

\begin{remark}
    The condition \eqref{eq2.14:temp} is equivalent to $\ta\le f_0\oplus g_0$ for some $f_0,g_0\in \CBC$.
\end{remark}
\begin{proof}[Proof of Proposition \ref{prop2.40:temp}]
	Let $\mcl{F}_\ta$ be the set of all pairs $(f,g)\in C_b^c(\Pi)^2$ such that $\ta \le f\oplus g$. Note that $\mcl{F}_\ta$ is non-empty from the assumption, e.g., the pair $f(x)=\al_0+\be_0c(z_0,x)$, $g(y)=\be_0c(z_1,y)$ is in $\mcl{F}_\ta$. 
	
	Given $(x_0,y_0)\in\Pi^2$, let $\al =\ta(x_0,y_0)$. It suffices to check: for all $\ep>0$, there is $f,g\in C_b^c(\Pi)$ with $f(x_0)=0=g(y_0)$, such that $\ta \le (\al+\ep)+f\oplus g$. First, by upper semicontinuity of $\ta$, there is $\de>0$, for all $(x,y)\in B_\de=\cb{(x,y):c(x_0,y)+c(y_0,y)<\de}\subset \Pi^2$, we have $\ta(x,y)\le \al+\ep$. 
	Let $M_\de = \inf\cb{c(x_0,x)+c(y_0,y):(x,y)\in B_\de^c}$. 
	Let $\be_\ep\ge 0$ be given by
	\begin{align*}
		\be _\ep =\max\{0,\de^{-1}\{-\al+\al_0 + \be_0 B[c(z_0,x_0)+ c(z_1,y_0)]\}\}\in [0,\infty),
	\end{align*}
	where $B$ is the constant from the relaxed triangle inequality for $c$. 
	Finally, set $f(x)=(\be_\ep+\be_0B)c(x_0,x)$ and $g(y)=(\be_\ep+\be_0 B) c(y_0,y)$.
	Clearly $f,g\in C_b^c(\Pi)$, and $f(x_0)=g(y_0)=0$.
	As the choice of $\de$, for all $(x,y)\in B_\de$, it holds $\ta(x,y)\le \al+\ep+ f(x)+g(y)$. For all $(x,y)\in B_\de^c$, we then have $c(x_0,x)+c(y_0,y)\ge \de$. It then follows by the $B$-relaxed triangle inequality:
    \begin{align*}
        \ta(x,y) &\le \al_0+\be_0[c(z_0,x)+c(z_1,y)]]\\
        &\le \al_0+\be_0 B[c(z_0,x_0)+c(x_0,x)+c(z_1,y_0)+c(y_0,y)]\\
        &= \al_0 + \be_0 B[c(z_0,x_0)+ c(z_1,y_0)]+\be_0 B[c(x_0,x)+c(y_0,y)]\\
        &\le \al+\be_\ep \de+\be_0 B[c(x_0,x)+c(y_0,y)]
        \le (\be_\ep+\be_0B) [c(x_0,x)+c(y_0,y)]\\
        &\le \al + f(x)+g(y)\le \al+\ep+f(x)+g(y). 
    \end{align*}
    Thus, $\ta\le \al+\ep+f\oplus g$. 
\end{proof}

\subsection{Dini derivatives}
Recall the fundamental theorem of calculus: if $F:[0,\infty)\to\mbr$ is differentiable, then it holds
\begin{align*}
	F(t)-F(s) = \int_s^t F'(\tau)\,d\tau,\quad 0\le s\le t\le\infty.
\end{align*}
The identity can be generalized to the case where $F$ is absolutely continuous (and hence $F'$ exists almost everywhere). In the latter part of this work, we will use the ``inequality version'' of this identity (i.e., replacing ``$=$'' by ``$\le$''), except the derivative $F'$ is replaced by the \emph{right-hand upper Dini derivative}.
\begin{definition}[Dini derivatives]\label{def:dini-derivative}
	Given a function $F:[0,\infty)\to\mbr$, the \emph{(right-hand) upper Dini derivative} $D^+F:[0,\infty)\to[-\infty,\infty]$ is defined by
	\begin{align*}
		D^+F(t)=\limsup_{h\searrow 0} \frac{F(t+h)-F(t)}{h}. 
	\end{align*}
\end{definition}
\begin{remark}
	Given a function $F:[0,T]\to\mbr$, one can in fact define four Dini derivatives associated with $F$, based on the left and right-hand limits combined with the limsup and liminf. Clearly, if $F$ is differentiable, then $D^+F=F'$.
\end{remark}

Here is the integral inequality that will be used. Several versions of this result can be found in \cite{hagood2006recovering}.
\begin{proposition}\label{prop:saks-consequence}
	Let $F:[0,\infty)\to[0,\infty)$ be a continuous function, and $G:[0,\infty)\to\mbr$ be a locally finite upper semicontinuous function. Suppose $D^+F(t)\le G(t)$ for all $t\ge0$. Then
	\begin{align*}
		F(t)-F(s) \le \int_s^t G(\tau)\,d\tau,\quad 0\le s\le t< \infty.
	\end{align*}
	Particularly, the above holds with $G=(D^+F)^\vee$, the upper semicontinuous envelope of $D^+F$. 
\end{proposition}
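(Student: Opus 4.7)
The plan is to reduce to the case where $G$ is continuous, and then attack the continuous case by a ``continuous induction'' argument at the first point where the desired inequality fails. For the reduction, I would invoke the classical fact that any upper semicontinuous function $G:[0,T]\to\mbr$ that is locally bounded above admits a pointwise decreasing approximation $G_n\searrow G$ by continuous functions (for instance, via Baire's theorem on upper semicontinuous functions, or the explicit Moreau--Yosida-type regularization $G_n(x) = \sup_y [G(y) - n\,d(x,y)]$ for a background metric $d$). Since $D^+F \le G \le G_n$ for every $n$, once the continuous case is known it yields
\begin{align*}
F(t) - F(s) \le \int_s^t G_n(\tau)\,d\tau
\end{align*}
for each $n$, and monotone convergence (using local finiteness of $G$ so that the integrals are well-defined) then gives $F(t) - F(s) \le \int_s^t G(\tau)\,d\tau$.

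For the continuous case, I would fix $\varepsilon > 0$ and $0 \le s \le t$, and consider the auxiliary function
\begin{align*}
\phi(u) = F(u) - F(s) - \int_s^u G(\tau)\,d\tau - \varepsilon(u-s),\qquad u\in[s,t],
\end{align*}
which is continuous with $\phi(s) = 0$; it suffices to show $\phi(t)\le 0$ and then send $\varepsilon\searrow 0$. Suppose for contradiction $\phi(t) > 0$, and set $u^* = \sup\{u\in[s,t] : \phi\le 0 \text{ on }[s,u]\}$. Continuity of $\phi$ gives $s\le u^* < t$, $\phi(u^*) = 0$, and $\phi$ takes strictly positive values in every right-neighborhood of $u^*$; in particular $D^+\phi(u^*) \ge 0$. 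On the other hand, continuity of $G$ at $u^*$ implies $\frac{1}{h}\int_{u^*}^{u^*+h} G(\tau)\,d\tau \to G(u^*)$ as $h\searrow 0$, so using the hypothesis $D^+F\le G$,
\begin{align*}
D^+\phi(u^*) = D^+F(u^*) - G(u^*) - \varepsilon \le -\varepsilon < 0,
\end{align*}
which is the required contradiction.

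The ``in particular'' statement with $G = (D^+F)^\vee$ is an immediate corollary, since the upper semicontinuous envelope is by construction the smallest upper semicontinuous function dominating $D^+F$, so it is an admissible choice of $G$ whenever it is locally finite. I expect the main technical point to be the reduction step rather than the critical-point argument: one needs to ensure $\int_s^t G_n \to \int_s^t G$ under whatever reading of ``locally finite'' is intended. Under local boundedness this is routine monotone convergence, whereas if $G$ is merely bounded above but allowed to tend to $-\infty$ one would need an additional truncation-from-below before approximating. The continuous-induction argument at $u^*$ itself is the standard pattern and poses no difficulty once the continuous-$G$ case has been isolated.
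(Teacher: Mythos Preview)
Your proof is correct and shares the same overall reduction with the paper: approximate the upper semicontinuous $G$ from above by a decreasing sequence of continuous functions $G_n\searrow G$, prove the inequality for each $G_n$, and pass to the limit by monotone convergence. The difference lies in how the continuous case is handled. The paper proceeds via a separate monotonicity lemma (if $D^+F\le L$ on $(a,b)$ then $F(b)-F(a)\le L(b-a)$), applies it on each subinterval of a partition to obtain the upper Darboux sum of $G$, and then refines the partition to recover $\int_s^t G$. Your argument instead fixes $\varepsilon>0$, introduces the auxiliary function $\phi(u)=F(u)-F(s)-\int_s^u G-\varepsilon(u-s)$, and argues by contradiction at the first point $u^*$ where $\phi$ becomes positive. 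Both are standard patterns; your continuous-induction route is a bit more self-contained (no auxiliary lemma, no appeal to Riemann integrability of $G$), while the paper's Darboux-sum route makes the role of the bound $D^+F\le \sup_{[a,b]}G$ more transparent and reusable. Your caveat about the reduction step (ensuring $\int G_n\to\int G$ under the intended meaning of ``locally finite'') is exactly the right thing to flag, and is handled in the paper in the same way you anticipate.
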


Although the proof is straightforward, it appears to be absent from the existing literature. To ensure the completeness of this work, we will present the proof here. We first introduce the following monotonicity lemma whose proof is elementary, and hence shall be skipped.
\begin{lemma}\label{lem2.38:temp}
	Suppose $F:[a,b]\to(-\infty,\infty)$ is a continuous function such that $D^+F\le L$ on $(a,b)$ for some $L\in\mbr$. Then $F(b)-F(a) \le L(b-a)$.
\end{lemma}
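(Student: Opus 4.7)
The plan is to prove the monotonicity lemma via the standard barrier-function contradiction argument, combined with a small left-endpoint perturbation to accommodate the fact that the hypothesis $D^+F\le L$ is imposed on the open interval $(a,b)$ rather than on $[a,b)$. For each $\epsilon>0$ and each $a'\in(a,b)$, I would introduce the continuous auxiliary function
\[
\phi_{a',\epsilon}(t) = F(t) - F(a') - (L+\epsilon)(t-a'), \qquad t\in[a',b],
\]
with $\phi_{a',\epsilon}(a')=0$, and show $\phi_{a',\epsilon}(b)\le 0$.

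The key step is a sup-argument. Let $A = \{t\in[a',b]: \phi_{a',\epsilon}(t)\le 0\}$, which is nonempty (contains $a'$) and closed by continuity of $F$. Set $t^* = \sup A$; then $t^*\in A$. Suppose toward a contradiction that $t^*<b$. Since $t^*\in[a',b)\subset(a,b)$, the hypothesis gives $D^+F(t^*)\le L$, and consequently $D^+\phi_{a',\epsilon}(t^*)\le -\epsilon<0$. Unpacking the definition of $\limsup$, I obtain an $h_0>0$ such that
\[
\frac{\phi_{a',\epsilon}(t^*+h)-\phi_{a',\epsilon}(t^*)}{h} < -\tfrac{\epsilon}{2}, \qquad 0<h<h_0,
\]
so $\phi_{a',\epsilon}(t^*+h)<\phi_{a',\epsilon}(t^*)\le 0$, meaning $t^*+h\in A$. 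This contradicts $t^*=\sup A$. Hence $t^*=b$, yielding $F(b)-F(a')\le (L+\epsilon)(b-a')$.

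Two successive limit operations finish the argument: first $\epsilon\searrow 0$ gives $F(b)-F(a')\le L(b-a')$ for every $a'\in(a,b)$, and then $a'\searrow a$ combined with the continuity of $F$ at $a$ gives the desired estimate $F(b)-F(a)\le L(b-a)$.

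The only subtle point is the location of the supremum $t^*$. Running the argument directly on $[a,b]$ would admit the pathological case $t^*=a$, at which no derivative bound is available (the hypothesis is only on the open interval). The remedy of working on $[a',b]$ with $a'>a$ and then passing to the limit is technical rather than deep, but it is genuinely necessary to make the infinitesimal ``push'' at $t^*$ legitimate, and it is the one piece of care that the ``elementary'' label in the paper glosses over.
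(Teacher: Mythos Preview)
Your proof is correct. The paper does not supply a proof of this lemma at all --- it explicitly remarks that the argument is elementary and skips it --- so there is nothing in the paper to compare your approach against. The barrier-function contradiction you give, together with the $a'\searrow a$ limit to accommodate the open-interval hypothesis, is a standard and complete treatment.
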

%

We may now prove Proposition \ref{prop:saks-consequence}.
\begin{proof}[Proof of Proposition \ref{prop:saks-consequence}]
	Let us first prove the case where $G$ is a continuous function. 
	For any interval $[a,b]\subset [0,\infty)$, we have $D^+F \le \sup_{\tau\in[a,b]}G(\tau)$. Then by Lemma \ref{lem2.38:temp}, it holds
	\begin{align}\label{eq2.12:temp}
		F(b)-F(a)\le \sup_{\tau\in[a,b]}G(\tau)(b-a).
	\end{align}
	Now for $0\le s\le t< \infty$, fix any partition $s=\tau_0\le\tau_1\le\cdots\le\tau_N=t$. Applying \eqref{eq2.12:temp} on each interval of the partition, we find
	\begin{align*}
		F(t)-F(s) = \sum_{k=1}^N\sqb{F(\tau_k)-F(\tau_{k-1})} \le \sum_{k=1}^N L_k\mrb{\tau_k-\tau_{k-1}},
	\end{align*}
	where $L_k:=\sup_{\tau\in[\tau_{k-1},\tau_k]} G(\tau)$.
	Note that the right-hand side is the upper Darboux sum of $G$ over the partition $\cb{\tau_k}_{k=0}^N$. Since this holds for arbitrary partitions, we conclude that
	\[F(t)-F(s) \le \int_s^t G(\tau)\,d\tau.\]
	
	For general functions $G$, we note that any locally finite upper semicontinuous function $G$ admits a sequence of continuous functions $\{G_n:[0,\infty)\to\mbr\}_{n}$ such that $G_n\searrow G$ pointwise. Since $D^+F(t)\le G(t)\le G_n(t)$, the above inequality holds with $G_n$ in place of $G$. Passing $n\to\infty$ and applying monotone convergence theorem for integrals then yields the desired bound.
\end{proof}
\section{Stability of optimal cost under Markov flows}\label{chap3}
In this preparatory section, we study the stability of optimal cost between two Markov flows. By \emph{Markov flows}, we mean the curve of probability measures $\cb{\mu_t}_{t\ge0}$ given by $\mu_t = \mu e^{t\mcA}$, where $\cb{e^{t\mcA}}_{t\ge0}$ is a probability semigroup and $\mu\in\mcP(\Pi)$. The term \emph{stability of optimal cost} means continuous dependence of the transport cost
\begin{align*}
    \mcC_c(\mu_t,\nu_t) = \mcC_c(\mu e^{t\mcA},\nu e^{t\mcB}),
\end{align*}
with respect to the perturbation of $\mu,\nu\in\mcP(\Pi)$ and generators $\mcA,\mcB$. A stereotypical result of this section will be a bound of $\mcC_c(\mu_t,\nu_t)$ with a formula/expression involving some quantity depending on $\mu,\nu,\mcA,\mcB$. Such a bound will be called a \emph{stability estimate}, which will play an important role in the coming sections.

The stability result established in this section will serve as a key tool in the coming sections. In the next section, we delve into the analysis of time inhomogeneous Fokker-Planck equations of the form
\begin{align*}
    \partial_t \rho_t = \rho_t \mcA_t,\quad t\in[0,T],
\end{align*}
where $\mcA:[0,T]\to\mcG(\Pi)$ is a curve of generators. 
Our goal will be to develop the well-posedness of this evolution problem. The stability result discussed in the current section plays a crucial role in establishing the existence and uniqueness of this equation.

The stability estimates obtained in this section also play an equally important role in Section \ref{chap5}, particularly in the proof of propagation of chaos. Our main result of propagation of chaos is an estimate of the $c$-optimal transport cost between the empirical measures $\mu_t^N$ of an $N$-particle system and the correspondent mean-field limit $\br_t$. Such an estimate is achieved by establishing an appropriate exponential stability bound for the appropriate generators.

\emph{Organization.}
The rest of this section is organized as follows.
In the coming subsection, we provide the definition of the Dini derivative of optimal cost between Markov flows. The main results of this section, Theorem \ref{thm:chap3-main-thm} and Corollary \ref{cor3.20:chap3-main-cor}, are stated in Sections \ref{sec3.3}, \ref{sec:c-exp} and proven in Sections \ref{sec3.3}, \ref{sec:etoa}.
In the last subsection, a duality formula for $\om_c$ is discussed. One consequence of this formula is the subadditivity of $\om_c$, which plays an important role in the proof of propagation of chaos.

\subsection{Dini derivatives of optimal costs between Markov flows}
Recall the concept of right-hand upper Dini derivatives from Definition \ref{def:dini-derivative}.
Let us begin with introducing a notion that will be important throughout this thesis.

\begin{definition}[Dini derivative between Markov flows]\label{def:om-definition}
    Let $\mcA,\mcB\in\GCZP$.
    For $\mu,\nu\in \mcp_c(\Pi)$, we define
    \begin{align*}
        \om_c(\mu,\nu;\mcA,\mcB):= \left.D^+\right\vert_{t=0} \mcC_c(\mu e^{t\mcA},\nu e^{t\mcB})=\limsup_{t\searrow 0}\frac{\mcc_c(\mu e^{t\mA},\nu e^{t\mB})-\mcc_c(\mu,\nu)}{t}.
    \end{align*}
    Abusing notations, for $x,y\in\Pi$, we write
    \begin{align*}
        \om_c(x,y;\mcA,\mcB) = \om_c(\de_x,\de_y;\mcA,\mcB).
    \end{align*}
\end{definition}

Formally speaking, $\om_c(\mu,\nu;\mcA,\mcB)$ is an (the least) upper bound of the rate of change of $t\mapsto\mcC_c(\mu e^{t\mcA},\nu e^{t\mcB})$ at time $t=0$.
Specifically, $\om_c$ will be used to quantify the closeness of $\mu e^{t\mcA}$ and $\nu e^{t\mcB}$ as time progresses. It will play a crucial role in establishing bounds for $\mcC_c(\mu e^{t\mcA}, \nu e^{t\mcB})$ such as the ones we will see later in Theorem \ref{thm:chap3-main-thm}. To address the technicality where the derivative may not exist, we consider the Dini derivative as an alternative approach. When the derivative does exist, it is clear that the Dini derivative coincides with the regular derivative.

Let us highlight the significance of the notion of $\omega_c$ in this research. As we will show in the subsequent sections, the key results in each section critically depend on an assumption about the bound of $\omega_c$.
More precisely, $\omega_c$ represents (an upper bound of) the rate of change of the $c$-optimal cost between two Markov flows. An appropriate bound on $\omega_c$ will lead to a stability estimate of the $c$-optimal transport cost. Consequently, this bound has important implications for other properties of mean-field models, such as the well-posedness of mean-field evolution problems and the propagation of chaos.

We note that the concept of the rate of change of transport cost between Markov flows has been previously studied in specific cases. For example, in \cite{alfonsi2018}, the authors consider the case where $\Pi = \mathbb{R}^d$, $c(x,y) = |x-y|^p$ for $p \geq 1$, and $\mathcal{A}, \mathcal{B}$ are bounded pure jump operators. In this context, they demonstrate the existence of the derivative and establish a ``duality'' formula for it. We will discuss this result further later in Remark \ref{rmk3.5:temp}.

\subsection{Equivalence of stability estimates and duality formulas}\label{sec3.3}
The main result of this section concerns the equivalent condition of the following stability estimate:
\begin{align*}
    \om_c(x,y;\mA,\mB)&\le f(x)+g(y)+\be c(x,y),\quad \text{for all }x,y\in \Pi,
\end{align*}
where $\be\ge 0$, and $f,g\in C_{b,c}(\Pi)$. 
We will establish the equivalence of the above with other estimates, including an exponential estimate of the $c$-optimal cost between the Markov flows $\mu e^{t\mA}$ and $\nu e^{t\mB}$, as well as a bound for the Dini derivative $\omega_c(\mu, \nu; \mA, \mB)$ for general probability measures $\mu, \nu \in \mathcal{P}_c(\Pi)$.

Let us now state our main result, recalling our notation that given functions $\phi,\psi\in C(\Pi)$, $\phi\oplus\psi$ is a function on $\Pi^2$ given by
\begin{align*}
    (\phi\oplus\psi)(x,y)= \phi(x)+\psi(y).
\end{align*}

\begin{theorem}\label{thm:chap3-main-thm}
    Suppose $\be\ge0$, $\mcA,\mcB\in\GCZP$, and $f,g\in C_{b,c}(\Pi)$.
    Then the following are equivalent:
    \begin{enumerate}[(a)]
        \item for all $x,y\in\Pi$ and $t\ge0$, it holds
        \begin{align*}
        	\mcC_c(\de_x e^{t\mcA},\de_y e^{t\mcB}) \le e^{\be t} c(x,y) + \int_0^t e^{\be(t-s)}e^{s\mcA}f(x)\,ds + \int_0^t e^{\be(t-s)} e^{s\mcB} g(y)\,ds;
        \end{align*}
        \item for all $\mu,\nu\in\PCP$ and $t\ge0$, it holds
        \begin{align*}
        	\mcC_c(\mu e^{t\mcA},\nu e^{t\mcB}) \le e^{\be t} \mcC_c(\mu,\nu) + \int_0^t e^{\be(t-s)}\inn{\mu,e^{s\mcA}f}ds + \int_0^t e^{\be(t-s)} \inn{\nu,e^{s\mcB}g}ds;
        \end{align*}
        \item for all $\mu,\nu\in\PCP$, it holds
        \begin{align*}
            \om_c(\mu,\nu;\mcA,\mcB)\le \inn{\mu,f}+\inn{\nu,g}+\be \mcC_c(\mu,\nu);
        \end{align*}
        \item for all $x,y\in\Pi$, it holds
        \begin{align*}
            \om_c(x,y;\mcA,\mcB)\le f(x)+g(y)+\be c(x,y);
        \end{align*}
        \item for all $\phi\in D(\mcA)$, $\psi\in D(\mcB)$ such that $c-(\phi\oplus\psi)$ achieves a global minimum at some $(x_0,y_0)\in\Pi^2$, it holds
        \begin{align*}
            \mcA\phi(x_0) + \mcB\psi(y_0) \le f(x_0)+g(y_0)+\be c(x_0,y_0).
        \end{align*}
    \end{enumerate}
\end{theorem}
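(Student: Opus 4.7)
The plan is to establish the equivalence via a cycle of implications $(a) \Leftrightarrow (b) \Leftrightarrow (c) \Leftrightarrow (d) \Leftrightarrow (e)$. Two trivial specializations give $(b) \Rightarrow (a)$ and $(c) \Rightarrow (d)$ by taking $\mu = \de_x$, $\nu = \de_y$. For $(a) \Rightarrow (b)$, I would apply Lemma \ref{lem:pointwise-to-measure-lem} with the optimal coupling $\ga$ of $\mu, \nu$: this bounds $\mcC_c(\mu e^{t\mcA}, \nu e^{t\mcB})$ by $\int \mcC_c(\de_x e^{t\mcA}, \de_y e^{t\mcB})\,d\ga$, and integrating the pointwise bound from $(a)$ against $\ga$ yields $(b)$ after recognizing $\int c\,d\ga = \mcC_c(\mu,\nu)$ and $\int e^{s\mcA}f(x)\,d\ga = \inn{\mu, e^{s\mcA}f}$.

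For $(b) \Leftrightarrow (c)$, the forward direction follows by subtracting $\mcC_c(\mu,\nu)$, dividing by $t > 0$, and taking $\limsup_{t \searrow 0}$; Proposition \ref{prop3.8:temp}(i) yields $\lim_{t \searrow 0} \tfrac{1}{t}\int_0^t e^{\be(t-s)} \inn{\mu, e^{s\mcA}f}\,ds = \inn{\mu, f}$. For the converse, set $F(t) := \mcC_c(\mu e^{t\mcA}, \nu e^{t\mcB})$ and $H(t) := e^{-\be t} F(t)$. The semigroup law gives $D^+ F(t) = \om_c(\mu_t, \nu_t; \mcA, \mcB)$ with $\mu_t := \mu e^{t\mcA}$, $\nu_t := \nu e^{t\mcB}$, so $(c)$ implies $D^+ H(t) \le e^{-\be t}[\inn{\mu_t, f} + \inn{\nu_t, g}]$. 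The right-hand side is continuous in $t$ by $\mcA, \mcB \in \GCZP$ and the definition of $C_{b,c}(\Pi)$, so Proposition \ref{prop:saks-consequence} integrates the inequality to yield $(b)$.

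For $(d) \Rightarrow (c)$, I apply Lemma \ref{lem:pointwise-to-measure-lem} with the optimal coupling, divide by $t$, and interchange $\limsup$ with the integral via reverse Fatou; the required dominating function can be built from Lemma \ref{lem3.6:temp} together with the $B$-relaxed triangle inequality. For $(d) \Rightarrow (e)$: given $(\phi, \psi) \in D(\mcA) \times D(\mcB)$ such that $c - (\phi \oplus \psi)$ has global minimum at $(x_0, y_0)$, the minimum condition yields $c(x, y) \ge \phi(x) + \psi(y) + [c(x_0, y_0) - \phi(x_0) - \psi(y_0)]$ for all $(x, y)$. Integrating against any coupling of $\de_{x_0}e^{t\mcA}$ and $\de_{y_0}e^{t\mcB}$ and minimizing over couplings gives
\[
    \mcC_c(\de_{x_0}e^{t\mcA}, \de_{y_0}e^{t\mcB}) \ge e^{t\mcA}\phi(x_0) + e^{t\mcB}\psi(y_0) + c(x_0, y_0) - \phi(x_0) - \psi(y_0).
\]
Subtracting $c(x_0, y_0)$, dividing by $t$, and passing to $\limsup_{t \searrow 0}$ gives $\om_c(x_0, y_0; \mcA, \mcB) \ge \mcA\phi(x_0) + \mcB\psi(y_0)$, which combined with $(d)$ gives $(e)$.

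The main obstacle is the closing implication $(e) \Rightarrow (d)$. The strategy is to invoke Kantorovich duality
\[
    \mcC_c(\de_x e^{t\mcA}, \de_y e^{t\mcB}) = \sup_{\phi \oplus \psi \le c}[e^{t\mcA}\phi(x) + e^{t\mcB}\psi(y)],
\]
together with the pointwise inequality $e^{t\mcA}\phi(x) + e^{t\mcB}\psi(y) - c(x, y) \le [e^{t\mcA}\phi(x) - \phi(x)] + [e^{t\mcB}\psi(y) - \psi(y)]$ valid for admissible pairs (since $\phi(x) + \psi(y) \le c(x,y)$). One then restricts to pairs $(\phi, \psi) \in D(\mcA) \times D(\mcB)$ for which $c - \phi \oplus \psi$ attains its minimum at the target point $(x, y)$; for these, $(e)$ directly gives $\mcA\phi(x) + \mcB\psi(y) \le f(x) + g(y) + \be c(x, y)$, yielding the infinitesimal bound $(d)$ upon passage to the limit. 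The technical difficulty is density: optimal Kantorovich potentials need not lie in $D(\mcA) \times D(\mcB)$. I would address this via resolvent regularization $\phi_\la := \la(\la - \mcA)^{-1}\phi$ producing $D(\mcA)$-approximants of $\phi$, combined with Proposition \ref{prop2.40:temp} to construct, for any prescribed $(x, y)$, admissible pairs whose minimum configuration can be forced at $(x, y)$ up to vanishing error, and then passing to the limit.
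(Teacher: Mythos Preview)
Your treatment of the forward chain $(a)\to(b)\to(c)\to(d)\to(e)$ matches the paper, and your $(c)\Rightarrow(b)$ via integrating the Dini derivative is valid (the paper does not need this direction, as it closes the cycle via $(e)\to(a)$, but your argument is correct). The gap is in closing the cycle.

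First, your $(d)\Rightarrow(c)$ via reverse Fatou is not justified: the dominating function you invoke from Lemma~\ref{lem3.6:temp} is a \emph{sufficient} condition for membership in $\GCZP$, not a consequence of it, so you have no uniform-in-$t$ integrable upper bound for $t^{-1}[\mcC_c(\de_x e^{t\mcA},\de_y e^{t\mcB})-c(x,y)]$. Worse, you cannot replace this by a Gr\"onwall argument as in $(c)\Rightarrow(b)$, because for $t>0$ the measures $\de_x e^{t\mcA}$ are no longer Diracs, and $(d)$ says nothing about $\om_c$ at non-Dirac measures.

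Second, and more fundamentally, your sketch of $(e)\Rightarrow(d)$ does not work. Resolvent regularization $\phi_\la=\la(\la-\mcA)^{-1}\phi$ puts $\phi_\la$ into $D(\mcA)$ but does not preserve the constraint $\phi_\la\oplus\psi_\la\le c$, and it gives no mechanism for forcing $c-(\phi_\la\oplus\psi_\la)$ to attain its minimum at a \emph{prescribed} point $(x,y)$. Proposition~\ref{prop2.40:temp} approximates $\theta$ from above by sums $f\oplus g$ but again offers no control on the minimizer location. The paper avoids this entirely by proving $(e)\Rightarrow(a)$ directly via a comparison-principle argument in the spirit of viscosity solutions: for fixed $\phi\in D^\sim(\mcA),\psi\in D^\sim(\mcB)$ with $\phi\oplus\psi\le c$, it evolves $\phi,\psi$ forward via Duhamel to obtain $\Phi_t,\Psi_t\in D^\sim$, adds penalizing barrier functions $F_n,G_n$ (Lemma~\ref{lem3.18:temp}) that grow at infinity but have vanishing generator, and argues by contradiction that $\Om_n:=c-(\Phi_t^n\oplus\Psi_t^n)$ cannot become negative. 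If it did, the first touching point $(t_0,x_0,y_0)$ lies in a compact set thanks to the barriers, and at that point $c-(\Phi_{t_0}^n\oplus\Psi_{t_0}^n)$ has its global minimum at $(x_0,y_0)$ with $\Phi_{t_0}^n\in D^\sim(\mcA)$, $\Psi_{t_0}^n\in D^\sim(\mcB)$---so condition~$(e)$ applies and contradicts $\partial_t\Om_n(t_0,x_0,y_0)\le 0$. The key conceptual point is that the touching location is determined by the \emph{evolution}, not prescribed in advance; this is what your resolvent approach cannot achieve. Taking the supremum over admissible $(\phi,\psi)$ via Lemma~\ref{lem:stronger-kantorovich} then yields $(a)$.
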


\begin{remark}
 	If Condition (e) holds for all \( \phi \in D(\mA), \psi \in D(\mB) \), then it also holds for all \( \phi \in D^\sim(\mA), \psi \in D^\sim(\mB) \). This is because adding constants to \( \phi \) or \( \psi \) does not affect either the global minimizer of \( c - (\phi \oplus \psi) \) or the value of \( \mA\phi(x_0) + \mB\psi(y_0) \). Therefore, the condition extends naturally to the larger domains \( D^\sim(\mA) \) and \( D^\sim(\mB) \).
\end{remark}

\begin{remark}\label{rem3.4:temp}
    Condition (e) in Theorem \ref{thm:chap3-main-thm} is inspired by the notion of \emph{viscosity solutions} from classical PDE theory. We also remark that viscosity solutions are also considered by several authors under the settings of general Feller/Markov generators, such as \cite{fleming2006controlled} and \cite{dai2018viscosity}.
\end{remark}
\begin{remark}\label{rmk3.5:temp}
    We specifically highlight the equivalence between Conditions (d) and (e), which will be referred to as \emph{the duality}. Recall that Kantorovich duality (Theorem \ref{thm:kantorovich-duality}) states
    \begin{align*}
        \mcc_c(\mu,\nu) &= \sup_{\phi,\psi : \phi \oplus \psi \le c} \sqb{\langle \mu, \phi \rangle + \langle \nu, \psi \rangle}.
    \end{align*}
    The equivalence between (d) and (e) is analogous to the identity above. Indeed, we shall see later that the proof of (d) $\leftrightarrow$ (e) is based on Kantorovich duality. We point out that this duality relation was discovered in \cite{alfonsi2018} in the special case of $\Pi = \mathbb{R}^d$, $c(x, y) = |x - y|^p$ for $p \geq 1$, and $\mathcal{A}, \mathcal{B}$ being bounded pure jump operators. More precisely, the authors show that the identity holds:
    \begin{align*}
    	\om_c(\mu, \nu; \mA, \mB) &= \int_{\mathbb{R}^d} \mA \phi(x) \, d\mu(x) + \int_{\mathbb{R}^d} \mB \psi(x) \, d\nu(x),
    \end{align*}
    where $(\phi, \psi)$ is any Kantorovich potential, that is, any pair of functions that solves the Kantorovich (maximization) problem. Note that since $\mA$ and $\mB$ are bounded operators, $\phi$ and $\psi$ are in the (generalized) domain of $\mA$ and $\mB$, respectively.
    In fact, we will prove an analogue of this identity later, see Corollary \ref{cor:general-duality-theta} and the remark after that. 
\end{remark}

Let us now proceed to the proof of Theorem \ref{thm:chap3-main-thm}. We will first show the simple implication (a) $\to$ (b) $\to$ (c) $\to$ (d) $\to$ (e), leaving (e) $\to$ (a) to the next subsection, due to its technicality. 
\begin{proof}[Proof of Theorem \ref{thm:chap3-main-thm}, (a) $\to$ (b) $\to$ (c) $\to$ (d) $\to$ (e)]
    Notice first that since $f,g\in C_{b,c}(\Pi)$, Proposition \ref{prop3.8:temp}(i) implies that for each $\mu,\nu\in\PCP$, we have $\inn{\mu e^{t\mcA},f}\to\inn{\mu,f}$ and $\inn{\nu e^{t\mcB},g}\to\inn{\nu,g}$ as $t\searrow0$. Furthermore, Proposition \ref{prop3.8:temp}(ii) implies that for each $x\in\Pi$ and $T\ge0$,
    \begin{align*}
        \sup_{t\in[0,T]} |e^{t\mcA}f(x)|,\sup_{t\in[0,T]} |e^{t\mcA}g(x)|<\infty.
    \end{align*}
    
    (a) $\to$ (b). Let $\ga$ be a $c$-optimal coupling of $\mu,\nu$. By Lemma \ref{lem:pointwise-to-measure-lem} and Fubini's theorem,
    \begin{align*}
        \mcC_c(\mu e^{t\mcA},\nu e^{t\mcB}) &\le \int_{\Pi^2} \mcC_c(\de_x e^{t\mcA},\de_y e^{t\mcB})\,d\ga(x,y)\\
        &\le e^{\be t} \mcC_c(\mu,\nu) + \int_\Pi \int_0^t e^{\be (t-s)}e^{s\mcA}f(x)\,ds\,d\mu(x) + \int_\Pi \int_0^t e^{\be (t-s)}e^{s\mcB}g(y)\,ds\,d\nu(y)\\
        &\le e^{\be t} \mcC_c(\mu,\nu) + \int_0^t e^{\be (t-s)}\inn{\mu,e^{s\mcA}f}ds + \int_0^t e^{\be (t-s)} \inn{\nu,e^{s\mcB}g}ds.
    \end{align*}
	
    (b) $\to$ (c). Rearranging terms in (b), we find
    \begin{align*}
    	\frac{\mcC_c(\mu e^{t\mcA},\nu e^{t\mcB})-\mcC_c(\mu,\nu)}{t} &\le \frac{e^{\be t}-1}{t} \mcC_c(\mu,\nu) + \frac{e^{\be t}}{t}\int_0^t e^{-\be s}\sqb{\inn{\mu,e^{s\mcA}f}+\inn{\nu,e^{s\mcB}g}}ds.
    \end{align*}
    Using the fact that $s\mapsto \inn{\mu, e^{s\mA}f}+\inn{\nu,e^{s\mB}g}$ is continuous, (c) follows by passing $t\searrow 0$. 

    (c) $\to$ (d). (d) follows from (c) with $\mu=\de_x$ and $\nu=\de_y$.

    (d) $\to$ (e). Fix $x_0,y_0\in\Pi$. By the definition of $\om_c$, for any fixed $\ep>0$, there is $t_0\in(0,T]$, for all $t\in[0,t_0]$, we have
    \begin{align*}
        \mcC_c(\de_{x_0}e^{t\mcA},\de_{y_0}e^{t\mcB}) \le c(x_0,y_0) + t\sqb{f(x_0)+g(y_0)+\be c(x_0,y_0)+\ep}.
    \end{align*}    
    Let $\phi\in D(\mcA)$, $\psi\in D(\mcB)$ be given as in (e), $2M= c(x_0,y_0)-\phi(x_0)-\psi(y_0)$ be the minimum of the function, and let $\tilde\phi=\phi-M$, $\tilde\psi=\psi-M$. Then $\tilde\phi,\tilde\psi\in C_b(\Pi)$, with $\tilde\phi\oplus\tilde\psi\le c$, and it holds $\tilde\phi(x_0)+\tilde\psi(y_0)=c(x_0,y_0)$. By Kantorovich duality, it holds for all $t\in[0,t_0]$:
    \begin{align*}
        e^{t\mcA}\tilde\phi(x_0) + e^{t\mcB}\tilde\psi(y_0) &= \inn{\de_{x_0}e^{t\mcA},\tilde\phi} + \inn{\de_{y_0}e^{t\mcB},\tilde\psi} \le \mcC_c(\de_{x_0}e^{t\mcA},\de_{y_0}e^{t\mcB})\\
        &\le c(x_0,y_0) + t\sqb{f(x_0)+g(y_0)+\be c(x_0,y_0)+\ep}\\
        &= \tilde\phi(x_0)+\tilde\psi(y_0) + t\sqb{f(x_0)+g(y_0)+\be c(x_0,y_0)+\ep}.
    \end{align*}
    This follows
    \begin{align*}
        \lim_{t\searrow0} \frac{1}{t}\sqb{e^{t\mcA}\tilde\phi(x_0)-\tilde\phi(x_0) + e^{t\mcB}\tilde\psi(y_0)-\tilde\psi(y_0)} \le f(x_0)+g(y_0)+\be c(x_0,y_0)+\ep.
    \end{align*}
    Note that the limit of the left-hand side equals to $\mcA\phi(x_0)+\mcB\psi(y_0)$. Since $\ep>0$ is arbitrary, passing $\ep\searrow0$ yields (e).
\end{proof}

\subsection{Proof of Theorem \ref{thm:chap3-main-thm}, (e) \texorpdfstring{$\to$}{implies} (a)}\label{sec:etoa}
The proof of the implication (e) $\to$ (a) in Theorem \ref{thm:chap3-main-thm} is more involved. Before presenting the proof, we first show a simplle generalization of the Kantorovich duality. 

\begin{lemma}\label{lem:stronger-kantorovich}
	For all $\mu,\nu\in\mcP_c(\Pi)$, it holds
	\[\mcC_c(\mu,\nu)=\sup_{\phi,\psi} \sqb{\inn{\mu,\phi}+\inn{\nu,\psi}},\]
	where the supremum is taken over all $\phi,\psi\in C_0^\sim(\Pi)$ satisfying $\phi\oplus\psi\le c$.
\end{lemma}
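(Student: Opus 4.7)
\medskip
\noindent\textbf{Proof proposal.}

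The inclusion $C_0^\sim(\Pi)\subset C_b(\Pi)$ together with the classical Kantorovich duality (Theorem \ref{thm:kantorovich-duality}) immediately yields the ``$\le$'' direction of the identity, since the supremum on the larger set $C_b(\Pi)$ is already $\mcC_c(\mu,\nu)$. So the task is the reverse inequality. Given any admissible pair $(\phi,\psi)\in C_b(\Pi)^2$ with $\phi\oplus\psi\le c$, the plan is to construct a sequence of admissible pairs $(\phi_n,\psi_n)\in C_0^\sim(\Pi)^2$ for which $\inn{\mu,\phi_n}+\inn{\nu,\psi_n}\to\inn{\mu,\phi}+\inn{\nu,\psi}$, and then take suprema on both sides.

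The construction leverages the locally compact Polish structure supplied by Hypothesis \refC. Such a space is $\si$-compact, so I fix an exhaustion $K_1\subset K_2\subset\cdots$ with $\bigcup_n K_n=\Pi$ and each $K_n$ contained in the interior of $K_{n+1}$. Urysohn's lemma in locally compact Hausdorff spaces then supplies compactly supported continuous cutoffs $\chi_n:\Pi\to[0,1]$ with $\chi_n\equiv 1$ on $K_n$, hence $\chi_n\nearrow 1$ pointwise. Setting $\al:=-\|\phi\|_\infty$ and $\be:=-\|\psi\|_\infty$, I define
\[\phi_n := \al + \chi_n(\phi-\al), \qquad \psi_n := \be + \chi_n(\psi-\be).\]
Both $\phi_n-\al$ and $\psi_n-\be$ are continuous with compact support, so in particular $\phi_n,\psi_n\in C_0^\sim(\Pi)$.

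Three short verifications then close the argument. First, because $\phi-\al\ge 0$ and $\chi_n\le 1$, one has $\phi_n\le\phi$, and analogously $\psi_n\le\psi$; hence $\phi_n\oplus\psi_n\le\phi\oplus\psi\le c$, so the truncated pair remains admissible. Second, $|\phi_n-\phi|=(1-\chi_n)(\phi-\al)\le 2\|\phi\|_\infty(1-\chi_n)$ tends to $0$ pointwise and is uniformly bounded, so dominated convergence gives $\inn{\mu,\phi_n}\to\inn{\mu,\phi}$, and the same reasoning handles $\psi_n$ against $\nu$. Third, passing to the supremum over all admissible $C_b$-pairs on the right-hand side and invoking Theorem \ref{thm:kantorovich-duality} delivers the claimed equality.

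The one subtlety --- and the step I would expect to slip if carried out naively --- is the normalization by $\al=-\|\phi\|_\infty$ (and analogously $\be$). Shifting so that $\phi-\al$ is nonnegative \emph{before} multiplying by $\chi_n\in[0,1]$ is precisely what enforces $\phi_n\le\phi$, and hence preserves the constraint $\phi_n\oplus\psi_n\le c$ through the truncation; a naive product such as $\chi_n\phi$ would violate the constraint wherever $\phi$ is negative. Once this sign reduction is in place, everything else is routine dominated-convergence bookkeeping.
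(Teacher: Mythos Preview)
Your proof is correct and follows essentially the same approach as the paper: both shift $\phi$ by a constant to make it nonnegative (the paper uses $\inf\phi$, you use $-\|\phi\|_\infty$), multiply by a $[0,1]$-valued cutoff $\chi_n\nearrow 1$ to land in $C_0^\sim(\Pi)$ while preserving $\phi_n\le\phi$, and pass to the limit (the paper invokes monotone convergence, you use dominated convergence). The differences are purely cosmetic.
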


\begin{proof}
	Let $\mcl{M}(\mu,\nu)$ be the supremum from the right hand side.
	Since $C_0^\sim(\Pi)\subset C_b(\Pi)$, it holds $\mcc_c(\mu,\nu)\ge \mcl{M}(\mu,\nu)$. 
	To prove the reversed inequality, by Kantorovich duality, given $\ep>0$, let $\phi_\ep,\psi_\ep\in C_b(\Pi)$ be such that $\phi_\ep\oplus \psi_\ep \le c$ and 
	\begin{align*}
		\mcc_c(\mu,\nu)\le \inn{\mu,\phi_\ep}+ \inn{\nu,\psi_\ep}+\ep.
	\end{align*}
	Let $\{\chi_n\}_n\subset C_0(\Pi)$ be a sequence of functions such that $\chi_n\nearrow 1$ locally uniformly. Let us define
	\begin{align*}
		\phi_{\ep,n}&= \chi_n(\phi_\ep-\inf \phi_\ep)+\inf\phi_\ep,\qquad \psi_{\ep,n}= \chi_n(\psi_\ep-\inf \psi_\ep)+\inf\psi_\ep.
	\end{align*}
	Particularly, $\phi_\ep,n,\psi_{\ep,n}$ are two sequences of $C_0^\sim(\Pi)$-functions that approximate $\phi_\ep,\psi_\ep$ from below. That is, we have $\phi_{\ep,n}\nearrow \phi_\ep,\psi_{\ep,n}\nearrow \psi_\ep$, $\phi_{\ep,n}\oplus \psi_{\ep,n}\le c$, and $\phi_{\ep,n},\psi_{\ep,n}\in C_0^\sim(\Pi)$. By monotone convergence, it holds
	\begin{align*}
		\mcl{M}(\mu,\nu)\ge  \lim_{n\to\infty}\sqb{\inn{\mu,\phi_{\ep,n}}+\inn{\mu,\psi_{\ep,n}} }=\inn{\mu,\phi_\ep}+\inn{\nu,\psi_\ep}\ge \mcc_c(\mu,\nu)-\ep.
	\end{align*}
	Since $\ep>0$ is arbitrary, the reversed inequality follows by passing $\ep\searrow 0$.
\end{proof}
%
	As hinted in the statement of Theorem \ref{thm:chap3-main-thm}(e) itself, the proof relies on the \emph{comparison principle argument} from the theory of viscosity solutions. Particularly, the following observation from continuous functions on topological spaces will be used. Let $\Om:[0,T]\times\Ga\to\mbr$ be a continuous function with $\Om(0,\cdot)\ge0$, where $\Ga$ is some topological space, and $[0,T]\subset[0,\infty)$ is a compact time interval. Assume that there exists $\ep>0$, and a compact set $K\subset\Ga$ such that
	\begin{align*}
		\inf_{t\in[0,T], x\in K^c} \Om(t,z)\ge \ep.
	\end{align*}
	Then exactly one of the following holds:
	\begin{enumerate}[(i)]
		\item $\Om(t,z)\ge0$ for all $t\in[0,T]$ and $z\in\Ga$;
		\item there exists $t_0\in[0,T), z_0\in K$ and $h>0$, such that $\Om(t_0,\cdot)\ge0$, $\Om(t_0,z_0)=0$ and $\Om(t_0+s,z_0)<0$ for all $s\in[0,h]$.
	\end{enumerate}
	Specifically, $t_0$ is the first moment at which the function $\Om(t,\cdot)$ is on the verge of breaching the barrier $g(x)\equiv0$ at $x=x_0$. The point $(t_0,z_0)$ will be referred to as the \emph{touching point}. Moreover, if $t\mapsto \Om(t,z_0)$ is differentiable at $t_0$, then it must hold that $\partial_t \Om(t_0,z_0)\le 0$.

To perform the comparison principle argument that involves the operator $\mA$, one has to guarantee that the test functions touch the barrier function at some point. This is usually achieved by modifying the barrier function by adding an auxiliary function $F_\ep(x)$ such that $\lim_{x\to\infty} F_\ep(x)=\infty$, while $\|\mA F_\ep(x)\|<\ep$, then letting $\ep\searrow 0$. For instance, if $\mA=\De$ is the $d$-dimensional Laplacian, one usually choose $F_\ep(x)=\frac \ep2 |x|^2$. The following lemma shows the existence of such auxiliary functions in our case. 

\begin{lemma}\label{lem3.18:temp}
	Let $\mcA\in \mcG(\Pi)$ and $C>0$.
	There is a sequence of functions $\{F_n\}_n\subset D^\sim(\mA)$ such that $0\le F_n\le C$, $\lim_{x\to\infty} F_n(x)=C$, $F_n\to 0$ locally uniformly as $n\to\infty$, and $\lim_{n\to\infty}\|\mA F_n\|_\infty=0$. 
\end{lemma}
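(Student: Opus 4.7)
My plan is to construct $F_n$ in the form $F_n = C - g_n$ where $g_n \in D(\mA) \cap C_0(\Pi)$ satisfies $0 \le g_n \le C$, $g_n \to C$ locally uniformly, and $\|\mA g_n\|_\infty \to 0$. Writing $F_n = -g_n + C$ directly exhibits $F_n \in D^\sim(\mA)$ with $\mA F_n = -\mA g_n$, and the desired properties of $F_n$ (including $\lim_{x\to\infty}F_n(x) = C$ via $g_n \in C_0(\Pi)$, and $F_n \to 0$ locally uniformly from $g_n \to C$) follow immediately.

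The candidate I propose is $g_n = C\lambda_n R_{\lambda_n}\chi_{m_n}$, where $R_\lambda = (\lambda I - \mA)^{-1}$ denotes the resolvent (mapping $C_0(\Pi)$ into $D(\mA)$); $\{\chi_m\}_{m\ge 1} \subset C_0(\Pi)$ is a monotonically increasing sequence with $0 \le \chi_m \le 1$ and $\chi_m \nearrow 1$ pointwise (easily built from a compact exhaustion of $\Pi$, using local compactness and $\sigma$-compactness); and the parameters $\lambda_n \to 0$ and $m_n$ are to be tuned. The sub-Markovian property of $\lambda R_\lambda$ gives $0 \le g_n \le C$ for free, while the Hille--Yosida identity $\mA(\lambda R_\lambda \chi) = \lambda(\lambda R_\lambda \chi - \chi)$ yields $\|\mA g_n\|_\infty \le 2C\lambda_n \to 0$.

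The hard part is ensuring $g_n \to C$ locally uniformly despite $\lambda_n \to 0$. Using the Laplace representation
\[
\lambda_n R_{\lambda_n}\chi_{m_n}(x) = \int_0^\infty \lambda_n e^{-\lambda_n t}\,e^{t\mA}\chi_{m_n}(x)\,dt
\]
and keeping only the portion of the integral over $[0, T_n]$ for a time horizon $T_n$ to be chosen, one obtains the lower bound
\[
g_n(x) \;\ge\; C\,(1 - e^{-\lambda_n T_n})\,\inf_{t \in [0, T_n]} e^{t\mA}\chi_{m_n}(x).
\]
I will pick $T_n \to \infty$ and $\lambda_n = T_n^{-1/2}$, so that $\lambda_n \to 0$ yet $\lambda_n T_n \to \infty$. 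It then suffices to arrange, given a compact exhaustion $K_n \nearrow \Pi$, that $e^{t\mA}\chi_{m_n}(x) \ge 1 - 1/n$ uniformly in $(t, x) \in [0, T_n] \times K_n$; this pointwise bound then upgrades to uniform convergence $g_n \to C$ on any fixed compact $K \subset \Pi$ (for $n$ large enough that $K \subset K_n$).

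The final ingredient is Dini's theorem. For each fixed $m$, the map $(t, x) \mapsto e^{t\mA}\chi_m(x)$ is jointly continuous on $[0, \infty) \times \Pi$, by combining strong continuity of $\{e^{t\mA}\}$ on $C_0(\Pi)$ in $t$ with the Feller continuity of $e^{t\mA}\chi_m$ in $x$. The sequence $\{e^{t\mA}\chi_m\}_m$ is monotonically increasing (since $\chi_m$ is and $e^{t\mA}$ preserves order), and its pointwise limit is the constant function $1$: this follows by monotone convergence against the transition kernel $\kappa_t(x, \cdot)$, using the conservative property $e^{t\mA}(\mathbf{1}) \equiv 1$ built into the definition of a probability semigroup. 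Dini's theorem applied on the compact set $[0, T_n] \times K_n$ then produces the desired $m_n$, completing the construction.
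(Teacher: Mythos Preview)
Your proof is correct and follows essentially the same approach as the paper: both construct $F_n = C - g_n$ with $g_n = C\lambda_n R_{\lambda_n}\chi_{m_n}$, obtain $\|\mA g_n\|_\infty \le 2C\lambda_n$ from the resolvent identity, and use Dini's theorem to secure the local uniform convergence $g_n \to C$.

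The only notable difference is in how Dini is deployed. You pass through the Laplace representation $\lambda R_\lambda \chi = \int_0^\infty \lambda e^{-\lambda t} e^{t\mA}\chi\,dt$, introduce a time horizon $T_n$, and apply Dini on the time-space compact $[0,T_n]\times K_n$ to the sequence $(t,x)\mapsto e^{t\mA}\chi_m(x)$. The paper is slightly more direct: it observes that $I_\lambda := \lambda R_\lambda$ is itself a Markov operator (i.e., has a probability-kernel representation), so $I_\lambda \chi_m \nearrow 1$ pointwise by monotone convergence, and then applies Dini purely in the spatial variable on $K_n$ to the sequence $\{I_{\lambda_n}\chi_m\}_m$. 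This avoids the Laplace integral and the auxiliary parameter $T_n$ altogether. Both arguments are valid; the paper's is a shade cleaner.
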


\begin{proof}
	Given $\la>0$, let $R_\la(\mA)$ denote the $\la$-resolvent operator of $\mA$, and let $I_\la=\la R_\la(A)$. It is a well-known fact that $I_\la$ is a Markov operator. 
	Suppose $G\in C_0(\Pi)$ is such that $0\le G\le 1$. Let $\tilde F:=I_\la G\in D(\mA)$, that is, $\la G = (\la I-\mA)\tilde F$, then 
	\begin{align*}
		\|\tilde F\|_\infty\le \|I_\la G\|_\infty\le \|G\|_\infty\le 1.
	\end{align*}
	This follows
	\begin{align}\label{eq3.11:Af-bound}
		\|\mA \tilde F\|_\infty=\la \|\tilde F-G\|_\infty\le 2\la.
	\end{align}
	
	Now, fix $\la>0$, $\ep\in(0,1)$ and a compact set $K\subset \Pi$. We claim there is a function $G\in C_0(\Pi)$ with $0\le G\le 1$ such that $(1-\ep)\mbo_K\le I_\la G\le 1$. To see this claim, we fix a sequence $\{G_n\}_n\subset C_0(\Pi)$ such that $0\le G_n\le 1$ and $G_n\nearrow  1$ locally uniformly. Then we have $I_\la G_n\nearrow 1$ pointwise as $n\to\infty$, due to that $I_\la$ is a Markov operator. By Dini's theorem, $I_\la G_n$ converges locally uniformly to 1. This means, there is some sufficiently large $n$ such that $(1-\ep)\mbo_{K}\le I_\la G_n(x)\le 1$. Choose $G=G_n$.
	
	To prove the lemma, by rescaling the functions, it suffices to show for the case $C=1$.
	Fix two sequences $\la_n\searrow 0$, $\ep_n\searrow 0$ and a sequence of increasing compact sets such that $K_n\nearrow \Pi$. By the claim above, there is $G_n\in C_0(\Pi)$ such that $0\le G_n\le 1$ and $(1-\ep_n)\mbo_{K_n}\le \tilde F_n\le 1$ where $\tilde F_n=I_{\la_n}G_n$. Particularly it follows $\tilde F_n\to 1$ locally uniformly. Moreover, \eqref{eq3.11:Af-bound} implies $\|\mA \tilde F_n\|_\infty\le 2 \la _n\to 0$. Set $F_n=1-\tilde F_n$, which finishes the construction.
\end{proof}

Finally, we may now complete the proof of Theorem \ref{thm:chap3-main-thm}, (e) $\to$ (a). 
We will make use of the well-known \emph{Duhamel principle} (see \cite[Lemma 1.3]{engel2006short}, \cite[Page 49]{evans2010partial} or \cite[Page 135]{john2004partial}), which states as follows. For any $\be \ge 0$, $\phi\in D(\mA)$, and $f\in C_0(\Pi)$, the function $\{\Phi_t\}_{t\ge 0}$ defined by
\begin{align}\label{duhamel}
	\Phi_t(x)&= e^{t(\mA-\be)}\phi(x)- \int_0^t e^{s(\mA-\be)}f(x)ds
\end{align}
satisfies that $\Phi_t\in D(\mA)$ for each $t\ge 0$, and solves the following PDE in the classical sense:
\begin{align*}
	\partial_t \Phi_t&= (\mA-\be)\Phi_t- f.
\end{align*}
Since probability semigroups preserve constant functions, the formula \eqref{duhamel} holds even for $\phi\in D^\sim(\mA), f\in C_0^\sim(\Pi)$. Particularly, for such $\phi,f$, $\{\Phi_t\}_{t\ge0}$ given by \eqref{duhamel} satisfies the PDE above, with $\Phi_t\in D^\sim(\mA)$ for each $t\ge 0$.

\begin{proof}[Proof of Theorem \ref{thm:chap3-main-thm}, (e) $\to$ (a)]
	We first prove the implication with the assumption that $f,g\in C_{b,c}(\Pi)$ are bounded below.  
	Fix $T>0$ and $\phi\in D^\sim(\mcA)$, $\psi\in D^\sim(\mcB)$ such that $\phi\oplus\psi\le c$. We will first establish the following bound: for all $t\in[0,T]$ and $x,y\in\Pi$,
	\begin{align}\label{eq3.12:temp}
		\quad e^{t\mcA}\phi(x) + e^{t\mcB}\psi(y) \le e^{\be t}c(x,y) + \int_0^t e^{\be(t-s)}e^{s\mcA}f(x)\,ds + \int_0^t e^{\be(t-s)}e^{s\mcB}g(y)\,ds.
	\end{align}
	Equivalently, for all $t\in[0,T]$, it holds $\Phi_t\oplus\Psi_t\le c$, where
	\begin{align*}
		\Phi_t(x) = e^{t(\mcA-\be)}\phi(x) - \int_0^t e^{s(\mcA-\be)}f(x)\,ds,\quad
		\Psi_t(y) = e^{t(\mcB-\be)}\psi(y) - \int_0^t e^{s(\mcB-\be)}g(y)\,ds.
	\end{align*}
	
	Let us introduce some constants and functions used in the proof. First, we denote $m_f,m_g$:
	\begin{align*}
		m_f := -\min\mcb{0,\inf_{x\in\Pi}f(x)}\in[0,\infty),\qquad m_g := -\min\mcb{0,\inf_{y\in\Pi}g(y)}\in[0,\infty),
	\end{align*}
	and the constant $C$ depending on $T,\phi,\psi,f,g$:
	\begin{align*}
		C: =  \norm{\phi}_\infty + \norm{\psi}_\infty + m_f T + m_g T +2.
	\end{align*}
	By Lemma \ref{lem3.18:temp}, let $\scb{F_n}_n\subset D^\sim(\mcA)$, $\scb{G_n}_n\subset D^\sim(\mcB)$ be two sequences of functions such that $0\le F_n,G_n\le C$, $F_n(x),G_n(x)\to C$ as $x\to\infty$, 
	$F_n,G_n\to 0$ locally uniformly as $n\to\infty$, and
	\begin{align}\label{eq3.13:temp}
		\la_n^F:=\norm{\mcA F_n}_\infty \to0, \qquad \la_n^G=\norm{\mcB G_n}_\infty \to0.
	\end{align}
	Since $F_n,G_n$ converge to $C$ as $x\to\infty$, we may find a compact set $K_n\subset\Pi$ such that
	\begin{align}\label{eq3.14:temp}
		F_n(x),G_n(x) \ge C-1, \qquad \mbox{for all $x\in K_n^c$. }
	\end{align}
	
	Now we introduce $\chi_{K_n}\in C_0(\Pi)$, a cut-off function such that $0\le\chi_{K_n}\le1$ and $\chi_{K_n}=1$ on $K_n$, and let
	\begin{align*}
		f_{n} &= (f+m_f)\chi_{K_n}-m_f, \qquad g_{n}= (g+m_g)\chi_{K_n}-m_g.
	\end{align*}
	We make the observation that $f_{n},g_{n}\in C_0^\sim(\Pi)$, and the following bounds hold:
	\begin{align}\label{eq3.15:temp}
		-m_f\le f_{n}\le f,\qquad -m_g\le g_{n}\le g. 
	\end{align}
	Next, let us introduce the following functions:
	\begin{align}
		\Phikn(x) &= e^{t(\mcA-\be)}\phi(x) - \int_0^t e^{s(\mcA-\be)}f_{n}(x)\,ds,\nonumber\\
		\Psikn(y) &= e^{t(\mcB-\be)}\psi(y) - \int_0^t e^{s(\mcB-\be)}g_{n}(y)\,ds,\nonumber\\
		\Phitn(x) &= \Phikn(x) - \rb{\lafn + \frac 1n} t - F_n(x),\label{eq3.16:temp}\\
		\Psitn(y) &= \Psikn(y) - \rb{\lagn + \frac 1n} t - G_n(y),\nonumber
	\end{align}
	Notice that for each $t\ge 0$, $\Phikn,\Phi_t^n\in D^\sim(\mcA)$, $\Psikn,\Psi_t^n\in D^\sim(\mcB)$, and they satisfy the PDEs with forcing for all $t\ge0$ in the classical sense, see the discussion of \eqref{duhamel}:
	\begin{align}\label{eq3.17:temp}
		\partial_t\Phikn = (\mcA-\be)\Phikn - f_n,\qquad
		\partial_t\Psikn = (\mcB-\be)\Psikn - g_n.
	\end{align} 
	Finally, let $\Om_n(t,\cdot,\cdot)=c-(\Phitn\oplus\Psitn)$, that is,
	\begin{align}
		\Om_n(t,x,y)&=c(x,y)-\Phi_t^n(x)-\Psi_t^n(y) \nonumber\\
		&= c(x,y) + \rb{\lafn + \lagn + \frac 2n} t + F_n(x) + G_n(y) - \Ta_n(t,x,y)  \label{eq3.18:temp},
	\end{align}
	where
	\begin{align}\label{eq3.19:Ta-def}
		\Ta_n(t,x,y)&:= e^{t(\mcA-\be)}\phi(x) + e^{t(\mcB-\be)}\psi(y) -\int_0^te^{s(\mA-\be)}f_{n}(x) ds-\int_0^te^{s(\mB-\be)}g_{n}(x) ds.
	\end{align}
	We will next show $\Om_n=c-(\Phi_t^n\oplus \Psi_t^n)\ge 0$. If it holds, we shall see that \eqref{eq3.12:temp} follows by passing $n\to\infty$.
	
	Assume to the contrary that the inequality $\Om_n(t,\cdot,\cdot)\ge 0$ does not hold for some $t\in[0,T]$. We now show the existence of a touching point at $\Om_n=0$. This will then lead to a contradiction to Condition (e). We note that the function $\Om_n(t,x,y)$ has the property:
	\begin{align}\label{eq3.20:temp}
		\sup_{t\in[0,T],(x,y)\in(K_n\times K_n)^c}\Om_n(t,x,y) \ge 1.
	\end{align}
	To see this, notice first that since $e^{t\mcA}, e^{t\mcB}$ are contractions and \eqref{eq3.15:temp}, $\Ta_n$ from \eqref{eq3.19:Ta-def} satisfies the bound
	\begin{align*}
		\Ta_n(t,x,y)&\le \|\phi\|_\infty +\|\psi\|_{\infty} + (m_f+m_g)T = C-2. 
	\end{align*}
	By the definition of $K_n$ from \eqref{eq3.14:temp}, we then have $G_n(y)-\Ta_n(t,x,y)\ge 1$ for all $(t,x,y)\in [0,T]\times \Pi\times K_n^c$. Since the remaining terms in \eqref{eq3.18:temp} are positive, this follows $\Om_n(t,x,y)\ge 1$ for such $(t,x,y)$. Similarly, on $[0,T]\times K_n^c\times \Pi$, we have $\Om_n(t,x,y)\ge 1$. This shows \eqref{eq3.20:temp}.
		
	Following the discussion before Lemma \ref{lem3.18:temp}, we must find some touching point $t_0\in[0,T)$ and $(x_0,y_0)\in K_n\times K_n$, that is, $\Om_n(t_0,\cdot,\cdot)\ge0$, $\Om_n(t_0,x_0,y_0)=0$ and
	\begin{align}\label{eq3.21:temp}
		\drvev{}{t}_{t=t_0}\Om_n(t,x_0,y_0)\le0.
	\end{align}
	Recall that $\Phi_{t_0}^n\in D^\sim(\mA),\Psi_{t_0}^n\in D^\sim(\mB)$. 
	The touching condition implies
	$c-(\Phi_{t_0}^n\oplus\Psi_{t_0}^n)$ achieves a global minimum at $(x_0,y_0)$. Condition (e) of Theorem \ref{thm:chap3-main-thm} (see Remark \ref{rem3.4:temp}) implies $\mA \Phi_{t_0}^n(x_0)+\mB\Psi_{t_0}^n(y_0)\le f(x_0)+g(y_0)+c(x_0,y_0)$, that is,
	\begin{align}\label{eq3.22:temp}
		\mcA\Phikzn(x_0) -\mcA F_n(x_0) + \mcB\Psikzn(y_0) -\mcB G_n(y_0) \le f(x_0)+g(y_0)+\be c(x_0,y_0).
	\end{align}
	
	Now, let us consider the $t$-derivative of $t\mapsto \Om(t,x_0,y_0)$ at $t=t_0$. We compute
	\begin{align*}
		\drv{}{t}\Om_n(t,x_0,y_0) = -\drv{}{t}\sqb{\Phitn(x_0)+\Psitn(y_0)}.
	\end{align*}
	By the definition of $\Phitn$ given in \eqref{eq3.16:temp}, we have
	\begin{align*}
		\drv{}{t}\Phitn(x) &= -\lafn -\frac 1n + \drv{}{t} \Phikn(x) .
	\end{align*}
	Applying \eqref{eq3.17:temp}, using $\Phi_t\le \Phi_t^n$, followed by dropping some nonpositive terms, we find
	\begin{align*}
		\drv{}{t}\Phitn(x) &= -\lafn - \frac 1n + (\mcA-\be)\Phikn(x) - f_{n}(x)\\
		&\le -\lafn - \frac 1n + \mcA\Phikn(x)- \be \Phi_t^n(x) - f_{n}(x).
	\end{align*}
	Likewise, we have the same for $\Psitn$:
	\begin{align*}
		\drv{}{t}\Psitn(y) &\le -\lagn - \frac 1n+ \mcB\Psikn(y) -\be\Psitn(y)- g_{n}(y).
	\end{align*}
	Evaluating at $t=t_0$, $(x,y)=(x_0,y_0)$, and using $f(x_0)=f_{n}(x_0)$, $g(y_0)=g_{n}(y_0)$, we find
	\begin{align*}
		\drvev{}{t}_{t=t_0}\Om_n(t,x_0,y_0) &\ge (\lafn+\lagn) -\mcA\Phikzn(x_0) - \mcB\Psikzn(y_0) + \be\sqb{\Phitzn(x_0)+\Psitzn(y_0)}\\
		&\quad + f(x_0) + g(y_0)+\frac 2n.
	\end{align*}
	Then using $\Phitzn(x_0)+\Psitzn(y_0)=c(x_0,y_0)$, the bound \eqref{eq3.22:temp}, and \eqref{eq3.13:temp}, we find
	\begin{align*}
		\drvev{}{t}_{t=t_0}\Om_n(t,x_0,y_0) &\ge (\lafn-\mcA F_n(x_0)) + (\lagn -\mcA G_n(y_0)) +\frac 2n \ge \frac 2n >0.
	\end{align*}
	This leads to a contradiction to \eqref{eq3.21:temp}. Therefore, we must have $\Om_n(t,x,y) \ge 0$ on $[0,T]\times\Pi^2$. That is,
	\begin{align}
		e^{t\mcA}\phi(x) + e^{t\mcB}\psi(y) &\le \rb{\lafn + \lagn + \frac 2n}te^{\be t} + e^{\be t}\sqb{F_n(x)+G_n(y)}\nonumber\\
		&\quad + e^{\be t}c(x,y) + \int_0^t e^{\be(t-s)}e^{s\mcA}f_{n}(x)\,ds  + \int_0^t e^{\be(t-s)}e^{s\mcB}g_{n}(y)\,ds. \nonumber
	\end{align}
	By \eqref{eq3.15:temp}, the above holds with $f,g$ in place of $f_{n},g_{n}$. Passing $n\to\infty$, the terms in the first line of the right-hand side vanish, and hence we arrive at \eqref{eq3.12:temp}. 
	
	In total, we have shown that for all $\phi\in D^\sim(\mcA)$, $\psi\in D^\sim(\mcB)$ satisfying $\phi\oplus\psi\le c$, and $f,g$ bounded below, Condition (e) implies \eqref{eq3.12:temp}. Since $D^\sim(\mA),D^\sim(\mB)$ are dense in $C_0^\sim(\Pi)$, the bound \eqref{eq3.12:temp} extends to all $\phi,\psi\in C_0^\sim(\Pi)$ such that $\phi\oplus\psi\le c$. By Lemma \ref{lem:stronger-kantorovich}, we conclude
	\begin{align}\label{eqn:3.102temp}
		\mcC_c(\de_x e^{t\mcA},\de_y e^{t\mcB}) \le e^{\be t} c(x,y) + \int_0^t e^{\be (t-s)}e^{s\mcA}f(x)\,ds + \int_0^t e^{\be (t-s)} e^{s\mcB} g(y)\,ds. 
	\end{align}

	Now it remains to remove the lower bound condition for $f,g$.	
	For $M\in\mbn$, let
	\begin{align*}
		f_{M}(x) = \max\scb{f(x),-M},\quad g_M(y)=\max\scb{g(y),-M}.
	\end{align*}
	Condition (e) of Theorem \ref{thm:chap3-main-thm} then holds with $f_M,g_M$ as well.
	Since $f_M,g_M$ are bounded below (by $-n$), the result above implies \eqref{eqn:3.102temp} with $f_M,g_M$ in place of $f,g$.
	Furthermore, for any $x,y$, as functions of times, notice that $e^{s\mcA}f_M(x)\searrow e^{s\mcA}f(x)$ and $e^{s\mcB}g_M(y)\searrow e^{s\mcB}g(y)$ pointwise. Applying monotone convergence theorem then yields \eqref{eqn:3.102temp}.
\end{proof}
\subsection{The \texorpdfstring{$c$}{c}-exponential stability condition and its equivalence}\label{sec:c-exp}
The hypotheses for the other main theorems (well-posedness of the mean-field evolution problem, propagation of chaos) will be formulated in terms of the condition below: there exists $\al,\be\ge0$ such that
\[\om_c(\mu,\nu;\mcA,\mcB)\le \al+\be\mcC_c(\mu,\nu),\quad\text{for all }\mu,\nu\in\mcP_c(\Pi).\]
This condition will lead to an exponential stability bound by Gr\"{o}nwall's inequality. The following result is a specific case that follows directly from Theorem \ref{thm:chap3-main-thm}, but we state it as an independent result as it will be frequently used. We introduce the following shorthand notation that will be used throughout this work. Given $\be\ge0$, we define $\zeta_{\be}:[0,\infty)\to[0,\infty)$ by
\begin{align}\label{eq3.25:zeta-be}
    \zeta_{\be}(t) =		
    \begin{cases}
        \displaystyle \frac{e^{\be t}-1}{\be}, &\text{if }\be>0,\\
        t, &\text{if }\be=0.
    \end{cases}
\end{align}
Note that $\zeta_\be(t)\to\zeta_0(t)$ locally uniformly as $\be\to0$.

\begin{corollary}[Equivalence of $c$-exponential stability condition]\label{cor3.20:chap3-main-cor}
    Let $\mcA,\mcB\in\GCZP$ and $\al$, $\be\ge0$.
    Then the following are equivalent:
    \begin{enumerate}[(a)]
        \item For all $x,y\in\Pi$ and $t\ge0$, it holds
        \begin{align*}
            \mcC_c(\de_x e^{t\mcA},\de_y e^{t\mcB}) \le e^{\be t}c(x,y) + \al\zeta_{\be}(t);
        \end{align*}
        \item For all $\mu,\nu\in\mcP_c(\Pi)$ and $t\ge0$, it holds
        \begin{align*}
            \mcC_c(\mu e^{t\mcA},\nu e^{t\mcB}) \le e^{\be t}\mcC_c(\mu,\nu) + \al\zeta_{\be}(t);
        \end{align*}
        \item For all $\mu,\nu\in\mcP_c(\Pi)$, it holds
        \begin{align*}
            \om_c(\mu,\nu;\mcA,\mcB) \le \al + \be\mcC_c(\mu,\nu);
        \end{align*}
        \item For all $x,y\in\Pi$, it holds
        \begin{align*}
            \om_c(x,y;\mcA,\mcB) \le \al + \be c(x,y);
        \end{align*}        
        \item For all $\phi\in D(\mcA),\psi\in D(\mcB)$ such that $c-(\phi\oplus\psi)$ achieves a global minimum at some $(x_0,y_0)\in\Pi^2$, it holds
        \[\mcA\phi(x_0)+\mcB\psi(y_0)\le \al + \be c(x_0,y_0).\]
    \end{enumerate}
\end{corollary}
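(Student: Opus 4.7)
The plan is to obtain this corollary as a direct specialization of Theorem \ref{thm:chap3-main-thm}. Specifically, I would apply the theorem with the constant choices $f \equiv \alpha$ and $g \equiv 0$, both of which clearly belong to $C_{b,c}(\Pi)$. Under this choice, the five conditions of Theorem \ref{thm:chap3-main-thm} reduce respectively to conditions (a)--(e) of the corollary, so the equivalence chain (a) $\leftrightarrow$ (b) $\leftrightarrow$ (c) $\leftrightarrow$ (d) $\leftrightarrow$ (e) stated there carries over immediately.

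The only computations required are the following simple ones. Since every probability semigroup preserves constants, $e^{s\mcA}f(x) = \alpha$ and $e^{s\mcB}g(y) = 0$ for all $s,x,y$, which handles the pointwise and measure-integrated expressions appearing in Theorem \ref{thm:chap3-main-thm}(a)--(b). The time integral evaluates explicitly to
\begin{align*}
    \int_0^t e^{\be(t-s)}\alpha\,ds = \alpha\,\zeta_\be(t),
\end{align*}
using the definition \eqref{eq3.25:zeta-be} in both the $\be>0$ and $\be=0$ cases. For conditions (c)--(e) the simplifications are even more immediate: $\inn{\mu,f}+\inn{\nu,g} = \alpha$, $f(x)+g(y) = \alpha$, and $f(x_0)+g(y_0) = \alpha$. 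Plugging these identifications into Theorem \ref{thm:chap3-main-thm}(a)--(e) yields exactly Corollary \ref{cor3.20:chap3-main-cor}(a)--(e), and the equivalences follow. There is no real obstacle here, as the corollary is a direct instantiation of the main theorem with the simplest admissible choice of $(f,g)$.
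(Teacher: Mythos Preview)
Your proposal is correct and matches the paper's proof exactly: the paper also applies Theorem \ref{thm:chap3-main-thm} with $f\equiv\alpha$, $g\equiv0$, and computes $\int_0^t e^{\be(t-s)}\inn{\mu,e^{s\mcA}\alpha}\,ds = \alpha\zeta_\be(t)$. Your version simply spells out the remaining substitutions more explicitly.
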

\begin{proof}
    This result follows directly from Theorem \ref{thm:chap3-main-thm}, by setting $f\equiv\al$ and $g\equiv0$, and observing that for all $\mu\in \mcp_c(\Pi)$:
    \begin{align*}
        \int_0^t e^{\be(t-s)}\inn{\mu, e^{s\mcA}\al}\,ds = \al \int_0^t e^{\be(t-s)}\,ds = \al\zeta_\be(t). \qquad \qedhere 
    \end{align*}
\end{proof}

To facilitate the discussion in the coming sections, we give the following definition.

\begin{definition}[$c$-exponential stability condition]
    \begin{enumerate}[(i)]
        \item We say $\mcA,\mcB\in\GCZP$ satisfy the \emph{$c$-exponential stability condition with parameters $\al$ and $\be$}, or \emph{$\mathrm{Exp}_c(\al,\be)$-condition} for short, if they satisfy any of the equivalent conditions in Corollary \ref{cor3.20:chap3-main-cor} with $\al,\be\ge0$.
        \item We define $\mathrm{Exp}_c(\al,\be)$ to be the set of all pairs $(\mcA,\mcB)$ in $\GCZP$ that satisfy the $\mathrm{Exp}_c(\al,\be)$-condition. Furthermore, we define
        \begin{align*}
            \mathrm{Exp}_c = \bigcup_{\al,\be\ge0}\mathrm{Exp}_c(\al,\be).
        \end{align*}
    \end{enumerate}
\end{definition}
\begin{remark}\label{rmk3.22:temp}
	One may use the $c$-exponential stability condition to \emph{topologize} the space $\mcg_c^0(\Pi)$. As seen in Corollary \ref{cor3.20:chap3-main-cor}, if $\mA,\mB$ are close in the sense that $(\mA,\mB)\in \mathrm{Exp}_c(\ep,\be)$, where $\ep>0$ is small, then the flow $\mu e^{t\mA},\mu e^{t\mB}$ starting at the same measure $\mu$ remains close, as the following bound holds:
	\begin{align*}
		\mcc_c(\mu e^{t\mA},\mu e^{t\mB})\le \ep \zeta_\be(t).
	\end{align*}
	More specifically, if $\{\mA_n\}_n$ is a sequence of $\mcg_c^0(\Pi)$-generator such that the pair $(\mA_n,\mB)$ satisfies $\mathrm{Exp}_c(\ep_n,\be)$-condition with $\ep_n\to 0$, then $\mu e^{t\mA_n}\xrightarrow{c}\mu e^{t\mB}$ for all $\mu\in\mcp_c(\Pi)$.
\end{remark}

The following example illustrates that whether a pair of generators satisfies the $\Exp$-condition is dependent on the underlying semimetric $c$.
\begin{example}\label{example3.23:temp}
   Consider $\Pi=\mbr^d$, and let $\mA =\ta_1 \De$, $\mB= \ta_2 \De$, where $\De$ is the Laplace operator on $\mbr^d$ and $\ta_1,\ta_2\ge 0$. If $c(x,y)=\frac 12 |x-y|^2$, then we will have 
   \begin{align*}
       \mcc_c(\de_xe^{\ta_1 t\De},\de_y e^{\ta_2 t\De}) &=\frac 12|x-y|^2+ \frac t 2\rb{\ta_1^{1/2}-\ta_2^{1/2}}^2.
   \end{align*}
   One can refer to the proof of Proposition \ref{prop:levy-bdd}(ii) later for the calculation. 
   This follows $(\ta_1\De,\ta_2\De)\in \mathrm{Exp}_c\mrb{\frac 12\mrb{\ta_1^{1/2}-\ta_2^{1/2}}^2,0}$.
   
   However, if $c(x,y)=|x-y|$, then we would have
   \begin{align*}
       \mcc_c(\de_x e^{\ta_1t\De}, \de_y e^{\ta_2 t\De})\sim |x-y|+C t^{1/2},  
   \end{align*}
   where $C>0$ is a constant.
   This particularly implies $\om_c(x,y;\ta_1\De,\ta_2\De)=\infty$. Hence, $(\ta_1\De,\ta_2\De)\notin \mathrm{Exp}_c$ in this case. 
\end{example}

\subsection{General duality formula and subadditivity of \texorpdfstring{$\om_c$}{omega\_c}}
Let us now state a generalization of the equivalence between (d) and (e) of Theorem \ref{thm:chap3-main-thm}, which will be useful in proving some other estimates for $\om_c$.
\begin{corollary}\label{cor:general-duality-theta}
    Let $\mcA,\mcB\in\GCZP$, and $\ta:\Pi^2\to[-\infty,\infty)$ be an upper semicontinuous function satisfying $\ta(x,y)\le \tilde f(x)+\tilde g(y)$ for some $\tilde f,\tilde g\in C_{b,c}(\Pi)$. Then the following are equivalent:
    \begin{enumerate}[(i)]
        \item for all $x,y\in\Pi$, it holds
        \[\om_c(x,y;\mcA,\mcB)\le\ta(x,y);\]
        \item for all $\phi\in D(\mcA),\psi\in D(\mcB)$ such that $c-(\phi\oplus\psi)$ achieves a global minimum at some $(x_0,y_0)\in\Pi^2$, it holds
        \begin{align*}
            \mcA\phi(x_0)+\mcB\psi(y_0)\le\ta(x_0,y_0).
        \end{align*}
    \end{enumerate}
\end{corollary}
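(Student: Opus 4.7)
The key idea is to reduce the general upper semicontinuous $\ta$ to the special case $\ta = f\oplus g$ with $f,g\in C_{b,c}(\Pi)$, which is covered by the equivalence (d)$\Leftrightarrow$(e) in Theorem \ref{thm:chap3-main-thm} (taking $\be = 0$). The bridge between the two formulations is the approximation result in Proposition \ref{prop2.40:temp}, which states that an upper semicontinuous $\ta$ bounded above by some $\tilde f\oplus\tilde g$ with $\tilde f,\tilde g\in C_{b,c}(\Pi)$ can be written pointwise as the infimum of $f(x_0)+g(y_0)$ over all pairs $(f,g)\in C_{b,c}(\Pi)^2$ satisfying $\ta\le f\oplus g$. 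The hypothesis on $\ta$ in the corollary is precisely tailored to ensure this infimum representation applies.

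For (i)$\Rightarrow$(ii), assume $\om_c(x,y;\mcA,\mcB)\le \ta(x,y)$ for all $x,y$. Fix any pair $(f,g)\in C_{b,c}(\Pi)^2$ with $\ta\le f\oplus g$. Then $\om_c(x,y;\mcA,\mcB)\le f(x)+g(y)$ for all $x,y$, so by the implication (d)$\Rightarrow$(e) of Theorem \ref{thm:chap3-main-thm} (with $\be=0$), for any $\phi\in D(\mcA),\psi\in D(\mcB)$ such that $c-(\phi\oplus\psi)$ attains a global minimum at $(x_0,y_0)$, we have $\mcA\phi(x_0)+\mcB\psi(y_0)\le f(x_0)+g(y_0)$. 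Taking the infimum over all admissible $(f,g)$ and invoking Proposition \ref{prop2.40:temp} at $(x_0,y_0)$ yields $\mcA\phi(x_0)+\mcB\psi(y_0)\le \ta(x_0,y_0)$.

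For (ii)$\Rightarrow$(i), fix $(x_0,y_0)\in\Pi^2$ and $\ep>0$. By Proposition \ref{prop2.40:temp}, there exists $(f,g)\in C_{b,c}(\Pi)^2$ with $\ta\le f\oplus g$ and $f(x_0)+g(y_0)\le \ta(x_0,y_0)+\ep$ (or, if $\ta(x_0,y_0)=-\infty$, the value can be made arbitrarily negative). By (ii), for any $\phi\in D(\mcA),\psi\in D(\mcB)$ such that $c-(\phi\oplus\psi)$ attains a global minimum at some $(x_1,y_1)$, we have
\[
\mcA\phi(x_1)+\mcB\psi(y_1)\le \ta(x_1,y_1)\le f(x_1)+g(y_1).
\]
Applying the implication (e)$\Rightarrow$(d) of Theorem \ref{thm:chap3-main-thm} (with $\be=0$) gives $\om_c(x,y;\mcA,\mcB)\le f(x)+g(y)$ for every $x,y\in\Pi$. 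Evaluating at $(x_0,y_0)$ yields $\om_c(x_0,y_0;\mcA,\mcB)\le \ta(x_0,y_0)+\ep$, and letting $\ep\searrow 0$ gives (i).

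The main conceptual step is the approximation Proposition \ref{prop2.40:temp}, which is already supplied; once that is in hand, the argument is a clean reduction to Theorem \ref{thm:chap3-main-thm}. No new delicate analysis is required, and the growth condition $\ta\le \tilde f\oplus\tilde g$ is only used to invoke the representation from Proposition \ref{prop2.40:temp}.
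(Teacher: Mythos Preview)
Your proof is correct and follows essentially the same approach as the paper: reduce to Theorem \ref{thm:chap3-main-thm} with $\beta=0$ via the infimum representation of $\ta$ from Proposition \ref{prop2.40:temp}, then take the infimum over admissible pairs $(f,g)$. The paper in fact only writes out the (i)$\Rightarrow$(ii) direction and says the converse is ``identical,'' so your spelled-out (ii)$\Rightarrow$(i) argument is a faithful elaboration of exactly what the paper has in mind.
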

\begin{remark}
    In particular, the following holds: let $\al(x_0,y_0)=\sup_{\phi,\psi}\msqb{\mcA\phi(x_0)+\mcB\psi(y_0)}$, where the supremum is taken over all pairs $(\psi,\psi)$ such that $c-(\phi\oplus\psi)$ achieves a global minimum at $(x_0,y_0)$. Then the upper semicontinuous envelope of $\om_c(\cdot,\cdot;\mcA,\mcB)$ and $\al$ are identical.
\end{remark}
\begin{proof}[Proof of Corollary \ref{cor:general-duality-theta}]
    (i) $\to$ (ii).
    Let $\mcF=\scb{(f,g) : \ta\le f\oplus g}$. Since $\ta$ is upper semicontinuous and satisfies the upper bound, we have $\ta(x,y)=\inf_{(f,g)\in\mcF}\msqb{f(x)+g(y)}$ (see Proposition \ref{prop2.40:temp}).
    
    Since $\om_c(\cdot,\cdot;\mcA,\mcB)\le \ta$, it follows that
    \begin{align*}
        \om_c(x,y;\mcA,\mcB) \le f(x)+g(y)
    \end{align*}
    for all $(f,g)\in\mcF$. By Theorem \ref{thm:chap3-main-thm} (with $\be=0$), (d) $\to$ (e), for all $\phi,\psi,x_0,y_0$ satisfying (ii), it holds
    \begin{align*}
        \mcA\phi(x_0)+\mcB\psi(y_0) \le f(x_0)+g(y_0).
    \end{align*}
    Since this bound holds for all $(f,g)\in\mcF$, taking the infimum over all such pairs, we find
    \begin{align*}
        \mcA\phi(x_0)+\mcB\psi(y_0) \le \ta(x_0,y_0).
    \end{align*}
    (ii) $\to$ (i). The proof of the converse is identical.
\end{proof}
A consequence of Corollary \ref{cor:general-duality-theta} is the subadditivity of the map
\begin{align*}
    \GCZP^2 \ni (\mcA,\mcB) \mapsto \om_c(\mu,\nu;\mcA,\mcB). 
\end{align*}
By subadditivity w.r.t. generators, it means:
\begin{align*}
    \om_c(\mu,\nu;\mcA_1+\mcA_2,\mcB_1+\mcB_2) \le \om_c(\mu,\nu;\mcA_1,\mcB_1) + \om_c(\mu,\nu;\mcA_2,\mcB_2).
\end{align*}
Before proving such a bound, it is crucial to address the issue of whether $\mcA_1+\mcA_2$ generates a probability semigroup. Specifically, given two probability generators $\mcA_1,\mcA_2\in\mcG(\Pi)$, the question arises: is their sum $\mcA_1+\mcA_2$ also a probability generator? This problem is well-studied in the context of perturbation theory for $C_0$-semigroups. Under various conditions on the generators $\mcA_k$, the answer is affirmative. For a detailed discussion, we refer the reader to \cite[Chapter III]{engel2006short}.
We will assume that $\mcA_1+\mcA_2$ and $\mcB_1+\mcB_2$ are indeed probability generators in $\GCZP$ and focus on proving subadditivity. 


\begin{theorem}\label{thm:om-subadditivity}
    Let $\cb{\mcA_k,\mcB_k}_{k=1}^m$ be a family of $\GCZP$-generators. For each $1\le k\le m$, let $\ta_k:\Pi^2\to[-\infty,\infty)$ be an upper semicontinuous function satisfying $\ta_k \le f_k\oplus g_k$ for some $f_k,g_k\in C_{b,c}(\Pi)$. Let
    \begin{align*}
        \ta=\sum_{k=1}^m \ta_k,\quad \mcA=\sum_{k=1}^m \mcA_k,\quad \mcB=\sum_{k=1}^m\mcB_k.
    \end{align*}
    Assume $\mcA$, $\mcB$ are probability generators in $\GCZP$, and there are two cores $\mcD,\mcD'$ of $\mcA,\mcB$ such that $\mcD\subset D(\mcA_k), \mcD'\subset D(\mcB_k)$ for each $1\le k\le m$.
    Suppose it holds for each $1\le k\le m$ that
    \begin{align*}
        \om_c(x,y;\mcA_k,\mcB_k) \le \ta_k(x,y),\quad\text{for all }x,y\in\Pi,
    \end{align*}
    then
    \begin{align*}
        \om_c(x,y;\mcA,\mcB) \le \ta(x,y),\quad\text{for all }x,y\in\Pi.
    \end{align*}
\end{theorem}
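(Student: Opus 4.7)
The plan is to convert the bound on $\om_c$ into its viscosity-type dual formulation via Corollary \ref{cor:general-duality-theta}, where subadditivity becomes transparent from the linearity of the generators. Specifically, for each $k$, the hypothesis $\om_c(x,y;\mcA_k,\mcB_k) \le \ta_k(x,y)$ is equivalent to the following test-function condition: whenever $\phi \in D(\mcA_k)$, $\psi \in D(\mcB_k)$ and $c - (\phi \oplus \psi)$ attains a global minimum at some $(x_0, y_0) \in \Pi^2$, one has $\mcA_k\phi(x_0) + \mcB_k\psi(y_0) \le \ta_k(x_0, y_0)$.

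Next, I would fix $\phi \in \mcD$ and $\psi \in \mcD'$ such that $c - (\phi \oplus \psi)$ achieves a global minimum at some $(x_0, y_0)$. Since the cores satisfy $\mcD \subset D(\mcA_k)$ and $\mcD' \subset D(\mcB_k)$ for every $k$, each of the $m$ individual inequalities above applies to this pair $(\phi,\psi)$ at $(x_0,y_0)$. Summing in $k$ and using the identities $\mcA\phi = \sum_k \mcA_k\phi$ on $\mcD$ and $\mcB\psi = \sum_k \mcB_k\psi$ on $\mcD'$ yields
\[
\mcA\phi(x_0) + \mcB\psi(y_0) \le \sum_{k=1}^m \ta_k(x_0, y_0) = \ta(x_0, y_0).
\]
Note also that $\ta$ is upper semicontinuous (as a finite sum of upper semicontinuous functions) and satisfies $\ta \le (\sum_k f_k)\oplus(\sum_k g_k)$ with $\sum_k f_k,\sum_k g_k\in C_{b,c}(\Pi)$, so Corollary \ref{cor:general-duality-theta} applies to the triple $(\mcA,\mcB,\ta)$. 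The reverse implication of that corollary would then immediately give the desired bound $\om_c(x,y;\mcA,\mcB) \le \ta(x,y)$, provided the above test-function inequality is available for every $\phi \in D(\mcA)$ and $\psi \in D(\mcB)$, not merely for elements of the cores.

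The main obstacle is precisely this core-to-domain upgrade: the touching-point condition ``$c-(\phi\oplus\psi)$ has a global minimum at $(x_0,y_0)$'' is a pointwise statement that is not stable under graph-norm approximation, so one cannot simply pass to the limit along a core-approximating sequence. To bypass this, I would revisit the comparison-principle proof of Theorem \ref{thm:chap3-main-thm}(e)$\to$(a) and check that the viscosity condition is invoked there only on Duhamel-evolved profiles $\Phi_t^n, \Psi_t^n$ built from a seed pair $(\phi,\psi)$ together with the resolvent-type cutoffs $F_n, G_n$ of Lemma \ref{lem3.18:temp}. By choosing the seed from $\mcD\times\mcD'$, and by arranging $F_n, G_n$ to be themselves approximable by elements of the cores in the graph norm (using that cores are dense in the graph-norm topology and that resolvent images can be so approximated), the comparison-principle argument should run using only the core-based viscosity inequality established above. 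This would yield the integrated stability bound of Theorem \ref{thm:chap3-main-thm}(a) with $f = \sum_k f_k$, $g = \sum_k g_k$, from which $\om_c(x,y;\mcA,\mcB)\le\ta(x,y)$ follows by another application of Corollary \ref{cor:general-duality-theta}. The inclusion of the core hypothesis in the statement of the theorem suggests that this is exactly the intended route.
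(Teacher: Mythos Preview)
Your proposal is correct and follows exactly the paper's approach: invoke Corollary~\ref{cor:general-duality-theta} for each $(\mcA_k,\mcB_k)$ to obtain the test-function inequality on $\mcD\times\mcD'$, sum over $k$, and apply the corollary in reverse for $(\mcA,\mcB,\ta)$. The core-to-domain upgrade you flag as the main obstacle is handled by the paper only in a one-line remark immediately after the proof, asserting that density of the cores suffices---so your caution here is well placed, and your sketched resolution via the comparison-principle argument already goes beyond what the paper itself spells out.
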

\begin{proof}
    Let $\phi\in\mcD$, $\psi\in\mcD'$ be such that $c-(\phi\oplus\psi)$ achieves a global minimum at some $(x_0,y_0)\in\Pi^2$. Then by Corollary \ref{cor:general-duality-theta}, it holds
    \begin{align*}
        \mcA_k\phi(x_0)+\mcB_k\psi(y_0) \le \ta_k(x_0,y_0).
    \end{align*}
    Summing up then leads to
    \begin{align*}
        \mcA\phi(x_0)+\mcB\psi(y_0) \le \sum_{k=1}^m\ta_k(x_0,y_0) = \ta(x_0,y_0).
    \end{align*}
    Finally, notice that $\ta$ is an upper semicontinuous function satisfying $\ta \le f\oplus g$, where $f:=\sum_{k=1}^m f_k$, $g:=\sum_{k=1}^m g_k$ are in $C_{b,c}(\Pi)$. Applying Corollary \ref{cor:general-duality-theta} again then leads to
    \begin{align*}
        \om_c(x,y;\mcA,\mcB) \le \ta(x,y)
    \end{align*}
    for all $x,y\in\Pi$.
\end{proof}

\begin{remark}
    Corollary \ref{cor:general-duality-theta} requires us to consider all functions $\phi,\psi$ in the domains of the probability generators. However, due to the density of the cores $\mcD,\mcD'$ in the domains, it suffices to check the condition for $\phi\in\mcD$, $\psi\in\mcD'$.
\end{remark}

\section{Abstract mean-field equations and their well-posedness}\label{chap4}

\subsection{Overview}\label{4.1:overview}
This subsection serves as a foreword, formulating the problems and presenting the main results of Section \ref{chap4}. The present work combines the semigroup approaches to mean-field models and optimal transport, necessitating the development of an entirely new theory for the well-posedness of the abstract mean-field evolution problem (Theorem \ref{thm4.3:chap4-main-result}). Before addressing the mean-field equation directly, we take a necessary detour to build a more general theory. This involves solving the nonhomogeneous linear Fokker-Planck equation \eqref{eq4.9:linear-fokker-planck}, whose solutions are in the space of curve of probability measures. 

There have been previous attempts (such as the JKO scheme \cite{jordan1998variational}, \cite{LEE2015131}) to solve equations whose solutions reside in the space of curves of probability measures, especially in the Euclidean setting. However, our approach extends and generalizes these efforts to a broader class of systems. The newly developed theory for the well-posedness of nonhomogeneous linear Fokker-Planck equation (Theorem \ref{linear-fokker-planck-thm}, Corollary \ref{cor4.19temp}, Lemma \ref{newest-stability}) is designed to incorporate well with optimal transport theory while remaining consistent with the stability estimates introduced in Section \ref{chap3}. Indeed, the results from Section \ref{chap3} play a critical role in the formulation and proof of the main results in this section. The organization of the remainder of Section \ref{chap4} will be outlined at the end of this subsection (Section 4.1).

\subsubsection{Abstract formulation of mean-field models and equations}
As demonstrated in the example of McKean-Vlasov diffusion above, the usual approach to mean-field systems involves starting with an $N$-particle system and deriving the corresponding mean-field description and generator. In this work, we consider the reverse approach: Instead, we begin by introducing a \emph{mean-field generator} $\{\mcA(\mu)\}_{\mu}$, and then identify a class of $N$-particle systems that are naturally associated with it. We demonstrate that the \emph{propagation of chaos} holds for these systems, with the mean-field limit described by the evolution problem governed by the mean-field generator. 

\begin{definition}[Mean-field generators]\label{def4.1:mean-field-operator}
	A \emph{mean-field generator} is defined as a map $\MFG$.
\end{definition}
\begin{notation}
	A mean-field generator $\MFG$ can also be treated as a collection $\cb{\mcA(\mu):\mu\in\mcP_c(\Pi)}$ of $\GCZP$-generators indexed by $\mcP_c(\Pi)$.
	We denote the following:
	\begin{enumerate}[(i)]
		\item For $\mu\in\mcP_c(\Pi)$, $\phi\in D(\mcA_\mu)$, we write $\mcA(\mu)=\mcA_\mu$ and $\mcA(\mu)(\phi)=\mcA_\mu(\phi)=\mcA(\phi;\mu)$.
		\item For each $\mu\in\mcP_c(\Pi)$, $\mcA(\mu)$ generates a probability semigroup $\scb{e^{t\mcA(\mu)}}_{t\ge0}$, which has transition kernels that we denote as $\cb{\ka_t(\cdot;\mu)}_{t\ge0}$.
	\end{enumerate}
\end{notation}

Given a mean-field generator $\cb{\mcA(\mu):\mu\in\mcP_c(\Pi)}$, the argument $\mu\in\mcP_c(\Pi)$ will be called a \emph{mean-field measure}. Intuitively speaking, the probability generator $\mcA(\mu)$ describes the evolution of a single particle over a short period of time under the influence of \emph{mean-field particles}, which are other particles in the system, assumed to be distributed as the mean-field measure $\mu$.
To illustrate the idea above, consider the mean-field generator for the McKean-Vlasov diffusion. Another example will be given in Section \ref{chap6}.
\begin{example}
    The mean-field generator of the McKean-Vlasov diffusion is given by \eqref{eq:mv-mf-generator}, which reads
    \begin{align*}
        \mcA_\mu(\phi)(x) =\mcA[\phi;\mu](x)= b(x,\mu)\cdot\nabla \phi(x)+\frac{1}{2} \sum_{i,j=1}^d a_{ij}(x,\mu) \partial_{ij}\phi(x),
    \end{align*}
    and there is a (mean-field) $N$-particle system naturally associated with it, which is the McKean-Vlasov diffusion \eqref{eq:mckean-vlasov-diffusion}.
\end{example}

\subsubsection{Mean-field equations and their well-posedness}\label{subsec4.asbkj}
Let $\mu=\scb{\mu_t}_{t}\in C([0,T];\mcP_c(\Pi))$ be a continuous curves of mean-field measures, and consider the initial value problem of the evolution equation
\begin{align}\label{eq4.7:linearized-mf-eq}\tag{PMFP}
	\left\{\begin{aligned}
		\partial_t \rho_t &= \rho_t\mcA(\mu_t),\quad t\in(0,T),\\
		\rho_0 &\in \mcP_c(\Pi).
	\end{aligned}\right.
\end{align}
A solution $\rho=\scb{\rho_t}_{t}$, which is a continuous curve of probability measures, is the distribution of a single particle under the influence of mean-field particles, assuming at any time $t\in[0,T]$, the mean-field particles are distributed as the \emph{prescribed} distribution $\mu_t$. Formally, equation \eqref{eq4.7:linearized-mf-eq} means: in the infinitesimal time window $[t,t+h]$, $\rho_s$ evolves following the Markov flow with the probability semigroup $\cb{e^{s\mcA(\mu_t)}}_{s\ge0}$:
\begin{align*}
	\rho_{t+h} \approx \rho_t e^{h\mcA(\mu_t)},\quad h \ll 1.
\end{align*}

The \emph{mean-field evolution problem} is a special case of the evolution problem \eqref{eq4.7:linearized-mf-eq}, where instead of a prescribed distribution $\mu_t$, the continuous curve of mean-field measures $\mu_t$ agrees with the solution $\rho_t$, that is,
\begin{align}\label{eq4.6:mean-field-equation}\tag{MFP}
	\left\{\begin{aligned}
		\partial_t \br_t &= \br_t \mcA(\br_t),\quad t\in(0,\infty),\\
		\br_0 &\in \mcP_c(\Pi).
	\end{aligned}\right.
\end{align}
In short, the distribution of a single particle $\br_t$ evolves as \eqref{eq4.7:linearized-mf-eq}, where the mean-field particles are distributed exactly by $\br_t$ itself. 

Notice that \eqref{eq4.7:linearized-mf-eq} is linear, while \eqref{eq4.6:mean-field-equation} is nonlinear. 
In the rest of this section, we develop the theory of well-posedness to the mean-field problem \eqref{eq4.6:mean-field-equation}, under a continuity assumption on mean-field generators. More precisely, we assume the following, recalling the definition of Dini derivative between Markov flows $\om_c$ from Definition \ref{def:om-definition}.

\begin{hypothesisA}\label{hypoA}
	Assume that the mean-field generator $\MFG$ satisfies: there exists $\al,\be\ge0$ such that for all $\mu,\nu,\mu',\nu'\in\mcP_c(\Pi)$, it holds
	\begin{align*}
		\om_c(\mu,\nu;\mcA(\mu'),\mcA(\nu')) \le \al\mcC_c(\mu',\nu')+\be\mcC_c(\mu,\nu).
	\end{align*}
	That is, $(\mcA(\mu'),\mcA(\nu'))\in\mathrm{Exp}_c(\al\mcC_c(\mu',\nu'),\be)$. Equivalently (see Corollary \ref{cor3.20:chap3-main-cor}), it holds for all $x,y\in\Pi$, $\mu',\nu'\in\PCP$ that
    \begin{align*}
		\om_c(x,y;\mcA(\mu'),\mcA(\nu')) \le \al\mcC_c(\mu',\nu')+\be c(x,y).
	\end{align*}
\end{hypothesisA}

Hypothesis {\refA} is essentially a Lipschitz continuity assumption on the map 
\[ (\mu,\mu') \mapsto \mu e^{t\mA(\mu')}. \]
In Section \ref{chap3}, particularly Corollary \ref{cor3.20:chap3-main-cor}, we provided a list of equivalent conditions for Hypothesis {\refA}, which will be useful for verifying it. 
A consequence of that corollary is the following estimate:
\begin{align*}
	\mathcal{C}_c(\mu e^{t \mathcal{A}(\mu')}, \nu e^{t \mathcal{A}(\nu')}) &\leq e^{\beta t} \mathcal{C}_c(\mu, \nu) + \al\zeta_\beta(t)\mathcal{C}_c(\mu', \nu').
\end{align*}
Thus, if the pairs $\mu, \nu$ and $\mu', \nu'$ are close in the Wasserstein-$c$ topology, the generated flows $\mu e^{t \mathcal{A}(\mu')}, \nu e^{t \mathcal{A}(\nu')}$ remain close (in a certain order of $t$) as time $t$ progresses.
We remark here the significance of this hypothesis. It ensures the well-posedness of the mean-field evolution problem and guarantees the propagation of chaos for certain classes of $N$-particle systems.

We will prove well-posedness, that is, the existence, uniqueness and stability of solutions, under a notion of solutions that we call \emph{$c$-stable solutions}. Although the statement of our main results of this section rely on the definition of $c$-stable solutions, we shall present them here for the sake of clarity and flow. The first result shows the well-posedness of the linearized problem \eqref{eq4.7:linearized-mf-eq}, while the second shows the existence and uniqueness of solution to the mean-field problem \eqref{eq4.6:mean-field-equation}.

\begin{theorem}[Well-posedness of the linearized problem]\label{thm4.345:chap4-main-result-linearized}
	Let $\MFG$ be a mean-field generator that satisfies Hypothesis {\refA}.
	\begin{enumerate}[(i)]
		\item For any $\mu\in C([0,T];\mcP_c(\Pi))$, the linearized problem \eqref{eq4.7:linearized-mf-eq} admits a unique $c$-stable solution $\scb{\rho_t}_{t\in[0,T]}\in C([0,T];\mcP_c(\Pi))$.
		\item Suppose $\mu,\nu\in C([0,T];\mcP_c(\Pi))$. If $\rho,\si\in C([0,T];\mcP_c(\Pi))$ are the $c$-stable solutions of
		\begin{align*}
			\left\{\begin{aligned}
				\partial_t \rho_t &= \rho_t\mcA(\mu_t),\quad t\in(0,T),\\
				\rho_0 &\in \mcP_c(\Pi),
			\end{aligned}\right.
			\quad\text{and}\quad
			\left\{\begin{aligned}
				\partial_t \si_t &= \si_t\mcA(\nu_t),\quad t\in(0,T),\\
				\si_0 &\in \mcP_c(\Pi),
			\end{aligned}\right.
		\end{align*}
		respectively, then it holds for all $t\in[0,T]$ that    
		\begin{align*}
			\mcC_c(\rho_t,\si_t) \le \mcC_c(\rho_0,\si_0) e^{\be t} + \al\zeta_\be(t) \sup_{s\in[0,T]} \mcC_c(\mu_s,\nu_s).
		\end{align*}
	\end{enumerate}
\end{theorem}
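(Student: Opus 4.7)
The plan is to build the solution via a piecewise-constant Euler scheme in which the (time-dependent) generator $\mcA(\mu_t)$ is frozen on small subintervals, then derive both uniqueness and the stability estimate in part (ii) by differentiating the transport cost and invoking the Dini-derivative/Gr\"onwall machinery from Section \ref{chap3} and Proposition \ref{prop:saks-consequence}. The notion of $c$-stable solution is tailored precisely so that $t \mapsto \mcC_c(\rho_t,\si_t)$ admits the Dini-derivative bound by $\om_c$, so the technical core lies in constructing a curve in $C([0,T];\PCP)$ satisfying that compatibility.

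For existence, I would fix a partition $0=t_0^n<t_1^n<\cdots<t_{K_n}^n=T$ with mesh $\de_n\to 0$, and define $\rho^n\in C([0,T];\PCP)$ inductively by $\rho^n_0=\rho_0$ and $\rho^n_s=\rho^n_{t_k^n} e^{(s-t_k^n)\mcA(\mu_{t_k^n})}$ on $[t_k^n,t_{k+1}^n]$. Since each $\mcA(\mu_{t_k^n})\in\GCZP$, Proposition \ref{prop3.8:temp} and the continuity condition from Definition \ref{def:gc0-generators} ensure $\rho^n$ lies in $C([0,T];\PCP)$. To show $\{\rho^n\}_n$ is Cauchy in $(C([0,T];\PCP),\mcD_c)$, I would compare two approximations $\rho^n,\rho^m$ on each subinterval via Corollary \ref{cor3.20:chap3-main-cor} applied to the pair $(\mcA(\mu_{t_k^n}),\mcA(\mu_{t_k^m}))$. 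Hypothesis \refA{} puts that pair in $\mathrm{Exp}_c(\al\mcC_c(\mu_{t_k^n},\mu_{t_k^m}),\be)$, and uniform $c$-continuity of $t\mapsto\mu_t$ on $[0,T]$ (a consequence of $\mu\in C([0,T];\PCP)$ and Proposition \ref{prop:hypoC-for-P_c(Pi)}) lets me control the accumulated error by $\sup_{|s-t|\le\de}\mcC_c(\mu_s,\mu_t)\to 0$. Completeness of $(C([0,T];\PCP),\mcD_c)$ from Proposition \ref{prop:completeness-of-measure-curve} then furnishes a limit curve $\rho\in C([0,T];\PCP)$. Verifying that $\rho$ is a $c$-stable solution amounts to passing to the limit in the Dini-derivative inequality enjoyed by the $\rho^n$, using lower semicontinuity of $\mcC_c$ and the continuity of the map $\nu\mapsto\mcA(\nu)$ in the $\mathrm{Exp}_c$-sense highlighted in Remark \ref{rmk3.22:temp}.

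For part (ii), I would proceed directly from the defining Dini-derivative compatibility of $c$-stable solutions. If $\rho,\si$ are the $c$-stable solutions associated with $\mu,\nu$ respectively, the notion of $c$-stable solution should yield
\begin{align*}
D^+ \mcC_c(\rho_t,\si_t) &\le \om_c(\rho_t,\si_t;\mcA(\mu_t),\mcA(\nu_t)).
\end{align*}
By Hypothesis \refA, this is bounded above by $\al\mcC_c(\mu_t,\nu_t)+\be\mcC_c(\rho_t,\si_t)$. Setting $F(t)=\mcC_c(\rho_t,\si_t)$ and $G(t)=\al\mcC_c(\mu_t,\nu_t)+\be F(t)$, the function $F$ is continuous (since $\rho,\si\in C([0,T];\PCP)$ and $\mcC_c$ is continuous by Proposition \ref{prop:hypoC-for-P_c(Pi)}) and $G$ is upper semicontinuous, so Proposition \ref{prop:saks-consequence} gives
\begin{align*}
F(t)-F(0) &\le \int_0^t \mleft[\al\mcC_c(\mu_s,\nu_s)+\be F(s)\mright]\,ds.
\end{align*}
The integral form of Gr\"onwall's inequality then yields
\begin{align*}
F(t) &\le e^{\be t}F(0)+\al\int_0^t e^{\be(t-s)}\mcC_c(\mu_s,\nu_s)\,ds \le e^{\be t}\mcC_c(\rho_0,\si_0)+\al\zeta_\be(t)\sup_{s\in[0,T]}\mcC_c(\mu_s,\nu_s),
\end{align*}
which is the claimed estimate. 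Uniqueness of part (i) falls out by specializing to $\mu=\nu$ and $\rho_0=\si_0$.

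The main obstacle is the construction step and its coupling with the chosen notion of $c$-stable solution: one must show that the piecewise-frozen Euler scheme not only converges in $(C([0,T];\PCP),\mcD_c)$, but that the limit genuinely satisfies the defining compatibility with $\om_c$ rather than merely a weak/distributional version of $\partial_t\rho_t=\rho_t\mcA(\mu_t)$. Passing Dini-derivative bounds to the limit under only $c$-convergence is delicate, since $\om_c$ is defined as a $\limsup$ and is not a priori continuous in its arguments; the key leverage is the duality characterization of Theorem \ref{thm:chap3-main-thm}(e) and the upper-semicontinuous envelope argument behind Proposition \ref{prop:saks-consequence}, which allow one to bypass pointwise convergence of $\om_c$ and work with its envelope instead.
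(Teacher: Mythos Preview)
Your construction in part (i) is exactly the paper's approach: the piecewise-constant Euler scheme, the Cauchy estimate via Hypothesis \refA{} and uniform continuity of $t\mapsto\mu_t$, and the appeal to completeness of $C([0,T];\PCP)$ are precisely the content of Theorem \ref{linear-fokker-planck-thm} and Corollary \ref{cor4.19temp}. One clarification: in the paper, the notion of $c$-stable solution is \emph{defined} to be the limit of PCA solutions, so once you have convergence of the scheme there is nothing further to verify---the limit is a $c$-stable solution by definition, not by a separate Dini-derivative compatibility check.

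For part (ii), your outline has a gap that you yourself flag at the end. You write that ``the notion of $c$-stable solution should yield $D^+\mcC_c(\rho_t,\si_t)\le\om_c(\rho_t,\si_t;\mcA(\mu_t),\mcA(\nu_t))$'', but this Dini bound is not part of the definition and is not established for $c$-stable solutions anywhere in the paper. The paper sidesteps this entirely: rather than proving a pointwise Dini-derivative inequality for the limit curves, it proves the \emph{integral} inequality
\[
\mcC_c(\rho_t,\si_t)\le\mcC_c(\rho_0,\si_0)+\int_0^t\bigl[\al\mcC_c(\mu_s,\nu_s)+\be\mcC_c(\rho_s,\si_s)\bigr]\,ds
\]
directly, by first establishing it for the PCA approximants $\rho^{(n)},\si^{(n)}$ (Proposition \ref{lem4.12:stability-for-rpc}) and then passing to the limit using reverse Fatou and the upper semicontinuity of the bound $\theta(s,\mu',\nu')=\al\mcC_c(\mu_s,\nu_s)+\be\mcC_c(\mu',\nu')$; this is Lemma \ref{newest-stability}. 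Gr\"onwall then finishes as you describe. So your instinct that the delicate point is passing the $\om_c$ control to the limit is correct, but the resolution is to work at the level of the integral inequality rather than the Dini derivative---the latter would require exactly the kind of $\limsup$-stability you note is problematic.
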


\begin{theorem}[Existence and uniqueness of solution to the mean-field problem]\label{thm4.3:chap4-main-result}
    Suppose a mean-field generator $\MFG$ satisfies Hypothesis {\refA}.
    Then for every initial condition $\br_0\in\PCP$, there exists a unique $c$-stable solution $\cb{\br_t}_{t\ge0}\in C([0,\infty);\mcP_c(\Pi))$ to the mean-field problem \eqref{eq4.6:mean-field-equation}.
\end{theorem}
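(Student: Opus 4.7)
The plan is to realize a $c$-stable solution of \eqref{eq4.6:mean-field-equation} as a fixed point of the solution map for the linearized problem \eqref{eq4.7:linearized-mf-eq} on a short time interval, and then iterate in time to obtain a global solution. Fix $T>0$ and $\bar\rho_0\in\mathcal{P}_c(\Pi)$; Theorem \ref{thm4.345:chap4-main-result-linearized}(i) lets me define the solution map
\[
\Phi_T:C([0,T];\mathcal{P}_c(\Pi))\to C([0,T];\mathcal{P}_c(\Pi)),\qquad \Phi_T(\mu):=\rho,
\]
where $\rho$ is the unique $c$-stable solution of \eqref{eq4.7:linearized-mf-eq} with prescribed mean-field curve $\mu$ and initial datum $\bar\rho_0$. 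By construction, fixed points of $\Phi_T$ are exactly $c$-stable solutions of \eqref{eq4.6:mean-field-equation} on $[0,T]$.

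The contraction estimate will come from Theorem \ref{thm4.345:chap4-main-result-linearized}(ii): with the common initial condition $\bar\rho_0$, the first term in the stability bound vanishes and I get
\[
\mathcal{D}_c(\Phi_T(\mu),\Phi_T(\nu))=\sup_{t\in[0,T]}\mathcal{C}_c(\rho_t,\sigma_t)\le\alpha\,\zeta_\beta(T)\,\mathcal{D}_c(\mu,\nu).
\]
By Proposition \ref{prop:completeness-of-measure-curve}, the space $(C([0,T];\mathcal{P}_c(\Pi)),\mathcal{D}_c)$ is complete and satisfies the $B$-relaxed triangle inequality. Since $\alpha\zeta_\beta(T)\to 0$ as $T\searrow 0$, for $T=T_0$ sufficiently small the map $\Phi_{T_0}$ is a strict contraction; passing via Proposition \ref{prop2.16:temp} to an equivalent genuine metric on the curve space (so that Banach's fixed point theorem is legitimately applicable), I obtain a unique $c$-stable solution $\bar\rho$ on $[0,T_0]$.

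The crucial observation is that $T_0$ depends only on the structural constants $\alpha,\beta$ and on the exponent from Proposition \ref{prop2.16:temp}, and \emph{not} on the initial datum. I can therefore restart the construction from $\bar\rho_{T_0}\in\mathcal{P}_c(\Pi)$ to produce a unique $c$-stable solution on $[T_0,2T_0]$, and concatenate along the grid $\{kT_0\}_{k\ge 0}$ to obtain a global solution in $C([0,\infty);\mathcal{P}_c(\Pi))$; here I will rely on the expected property that $c$-stability is local in time, so that concatenating $c$-stable solutions across successive intervals yields a $c$-stable solution on the union. For uniqueness, if $\bar\rho,\bar\sigma$ are two $c$-stable solutions of \eqref{eq4.6:mean-field-equation} with the same initial condition, applying the same stability bound on $[0,T_0]$ with $\mu=\bar\rho$ and $\nu=\bar\sigma$ gives $\mathcal{D}_c(\bar\rho,\bar\sigma)\le\alpha\zeta_\beta(T_0)\mathcal{D}_c(\bar\rho,\bar\sigma)$, forcing equality on $[0,T_0]$, and an induction along the grid propagates equality to $[0,\infty)$.

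The principal obstacle is the semimetric nature of $\mathcal{C}_c$: the lack of a genuine triangle inequality prevents a direct appeal to Banach's fixed point theorem on $(\mathcal{P}_c(\Pi),\mathcal{C}_c)$ or on its curve space. This is precisely where Proposition \ref{prop2.16:temp}, together with Proposition \ref{prop:hypoC-for-P_c(Pi)} and Proposition \ref{prop:completeness-of-measure-curve}, will do the real work, supplying a bi-equivalent genuine metric in which the contraction argument can be legitimately run. All other ingredients --- well-posedness of the linearized problem, the sharp Lipschitz dependence on the mean-field curve, and preservation of $\mathcal{P}_c(\Pi)$-membership along the flow --- are already delivered by Theorem \ref{thm4.345:chap4-main-result-linearized}, so the remaining work is essentially the fixed-point packaging and the global-in-time extension.
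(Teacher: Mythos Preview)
Your proposal is correct and follows the same overall strategy as the paper: define the solution map $\Phi$ for the linearized problem, use Theorem~\ref{thm4.345:chap4-main-result-linearized}(ii) to obtain the contraction constant $\alpha\zeta_\beta(T)$, choose $T$ small enough to make $\Phi$ a strict contraction on the complete curve space from Proposition~\ref{prop:completeness-of-measure-curve}, and then glue in time using that the window length is independent of the initial datum.

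The only genuine difference lies in how you handle the fixed-point argument in a \emph{semimetric} space. The paper invokes a ready-made contraction principle for complete regular semimetric spaces (Theorem~\ref{fixed-point}, due to Bessenyei--P\'ales), noting that the $B$-relaxed triangle inequality implies regularity. You instead propose to pass, via Proposition~\ref{prop2.16:temp}, to a bi-comparable genuine metric $d$ on the curve space satisfying $d\le\mathcal{D}_c^{\theta}\le 2d$, and then apply the classical Banach fixed-point theorem. This works: a $\mathcal{D}_c$-contraction with constant $k$ becomes a $d$-contraction with constant $2k^{\theta}$, so you need $\alpha\zeta_\beta(T_0)<2^{-1/\theta}$ rather than merely $<1$, but this is still achievable by shrinking $T_0$, and $T_0$ still depends only on $\alpha,\beta,\theta$ and not on $\bar\rho_0$. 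Completeness of $(C([0,T_0];\mathcal{P}_c(\Pi)),d)$ follows from that of $\mathcal{D}_c$ since the two generate the same Cauchy sequences. Your route is slightly more elementary in that it reduces to the classical Banach theorem; the paper's route is cleaner bookkeeping since the contraction threshold is simply $k<1$. Both leave the gluing step at the same level of detail.
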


\subsubsection{Abstract linear Fokker-Planck equations}
To prove the well-posedness of \eqref{eq4.6:mean-field-equation}, we follow the classical theory for parabolic semilinear/quasilinear equations. We first develop the well-posedness for the linearized problem \eqref{eq4.7:linearized-mf-eq}, the existence and uniqueness of solutions to the mean-field problem \eqref{eq4.6:mean-field-equation} follows by a standard fixed point argument.

More generally, the linearized problem \eqref{eq4.7:linearized-mf-eq} can be generalized to the case where, instead of a mean-field generator, we have a time-dependent generator $\mcA:[0,T]\to\GCZP$, and the following linear problem:
\begin{align}\label{eq4.9:linear-fokker-planck}
	\left\{\begin{aligned}
		\partial_t \rho_t &= \rho_t \mcA_t,\quad t\in(0,T),\\
		\rho_0 &\in \mcP_c(\Pi).
	\end{aligned}\right.
\end{align}
Here are the two main challenges or technicalities of the development of the theory.

(1) \emph{Notions of solutions.} There are several notions of solutions in literature for problem \eqref{eq4.9:linear-fokker-planck} (and hence \eqref{eq4.6:mean-field-equation}), along with their correspondent well-posedness theory. For example, in his classical work \cite{kato1953integration}, Kato examined non-homogeneous linear problems with time-dependent operators and established conditions on these operators that ensure well-posedness, see also the discussion in \cite[Section VI.9]{engel2000one}.
However, these notions of solutions and well-posedness theory do not serve the right purpose for our application. Particularly, it often requires strong assumptions on the probability generators $\cb{\mcA_t}_{t\in[0,T]}$ to guarantee well-posedness of solutions. More importantly, the stability estimate in terms of Wasserstein $c$-semimetric, which plays a critical role throughout this work, is absent in these theories. Due to these reasons, we have to come up with a new notion of solutions and develop the correspondent well-posedness theory.
In fact, one shall see that our notion of $c$-stable solutions is a (sufficient) notion to guarantee that the desired stability estimate holds.

(2) \emph{Continuity of the curve of generators.} Ensuring the well-posedness of the nonhomogeneous problem \eqref{eq4.9:linear-fokker-planck} requires some form of continuity assumption on the map $t \mapsto \mcA_t$. While we do not aim to develop a detailed topological structure for the space $\mcG_c^0(\Pi)$ of probability generators or for the space of curves of probability generators, as this could become quite technical, we can utilize the exponential stability condition from Section \ref{sec:c-exp} to provide a basic notion of continuity on the space of $\mcG_c^0(\Pi)$-curves; see Remark \ref{rmk3.22:temp}. This ``rudimentary'' topology on curves of generators will be sufficient for establishing the well-posedness of the nonhomogeneous problem.

\subsubsection{Organization}
This section consists of two remaining subsections. In the next subsection, we will address the well-posedness of problem \eqref{eq4.9:linear-fokker-planck} using the \emph{method of piecewise constant approximation}. We will also define the notion of $c$-stable solutions there.
The final subsection will cover the linearized problem \eqref{eq4.7:linearized-mf-eq} and the mean-field problem \eqref{eq4.6:mean-field-equation}, as a consequence of the well-posedness result built in the next subsection.

\subsection{Nonhomogeneous linear Fokker-Planck equations and their well-posedness}\label{sec4.4}
The main focus of this subsection is to establish the well-posedness theory for the initial value problem of the \emph{nonhomogeneous linear Fokker-Planck equation} \eqref{eq4.9:linear-fokker-planck}.
We will establish the well-posedness of the initial value problem under a suitable notion of solutions and appropriate continuity assumptions on the generators $t\mapsto\mcA_t$. Throughout this section, we will adopt the following notation for ``curves of generators''.
\begin{notation}
	A \emph{curve of $\GCZP$-generators} $\bs{\mcA}=\cb{\mcA(t)}_{t\in[0,T]}$ over an interval $[0,T]\subset\mbr$ is a function $\mcA:[0,T]\to\GCZP$. We sometimes write $\mcA(t)=\mcA_t$.
\end{notation}

We will adopt the \emph{method of piecewise constant approximation} to establish the well-posedness of \eqref{eq4.9:linear-fokker-planck}. 
The central idea is to approximate the equation \eqref{eq4.9:linear-fokker-planck} using a piecewise constant approximation of the generator curve \( \{\mA_t\}_{t \ge 0} \), for which well-posedness is ensured by classical semigroup theory. 
The approach involves taking the limit of the solutions to the piecewise constant approximating equations to derive the solution for \eqref{eq4.9:linear-fokker-planck}. 
As mentioned earlier, the main technicality is to provide a (sufficient) continuity assumption on the curves $\bs\mcA$ that guarantees the existence and uniqueness of such limit.

\subsubsection{The case of piecewise constant generators and their well-posedness}\label{subsec4.3.2}
Following the plan, we begin by considering curves $\mcA:[0,T]\to\GCZP$ that are piecewise constant and show that the nonhomogeneous linear Fokker-Planck equation \eqref{eq4.9:linear-fokker-planck} associated to such curves are well-posed. 
Let us give the following definition.
\begin{definition}
	A curve of $\GCZP$-generators $\bs{\mcA}$ is \emph{(right-continuous) piecewise constant} if there is a partition $0=t_0<t_1<\cdots<t_K=T$ such that for each $0\le k< K$,
	\begin{align*}
		\mcA(t)=\mcA(t_{k-1}),\quad \mbox{ if } t\in [t_{k-1},t_k).
	\end{align*}
	We denote $\RPC([0,T];\GCZP)$ as the space of all (right-continuous) piecewise constant curves of $\GCZP$-generators.
\end{definition}

\begin{definition}
	Given $\mcA:[0,T]\to\mcG(\Pi)$, we say that $\rho\in C([0,T];\mcP(\Pi))$ is a \emph{strong dual} solution to \eqref{eq4.9:linear-fokker-planck} if for each $t\in(0,T)$,
	\begin{align*}
		\lim_{h\searrow 0} \frac{\inn{\rho_{t+h},\phi}-\inn{\rho_t,\phi}}{h} = \inn{\rho_t,\mcA_t\phi}\quad\text{for all }\phi\in D(\mcA_t).
	\end{align*}
\end{definition}

\begin{remark}
	For a constant curve $\mcA_t\equiv\mcA$, $\scb{\rho_t}_{t}\in C([0,T];\mcP(\Pi))$ defined by $\rho_t=\rho_0 e^{t\mcA}$ is the unique strong dual solution of \eqref{eq4.9:linear-fokker-planck} by classical semigroup theory.
\end{remark}

\begin{proposition}\label{lem4.10:exist-unique-for-rpc}
	If $\bs\mcA\in\RPC([0,T];\GCZP)$, then for any initial condition $\rho_0\in\PCP$, \eqref{eq4.9:linear-fokker-planck} has a unique strong dual solution $\rho\in C([0,T];\mcP_c(\Pi))$.
\end{proposition}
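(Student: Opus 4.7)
The plan is to construct $\rho$ explicitly by concatenating constant-generator semigroup flows on each subinterval of the partition, and then verify the strong dual equation and uniqueness piecewise, invoking the well-known solvability of the dual Cauchy problem for a single $\GCZP$-generator.

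First, I would let $0=t_0<t_1<\cdots<t_K=T$ denote the partition associated to $\bs\mcA$ and write $\mcA^{(k)}:=\mcA(t_{k-1})$ for the generator active on $[t_{k-1},t_k)$, $k=1,\ldots,K$. Then I would build $\rho$ recursively: put $\rho_t:=\rho_0\,e^{t\mcA^{(1)}}$ on $[t_0,t_1]$ and, having defined $\rho_{t_{k-1}}$, extend by $\rho_t:=\rho_{t_{k-1}}\,e^{(t-t_{k-1})\mcA^{(k)}}$ on $[t_{k-1},t_k]$. Because each $\mcA^{(k)}\in\GCZP$ and $\rho_0\in\mcP_c(\Pi)$, Definition \ref{def:gc0-generators} ensures that $\rho_t\in\mcP_c(\Pi)$ for every $t$ and that $t\mapsto\rho_t$ is $\mcC_c$-continuous on each subinterval; continuity across the breakpoints $t_k$ is immediate from the construction, so $\rho\in C([0,T];\mcP_c(\Pi))$.

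Next, I would verify the strong dual equation at an arbitrary $t\in(0,T)$. By right-continuity of $\bs\mcA$, there exists $\de>0$ with $\mcA_s=\mcA_t$ on $[t,t+\de)$. For $h\in(0,\de)$, the construction yields $\rho_{t+h}=\rho_t\,e^{h\mcA_t}$, so for any $\phi\in D(\mcA_t)$,
\begin{align*}
\frac{\inn{\rho_{t+h},\phi}-\inn{\rho_t,\phi}}{h} &= \inn{\rho_t,\,\tfrac{1}{h}(e^{h\mcA_t}\phi-\phi)} \xrightarrow[h\searrow 0]{} \inn{\rho_t,\mcA_t\phi},
\end{align*}
where the passage to the limit uses $\|\tfrac{1}{h}(e^{h\mcA_t}\phi-\phi)-\mcA_t\phi\|_\infty\to 0$ together with the fact that $\rho_t$ is a probability measure.

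For uniqueness, I would let $\tilde\rho$ be another strong dual solution with $\tilde\rho_0=\rho_0$ and compare it with $\rho$ on each subinterval. On $[t_0,t_1]$ both solve the dual Cauchy problem with the constant generator $\mcA^{(1)}$ and the same initial datum. The classical semigroup argument---differentiating $s\mapsto\inn{\tilde\rho_s,e^{(t-s)\mcA^{(1)}}\phi}$ for $\phi\in D(\mcA^{(1)})$ and using the strong dual equation for $\tilde\rho$ together with the backward equation $\frac{d}{ds}e^{(t-s)\mcA^{(1)}}\phi=-\mcA^{(1)}e^{(t-s)\mcA^{(1)}}\phi$---shows the map is constant in $s$, whence $\inn{\tilde\rho_t,\phi}=\inn{\rho_0,e^{t\mcA^{(1)}}\phi}=\inn{\rho_t,\phi}$. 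Density of $D(\mcA^{(1)})$ in $C_0(\Pi)$ then forces $\tilde\rho_t=\rho_t$ on $[t_0,t_1]$, and iterating on the remaining subintervals yields $\tilde\rho=\rho$ throughout $[0,T]$. The only mild technical obstacle here is justifying the differentiability in $s$ of $\inn{\tilde\rho_s,e^{(t-s)\mcA^{(1)}}\phi}$, which follows from the uniform $(C_0(\Pi),\|\cdot\|_\infty)$-continuity of $s\mapsto e^{(t-s)\mcA^{(1)}}\phi$ on the Feller domain and a product-rule argument; this is standard bookkeeping rather than a genuine difficulty.
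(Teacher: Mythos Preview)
Your proof is correct and follows essentially the same approach as the paper: both construct the solution by gluing together the semigroup flows $\rho_{t_{k-1}}e^{(t-t_{k-1})\mcA(t_{k-1})}$ on successive subintervals and then reduce uniqueness to the constant-generator case on each piece. The paper's proof is more terse, invoking ``classical semigroup theory'' for the constant case without spelling out the duality argument you give, but the substance is the same.
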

\begin{proof}
	The establishment of existence and uniqueness of the solution is simplified by reducing it to the case of a constant generator. Specifically, the solution is obtained by solving the problem on each interval where the generator remains constant, then ``glue'' the solutions together in a piecewise manner. To elaborate further, let $\cb{t_k}_{k=0}^K$ be the corresponding partition of $\bs\mcA=\cb{\mcA_t}_{t\in[0,T]}$, and defined recursively for $t\in\cb{t_k}_{k=0}^K$:
	\begin{align*}
		\rho_{t_0}=\rho_0,\quad \rho_{t_k} = \rho_{t_{k-1}} e^{(t_k-t_{k-1})\mcA(t_{k-1})}.
	\end{align*}
	For other $t\in[0,T]$, we have
	\begin{align*}
		\rho_t = \rho_{t_{k-1}} e^{(t-t_{k-1})\mcA(t_{k-1})},\quad\text{if }t\in[t_{k-1},t_k).
	\end{align*}
	Then $\rho=\cb{\rho_t}_{t\in[0,T]}$ is a strong dual solution to \eqref{eq4.9:linear-fokker-planck}. Uniqueness follows by applying the uniqueness of the constant case on each interval $[t_{k-1},t_k)$.
\end{proof}

For our purpose later, let us introduce the following.
\begin{definition}[Solution Operator]\label{def4.11:sol-op-for-rpc}
	We define
	\[\mbs_0:\mcP_c(\Pi)\times\RPC([0,T];\GCZP)\to C([0,T];\mcP_c(\Pi))\]
	by letting $\mbs_0(\rho_0;\bs{\mcA})$ to be the unique strong dual solution $\rho\in C([0,T];\mcP_c(\Pi))$ of \eqref{eq4.9:linear-fokker-planck}.
\end{definition}

We now prove the following integral inequality, which is needed to obtain a stability estimate in the coming discussion.
\begin{proposition}\label{lem4.12:stability-for-rpc}
	Suppose that $\bs\mcA,\bs\mcB\in\RPC([0,T];\GCZP)$ and $\rho_0,\si_0\in\mcP_c(\Pi)$. Let $\rho = \mbs_0(\rho_0;\bs{\mcA})$ and $\si = \mbs_0(\si_0;\bs{\mcB}).$
	Then it holds for all $0\le t\le T$:
	\begin{align*}
		\mcC_c(\rho_t,\sigma_t)&\le \mcC_c(\rho_0,\si_0)+\int_0^t \ta(s)\,ds,
	\end{align*}
	where $\ta:[0,T]\to\mbr$ is any upper semicontinuous function such that
	\begin{align*}
		\om_c(\rho_s,\si_s;\mcA_s,\mcB_s)\le\ta(s),\quad\text{for all }s\in[0,T].
	\end{align*}
\end{proposition}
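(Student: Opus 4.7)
The plan is to reduce the claim to an application of Proposition \ref{prop:saks-consequence}, taking $F(t) := \mcC_c(\rho_t,\si_t)$ on $[0,T]$ and $G := \ta$. What remains is to verify the two hypotheses of that proposition: $F$ is continuous, and the right-hand Dini derivative satisfies $D^+F(t) \le \ta(t)$ for every $t \in [0,T)$. The continuity of $F$ is immediate: by Definition \ref{def4.11:sol-op-for-rpc}, both $\rho$ and $\si$ belong to $C([0,T];\mcP_c(\Pi))$, so $t\mapsto \rho_t$ and $t\mapsto\si_t$ are continuous in the Wasserstein-$c$ semimetric, and Proposition \ref{prop:hypoC-for-P_c(Pi)}(C3) then guarantees that $(t,s)\mapsto\mcC_c(\rho_t,\si_s)$ is jointly continuous, giving continuity of $F$.

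For the Dini derivative bound, I would choose a common refinement $0=s_0<s_1<\cdots<s_K=T$ of the partitions associated with $\bs\mcA$ and $\bs\mcB$, so that both $\mcA_t$ and $\mcB_t$ are constant on each half-open sub-interval $[s_k,s_{k+1})$. Unwinding the explicit construction of $\mbs_0$ from the proof of Proposition \ref{lem4.10:exist-unique-for-rpc}, for any $t\in[s_k,s_{k+1})$ and $0<h<s_{k+1}-t$ one has the semigroup identities $\rho_{t+h}=\rho_t\,e^{h\mcA_t}$ and $\si_{t+h}=\si_t\,e^{h\mcB_t}$. Substituting into the definition of $D^+F$ and comparing with Definition \ref{def:om-definition} gives
\begin{align*}
D^+F(t)
&= \limsup_{h\searrow 0}\frac{\mcC_c(\rho_t e^{h\mcA_t},\si_t e^{h\mcB_t})-\mcC_c(\rho_t,\si_t)}{h}
= \om_c(\rho_t,\si_t;\mcA_t,\mcB_t) \le \ta(t).
\end{align*}

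Once these two ingredients are established, Proposition \ref{prop:saks-consequence} (applied on $[0,T]$, after extending $F$ and $\ta$ constantly beyond $T$ if needed) yields $F(t)-F(0) \le \int_0^t \ta(s)\,ds$, which is precisely the desired inequality. If $\ta$ is not a priori locally finite, one may instead invoke the final clause of Proposition \ref{prop:saks-consequence} with $G=(D^+F)^\vee$, the upper semicontinuous envelope of $D^+F$; this envelope is dominated by any upper semicontinuous majorant of $s\mapsto\om_c(\rho_s,\si_s;\mcA_s,\mcB_s)$, hence by $\ta$, so the bound still follows.

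The only non-routine point is the identification $D^+F(t)=\om_c(\rho_t,\si_t;\mcA_t,\mcB_t)$, and this is exactly where the piecewise-constant hypothesis on $\bs\mcA,\bs\mcB$ is used: it allows the semigroup identity $\rho_{t+h}=\rho_t e^{h\mcA_t}$ to hold for all $h$ in a right-neighborhood of $t$, so that $D^+F(t)$ genuinely measures the infinitesimal evolution under the single pair $(\mcA_t,\mcB_t)$, matching the definition of $\om_c$. Beyond this the argument is a direct application of the Dini-derivative integration result; I do not anticipate any further obstacle.
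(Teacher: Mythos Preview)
Your proposal is correct and follows essentially the same approach as the paper: both arguments rest on Proposition~\ref{prop:saks-consequence} applied to $F(t)=\mcC_c(\rho_t,\si_t)$, using the piecewise-constant structure to identify $D^+F(t)$ with $\om_c(\rho_t,\si_t;\mcA_t,\mcB_t)$. The only difference is organizational: the paper first treats the constant-generator case and then inducts over the partition intervals, whereas you compute $D^+F$ directly at every $t\in[0,T)$ (including partition points, where right-continuity of $\bs\mcA,\bs\mcB$ ensures the semigroup identity still holds) and invoke Proposition~\ref{prop:saks-consequence} once on the whole interval.
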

\begin{proof}
    First, suppose $\bs\mcA\equiv\mcA,\bs\mcB\equiv\mcB$ are constant generators in $\GCZP$. Then the solutions $\mbs_0(\rho_0;\bs{\mcA})$ and $\mbs_0(\si_0;\bs{\mcB})$ are given by $\rho_0 e^{t\mcA}$ and $\si_0 e^{t\mcB}$, respectively. By the assumption, we have
    \begin{align*}
        \om_c(\rho_s,\si_s;\mcA,\mcB)\le\ta(s).
    \end{align*}
    Then by Proposition \ref{prop:saks-consequence}, it holds for all $0\le s\le t\le T$ that
    \begin{align}\label{eq4.XX:temp}
        \mcC_c(\rho_0 e^{t\mcA},\si_0 e^{t\mcB}) \le \mcC_c(\rho_0 e^{s\mcA},\si_0 e^{s\mcB}) + \int_s^t \ta(\tau)\,d\tau.
    \end{align}
    The general case of piecewise constant generators follows from \eqref{eq4.XX:temp}, and a simple induction argument over the partition. 
    First, by refining the partitions, we may assume that $\bs\mcA$ and $\bs\mcB$ have a common partition $\cb{t_k}_{k=0}^K$. 
    Clearly it suffices to prove the integral inequality for the terminal time $t=t_K$.
    For each $1\le k\le K$, we have the following recursive bound by \eqref{eq4.XX:temp}:
    \begin{align*}
        \mcC_c(\rho_{t_k},\si_{t_k}) &= \mcC_c(\rho_{t_{k-1}} e^{(t_k-t_{k-1})\mcA(t_{k-1})} , \si_{t_{k-1}} e^{(t_k-t_{k-1})\mcB(t_{k-1})})
        \le \mcC_c(\rho_{t_{k-1}},\si_{t_{k-1}}) + \int_{t_{k-1}}^{t_k} \ta(s)\,ds.
    \end{align*}
    By induction, we find the estimate for the terminal time $t_K$:
    \[\mcC_c(\rho_{t_K},\si_{t_K}) \le \mcC_c(\rho_0,\si_0) + \int_{0}^{t_K} \ta(s)\,ds.\qedhere\]
\end{proof}

\subsubsection{\texorpdfstring{$\CEXP$}{CEXP}-continuous curves of generators and piecewise constant approximation scheme}\label{subsec4.3.4}
Let us now return to the linear problem \eqref{eq4.9:linear-fokker-planck}, involving general ``continuous'' curves of generators $\bs\mcA=\cb{\mcA_t}_{t\in[0,T]}$, given in the following definition. 
\begin{definition}[$\Exp$-continuity]\label{def4.askjb}
	A curve $\bs\mcA=\cb{\mcA_t}_{t\in[0,T]}$ is \emph{$\Exp$-continuous} if
	\begin{enumerate}[(i)]
		\item $\mcA_t\in\GCZP$ for all $t\in[0,T]$.
		\item There exists $\be\ge0$, for all $\ep>0$, there exists $\de>0$ such that if $|t-s|<\de$, then
		\begin{align*}
			\om_c(\mu,\nu;\mcA_t;\mcA_s) \le \ep+\be\mcC_c(\mu,\nu),\quad\mu,\nu\in\mcP_c(\Pi).
		\end{align*}
		Namely, if $|t-s|<\de$, then $(\mcA_t,\mcA_s)\in\Exp(\ep,\be)$.
	\end{enumerate}
	We denote $\CEXP([0,T];\GCZP)$ as the family of all $\Exp$-continuous curves of generators.
\end{definition}
\begin{remark}
	Definition \ref{def4.askjb}(ii) is equivalent to: there exists $\be\ge0$ and $\ta:[0,\infty)\to[0,\infty)$ with $\lim_{s\searrow0}\ta(s)=0$ such that
	\begin{align*}
		\om_c(\mu,\nu;\mcA_t;\mcA_s) \le \ta(|t-s|)+\be\mcC_c(\mu,\nu),\quad\mu,\nu\in\mcP_c(\Pi).
	\end{align*}
\end{remark}
We note that it is possible to establish the well-posedness of \eqref{eq4.9:linear-fokker-planck} under weaker continuity assumptions, such as piecewise continuous generators. However, we do not intend to explore these cases in detail. Instead, we will focus on proving a sufficient continuity assumption that ensures the well-posedness of \eqref{eq4.9:linear-fokker-planck}.

Our method of constructing a solution in the case of $\Exp$-continuous curves of generators is based on the piecewise constant approximation of these curves. We begin by providing the following definition.
\begin{definition}[Piecewise constant approximation]\label{def4.16}
	Let $\bs\mcA\in \CEXP([0,T];\GCZP)$.
	\begin{enumerate}[(i)]
		\item Let $\Delta = \{0=t_0<t_1<\cdots<t_n=T\}$ be a partition over $[0,T]$, we denote
		\begin{align*}
			\mathrm{mesh}(\Delta) = \max_{1\le k\le n}|t_k-t_{k-1}|.
		\end{align*}	
		The \emph{(right-hand) piecewise constant approximation of $\bs\mcA$ over the partition $\Delta$} is curve $\bs{\mcA}^{\De}\in \RPC([0,T];\GCZP)$ given by
		\begin{align*}
			\mcA^\Delta(t)&= \mcA(\tau(t)),\qquad \tau(t)= \min\cb{t_k:0\le k\le n, t<t_k}.
		\end{align*}
		That is, $\mcA^\Delta(t)=\mcA(t_k)$ if $t\in [t_{k-1},t_{k})$.
		\item Let $\cb{\De_n}_n$ be a sequence of partitions of $[0,T]$ such that $\lim_{n\to\infty} \mathrm{mesh}(\De_n)=0$. The sequence of curves $\{\bs\mcA^{(n)}\}_n$, where $\bs\mcA^{(n)}=\bs\mcA^{\De_n}$ will be called a \emph{piecewise constant approximating sequence} (in short, \emph{PCA sequence}) of $\bs\mcA$.
	\end{enumerate}
\end{definition}

Here is the main theorem for the construction of solutions to \eqref{eq4.9:linear-fokker-planck}, where we recall from Definition \ref{def4.11:sol-op-for-rpc} that $\mbs_0(\rho_0;\bs{\mcA})$ denotes the unique strong dual solution of \eqref{eq4.9:linear-fokker-planck} for the case where $\bs{\mcA}\in\RPC([0,T];\GCZP)$.
\begin{theorem}\label{linear-fokker-planck-thm}
	Suppose $\bs\mcA\in\CEXP([0,T];\GCZP)$. For any $\rho_0\in \mcP_c(\Pi)$, there exists a unique $\rho\in C([0,T];\mcP_c(\Pi))$ such that the following holds: for every PCA sequence $\{\bs\mcA^{(n)}\}_n$ of $\bs\mcA$, we have that $\rho^{(n)}:=\mbs_0(\rho_0;\bs{\mcA}^{(n)})$ converges to $\rho$ in the sense that
	\begin{align*}
		\lim_{n\to\infty} \sup_{t\in [0,T]} \mcC_c(\rho_t,\rho^{(n)}_t)=0. 
	\end{align*}
\end{theorem}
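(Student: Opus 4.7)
The plan is to prove existence and uniqueness of the limit simultaneously by establishing a Cauchy estimate not just within one PCA sequence, but between solutions arising from two arbitrary PCA sequences. Fix two PCA sequences $\{\bs\mcA^{(n)}\}_n$ and $\{\bs{\tilde\mcA}^{(m)}\}_m$ associated with partitions $\De_n, \tilde\De_m$, and set $\rho^{(n)}=\mbs_0(\rho_0;\bs\mcA^{(n)})$, $\tilde\rho^{(m)}=\mbs_0(\rho_0;\bs{\tilde\mcA}^{(m)})$, which are well-defined strong dual solutions in $C([0,T];\mcP_c(\Pi))$ by Proposition \ref{lem4.10:exist-unique-for-rpc}. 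I will bound $\mcD_c(\rho^{(n)},\tilde\rho^{(m)})$ and then let $n,m\to\infty$.

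The key ingredient is the $\Exp$-continuity of $\bs\mcA$. Given $\ep>0$, let $\de>0$ be as in Definition \ref{def4.askjb}(ii), together with the associated constant $\be\ge0$. For any $s\in[0,T]$, we have $\mcA^{(n)}_s=\mcA(\tau_n(s))$ and $\tilde\mcA^{(m)}_s=\mcA(\tilde\tau_m(s))$ with $|\tau_n(s)-s|\le\mathrm{mesh}(\De_n)$ and $|\tilde\tau_m(s)-s|\le\mathrm{mesh}(\tilde\De_m)$. Once $\max(\mathrm{mesh}(\De_n),\mathrm{mesh}(\tilde\De_m))<\de/2$, we get $|\tau_n(s)-\tilde\tau_m(s)|<\de$ \emph{uniformly} in $s$, so $\Exp$-continuity yields
\begin{align*}
\om_c(\mu,\nu;\mcA^{(n)}_s,\tilde\mcA^{(m)}_s)\le \ep+\be\,\mcC_c(\mu,\nu),\qquad \mu,\nu\in\mcP_c(\Pi),\ s\in[0,T].
\end{align*}

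Next I apply Proposition \ref{lem4.12:stability-for-rpc} with the choice $\ta(s)=\ep+\be\,\mcC_c(\rho^{(n)}_s,\tilde\rho^{(m)}_s)$, which is continuous (hence upper semicontinuous) in $s$ because $\rho^{(n)},\tilde\rho^{(m)}\in C([0,T];\mcP_c(\Pi))$ and $\mcC_c$ is jointly continuous by Proposition \ref{prop:hypoC-for-P_c(Pi)}(C3). Since $\rho^{(n)}_0=\tilde\rho^{(m)}_0=\rho_0$, this gives
\begin{align*}
\mcC_c(\rho^{(n)}_t,\tilde\rho^{(m)}_t)\le \int_0^t\sqb{\ep+\be\,\mcC_c(\rho^{(n)}_s,\tilde\rho^{(m)}_s)}\,ds,\qquad t\in[0,T].
\end{align*}
Gr\"onwall's inequality applied to the real-valued function $t\mapsto \mcC_c(\rho^{(n)}_t,\tilde\rho^{(m)}_t)$ then produces
\begin{align*}
\mcD_c(\rho^{(n)},\tilde\rho^{(m)})=\sup_{t\in[0,T]}\mcC_c(\rho^{(n)}_t,\tilde\rho^{(m)}_t)\le \ep\,T\,e^{\be T}.
\end{align*}

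From here both conclusions follow at once. Taking $\{\bs{\tilde\mcA}^{(m)}\}=\{\bs\mcA^{(n)}\}$ shows that $\{\rho^{(n)}\}$ is Cauchy in $(C([0,T];\mcP_c(\Pi)),\mcD_c)$; by the completeness established in Proposition \ref{prop:completeness-of-measure-curve}, there exists $\rho\in C([0,T];\mcP_c(\Pi))$ such that $\mcD_c(\rho^{(n)},\rho)\to 0$. Applying the same estimate to two distinct PCA sequences shows that the limit is independent of the PCA sequence, giving uniqueness of $\rho$. The one point requiring care is that the $\ep$ coming from $\Exp$-continuity must be chosen uniformly in $s$; this is precisely why Definition \ref{def4.askjb}(ii) is stated in a uniform form (with a single $\de$ valid for all pairs $t,s$), and is therefore the crucial structural assumption that drives the entire argument.
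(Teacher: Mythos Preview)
Your proof is correct and follows essentially the same approach as the paper: exploit $\Exp$-continuity to bound $\om_c$ between PCA solutions uniformly in $s$, apply Proposition~\ref{lem4.12:stability-for-rpc}, close with Gr\"onwall, and invoke completeness of $C([0,T];\mcP_c(\Pi))$. The only organizational difference is that you compare two arbitrary PCA sequences from the start, which handles existence (Cauchy) and uniqueness (sequence-independence) in a single estimate, whereas the paper first proves one sequence is Cauchy and then repeats the argument for uniqueness; your Gr\"onwall bound $\ep T e^{\be T}$ is slightly cruder than the paper's $\ep\zeta_\be(T)$, but this is immaterial.
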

\begin{proof}
	Let $\{\bs\mcA^{(n)}\}_n$ be a PCA sequence of $\bs\mcA$ and denote $\rho^{(n)} =\mbs_0(\rho_0;\bs\mcA^{(n)})$ for each $n$. We first show that $\scb{\rho^{(n)}}_n$ is Cauchy. We can write
	\begin{align*}
		\mcA^{(n)}(t)=\mcA(\tau_n(t)),
	\end{align*}
	where $\tau_n$ is a sequence of ``time-changes'' in Definition \ref{def4.16}, i.e. $\tau_n:[0,T]\to[0,T]$ is a sequence of increasing step functions such that $\tau_n(t)\searrow t$ as $n\to\infty$.
	
	Consider the expression
	\begin{align*}
		\om_c(\rho^{(m)}_s,\rho^{(n)}_s;  \mcA^{(m)}_s,\mcA^{(n)}_s) &= \om_c(\rho^{(m)}_s,\rho^{(n)}_s;  \mcA(\tau_m(s)),\mcA(\tau_n(s))).
	\end{align*}
	Since $\bs\mcA\in\CEXP([0,T];\GCZP)$, there exists $\be\ge0$, for all $\ep>0$, there exists $\de>0$ such that if $|t-s|<\de$, then
	\begin{align*}
		\om_c(\mu,\nu;\mcA_t;\mcA_s) \le \ep+\be\mcC_c(\mu,\nu),\quad\mu,\nu\in\mcP_c(\Pi).
	\end{align*}
	Observe that
	\begin{align*}
		\abs{\tau_m(s)-\tau_n(s)} \le \abs{\tau_m(s)-s} + \abs{\tau_m(s)-s} \le \mathrm{mesh}(\De_m) + \mathrm{mesh}(\De_n),\quad\forall s\in[0,T].
	\end{align*}
	Since $\mathrm{mesh}(\De_n)\xrightarrow{n\to\infty}0$, we can choose $N$ large enough so that $\mathrm{mesh}(\De_n)<\de/2$ for all $n\ge N$.
	Then whenever $m,n\ge N$,
	\begin{align*}
		\om_c(\rho^{(m)}_s,\rho^{(n)}_s;  \mcA^{(m)}_s,\mcA^{(n)}_s) &= \om_c(\rho^{(m)}_s,\rho^{(n)}_s;  \mcA(\tau_m(s)),\mcA(\tau_n(s)))
		\le \ep + \be\mcC_c(\rho^{(m)}_s,\rho^{(n)}_s).
	\end{align*}
	Notice that the right-hand side is an upper semicontinuous function in time.
	Then by Proposition \ref{lem4.12:stability-for-rpc},
	\begin{align*}
		\mcC_c(\rho^{(m)}_t,\rho^{(n)}_t) \le \int_0^t \sqb{\ep + \be\mcC_c(\rho^{(m)}_s,\rho^{(n)}_s)}\,ds.
	\end{align*}
	By Gr\"{o}nwall's inequality, we obtain
	\begin{align*}
		\sup_{t\in[0,T]}\mcC_c(\rho^{(m)}_t,\rho^{(n)}_t) \le \ep\zeta_\be(t),
	\end{align*}
	where we recall from \eqref{eq3.25:zeta-be} that $\zeta_\be(t)=\be^{-1}(e^{\be t}-1)$ if $\be >0$ and $=t$ if $\be=0$.
	This shows that $\scb{\rho^{(n)}}_n$ is Cauchy. By the completeness of $C([0,T],\mcP_c(\Pi))$, we find that the limit $\rho\in C([0,T];\mcP_c(\Pi))$ exists.
	
	To see uniqueness of such a limit, let $\{\bs{\tilde\mcA}^{(n)}\}_n$ be another PCA sequence of $\bs\mcA$ and $\tilde\rho^{(n)} = \mbs_0(\rho_0;\bs{\tilde\mcA}^{(n)})$, with $\tilde\rho$ being the limit. Repeating the argument above, we find that
	\begin{align*}
		\sup_{t\in[0,T]}\mcC_c(\rho^{(n)}_t,\tilde\rho^{(n)}_t)
	\end{align*}
	can be arbitrary small whenever $n$ is large. Therefore, by the $B$-relaxed triangle inequality, their limits satisfy
	\begin{align*}
		\sup_{t\in[0,T]}\mcC_c(\rho_t,\tilde\rho_t) \le B\sup_{t\in[0,T]}\mcC_c(\rho_t,\rho_t^{(n)}) + B^2\sup_{t\in[0,T]}\mcC_c(\rho_t^{(n)},\tilde\rho_t^{(n)}) + B\sup_{t\in[0,T]}\mcC_c(\tilde\rho_t,\tilde\rho_t^{(n)}),
	\end{align*}
	where $n$ can be chosen to be large enough so that the right-hand side is arbitrarily small. This shows that $\rho=\tilde\rho$ and hence the limit agrees.
\end{proof}

\subsubsection{The notion of \texorpdfstring{$c$}{c}-stable solutions and its well-posedness}
With the theorem above established, we may now give the notion of solutions.
\begin{definition}[$c$-stable solutions]
	Given a curve of generators $\bs\mcA=\cb{\mcA_t}_{t\in[0,T]}$, we say $\rho\in C([0,T];\mcP_c(\Pi))$ is a $c$-stable solution of \eqref{eq4.9:linear-fokker-planck} if the following holds: for every PCA sequence $\scb{\bs\mcA^{(n)}}$ of $\bs\mcA$, we have $\rho^{(n)}:=\mbs_0(\rho_0;\mcA)$ converges to $\rho$ in the sense that
	\begin{align*}
		\lim_{n\to\infty}\sup_{t\in[0,T]}\mcC_c(\rho_t,\rho_t^{(n)})=0.
	\end{align*}
\end{definition}
\begin{remark}
	Note that the notion of solutions implies the uniqueness of $c$-stable solutions w.r.t. the initial data $\rho_0$, if it exists (since the space $C([0,T];\mcP_c(\Pi))$ is Hausdorff, the limit must be unique). In this case, we denote also $\mbs(\rho_0;\bs\mcA)$ as the $c$-stable solution with initial data $\rho_0$ for \eqref{eq4.9:linear-fokker-planck}.
\end{remark}

In conclusion, we arrive at the following existence and uniqueness result for \eqref{eq4.9:linear-fokker-planck}.
\begin{corollary}\label{cor4.19temp}
	For every $\bs\mcA\in\CEXP([0,T];\GCZP)$ and $\rho_0\in\mcP_c(\Pi)$, there exists a unique $c$-stable solution to the initial value problem \eqref{eq4.9:linear-fokker-planck}.
\end{corollary}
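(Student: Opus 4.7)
The plan is to observe that this corollary is essentially a rephrasing of Theorem \ref{linear-fokker-planck-thm} in the language of $c$-stable solutions, so the proof is very short. The work has already been done: Theorem \ref{linear-fokker-planck-thm} asserts that for any $\bs\mcA\in\CEXP([0,T];\GCZP)$ and $\rho_0\in\mcP_c(\Pi)$ there is a (unique) curve $\rho\in C([0,T];\mcP_c(\Pi))$ such that $\mbs_0(\rho_0;\bs\mcA^{(n)})\to\rho$ uniformly in $\mcC_c$ along every PCA sequence. By the very definition of a $c$-stable solution, this $\rho$ is a $c$-stable solution of \eqref{eq4.9:linear-fokker-planck}. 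This settles existence in one line.

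For uniqueness, I would argue as follows. Suppose $\rho,\tilde\rho\in C([0,T];\mcP_c(\Pi))$ are both $c$-stable solutions with the same initial datum $\rho_0$. Pick any PCA sequence $\{\bs\mcA^{(n)}\}_n$ of $\bs\mcA$ (existence of such a sequence is automatic: take any sequence of partitions with mesh tending to zero). Writing $\rho^{(n)}=\mbs_0(\rho_0;\bs\mcA^{(n)})$, the defining property of a $c$-stable solution gives
\begin{align*}
\sup_{t\in[0,T]}\mcC_c(\rho_t,\rho_t^{(n)})\to 0,\qquad \sup_{t\in[0,T]}\mcC_c(\tilde\rho_t,\rho_t^{(n)})\to 0.
\end{align*}
Applying the $B$-relaxed triangle inequality for $\mcC_c$ (which is inherited on $C([0,T];\mcP_c(\Pi))$ by Proposition \ref{prop:completeness-of-measure-curve}), we get
\begin{align*}
\sup_{t\in[0,T]}\mcC_c(\rho_t,\tilde\rho_t)\le B\sup_{t\in[0,T]}\mcC_c(\rho_t,\rho_t^{(n)})+B\sup_{t\in[0,T]}\mcC_c(\rho_t^{(n)},\tilde\rho_t),
\end{align*}
and letting $n\to\infty$ forces $\sup_t\mcC_c(\rho_t,\tilde\rho_t)=0$, hence $\rho=\tilde\rho$.

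There is no real obstacle here; the substance of the argument is entirely contained in Theorem \ref{linear-fokker-planck-thm}, whose proof combined the $\Exp$-continuity hypothesis with the integral inequality of Proposition \ref{lem4.12:stability-for-rpc} and Grönwall to show that $\{\rho^{(n)}\}$ is Cauchy and that its limit does not depend on the chosen PCA sequence. The corollary is simply the extraction of this existence/uniqueness statement under the newly introduced terminology, and it justifies the notation $\mbs(\rho_0;\bs\mcA)$ for the unique $c$-stable solution.
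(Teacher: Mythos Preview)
Your proposal is correct and matches the paper's approach: the corollary is indeed an immediate reformulation of Theorem \ref{linear-fokker-planck-thm} in the language of $c$-stable solutions. The only cosmetic difference is that the paper dispatches uniqueness in a one-line remark preceding the corollary (observing that the definition already forces uniqueness since $C([0,T];\mcP_c(\Pi))$ is Hausdorff), whereas you spell out the underlying relaxed-triangle-inequality argument explicitly; these are the same idea.
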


\subsubsection{Stability estimates of $c$-stable solutions}
Before we close this subsection, let us prove the following integral inequalities, which may lead to some useful stability estimates later.
\begin{lemma}\label{newest-stability}\label{lem4.askjba}
	Suppose $\bs\mcA,\bs\mcB\in\CEXP([0,T];\GCZP)$ and $\rho_0,\si_0\in\mcP_c(\Pi)$. Let $\rho = \mbs(\rho_0;\bs\mcA)$ and $\si = \mbs(\si_0;\bs\mcB)$.
	Let $\ta:[0,T]\times\PCP^2\to\mbr$ be an upper semicontinuous function such that for each $\mcC_c$-bounded set\footnote{By a $\mcC_c$-bounded set $B\subset\PCP$, we mean that there exists $\mu_0\in\PCP$ and $M>0$ such that $\mcC_c(\mu_0,\mu)< M$ for all $\mu\in B$.} $B\subset\PCP$, it holds
	\[\sup_{t\in[0,T],\mu,\nu\in B}\ta(t,\mu,\nu)<\infty.\]
	If $\om_c(\mu,\nu;\mcA_s,\mcB_s)\le \ta(s,\mu,\nu)$ on $[0,T]\times\mcP_c(\Pi)^2$, then it holds
	\begin{align*}
		\mcC_c(\rho_t,\si_t)\le \mcC_c(\rho_0,\si_0) + \int_0^t {\ta(s,\rho_s,\si_s)}\,ds,\quad t\in[0,T].
	\end{align*}
\end{lemma}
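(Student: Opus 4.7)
The plan is to lift the integral inequality for piecewise constant generators (Proposition \ref{lem4.12:stability-for-rpc}) to $c$-stable solutions via a piecewise-constant approximation argument. Fix a sequence of partitions $\{\De_n\}_n$ of $[0,T]$ with $\mathrm{mesh}(\De_n)\to 0$, and by taking common refinements use the same partitions for both curves. Let $\bs\mcA^{(n)},\bs\mcB^{(n)}$ be the associated PCA curves of $\bs\mcA,\bs\mcB$ with common time-change map $\tau_n$, so that $\mcA^{(n)}_s=\mcA_{\tau_n(s)}$ and $\mcB^{(n)}_s=\mcB_{\tau_n(s)}$. By Theorem \ref{linear-fokker-planck-thm}, the PCA solutions $\rho^{(n)}:=\mbs_0(\rho_0;\bs\mcA^{(n)})$ and $\si^{(n)}:=\mbs_0(\si_0;\bs\mcB^{(n)})$ converge uniformly in the Wasserstein-$c$ semimetric to $\rho$ and $\si$ respectively.

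For each $n$ and $s\in[0,T]$, the hypothesis applied at time $\tau_n(s)\in[0,T]$ gives
\begin{align*}
\om_c(\rho^{(n)}_s,\si^{(n)}_s;\mcA^{(n)}_s,\mcB^{(n)}_s) = \om_c(\rho^{(n)}_s,\si^{(n)}_s;\mcA_{\tau_n(s)},\mcB_{\tau_n(s)}) \le \ta(\tau_n(s),\rho^{(n)}_s,\si^{(n)}_s) =: g_n(s).
\end{align*}
Let $\ta_n(s):=\limsup_{r\to s}g_n(r)$ be the upper semicontinuous envelope of $g_n$; clearly $\ta_n(s)\ge g_n(s)$, so $\ta_n$ still dominates $\om_c(\rho^{(n)}_s,\si^{(n)}_s;\mcA^{(n)}_s,\mcB^{(n)}_s)$. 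The uniform $\mcC_c$-convergence of $\rho^{(n)}\to\rho,\si^{(n)}\to\si$, combined with the $\mcC_c$-continuity of $\rho,\si$, places the family $\{\rho^{(n)}_s,\si^{(n)}_s,\rho_s,\si_s:n\in\mbn,s\in[0,T]\}$ inside a single $\mcC_c$-bounded set $B$; the boundedness hypothesis on $\ta$ then yields $g_n(s)\le M:=\sup_{s\in[0,T],\,\mu,\nu\in B}\ta(s,\mu,\nu)<\infty$, and hence $\ta_n(s)\le M$ uniformly in $n,s$. Proposition \ref{lem4.12:stability-for-rpc} applied to the PCA curves therefore produces
\begin{align*}
\mcC_c(\rho^{(n)}_t,\si^{(n)}_t) \le \mcC_c(\rho_0,\si_0) + \int_0^t \ta_n(s)\,ds.
\end{align*}

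It remains to pass $n\to\infty$. The left-hand side converges to $\mcC_c(\rho_t,\si_t)$ by Proposition \ref{prop:hypoC-for-P_c(Pi)}(C3). For the right-hand side, the reverse Fatou lemma (valid since $\ta_n\le M$ and $[0,T]$ has finite measure) reduces the problem to the pointwise bound $\limsup_n\ta_n(s)\le \ta(s,\rho_s,\si_s)$ for every $s\in[0,T]$. This step is the main obstacle, because $\ta_n$ is defined through an envelope rather than a direct evaluation. The argument proceeds by diagonal extraction: for each $n$ choose $r_k^{(n)}\to s$ with $g_n(r_k^{(n)})\to \ta_n(s)$, then select a subsequence $n_k\to\infty$ and $r_k:=r_{j_k}^{(n_k)}$ with $|r_k-s|<1/k$ and $g_{n_k}(r_k)\ge \ta_{n_k}(s)-1/k$. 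Because $|\tau_{n_k}(r_k)-s|\le\mathrm{mesh}(\De_{n_k})+|r_k-s|\to 0$, and $\mcC_c(\rho^{(n_k)}_{r_k},\rho_s),\mcC_c(\si^{(n_k)}_{r_k},\si_s)\to 0$ by the $B$-relaxed triangle inequality together with uniform $\mcC_c$-convergence and $\mcC_c$-continuity, the triple $(\tau_{n_k}(r_k),\rho^{(n_k)}_{r_k},\si^{(n_k)}_{r_k})$ converges to $(s,\rho_s,\si_s)$, and the upper semicontinuity of $\ta$ then gives $\limsup_k g_{n_k}(r_k)\le \ta(s,\rho_s,\si_s)$, which forces $\limsup_n\ta_n(s)\le \ta(s,\rho_s,\si_s)$. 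Combining all three displays yields $\mcC_c(\rho_t,\si_t)\le\mcC_c(\rho_0,\si_0)+\int_0^t\ta(s,\rho_s,\si_s)\,ds$, as required.
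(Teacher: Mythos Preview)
Your proof is correct and follows the same overall strategy as the paper: approximate by PCA curves with a common time-change $\tau_n$, invoke Proposition~\ref{lem4.12:stability-for-rpc} at the PCA level, then pass to the limit via reverse Fatou and the upper semicontinuity of $\ta$.

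The one place your argument is more laborious than necessary is the handling of the upper semicontinuous envelope $\ta_n$. You work with the envelope throughout and then run a diagonal extraction to show $\limsup_n\ta_n(s)\le\ta(s,\rho_s,\si_s)$. The paper instead observes that, on each subinterval where $\tau_n$ is constant, the map $s\mapsto(\tau_n(s),\rho_s^{(n)},\si_s^{(n)})$ is continuous, so $g_n(s)=\ta(\tau_n(s),\rho_s^{(n)},\si_s^{(n)})$ is already upper semicontinuous except at the finitely many jump points of $\tau_n$; hence $g_n$ agrees with its envelope almost everywhere, and one can integrate $g_n$ directly. After that, the limsup step is immediate from $\tau_n(s)\to s$, $\rho_s^{(n)}\xrightarrow{c}\rho_s$, $\si_s^{(n)}\xrightarrow{c}\si_s$ and the upper semicontinuity of $\ta$, with no diagonal extraction needed. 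Your route is perfectly valid, but this observation saves the extra step.
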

\begin{remark}
	We stress here that the upper semicontinuity is with respect to the product topology of $[0,T]$ and Wasserstein topology on $\mcP_c(\Pi)$. Specifically, it means: if $\mu_n\xrightarrow{c}\mu$ (meaning $\mcC_c(\mu_n,\mu)\to0$), $\nu_n\xrightarrow{c}\nu$ and $s_n\to s$, then it holds
	\begin{align*}
		\limsup_{n\to\infty}\ta(s_n,\mu_n,\nu_n) \le \ta(s,\mu,\nu).
	\end{align*}
\end{remark}
\begin{proof}[Proof of Lemma \ref{lem4.askjba}]
	Consider PCA sequences $\scb{\bs\mcA^{(n)}}_n$, $\scb{\bs\mcB^{(n)}}_n$ of $\bs\mcA$, $\bs\mcB$ such that for each $n$, $\bs\mcA^{(n)}$ and $\bs\mcB^{(n)}$ have a common partition. We can write
	\begin{align*}
		\mcA^{(n)}(s)=\mcA(\tau_n(s)),\quad \mcB^{(n)}(s)=\mcB(\tau_n(s)),
	\end{align*}
	where $\tau_n$ is a sequence of ``time-changes'' as before (see Definition \ref{def4.16}). Let $\rho^{(n)} = \mbs(\rho_0;\bs\mcA^{(n)})$ and $\si^{(n)} = \mbs(\si_0;\bs\mcB^{(n)}).$
	Notice that
	\begin{align*}
		\om_c(\rho_s^{(n)},\si_s^{(n)};\mcA^{(n)}(s),\mcB^{(n)}(s)) 
		\le \ta(\tau_n(s),\rho_s^{(n)},\si_s^{(n)}) 
	\end{align*}
	By Proposition \ref{lem4.12:stability-for-rpc}, we have
	\begin{align*}
		\mcC_c(\rho_t^{(n)},\si_t^{(n)}) &\le \mcC_c(\rho_0,\si_0) + \int_0^t \ta(\tau_n(s),\rho_s^{(n)},\si_s^{(n)})^\vee\,ds,
	\end{align*}
	where $\ta(\tau_n(s),\rho_s^{(n)},\si_s^{(n)})^\vee$ denotes the upper semicontinuous envelope of $s\mapsto\ta(\tau_n(s),\rho_s^{(n)},\si_s^{(n)})$.
	From the assumption of $\ta$, since $s\mapsto\ta(s,\rho_s^{(n)},\si_s^{(n)})$ is upper semicontinuous and $\tau_n$ is piecewise constant with finite jumps, we see that $s\mapsto\ta(\tau_n(s),\rho_s^{(n)},\si_s^{(n)})$ is upper semicontinuous except at those finite jump points. Hence, $\ta(\tau_n(s),\rho_s^{(n)},\si_s^{(n)})$ agrees with its envelope almost everywhere, and thus
	\begin{align}\label{eq4.asdbksdj}
		\mcC_c(\rho_t^{(n)},\si_t^{(n)}) &\le \mcC_c(\rho_0,\si_0) + \int_0^t {\ta(\tau_n(s),\rho_s^{(n)},\si_s^{(n)})}\,ds. 
	\end{align}
	Next, we want to find a uniform bound (in $n$) for the integrand in \eqref{eq4.asdbksdj}. Notice that since $\rho^{(n)}$ converges to $\rho$ and $\si^{(n)}$ converges to $\si$ uniformly, we can find a $\mcC_c$-bounded set $B\subset\PCP$ such that $\si^{(n)}_s,\rho_s^{(n)}\in B$ for all $n\ge1$ and $s\in[0,T]$. Thus, by the assumption on $\ta$, it holds
	\begin{align*}
		\sup_{s\in[0,T],n\ge1} \ta(\tau_n(s),\rho_s^{(n)},\si_s^{(n)}) <\infty.
	\end{align*}
	Taking limit superior as $n\to\infty$ in \eqref{eq4.asdbksdj} and using reverse Fatou lemma, we find that
	\begin{align*}
		\mcC_c(\rho_t,\si_t) &\le \mcC_c(\rho_0,\si_0) + \int_0^t \limsup_{n\to\infty}\ta(\tau_n(s),\rho_s^{(n)},\si_s^{(n)})\,ds\\
		&\le \mcC_c(\rho_0,\si_0) + \int_0^t {\ta(s,\rho_s,\si_s)}\,ds.\qedhere
	\end{align*}
\end{proof}

We now state a similar result to Lemma \ref{lem4.askjba}, but with $\mcA$ being a constant generator in $\GCZP$. However, note that a constant curve of generators in $\GCZP$ is not necessarily $\Exp$-continuous, that is, it might not be in $\CEXP([0,T];\GCZP)$. Hence, the following lemma cannot be stated as a special case of Lemma \ref{lem4.askjba} above. We shall skip the proof of this lemma as it is similar to the proof of Lemma \ref{lem4.askjba}. This result will be useful in the coming section about propagation of chaos.
\begin{lemma}\label{special-stability}
	Suppose $\mcA\in\GCZP$, $\bs\mcB\in\CEXP([0,T];\GCZP)$ and $\rho_0,\si_0\in\mcP_c(\Pi)$. Let $\rho_t = \rho_0 e^{t\mcA}$ and $\cb{\si_t}_{t} = \mbs(\si_0;\bs\mcB)$.
	Let $\ta:[0,T]\times\PCP^2\to\mbr$ be an upper semicontinuous function such that for each $\mcC_c$-bounded set $B\subset\PCP$, it holds
	\[\sup_{t\in[0,T],\mu,\nu\in B}\ta(t,\mu,\nu)<\infty.\]
	If $\om_c(\mu,\nu;\mcA_s,\mcB_s)\le \ta(s,\mu,\nu)$ on $[0,T]\times\mcP_c(\Pi)^2$, then it holds
	\begin{align*}
		\mcC_c(\rho_t,\si_t)\le \mcC_c(\rho_0,\si_0) + \int_0^t \ta(s,\rho_s,\si_s)\,ds,\quad t\in[0,T].
	\end{align*}
\end{lemma}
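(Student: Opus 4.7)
The proof will follow the same overall scheme as Lemma \ref{lem4.askjba} (Lemma \ref{newest-stability}), but with the simplification that only $\bs\mcB$ needs to be approximated by piecewise constant curves, since the constant curve $\bs\mcA \equiv \mcA$ already lies in $\RPC([0,T];\GCZP)$ trivially (take the trivial partition). Crucially, we do not need $\mcA$ to be $\Exp$-continuous anywhere in the argument — only the piecewise-constant solution theory of Section \ref{subsec4.3.2} is invoked on the $\mcA$ side.

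Concretely, I would fix a PCA sequence $\{\bs\mcB^{(n)}\}_n$ of $\bs\mcB$, write $\mcB^{(n)}(s) = \mcB(\tau_n(s))$ for the associated sequence of time-changes with $\tau_n(s) \to s$, and set $\si^{(n)} = \mbs_0(\si_0;\bs\mcB^{(n)})$. By the definition of $c$-stable solution, $\si^{(n)} \to \si$ uniformly on $[0,T]$ in $\mcC_c$, and $\rho = \mbs_0(\rho_0;\bs\mcA)$ in the trivial RPC sense. Applying Proposition \ref{lem4.12:stability-for-rpc} to the RPC pair $(\bs\mcA,\bs\mcB^{(n)})$, with the pointwise bound
\[
\om_c(\rho_s,\si_s^{(n)};\mcA,\mcB^{(n)}_s) \;\le\; \ta(\tau_n(s),\rho_s,\si_s^{(n)}),
\]
yields
\[
\mcC_c(\rho_t,\si_t^{(n)}) \;\le\; \mcC_c(\rho_0,\si_0) + \int_0^t \ta(\tau_n(s),\rho_s,\si_s^{(n)})\,ds,
\]
where, as in the proof of Lemma \ref{lem4.askjba}, the integrand is upper semicontinuous off the finite jump set of $\tau_n$, hence agrees a.e.\ with its upper semicontinuous envelope.

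The final step is to pass $n \to \infty$. Continuity of $\mcC_c$ under $c$-convergence (Proposition \ref{prop:hypoC-for-P_c(Pi)}(C3)) gives convergence of the left-hand side to $\mcC_c(\rho_t,\si_t)$. For the right-hand side, I would use the reverse Fatou lemma: uniform convergence of $\si^{(n)}$ keeps all $\si_s^{(n)}$ inside a common $\mcC_c$-bounded set $B$, and the same holds trivially for $\rho_s$ on $[0,T]$, so the hypothesis on $\ta$ delivers a uniform upper bound, and upper semicontinuity of $\ta$ with respect to the joint topology on $[0,T] \times \mcP_c(\Pi)^2$ gives
\[
\limsup_{n\to\infty} \ta(\tau_n(s),\rho_s,\si_s^{(n)}) \;\le\; \ta(s,\rho_s,\si_s).
\]
Combining these yields the stated integral inequality.

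The only point that requires any care — and the reason the lemma is stated separately rather than as a corollary of Lemma \ref{lem4.askjba} — is that a constant curve is not required to be $\Exp$-continuous, so we cannot simply feed $\mcA$ into the $\CEXP$-machinery on both sides. The workaround is structurally painless: in Proposition \ref{lem4.12:stability-for-rpc} the two input curves are allowed to live in $\RPC$ independently, and the constant curve is trivially RPC. Aside from that bookkeeping, the argument is a direct transcription of the limit-passing step from the proof of Lemma \ref{lem4.askjba}.
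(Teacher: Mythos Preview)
Your proposal is correct and matches the paper's approach exactly: the paper explicitly skips the proof of this lemma, remarking only that it is ``similar to the proof of Lemma \ref{lem4.askjba},'' and your write-up supplies precisely that adaptation --- approximating only $\bs\mcB$ by a PCA sequence while treating the constant curve $\bs\mcA\equiv\mcA$ as trivially RPC, then invoking Proposition \ref{lem4.12:stability-for-rpc} and passing to the limit via reverse Fatou and upper semicontinuity. You have also correctly identified the one non-trivial point, namely that a constant curve need not lie in $\CEXP$, which is the sole reason the lemma is stated separately.
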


\subsection{Well-posedness of mean-field equations (proof of Theorems \ref{thm4.345:chap4-main-result-linearized} and \ref{thm4.3:chap4-main-result})}\label{sec4.5}
Now, we return to the discussion of the mean-field problem \eqref{eq4.6:mean-field-equation}, where our goal is to prove Theorem \ref{thm4.345:chap4-main-result-linearized}, followed by Theorem \ref{thm4.3:chap4-main-result}. As mentioned, our strategy is to first consider the linearized problem, and then apply a fixed point argument.

\subsubsection{Well-posedness of linearized problem (proof of Theorem \ref{thm4.345:chap4-main-result-linearized})}
Consider the linearized problem of the mean-field equation \eqref{eq4.7:linearized-mf-eq}, which reads
\begin{align*}
	\left\{\begin{aligned}
		\partial_t \rho_t &= \rho_t\mcA(\mu_t),\quad t\in(0,T),\\
		\rho_0 &\in \mcP_c(\Pi),
	\end{aligned}\right.
\end{align*}
where $\mu\in C([0,T];\mcP_c(\Pi))$ is given. Utilizing the results from Section \ref{sec4.4}, we can show that this linearized problem is well-posed, under the assumption that the mean-field generator $\mcA$ satisfies Hypothesis {\refA}.
\begin{proof}[Proof of Theorem \ref{thm4.345:chap4-main-result-linearized}]
	(i) Let $\mu\in C([0,T];\mcP_c(\Pi))$ and $\rho_0\in\PCP$.
	First, observe that the curve of generators
	\[t\mapsto\mcA(\mu_t)\]
	is in $\CEXP([0,T];\GCZP)$. Indeed, by Hypothesis {\refA} (see Section \ref{subsec4.asbkj}), there exists $\al,\be\ge0$, for all $\mu',\nu'\in\mcP_c(\Pi)$ and $t,s\in[0,T]$, it holds
	\begin{align*}
		\om_c(\mu',\nu';\mcA(\mu_t),\mcA(\mu_s)) \le \al\mcC_c(\mu_t,\mu_s)+\be\mcC_c(\mu',\nu').
	\end{align*}
	Since $(t,s)\mapsto\mcC_c(\mu_t,\mu_s)$ is continuous on $[0,T]^2$, which is a compact domain in $\mbr^2$, it is uniformly continuous. That is, $\mcC_c(\mu_t,\mu_s)$ is small whenever $|t-s|$ is small (independent of the actual value of $t,s$). This means that $t\mapsto\mcA(\mu_t)$ is in $\CEXP([0,T];\GCZP)$. The existence and uniqueness of a $c$-stable solution $\mbs(\rho_0,\bs\mcA(\mu))$ then follows from Corollary \ref{cor4.19temp}.
	
	(ii) Let $\rho=\mbs(\rho_0,\bs\mcA(\mu))$ and $\si=\mbs(\si_0,\bs\mcA(\nu))$, where $\si_0\in\mcP_c(\Pi)$ and $\nu\in C([0,T];\PCP)$. We have
	\begin{align*}
		\om_c(\mu',\nu';\mcA(\mu_s),\mcA(\nu_s)) \le \al\mcC_c(\mu_s,\nu_s)+\be\mcC_c(\mu',\nu'),
	\end{align*}
	where the right-hand side is a continuous function of $(s,\mu',\nu')$. By Lemma \ref{lem4.askjba},
	\begin{align*}
		\mcC_c(\rho_t,\si_t) &\le \mcC_c(\rho_0,\si_0) + \int_0^t \sqb{\al\mcC_c(\mu_s,\nu_s)+\be\mcC_c(\rho_s,\si_s)}\,ds\\
		&\le \mcC_c(\rho_0,\si_0) + \int_0^t \sqb{\al \sup_{\tau\in[0,T]}\mcC_c(\mu_\tau,\nu_\tau)+\be\mcC_c(\rho_s,\si_s)}ds.
	\end{align*}
	It then follows from Gr\"{o}nwall's inequality that
	\[\mcC_c(\rho_t,\si_t) \le \mcC_c(\rho_0,\si_0) e^{\be t} + \al\zeta_\be(t) \sup_{s\in[0,T]} \mcC_c(\mu_s,\nu_s).\qedhere\]
\end{proof}

\subsubsection{Well-posedness of nonlinear problem (proof of Theorem \ref{thm4.3:chap4-main-result})}
In the preceding discussion, we have found a unique $c$-stable solution to the linearized problem \eqref{eq4.7:linearized-mf-eq} for any given $\mu \in C\mrb{[0,T];\mcP_c(\Pi)}$ and any initial data. This gives a well-defined map from the given $\mu$ to the unique $c$-stable solution $\rho \in C\mrb{[0,T];\mcP_c(\Pi)}$ of \eqref{eq4.7:linearized-mf-eq}. We denote this map by
\begin{align}\label{eq4.contractphi}
	\Phi: C\mrb{[0,T];\mcP_c(\Pi)} \to C\mrb{[0,T];\mcP_c(\Pi)}.
\end{align}
The well-posedness of the mean-field problem \eqref{eq4.6:mean-field-equation} reduces to: $\Phi$ has a unique fixed point. That is, there exists a unique $\br \in C\mrb{[0,T];\mcP_c(\Pi)}$ such that $\Phi(\br)=\br$. This means that $\br$ is the unique solution of
\begin{align*}
	\left\{\begin{aligned}
		\partial_t \br_t &= \br_t\mcA(\br_t),\quad t\in(0,T),\\
		\br_0 &\in \mcP_c(\Pi),
	\end{aligned}\right.
\end{align*}
which is the mean-field equation \eqref{eq4.6:mean-field-equation}.
We shall make use of the following fixed-point theorem for a semimetric space.

\begin{theorem}[{\cite[Theorem 1]{Bessenyei2014ACP}}]\label{fixed-point}
	Suppose $(X,c)$ is a complete regular semimetric space and $T:X\to X$ is a contraction mapping, that is, there exists $0\le k<1$ such that for all $x,y\in X$,
	\[c(Tx,Ty)\le k c(x,y).\]
	Then $T$ has a unique fixed point.
\end{theorem}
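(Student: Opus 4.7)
The plan is to carry out the classical Picard iteration adapted to the semimetric setting. Starting from an arbitrary point $x_0 \in X$, I form the iterates $x_n := T^n x_0$. The contraction inequality, applied inductively, gives the geometric decay
\[
c(x_n, x_{n+1}) \le k^n \, c(x_0, x_1),
\]
so that consecutive distances tend to zero. The goal is then to upgrade this to the Cauchy property, extract a limit via completeness, and verify that this limit is a fixed point of $T$.

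The main obstacle is the Cauchy estimate itself. In a genuine metric space this is a routine telescoping argument, but in a semimetric space the triangle inequality is absent, and one can only rely on the \emph{regularity} hypothesis. Regularity, in the sense of the cited reference, is encoded by a suitable triangle function $\Phi$ satisfying $c(x, z) \le \Phi(c(x, y), c(y, z))$ together with a continuity/monotonicity property ensuring that iterating $\Phi$ on geometrically small inputs still produces vanishing outputs. The technical heart of the argument is therefore to show, using this regularity together with the geometric bound above, that
\[
\sup_{m \ge n} c(x_n, x_m) \longrightarrow 0 \quad \text{as } n \to \infty.
\]
This is the step that a direct transplant of the Banach argument would fail at, and it is exactly what the regularity axiom is designed to supply.

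Once the Cauchy property is established, completeness of $(X, c)$ produces a limit $x^{\ast} \in X$ with $c(x_n, x^{\ast}) \to 0$. The contraction hypothesis makes $T$ automatically $c$-continuous, since $c(T x_n, T x^{\ast}) \le k \, c(x_n, x^{\ast}) \to 0$. Combining this with $T x_n = x_{n+1}$ and using regularity once more to identify the limit of the shifted sequence, I conclude $T x^{\ast} = x^{\ast}$, which gives existence of a fixed point.

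Uniqueness is the easy half: if $x$ and $y$ are both fixed by $T$, then
\[
c(x, y) = c(T x, T y) \le k \, c(x, y),
\]
and since $0 \le k < 1$ this forces $c(x, y) = 0$, whence $x = y$ by positive definiteness of the semimetric. The substance of the proof thus lies entirely in the Cauchy step, where the regularity of $c$ is indispensable.
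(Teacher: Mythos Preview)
The paper does not prove this theorem; it is quoted directly from \cite{Bessenyei2014ACP} and used as a black box in the proof of Theorem~\ref{thm4.3:chap4-main-result}. There is therefore no ``paper's own proof'' to compare your proposal against. Your sketch follows the expected Picard iteration strategy and correctly identifies the Cauchy step as the one requiring the regularity hypothesis, which is consistent with the argument in the cited reference.

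One minor point: the paper records regularity not via a triangle function $\Phi$ but via the condition
\[
\limsup_{r\searrow0}\,\sup_{x\in X}\mathrm{diam}\,B^{c}(x,r)=0,
\]
so if you were to flesh out the Cauchy step in this paper's language you would phrase it in terms of uniform shrinking of ball diameters rather than an abstract triangle function. The two formulations are closely related, but your write-up should match whichever definition is in force.
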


\begin{remark}
	We say that $(X,c)$ is regular if the $c$-balls satisfy:
	\begin{align*}
		\limsup_{r\searrow0} \rb{\sup_{x\in X} \mathrm{diam}\,B^{c}(x,r)}=0,
	\end{align*}
	where the \emph{diameter} of a set $E\subset X$ is given by $\mathrm{diam}(E) = \sup_{x,y\in E} c(x,y)$.
\end{remark}

By Proposition \ref{prop:completeness-of-measure-curve}, the space $C([0,T];\mcP_c(\Pi))$ equipped with the semimetric
\begin{align*}
	\sup_{t\in[0,T]}\mcC_c(\rho_t,\si_t)
\end{align*}
is complete and satisfies the relaxed triangle inequality. One can verify that the latter implies that this space is regular.
The fact that $\Phi$ from \eqref{eq4.contractphi} is a contraction mapping follows from Theorem \ref{thm4.345:chap4-main-result-linearized}.
We now present the proof of Theorem \ref{thm4.3:chap4-main-result}.
\begin{proof}[Proof of Theorem \ref{thm4.3:chap4-main-result}]
	Fix any $\br_0\in\PCP$.
	Let $h>0$ be small enough such that $\al\zeta_\be(h)<1$. 
	Define the map $\Phi: C\mrb{[0,h];\mcP_c(\Pi)} \to C\mrb{[0,h];\mcP_c(\Pi)}$ by letting $\Phi(\mu)=\mbs(\br_0;\mcA(\mu))$ to be the unique $c$-stable solution $\rho\in C\mrb{[0,h];\mcP_c(\Pi)}$ of the initial value problem \eqref{eq4.7:linearized-mf-eq}.
	By Theorem \ref{thm4.345:chap4-main-result-linearized}, if $\mu,\nu\in C([0,h];\mcP_c(\Pi))$, then
	\begin{align*}
		\sup_{t\in[0,h]} \mcC_c(\rho_t,\si_t) \le \al\zeta_\be(h)\sup_{t\in[0,h]} \mcC_c(\mu_t,\nu_t),
	\end{align*}
	where $\rho=\Phi(\mu)$ and $\si=\Phi(\nu)$.
	Since $\al\zeta_\be(h)<1$, the mapping $\Phi: C\mrb{[0,h];\mcP_c(\Pi)} \to C\mrb{[0,h];\mcP_c(\Pi)}$ is a contraction. By Theorem \ref{fixed-point}, $\Phi$ admits a unique fixed point $\br^{(1)}\in C\mrb{[0,h];\mcP_c(\Pi)}$. That is,
	\begin{align*}
		\left\{\begin{aligned}
			\partial_t \br_t &= \rho_t\mcA(\br_t),\quad t\in(0,h),\\
			\br_0 &\in \mcP_c(\Pi),
		\end{aligned}\right.
	\end{align*}
	has a unique local solution $\br^{(1)}\in C\mrb{[0,h];\mcP_c(\Pi)}$ for small $h>0$ given above.
	
	We establish the local existence and uniqueness of solutions within a time window \( h > 0 \) that is independent of the initial data. Global existence and uniqueness are then established via the standard ``gluing'' method.
\end{proof}
\section{Propagation of chaos in the abstract mean-field model}\label{chap5}
In this section, we study the propagation of chaos in the abstract mean-field $N$-particle systems. \emph{Propagation of chaos} is a concept that describes the limiting behaviour of weakly coupled systems of interacting particles when the number of particles is large. Intuitively, as the number of particles grows to infinity, any two randomly selected particles are statistically independent. Hence, the process ``converges'' to a pairwise independent process that we refer to as the \emph{mean-field limit}, as the number of particles grows to infinity, even though the system is coupled. The mean-field limit is usually obtained through the mean-field equations discussed in Section \ref{chap4}. One can regard the propagation of chaos as the ``law of large numbers'' for large particle systems.

Before proceeding to the details, let us provide a brief but informal explanation of the terminology used above from a probabilistic perspective. 
Suppose $\scb{\BX_t}_{t\ge0}$ is an $N$-particle system, where $\BX_t=(X_t^1,X_t^2,\cdots,X_t^N)$.
The term \emph{chaos} refers to the concept of \emph{independence and identical distribution} in probability theory. For a fixed time $t\ge0$, $\BX_t$ is considered \emph{chaotic} if its coordinate processes $\scb{X_t^i}_{1\le i\le N}$ are ``close'' to being i.i.d. processes. Finally, the term \emph{propagation of chaos} refers to a key property of a particle system: if $\BX_0$ is chaotic, then $\BX_t$ remains chaotic for any $t\ge0$. In other words, the chaotic nature of the system at the initial time $t=0$ is propagated forward in time through the dynamics of the system. Propagation of chaos often emerge in mean-field $N$-particle systems because, in mean-field systems, the particles are only weakly coupled, resulting in relatively weak dependencies between them.

\subsection{Notion of chaos and propagation of chaos}
The notion of \emph{chaos} was first introduced by Mark Kac in 1956 in his seminal article \cite{kac1956foundations}. In what follows, $\Pi$ is a Polish space, and by a symmetric probability measure $\Bmu$ on $\Pi^N$, we mean $\Bmu$ satisfies: for any permutation $\sigma$ of the indices $\{1, 2, \ldots, N\}$ and any measurable set $\bs E \subseteq \Pi^N$,
\[\Bmu(\bs E)=\Bmu(\si(\bs E)),\]
where $\sigma(\bs E) := \cb{(x_{\sigma(1)}, x_{\sigma(2)}, \cdots, x_{\sigma(N)}) : (x_1, x_2, \ldots, x_N) \in \bs E}$.
We refer to \cite[Chapter 3]{Chaintron_2022_1} for the coming definitions.
\begin{definition}[Kac's chaos]
    Let $\br\in\mcP(\Pi)$ and $\cb{\Brho^N}_{N\in\mbn}$ be a sequence where each $\Brho^N$ is a symmetric probability measure on $\Pi^N$. The sequence $\cb{\Brho^N}_{N\in\mbn}$ is said to be \emph{(Kac's) $\br$-chaotic} if for any $k\in\mbn$ and any function $\bs\Phi_k\in C_b(\Pi^k)$,
    \begin{align*}
        \lim_{N\to\infty} \inn{\Brho^N,\bs\Phi_k\otimes 1^{\otimes(N-k)}} = \inn{\br^{\otimes k},\bs\Phi_k}.
    \end{align*}
\end{definition}
This means that for all $k\in\mbn$, $\Brho^{k,N}$, the \emph{$k$-th marginal} of $\Brho^N$, converges to $\br^{\otimes k}$ weakly in $\mcP(\Pi^k)$. If we view $\Brho^N$ as the law of some $N$-particle system, this property suggests that any group of $k$ particles become statistically independent and identically distributed with the common law $\br$ as $N$ grows to $\infty$, hence the terminology of chaos. Equivalently, Kac's chaos can be defined by using only tensorized test functions of the form $\bs\Phi_k = \phi_1\otimes\cdots\otimes\phi_k$, where $\phi_i\in C_b(\Pi)$. Furthermore, there is the following characterization of Kac's chaos from a probabilistic point of view by Sznitman \cite[Proposition 2.2]{sznitman1991topics}.
\begin{proposition}\label{prop:equiv-of-Kac-chaos}
    Let $\br\in\mcP(\Pi)$ and $\cb{\Brho^N}_{N\in\mbn}$ be a sequence of symmetric probability measures on $\Pi^N$. The following are equivalent:
    \begin{enumerate}[(a)]
        \item $\cb{\Brho^N}_{N\in\mbn}$ is (Kac's) $\br$-chaotic.
        \item There exists $k\ge2$ such that $\Brho^{k,N}$ converges weakly to $\br^{\otimes k}$ as $N\to\infty$.
        \item For each $N$, let $\bs X^N=(X^1,\cdots,X^N)$ be a system of $\Pi$-valued random variables with $\bs X^N \sim \Brho^N$. The random empirical measure $\mu_{\bs X^N}$ converges in law to the deterministic measure $\br$ as $N\to\infty$.
    \end{enumerate}
\end{proposition}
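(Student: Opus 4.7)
The plan is to close the cycle of implications (a) $\Rightarrow$ (b) $\Rightarrow$ (c) $\Rightarrow$ (a). The first implication is immediate from the definition of Kac chaos: for any $k \ge 2$ and any $\bs\Phi_k \in C_b(\Pi^k)$, one has $\inn{\Brho^{k,N}, \bs\Phi_k} = \inn{\Brho^N, \bs\Phi_k \otimes 1^{\otimes(N-k)}} \to \inn{\br^{\otimes k}, \bs\Phi_k}$, which is exactly weak convergence $\Brho^{k,N} \to \br^{\otimes k}$.

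For (b) $\Rightarrow$ (c), I would first reduce to the second marginal: since $\Brho^{k,N} \to \br^{\otimes k}$ weakly for some $k \ge 2$, projecting the first two coordinates yields $\Brho^{2,N} \to \br^{\otimes 2}$ and $\Brho^{1,N} \to \br$. The strategy is then a second-moment computation in the spirit of Sznitman: for $\phi \in C_b(\Pi)$, symmetry of $\Brho^N$ gives
$$ \mbe\inn{\mu_{\bs X^N}, \phi} = \inn{\Brho^{1,N}, \phi} \to \inn{\br, \phi}, $$
$$ \mbe\inn{\mu_{\bs X^N}, \phi}^2 = \tfrac{1}{N}\inn{\Brho^{1,N}, \phi^2} + \tfrac{N-1}{N}\inn{\Brho^{2,N}, \phi \otimes \phi} \to \inn{\br, \phi}^2, $$
so $\mathrm{Var}(\inn{\mu_{\bs X^N}, \phi}) \to 0$ and $\inn{\mu_{\bs X^N}, \phi} \to \inn{\br, \phi}$ in $L^2$, hence in probability. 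Combined with tightness of $\mcb{\mu_{\bs X^N}}$ in $\mcP(\Pi)$ (obtained from tightness of $\Brho^{1,N}$ via Markov's inequality applied to a compact exhaustion), this promotes to convergence in law $\mu_{\bs X^N} \to \br$ in $\mcP(\Pi)$.

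For (c) $\Rightarrow$ (a), it suffices to establish the limit for tensorized test functions $\bs\Phi_k = \phi_1 \otimes \cdots \otimes \phi_k$ with $\phi_i \in C_b(\Pi)$, since their linear span is a determining class for weak convergence on $\Pi^k$. Expanding
$$ \mbe\prod_{i=1}^k \inn{\mu_{\bs X^N}, \phi_i} = \tfrac{1}{N^k}\sum_{i_1, \ldots, i_k = 1}^N \mbe[\phi_1(X^{i_1}) \cdots \phi_k(X^{i_k})] $$
and splitting into distinct versus non-distinct multi-indices, the symmetry of $\Brho^N$ yields
$$ \mbe\prod_{i=1}^k \inn{\mu_{\bs X^N}, \phi_i} = \tfrac{N(N-1)\cdots(N-k+1)}{N^k}\,\inn{\Brho^{k,N}, \bs\Phi_k} + E_{N,k}, $$
with $|E_{N,k}| \le C_k N^{-1} \prod_i \snorm{\phi_i}_\infty$. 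By (c) and bounded convergence, the left-hand side converges to $\prod_i \inn{\br, \phi_i} = \inn{\br^{\otimes k}, \bs\Phi_k}$, forcing $\inn{\Brho^{k,N}, \bs\Phi_k} \to \inn{\br^{\otimes k}, \bs\Phi_k}$.

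The hard part will be the (b) $\Rightarrow$ (c) step: although the variance bound is a one-line symmetry computation, one must carefully combine $L^2$-convergence of the linear functionals $\inn{\mu_{\bs X^N}, \phi}$ with tightness of $\mcb{\mu_{\bs X^N}}$ as $\mcP(\Pi)$-valued random variables to deduce convergence in law in $\mcP(\Pi)$ endowed with the weak topology. The remaining implications are combinatorial and rest only on boundedness together with the density of tensor products; these should be essentially routine.
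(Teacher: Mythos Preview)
The paper does not give its own proof of this proposition; it simply attributes the result to Sznitman \cite[Proposition 2.2]{sznitman1991topics} and states it without argument. Your proposal is precisely the classical Sznitman proof (second-moment/variance computation for (b)$\Rightarrow$(c), combinatorial expansion for (c)$\Rightarrow$(a)), and it is correct.
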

It is important to note that Kac's notion of chaos is purely \emph{qualitative}. On the other hand, \emph{quantitative} approaches have also been developed and explored by various authors, often leading to stronger and more precise results, in terms of rate of convergence. The initial step in this approach is to introduce a ``metric'' that quantifies the degree of chaos, and one of them is the Wasserstein metric. 

Notions of quantitative chaos using the Wasserstein metric have been introduced by authors such as \cite{hauray2014kac} and \cite{Chaintron_2022_1}. Unlike Kac's original notion of chaos, which is essentially based on weak convergence, their framework defines chaos based on the convergence of measures in the Wasserstein metric, providing a more precise and measurable way to assess the degree of chaos. Inspired by this idea, we introduce a notion of quantitative chaos using the Wasserstein-$c$ semimetric (see Definition \ref{def:optimal-cost} and Proposition \ref{prop:hypoC-for-P_c(Pi)}). Let $(\Pi,c)$ be a semimetric space that satisfies Hypothesis {\refC}. For $N\in\mbn$, let $(\Pi^N,\Bc_N)$ be the product semimetric space, with
\begin{align*}
    \Bc_N((x_1,\cdots,x_N),(y_1,\cdots,y_N)) = \frac{1}{N}\sum_{k=1}^N c(x_k,y_k).
\end{align*}
We can then define the optimal transport cost $\mcC_{\Bc_N}$, given by
\begin{align*}
    \mcC_{\Bc_N}(\Bmu,\Bnu) = \inf_{\bs\ga\in\Ga(\Bmu,\Bnu)}\int_{\Pi^{2N}} \Bc_N(\Bx,\By)\,d\bs\ga(\Bx,\By).
\end{align*}
\begin{definition}[Chaos in Wasserstein-$c$ semimetric]\label{def:chaos-in-wasserstein-semimetric}
    Let $\br\in\mcP(\Pi)$ and $\cb{\Brho^N}_N$ be a sequence of symmetric probability measures on $\Pi^N$. We say $\cb{\Brho^N}_N$ is \emph{infinite dimensional Wasserstein-$c$ $\br$-chaotic} if
    \begin{align*}
        \mcC_{\Bc_N}(\Brho^N,\br^{\otimes N})\to 0,\qquad\mbox{as }N\to\infty.
    \end{align*}
\end{definition}
For the case where $c$ is a metric, this corresponds exactly to the second notion of chaos introduced in \cite[Definition 3.20]{Chaintron_2022_1}. This review paper also discusses several other notions of chaos, and interested readers can refer to \cite[Section 3]{Chaintron_2022_1} for further details.

The notion of \emph{propagation of chaos} is a dynamical version of chaos. Fix a final time $T\ge 0$ and let $\cb{\bs X_t^N}_{t\ge0}$ be a process on $\Pi^N$, with initial distribution that is chaotic. The property of propagation of chaos is said to hold when the initial chaos is propagated at later times. Note that this property can hold either at \emph{pointwise} level or at \emph{pathwise} level, but we focus only on pointwise propagation of chaos here. The pathwise case will be discussed briefly in a remark later.
\begin{definition}[Pointwise propagation of chaos]\label{def:Kac-poc}
    For each $N\in\mbn$, let $\cb{\bs X_t^N}_{t\in[0,T]}$ be a permutation-invariant Markov process on $\Pi^N$ with $\cb{\Brho^N_t}_{t\ge0}$ being its distribution, and let $\cb{\br_t}_{t\ge0}\in C([0,T];\mcP(\Pi))$. Fix any $T>0$.
    \begin{enumerate}[(i)]
        \item \emph{(Pointwise) propagation of Kac's chaos} holds if the following is true: if the initial distribution $\Brho_0^N$ is (Kac's) $\br_0$-chaotic, then for each $t\in[0,T]$, the distribution $\Brho_t^N\in\mcP(\Pi^N)$ is (Kac's) $\br_t$-chaotic.
        \item \emph{(Pointwise) propagation of infinite dimensional Wasserstein-$c$ chaos} holds if the following is true: if the initial distribution satisfies $\mcC_{\Bc_N}(\Brho_0^N,\br_0^{\otimes N})\to 0$ as $N\to\infty$, then for each $t\in[0,T]$,
        \begin{align}\label{eq:convergence-in-CcN}
            \mcC_{\Bc_N}(\Brho_t^N,\br_t^{\otimes N})\to 0,\qquad\mbox{as }N\to\infty.
        \end{align}
    \end{enumerate}
\end{definition}
If propagation of chaos holds, the measure $\br_t$ shall be called the \emph{mean-field limit}. 
In this work, we focus on propagation of infinite dimensional Wasserstein-$c$ chaos. A typical estimate in this case is given by
\begin{align*}
    \sup_{t\in[0,T]}\mcC_{\Bc_N}(\Brho_t^N,\br_t^{\otimes N})\le \ep(N,T)\rb{1+\mcC_{\Bc_N}(\Brho_0^N,\br_0^{\otimes N})},
\end{align*}
where $\ep(N,T)\to0$ as $N\to\infty$, leading to \eqref{eq:convergence-in-CcN}. Note that by Proposition \ref{prop:equiv-of-c-convergence}, convergence in the optimal cost implies weak convergence as well, which leads to Kac's chaos by Proposition \ref{prop:equiv-of-Kac-chaos}(b).

\subsubsection{Plan and organization}
In the upcoming subsection, we clarify the notations that will be adopted throughout this section. We then formulate the abstract framework of a mean-field particle system. Next, we state the main assumptions and main result of this section: an exponential estimate on the optimal costs between probability measures on the $N$-fold product space $\Pi^N$, that will lead to propagation of chaos.

Section \ref{sec5.3} serves as a preparatory step before proving the main result by studying the relationship between optimal transport, $c$-stable solutions, and tensor product. Finally, in Section \ref{sec5.4}, we collect results from Section \ref{sec5.3}, along with those from previous sections, to prove the main result.

\subsection{Abstract framework and main result}\label{sec5.2}

\subsubsection{Notations}
This section will be notationally intensive, so we begin by introducing notations that will be used.
Let us recall again the state space $(\Pi,c)$, which is a semimetric space that satisfies Hypothesis {\refC} (see Section \ref{subsec2.3.2:hypoC}).
In the framework of $N$-particle systems, we will work on the $N$-fold product space of $\Pi$, that is,
\begin{align*}
    \Pi^N = \overbrace{\Pi\times\cdots\times\Pi}^N.
\end{align*}
In this $N$-fold space, we use boldface letters/symbols (e.g. $\Bx,\By$) to denote variables in this space. For instance,
\begin{align*}
    \Bx=\Bx_N = (x_1,x_2,\cdots,x_N)\in\Pi^N,\quad\text{where $x_k\in\Pi$ for $1\le k\le N$}.
\end{align*}
Similarly, one may define
\[C_0(\Pi^N),\mcP(\Pi^N),\mcG(\Pi^N),\]
the space of continuous functions (vanishing at infinity), the space of probability measures, and the space of probability generators on $\Pi^N$. For objects in these spaces, we again use boldface symbols to represent them. For example,
\begin{align*}
    \bs\Phi=\bs\Phi_N\in C_0(\Pi^N),\quad \bs\mu=\bs\mu_N\in\mcP(\Pi^N),\quad \bs\mcA=\bs\mcA_N \in \mcG(\Pi^N).
\end{align*}
In short, we shall use normal typeset for $1$-dimensional space, while boldface typeset is reserved for $N$-dimensional spaces. In most parts of the writing, we will suppress $N$, with understanding that $N\in\mbn$ is a fixed large number.


Let $\bigsqcup_{m\ge1}\Pi^m$ denote the disjoint union of Cartesian powers of $\Pi$, that is, the set contains all points of the form $(x_1,x_2,\cdots,x_m)\in\Pi^m$ for any arbitrary $m\in\mbn$. Let us define $\mu$ to be the \emph{empirical measure} map:
\begin{align*}
    \mu: \bigsqcup_{m\ge1}\Pi^m \to \mcP(\Pi),\quad \mu(x_1,x_2,\cdots,x_m) = \frac{1}{m}\sum_{k=1}^m \de_{x_k}.
\end{align*}
In particular, if $\Bx=\Bx_N \in\Pi^N$, we have $\mu(\Bx)=\frac{1}{N}\sum_{k=1}^N \delta_{x_k}$, which represents the empirical measure associated to the state $\Bx$ of an $N$-particle system. For $\Bx=\Bx_N\in\Pi^N$ and $1\le k\le N$, we denote the $k$-th truncated variable
\begin{align*}
    \Bx_k' = \Bx_{N,k}' = (x_1,x_2,\cdots,x_{k-1},\cancel{x},x_{k+1},\cdots,x_N)\in\Pi^{N-1}.
\end{align*}
In this case, we have
\begin{align*}
    \mu(\Bx_k') = \frac{1}{N-1} \sum_{j=1,j\neq k}^N \de_{x_j},
\end{align*}
which represents the empirical measure of the $N-1$ particles, excluding the $k$-th particle.
Furthermore, given $\Bx=\Bx_N\in\Pi^N$ and $\bs{\Phi}\in C(\Pi^N)$, we denote $\bs{\Phi}(\cdot;\Bx_k')\in C(\Pi)$ as the \emph{$x_k$-slicing function}
\begin{align*}
    y\mapsto \bs{\Phi}(x_1,x_2,\cdots,x_{k-1},y,x_{k+1},\cdots,x_N).
\end{align*}

\subsubsection{From mean-field generators to \texorpdfstring{$N$}{N}-particle systems}
Throughout the rest of this section, $\MFG$ is a \emph{mean-field generator}, which is a family of probability measure-dependent generators (see Definition \ref{def4.1:mean-field-operator}). The argument $\mu$ of $\mcA_\mu=\mcA(\mu)$, which is a probability measure, will be called a \emph{mean-field measure}. Recall also the correspondent transition kernel of $\mcA(\mu)$, which is denoted as $\cb{\ka_t(\mu)}_{t\ge0}$. We remind the reader the intuition here: $\mcA(\mu)$ is the infinitesimal description of a single particle, given that the mean-field particles are distributed as $\mu$. The solution of the associated (initial value problem of) mean-field equation \eqref{eq4.6:mean-field-equation}:
\begin{align*}
    \left\{\begin{aligned}
        \partial_t \br_t &= \br_t \mcA(\br_t),\quad t\in(0,T),\\
        \br_0 &\in \mcP_c(\Pi),
    \end{aligned}\right.
\end{align*}
gives a (the) candidate for mean-field limit of some $N$-particle systems as $N\to\infty$. Our main task here is to introduce an $N$-particle system, for each $N\ge1$, associated to the mean-field generator $\mcA$ such that the mean-field limit holds.

There are various classes of $N$-particle systems (Feller processes on $\Pi^N$) associated with a mean-field generator $\cb{\mcA(\mu)}_\mu$. In this work, we focus on a prototypical model of $N$-particle systems, which emerges in various applications, such as McKean-Vlasov diffusion models, mean-field jump processes and piecewise deterministic Markov processes \cite{Chaintron_2022_2}. We will now describe such particle systems by introducing the corresponding probability generators $\bs{\hat\mcA}\in\mcG(\Pi^N)$ on $\Pi^N$. We point out that such an approach is not new, as it has been discussed, for instance, in \cite[Section 2.2]{Chaintron_2022_1} .
The generator $\bs{\hat\mcA}=\bs{\hat\mcA}_N$ of the $N$-particle system is the superposition (sum) of probability generators:
\begin{align*}
    \bs{\hat\mcA} = \sum_{k=1}^N \bs{\hat\mcA}^{(k)},\quad \bs{\hat\mcA}^{(k)}\in\mcG(\Pi^N),
\end{align*}
where $\bs{\hat\mcA}^{(k)}$'s will be described in the coming paragraph.

Let us first give a description of $\bs{\hat\mcA}^{(1)}$, from the perspective of stochastic processes. The associated process $\cb{\bs Y_t}_{t\ge0}$, starting at $(x_1,x_2,\cdots,x_N)$, takes the form
\begin{align*}
    \bs Y_t = (Y_t^1,Y_t^2,\cdots,Y_t^N),\quad (Y_t^2,\cdots,Y_t^N) = (x_2,\cdots,x_N),
\end{align*}
where the $k$-th coordinates of $\bs Y_t$, for $k\ge2$, remain constant, while the first coordinate $\cb{Y_t^1}_{t\ge0}$ is the process associated with the generator
\begin{align*}
    \mcA(\mu(\Bx_1')), \quad\mu(\Bx_1')=\frac{1}{N-1}\sum_{k\ge2}\de_{x_k}.
\end{align*}
Specifically, the process $\cb{\bs Y_t}_{t\ge0}$ remains in the $x_1$-slicing, i.e. $\bs Y_t\in\Pi\times\cb{(x_2,\cdots,x_N)}$. The first coordinate process $Y_t^1$ is still dependent on the initial state $(x_2,\cdots,x_N)$, particularly depending on the empirical measures $\mu(\Bx_1')$ of $\cb{x_2,\cdots,x_N}$ via the mean-field generator $\mcA$.

To give the description from the level of generators, $\bs{\hat\mcA}^{(1)}$ is identified as
\begin{align}\label{eq:bold-hat-mcA-1}
    \bs{\hat\mcA}^{(1)} \bs{\Phi}(\Bx) = \bs{\hat\mcA}^{(1)}\bs{\Phi}(x_1;\Bx_1') := \mcA\sqb{\bs{\Phi}(\cdot;\Bx_1');\mu(\Bx_1')}(x_1),
\end{align}
where the domain $D(\bs{\hat\mcA}^{(1)})$ is identified as the set of all functions $\bs\Phi\in C_0(\Pi^N)$ such that each $x_1$-slicing $\bs\Phi(\cdot;\Bx_1')\in D(\mcA(\mu(\Bx_1')))$ for all $\Bx_1'\in\Pi^{N-1}$. To further explain the notation above, the value of $\bs{\hat\mcA}^{(1)}\bs{\Phi}(x_1,x_2,\cdots,x_N)$, is given by
\begin{align*}
    \mcA_\mu (\phi) (x_1),
    \qquad\text{where}\quad
    \phi = \bs{\Phi}(\cdot;\Bx_1')\in D(\mcA(\mu)),\quad \mu=\mu(\Bx_1') = \frac{1}{N-1} \sum_{k=2}^N \de_{x_k}.
\end{align*}
Alternatively, we may also define $\bs{\hat\mcA}^{(1)}$ using tensor product: $\bs{\hat\mcA}^{(1)}$ is the unique generator in $\mcG(\Pi^N)$ such that it holds for all (appropriate) $\phi\in C_0(\Pi),\bs\Psi'\in C_0(\Pi^{N-1})$ that
\begin{align*}
    \bs{\hat\mcA}^{(1)}[\phi\otimes\bs\Psi'](\Bx) = \mcA(\phi;\mu(\Bx_1'))(x_1)\cdot\bs\Psi'(\Bx_1').
\end{align*}
Lastly, we may also give a description through its transition kernel $\scb{\ka_t^{(1)}}_{t\ge0}$ and probability semigroup $\scb{\bs T_t^{(1)}}_{t\ge0}$. The transition kernel is given by
\begin{align}\label{eq5.2:temp}
    \bs\ka_t^{(1)}(\Bx) = \ka_t(x_1;\mu(\Bx_1'))\otimes\de_{x_2}\otimes\cdots\otimes\de_{x_N},
\end{align}
where $\cb{\ka_t(\cdot;\mu)}_{t\ge0}$ is the transition kernel of $\mcA(\mu)$ for a mean-field measure $\mu\in\mcP_c(\Pi)$. This also follows:
\begin{align*}
    e^{t\bs{\hat\mcA}^{(1)}} \bs\Phi(\Bx) = \inn{\bs\ka_t^{(1)}(\Bx),\Phi} = \int_\Pi \bs\Phi(y;\Bx_1')\,\ka_t(x_1,dy;\mu(\Bx_1')) = e^{t\mcA(\mu(\Bx_1'))} \bs\Phi(\cdot;\Bx_1')(x_1).
\end{align*}
For $k\ge2$, the description of $\bs{\hat\mcA}^{(k)}$ is analogous to the case of $\bs{\hat\mcA}^{(1)}$, except that the first particle is replaced by the $k$-th particle. The simplest way to express this notationally is as follows. Let $\si_k$, for $k\ge2$, be the action of interchanging the coordinate $x_1$ and $x_k$ in the variable $(x_1,x_2,\cdots,x_N)$. For example,
\begin{align*}
    \si_2\bs\Phi(x_1,x_2,x_3,\cdots,x_N) &= \bs\Phi(x_2,x_1,x_3,\cdots,x_N),\\
    \si_3\bs\Phi(x_1,x_2,x_3,\cdots,x_N) &= \bs\Phi(x_3,x_2,x_1,\cdots,x_N).
\end{align*}
Then we have
\begin{align}\label{eq:bold-hat-mcA-k}
    \bs{\hat\mcA}^{(k)}\bs{\Phi} = (\si_k^{-1} \bs{\hat\mcA}^{(1)} \si_k)\bs{\Phi}.
\end{align}

Finally, the main object of study is the superposition of $\bs{\hat\mcA}^{(k)}$'s. Given its importance, we shall give a definition to it.
\begin{definition}\label{def:N-particle-generator}
    Let $\MFG$ be a mean-field generator. For $N\ge1$ and $1\le k\le N$, let $\bs{\hat\mcA}^{(k)}\in\mcG(\Pi^N)$ be given by \eqref{eq:bold-hat-mcA-1} and \eqref{eq:bold-hat-mcA-k}. The \emph{$N$-particle generator} associated to the mean-field generator $\mcA$ is the sum of these generators:
    \begin{align*}
        \bs{\hat\mcA} = \sum_{k=1}^N \bs{\hat\mcA}^{(k)}.
    \end{align*}
\end{definition}
The question of whether $\BHA^{(k)}$'s and $\BHA$ generate probability semigroups will be addressed later.

In the theory of Markov processes, operators of the form described above are often employed to construct a Markov process as a superposition of different individual processes. In our context, the process $\cb{\bs X_t}_{t\ge0}$ associated with $\bs{\hat\mcA}$ is a superposition of the processes governed by the operators $\bs{\hat\mcA}^{(k)}$. This means that the state of \emph{each} particle in the system evolves according to the empirical measure $\mu$ of the other particles, with the dynamics determined by $\mcA(\mu)$. Consequently, this framework naturally aligns with the concept of mean-field particle systems.

\begin{remark}
    Note that different authors may define the $N$-particle generator in various ways. For instance, in \cite{Chaintron_2022_1}, the associated $N$-particle generator is defined similarly to ours, but with the empirical measure depending on all particles in the system, i.e., using $\mu=\mu(\Bx)= \frac{1}{N}\sum_{k=1}^N \de_{x_k}$ in place of $\mu=\mu(\Bx_1')=\frac{1}{N-1}\sum_{k=2}^N \de_{x_k}$ in \eqref{eq:bold-hat-mcA-1}. While this distinction is minor and is mostly a matter of preference, we use our definition here to better capture the idea of ``depending on the distribution of other particles in the system''.
\end{remark}

\subsubsection{Assumptions for quantitative propagation of chaos}
The primary result of this section is an exponential estimate of the optimal transport cost (w.r.t. a cost function that will be specified later) between the Markov flows generated by the $N$-particle generators $\bs{\hat\mcA}$ and the tensorization of solutions to the mean-field equations. We will derive this estimate under Hypothesis {\refA} on mean-field generators from Section \ref{chap4}. In fact, our result extends to a more general hypothesis on mean-field generators, which will grant us some better result in propagation of chaos in some cases (see Remark \ref{rmk5.10:temp} and Example \ref{example5.15} below).

\begin{hypothesisAp}\label{hypoA'}
    Let $\Xi:\Pi\times\PCP^2\to[0,\infty)$ be a function that satisfies the following for all $x,y\in\Pi$ and $\mu,\nu,\tilde\mu\in\PCP$:
    \begin{enumerate}[(i)]
        \item $\Xi(x,\mu,\nu)=\Xi(x,\nu,\mu)$ and $\Xi(x,\mu,\mu)=0$.
        \item There exists $B>1$ such that
        \[\Xi(x,\mu,\nu)\le B\sqb{c(x,y) + \mcC_c(\mu,\tilde\mu) + \Xi(y,\tilde\mu,\nu)}.\]
        \item The map $(x,\mu,\nu)\mapsto\Xi(x,\mu,\nu)$ is continuous w.r.t. the product topology of $\Pi\times\PCP^2$.
    \end{enumerate}
    We say that a mean-field generator $\MFG$ satisfies Hypothesis {\refAp} if there exists $\al,\be\ge0$ such that the following holds for all $x,y\in\Pi$, $\mu',\nu'\in\PCP$:
    \begin{align}\label{eq5.4:temp}
        \om_c(x,y;\mcA(\mu'),\mcA(\nu')) \le \al \Xi(x,\mu',\nu') + \be c(x,y).
    \end{align}
\end{hypothesisAp}

Hypothesis {\refAp} is an assumption that can be applied to general semimetric state space $(\Pi,c)$ satisfying Hypothesis {\refC}. In applications, a common and practical choice is to take $(\Pi,d)$ as a metric space and set $c(x,y)=\f1p d(x,y)^p$ for some $p\ge1$, in which case the $p$-th root of the optimal transport cost defines a metric on the space $\PCP$. In such cases, the general conditions in Hypothesis {\refAp} can be substituted with more concrete assumptions that involve Lipschitz-type continuity with respect to $d$ and the Wasserstein distance.
We will highlight this special case in the Section \ref{sec5.2.5}.

\begin{remark}\label{rmk5.8:temp}
    Notice that Condition (ii) above implies that for all $x\in\Pi$, $\mu,\nu\in\PCP$,
    \begin{align*}
        \Xi(x,\mu,\nu) \le B\mcC_c(\mu,\nu).
    \end{align*}
    This means that for each $\mu,\nu\in\PCP$, $x\mapsto\Xi(x,\mu,\nu)$ is in $C_b(\Pi)\subset C_{b,c}(\Pi)$. By Theorem \ref{thm:chap3-main-thm}, Condition \eqref{eq5.4:temp} is equivalent to
    \begin{align*}
        \om_c(\mu,\nu;\mcA(\mu'),\mcA(\nu')) \le \al\int_\Pi\Xi(x,\mu',\nu')\,d\mu(x) + \be\mcC_c(\mu,\nu)
    \end{align*}
    for all $\mu,\nu,\mu',\nu'\in\PCP$.
\end{remark}

\begin{remark}
	Observe that if a mean-field generator $\mcA$ satisfies Hypothesis {\refA}, then it satisfies Hypothesis (A, $\mcC_c$), that is, Hypothesis {\refAp} with $\Xi(x,\mu,\nu)=\mcC_c(\mu,\nu)$. Note that $\Xi(x,\mu,\nu)=\mcC_c(\mu,\nu)$ satisfies Condition (i)--(iii) in Hypothesis {\refAp}. 
\end{remark}

\begin{remark}\label{rmk5.10:temp}
    On the other hand, if a mean-field generator $\mcA$ satisfies Hypothesis {\refAp}, then there exists $\al,\be\ge0$, $B>1$ such that
    \begin{align*}
        \om_c(\mu,\nu;\mcA(\mu'),\mcA(\nu')) &\le \al \int_\Pi\Xi(x,\mu',\nu')\,d\mu(x) + \be\mcC_c(\mu,\nu)\\
        &\le \al B\mcC_c(\mu',\nu') + \be\mcC_c(\mu,\nu).
    \end{align*}
    for all $\mu,\nu,\mu',\nu'\in\PCP$.
    Therefore, $\mcA$ satisfies Hypothesis {\refA} as well, and the mean-field equation \eqref{eq4.6:mean-field-equation} associated to $\mcA$ is well-posed. The advantage of considering this slightly more general hypothesis lies in the fact that the final exponential estimate depends on the number $N$ of particles and $\Xi$, which, in certain cases, may result in an improved convergence rate as $N\to\infty$.
\end{remark}

Let us now introduce the semimetric cost $\bs c$ of which the exponential estimate applies to. Recall that $(\Pi,c)$ is a semimetric space satisfying Hypothesis {\refC}. Let $\bs c=\bs c_N:(\Pi^N)^2\to[0,\infty)$ be the \emph{tensorized cost} given by
\begin{align*}
    \bs c = \frac{1}{N}(\overbrace{c\oplus c\oplus\cdots\oplus c}^N),\quad \bs{c}(\Bx,\By) = \frac{1}{N} \sum_{k=1}^N c(x_k,y_k).
\end{align*}
One may verify that $(\Pi^N,\bs{c})$ is a semimetric space satisfying Hypothesis {\refC}.
\begin{proposition}
    If $(\Pi,c)$ satisfies Hypothesis {\refC} with the relaxed triangle inequality for some $B\ge1$, then $(\Pi^N,\bs c)$ also satisfies Hypothesis {\refC} with the same constant $B$ in the relaxed triangle inequality.
\end{proposition}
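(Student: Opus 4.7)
The plan is to verify each of the four conditions (C1)--(C4) for $(\Pi^N,\bs c)$ in turn, using the fact that $\bs c(\Bx,\By) = \frac{1}{N}\sum_{k=1}^N c(x_k,y_k)$ decouples the coordinates.

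First, condition (C2) is immediate: applying the $B$-relaxed triangle inequality for $c$ in each coordinate and then averaging yields $\bs c(\Bx,\By)\le B[\bs c(\Bx,\Bz)+\bs c(\Bz,\By)]$ with the same constant $B$. Condition (C3) follows from the key observation that $\bs c$-convergence is equivalent to coordinate-wise $c$-convergence: since $c\ge 0$, the fact that $\bs c(\Bx_n,\Bx)\to 0$ forces $c(x_{n,k},x_k)\to 0$ for each $k$, and conversely. Then assuming $\Bx_n\xrightarrow{\bs c}\Bx$ and $\By_n\xrightarrow{\bs c}\By$, one applies (C3) of Hypothesis (C) for $c$ to each coordinate and averages the resulting limits to deduce $\bs c(\Bx_n,\By_n)\to \bs c(\Bx,\By)$.

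For (C1), I would invoke Proposition \ref{prop2.16:temp} to obtain a metric $d$ on $\Pi$ and $\ta\in(0,1]$ with $d(x,y)\le c(x,y)^\ta\le 2d(x,y)$, so that $(\Pi,d)$ is a separable complete metric space inducing the same topology as $c$. The product metric $\bs d(\Bx,\By)=\max_{1\le k\le N} d(x_k,y_k)$ then makes $\Pi^N$ a separable complete metric space, and the coordinate-wise equivalence established above shows that $\bs c$-convergence coincides with $\bs d$-convergence (both are equivalent to convergence of each coordinate). Hence $(\Pi^N,\bs c)$ is separable and complete. Finally, for (C4), local compactness of $\Pi^N$ is inherited from the local compactness of $\Pi$: given $\Bx\in\Pi^N$, picking a compact neighborhood $K_k$ of each $x_k$ in $\Pi$ (using (C4) for $\Pi$), the product $K_1\times\cdots\times K_N$ is a compact neighborhood of $\Bx$ in $\Pi^N$.

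None of the steps should present a serious obstacle; the only subtlety worth flagging is the equivalence between $\bs c$-convergence and coordinate-wise $c$-convergence, which is used both to transfer (C3) and to identify the $\bs c$-topology with the (Polish) product topology. All other verifications are direct consequences of the coordinate-wise structure of $\bs c$.
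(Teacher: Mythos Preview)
Your proof is correct. The paper does not actually supply a proof of this proposition; it merely states the result and remarks beforehand that ``one may verify'' it, so there is nothing to compare against beyond noting that your coordinate-wise verification is exactly the routine check the authors had in mind.
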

With this proposition in place, all the notions (optimal cost, spaces, etc.) are now applicable to the semimetric cost $\bs c$. Specifically, it follows that the optimal transport cost $\mcC_{\bs c}$
\footnote{Note the distinction between $\mcC_c$ and $\mcC_{\boldsymbol{c}}$: $\mcC_c$ represents the optimal transport cost in one dimension with a normal typeset $c$, while $\mcC_{\boldsymbol{c}}$ denotes the cost in higher dimensions with boldface $\boldsymbol{c}$. Similarly for $\om_c$ and $\om_{\Bc}$.}
w.r.t. the cost $\bs c$, given by
\begin{align*}
    \mcC_{\bs c}(\bs\mu,\bs\nu) = \inf_{\bs\ga\in\Ga(\bs\mu,\bs\nu)} \int_{\Pi^{2N}} \bs c(\Bx,\By)\,d\bs\ga(\Bx,\By),
\end{align*}
defines a semimetric on the space of probability measures $\mcP_{\Bc}(\Pi^N)$, which contains all probability measures $\bs\mu$ on $\Pi^N$ such that
\begin{align*}
    \int_{\Pi^N} \bs c(\bs z,\bs y)\,d\bs\mu(\By)<\infty,\quad\text{for some (equivalently, all) }\bs z\in\Pi^N.
\end{align*}
Note that the semimetric space $(\mcP_{\bs c}(\Pi^N),\mcC_{\bs c})$ satisfies (C1)--(C3) of Hypothesis {\refC} (See Proposition \ref{prop:hypoC-for-P_c(Pi)}). 

Recall also the space of $\Bc$-continuous probability generators $\mcG_{\Bc}^0(\Pi^N)$ given by Definition \ref{def:gc0-generators}. We also specifically mention the notion of \emph{Dini derivative} of $\bs c$-optimal cost between two generators,  given in Definition \ref{def:om-definition}. That is, $\om_{\Bc}:\mcP_{\bs{c}}(\Pi^N)^2\times\mcG_{\Bc}^0(\Pi^N)^2\to[-\infty,\infty]$ given by
\begin{align*}
    \om_{\Bc}(\bs\mu,\bs\nu;\bs\mcA,\bs\mcB):= \left.D^+\right\vert_{t=0} \mcC_{\bs{c}}(\bs\mu e^{t\bs\mcA},\bs\nu e^{t \bs\mcB}).
\end{align*}

Let us now address a subtle issue: the generation problem of $\BHA$, that is, whether $\BHA^{(k)}$'s and $\BHA$ generate probability semigroups. The question of whether the sum $\sum_{k=1}^m \mcA_k$ of probability generators actually generates a probability semigroup falls within the realm of \emph{perturbation theory} for $C_0$-semigroups. The answer is affirmative under various assumptions on the $\mcA_k$'s. We do not intend to delve further into the specific conditions required to guarantee this. Instead, we will encapsulate this within the following hypothesis.
\begin{hypothesis}\label{hypo:generation-problem}
    Let $\MFG$ be a mean-field generator. Assume for each $1\le k\le N$ that $\BHAK$ and $\BHA=\BHA_N=\sum_{k=1}^N\BHAK$ (see Definition \ref{def:N-particle-generator}) generate probability semigroups on $\Pi^N$. Moreover, $\BHAK, \BHA\in\GCZPN$. Finally, there is a core $\bs\mcD\subset D(\BHA)\subset C_0(\Pi^N)$ such that $\bs\mcD\subset D(\BHAK)$ for all $k$.
\end{hypothesis}

\subsubsection{Main results of quantitative propagation of chaos}
We now present the statement of our main result in this section. Let $\MFG$ be a mean-field generator that satisfies Hypothesis {\refAp}. Let $\cb{\br_t}_{t\ge0}\in\CTPC$ be a $c$-stable solution of the associated mean-field problem \eqref{eq4.6:mean-field-equation}. Recall the $N$-particle generator $\bs{\hat\mcA}$ associated to the mean-field generator $\mcA$ defined in Definition \ref{def:N-particle-generator}. The main result is an exponential estimate of the $\bs c$-optimal cost between $\bs\rho_t=\bs\rho_0 e^{t\bs{\hat\mcA}}$, for some $\bs\rho_0\in\mcP_{\bs{c}}(\Pi^N)$, and the tensor product $\bs\br_t= \br_t^{\otimes N}$ of the mean-field solution $\br_t$. We will establish an exponential bound for $\mcC_{\bs c}(\bs\rho_t,\bs\br_t)$ with a quantity that depends on $N\in\mbn$ and $\Xi$ from Hypothesis {\refAp}, given as follows:
\begin{definition}\label{def:aleph-N}
    Let $\Xi:\Pi\times\PCP^2\to[0,\infty)$ satisfy Hypothesis {\refAp}. Given $\br\in\mcP_c(\Pi)$, we define
    \begin{align*}
        \aleph_N(\br) = \aleph_N(\br;\Xi) :=\int_{\Pi^N} \Xi(y_1,\mu(\By_1'),\br)\,d\br^{\otimes N}(\By), \quad\mu(\By_1')=\frac{1}{N-1}\sum_{k=2}^N \de_{y_k}.
    \end{align*}
\end{definition}
\begin{remark}
    To provide some insight into this quantity in the context of probability theory, let $\cb{X_k}_{k=1}^\infty$ be i.i.d. $\Pi$-valued random variables with the common law $\br\in\mcP_c(\Pi)$. Then the integral above represents the expected value of $\Xi$ between $\br$ and its i.i.d. empirical measure $\frac{1}{N-1}\sum_{k=2}^N\de_{X_k}$:
    \begin{align*}
        \aleph_N(\br) = \mbe\msqb{\Xi\mrb{X_1,\frac{1}{N-1}\sum_{k=2}^N \de_{X_k},\br}}.
    \end{align*}
    Intuitively, by the law of large numbers, we expect that $\frac{1}{N-1}\sum_{k=2}^N \de_{X_k}\to\br$ in a certain sense. Thus, if $\Xi$ is an appropriate function that reflects this convergence, we should observe that $\aleph_N(\br)\to 0$ as $N\to\infty$. In fact, in the next section, we will provide a bound (in terms of $N$) for the quantity $\aleph_N(\br)$, where $\Xi=\mcC_c$ for some cost $c$.
\end{remark}

\begin{example}\label{example5.15}
    Let us give a simple example of $\Xi$ here and compute its $\aleph_N$. Let us consider $\Pi=\mbr^d$ and $c(x,y)=\frac{1}{2}|x-y|^2$. Given $\mu,\nu\in\mcP_c(\mbr^d)$, we let $\Xi(y,\mu,\nu)=\Xi(\mu,\nu)$ be the square difference of the \emph{first moment} (or \emph{mean}) vectors
    \begin{align*}
        \Xi(\mu,\nu) = \abs{\int_{\mbr^{2d}}(x-y)\,d\mu(x)\,d\nu(y)}^2 = \abs{\int_{\mbr^d}x\,d\mu(x) - \int_{\mbr^d}y\,d\nu(y)}^2.
    \end{align*}
    In this case, by the standard estimation of variance from the probability theory, for $\br$ with finite second moment,
    \begin{align*}
        \aleph_N(\br) &= \mbe\msqb{\abs{\int_{\mbr^d}\mrb{\frac{1}{N-1}\sum_{k=2}^N X_k-y}\,d\br(y)}^2} = \Var\msqb{\frac{1}{N-1}\sum_{k=2}^N X_k}\\
        &\le \frac{1}{N-1}\int_{\mbr^d} y^2\,d\br(y).
    \end{align*}
    It follows that $\aleph_N(\br)=O(N^{-1})$.
\end{example}
Another example of $\aleph_N$ will be discussed in the next section.
Let us now state the main theorem.
\begin{theorem}[Exponential estimate of $\bs{c}$-optimal cost]\label{thm:chap5-main-thm}
    Suppose that $(\Pi,c)$ is a semimetric space satisfying Hypothesis {\refC}, and $\MFG$ is a mean-field generator satisfying Hypothesis {\refAp}.
    Let $\bs{\hat\mcA}$ be the $N$-particle generator associated to $\mcA$ given in Definition \ref{def:N-particle-generator} and assumes that Hypothesis \ref{hypo:generation-problem} holds.
    Let $\br_t\in C([0,\infty);\mcP_c(\Pi))$ be the $c$-stable solution of the mean-field equation \eqref{eq4.6:mean-field-equation} associated to $\mcA$, and $\bs\rho_0\in\mcP_{\bs{c}}(\Pi^N)$. We denote
    \begin{align*}
        \bs{\rho}_t = \bs{\rho}_0 e^{t\bs{\hat{\mcA}}_N}, \quad \bs{\br}_t = \br_t^{\otimes N}.
    \end{align*}
    Then for any $T\ge0$, it holds
    \begin{align*}
        \sup_{t\in[0,T]}\mcC_{\bs{c}}\mrb{\bs{\rho}_t,\bs{\br}_t} \le \mcC_{\bs{c}}\mrb{\bs{\rho}_0,\bs{\br}_0} e^{K T} + \al B \zeta_{K}(T) \sup_{t\in[0,T]} \aleph_{N}(\br_t),
    \end{align*}
    where $\aleph_{N}$ is given in Definition \ref{def:aleph-N}, $\al,\be\ge0,B>1$ are constants from Hypothesis \refAp, $K:=\be+2\al B$, and $\zeta_{K}(T):=K^{-1}(e^{K T}-1)$.
\end{theorem}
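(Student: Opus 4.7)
The plan is to apply the stability machinery of Sections 3 and 4 to the pair $(\bs\rho_t, \bs\br_t)$, after identifying $\bs\br_t = \br_t^{\otimes N}$ as the $c$-stable solution of a time-dependent linear Fokker-Planck equation on $\Pi^N$. Let $\bs{\tilde\mcA}_t = \sum_{k=1}^N \bs{\tilde\mcA}_t^{(k)}$, where $\bs{\tilde\mcA}_t^{(k)}$ acts as $\mcA(\br_t)$ on the $k$-th coordinate and trivially on the others. A PCA argument (Section \ref{subsec4.3.4}) reduces $\bs\sigma_t^{(n)}$ on each constant piece to a tensor product of $1$-D evolutions, which yields $\bs\sigma_t^{(n)} = \sigma_t^{(n)\otimes N}$ where $\sigma_t^{(n)}$ is the $1$-D PCA approximation of the mean-field solution $\br_t$. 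Since $\mcC_{\bs c}(\mu^{\otimes N},\nu^{\otimes N}) \le \mcC_c(\mu,\nu)$, uniform convergence $\sigma_t^{(n)}\xrightarrow{c}\br_t$ lifts to $\bs\sigma_t^{(n)}\xrightarrow{\bs c}\br_t^{\otimes N}$, identifying $\bs\br_t = \br_t^{\otimes N}$ as the unique $c$-stable solution. A routine calculation with Hypothesis \refAp{} and the bound $\Xi(\cdot,\mu,\nu)\le B\mcC_c(\mu,\nu)$ (see Remark \ref{rmk5.8:temp}) shows $\{\bs{\tilde\mcA}_t\}\in\CEXPN([0,T];\GCZPN)$, so Lemma \ref{special-stability} applies with $\mcA = \bs{\hat\mcA}$ and $\bs\mcB = \{\bs{\tilde\mcA}_t\}$.

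Next, I estimate $\omega_{\bs c}(\bs\mu,\bs\nu;\bs{\hat\mcA},\bs{\tilde\mcA}_s)$ by the subadditivity result (Theorem \ref{thm:om-subadditivity}) applied to the decompositions $\bs{\hat\mcA}=\sum_k \bs{\hat\mcA}^{(k)}$ and $\bs{\tilde\mcA}_s = \sum_k \bs{\tilde\mcA}_s^{(k)}$. For each $k$, both $\bs{\hat\mcA}^{(k)}$ and $\bs{\tilde\mcA}_s^{(k)}$ evolve only the $k$-th coordinate, with generators $\mcA(\mu(\Bx_k'))$ and $\mcA(\br_s)$ respectively. Since $\bs c$ is separable, the product structure of these evolutions yields the pointwise identity
\begin{align*}
\omega_{\bs c}(\delta_{\Bx},\delta_{\By};\bs{\hat\mcA}^{(k)},\bs{\tilde\mcA}_s^{(k)}) = \tfrac{1}{N}\omega_c(x_k,y_k;\mcA(\mu(\Bx_k')),\mcA(\br_s)),
\end{align*}
which by Hypothesis \refAp{} is at most $\tfrac{\al}{N}\Xi(x_k,\mu(\Bx_k'),\br_s) + \tfrac{\be}{N}c(x_k,y_k)$. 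Summing over $k$ and lifting from Dirac masses to general measures via Theorem \ref{thm:chap3-main-thm} (equivalence (d)$\Leftrightarrow$(c), applied in the product space), I obtain the candidate bound
\begin{align*}
\ta(s,\bs\mu,\bs\nu) := \tfrac{\al}{N}\int_{\Pi^N}\sum_{k=1}^N \Xi(x_k,\mu(\Bx_k'),\br_s)\,d\bs\mu(\Bx) + \be\mcC_{\bs c}(\bs\mu,\bs\nu),
\end{align*}
which is continuous (hence upper semicontinuous) in $(s,\bs\mu,\bs\nu)$ and locally bounded on $\mcC_{\bs c}$-bounded sets; substituting into Lemma \ref{special-stability} then gives $\mcC_{\bs c}(\bs\rho_t,\bs\br_t) \le \mcC_{\bs c}(\bs\rho_0,\bs\br_0) + \int_0^t \ta(s,\bs\rho_s,\bs\br_s)\,ds$.

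The crucial step is closing the system by bounding $\ta(s,\bs\rho_s,\bs\br_s)$ in terms of $\mcC_{\bs c}(\bs\rho_s,\bs\br_s)$ and $\aleph_N(\br_s)$ alone. Picking a $\bs c$-optimal coupling $\bs\ga_s\in\Ga(\bs\rho_s,\bs\br_s)$ and applying Hypothesis \refAp{}(ii) with $\tilde\mu=\mu(\By_k')$ gives
\begin{align*}
\Xi(x_k,\mu(\Bx_k'),\br_s) \le B\bigl[c(x_k,y_k) + \mcC_c(\mu(\Bx_k'),\mu(\By_k')) + \Xi(y_k,\mu(\By_k'),\br_s)\bigr].
\end{align*}
Integrating against $\bs\ga_s$ and summing over $k$, three elementary identities close the loop: $\sum_k\int c(x_k,y_k)\,d\bs\ga_s = N\mcC_{\bs c}(\bs\rho_s,\bs\br_s)$; the pointwise bound $\mcC_c(\mu(\Bx_k'),\mu(\By_k'))\le\tfrac{1}{N-1}\sum_{j\ne k}c(x_j,y_j)$ combined with double-counting produces another $N\mcC_{\bs c}(\bs\rho_s,\bs\br_s)$; and the symmetry of $\bs\br_s=\br_s^{\otimes N}$ gives $\sum_k\int\Xi(y_k,\mu(\By_k'),\br_s)\,d\bs\br_s = N\aleph_N(\br_s)$. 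Putting these together yields
\begin{align*}
\ta(s,\bs\rho_s,\bs\br_s) \le (\be+2\al B)\mcC_{\bs c}(\bs\rho_s,\bs\br_s) + \al B\aleph_N(\br_s) = K\mcC_{\bs c}(\bs\rho_s,\bs\br_s) + \al B\aleph_N(\br_s),
\end{align*}
after which Gr\"onwall's inequality applied to $F(t)=\mcC_{\bs c}(\bs\rho_t,\bs\br_t)$ delivers the claimed exponential estimate.

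The main obstacle I foresee is the careful bookkeeping required in this final closing step, in particular tracking the factor-of-two in $K=\be+2\al B$ (one $N\mcC_{\bs c}$ arises from the $c(x_k,y_k)$ term and a second, less obvious one from the $\mcC_c(\mu(\Bx_k'),\mu(\By_k'))$ term via the combinatorial identity $\sum_k\sum_{j\ne k} = (N-1)\sum_j$). A secondary technical point is the identification $\bs\br_t = \br_t^{\otimes N}$ as a $c$-stable solution of the non-autonomous $\bs{\tilde\mcA}_t$-equation, which requires verifying that tensor-product evolution on each PCA-piece commutes with the $\bs c$-limit; this is where the upper bound $\mcC_{\bs c}(\mu^{\otimes N},\nu^{\otimes N})\le\mcC_c(\mu,\nu)$ is indispensable.
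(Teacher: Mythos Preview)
Your proposal is correct and follows essentially the same route as the paper's proof: identify $\bs\br_t=\br_t^{\otimes N}$ as the $\bs c$-stable solution of the tensorized linear problem (the paper calls your $\bs{\tilde\mcA}_t$ the $N$-independent superposition generator $\bs\mcM(\br_t)$), apply Lemma~\ref{special-stability}, bound $\omega_{\bs c}$ via subadditivity (Theorem~\ref{thm:om-subadditivity}) coordinate by coordinate, and close the integral inequality using the $B$-relaxed triangle inequality from Hypothesis~\refAp(ii) together with the combinatorial identity $\sum_k\sum_{j\ne k}=(N-1)\sum_j$.

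Two minor remarks. First, your ``pointwise identity'' for $\omega_{\bs c}(\delta_{\Bx},\delta_{\By};\bs{\hat\mcA}^{(k)},\bs{\tilde\mcA}_s^{(k)})$ is stated as an equality; the paper only establishes the inequality $\le$ (Lemma~\ref{lem5.28:temp}), which is all you need. Second, the paper performs the closing step \emph{before} invoking Lemma~\ref{special-stability}, producing a $\theta$ that already involves $\mcC_{\bs c}(\bs\mu,\bs\br_s)$ for general $\bs\mu$, whereas you first define $\theta$ with the raw $\Xi$-integral and close only after specializing to $(\bs\rho_s,\bs\br_s)$; both orderings work, but yours requires checking that the $\Xi$-integral term is upper semicontinuous and locally bounded in $\bs\mu$, which follows from Lemma~\ref{lem5.30:temp} ($\bs F_k\in C_b^{\Bc}(\Pi^N)$) together with Proposition~\ref{prop:equiv-of-c-convergence}.
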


\subsubsection{Exponential estimate in the Wasserstein-$p$ space}\label{sec5.2.5}
Let us now state the main exponential estimate in the setting of Wasserstein space. Let \((\Pi, d)\) be a metric space, fix \(p \in [1,\infty)\), and define the semimetric \(c = c_p\) by
\[
c_p(x, y) := \frac{1}{p} d(x, y)^p.
\]
The prefactor \(\f 1p\) is \emph{not essential}—it is a matter of convention and could just as well be set to \(1\) without affecting the substance of the results. Throughout, for any notation involving \(c_p\)—such as \(\omega_c\), \(\mcp_c(\Pi)\), \(\mcg_c^0(\Pi)\), or \(\mcc_c\)—we will replace the subscript \(c\) with \(p\), and write \(\omega_p\), \(\mcp_p(\Pi)\), \(\mcg_p^0(\Pi)\), and \(\mcc_p\), respectively.

It is well known that the \(p\)-th root of \(\mcc_p\),
\[
\mcW_p(\mu, \nu) := \mcc_p(\mu, \nu)^{1/p},
\]
defines a metric on the space \(\mcp_p(\Pi)\), known as the \emph{Wasserstein-\(p\) metric}. In this setting, the general conditions stated in Hypothesis \refAp\, may be replaced by more tractable assumptions that involve Lipschitz-type continuity with respect to $d$ and \(\mcW_p\). We introduce the following hypothesis.

\begin{hypothesisApp}\label{hypoApp}
	Let \(p \in [1,\infty)\), and let \(\Si : \Pi \times \mcp_p(\Pi)^2 \to [0,\infty)\) be a function satisfying the following conditions for all \(x, y \in \Pi\) and \(\mu, \nu, \tilde\mu \in \mcp_p(\Pi)\):
	\begin{enumerate}[(i)]
		\item For every \(x \in \Pi\), the map \((\mu, \nu) \mapsto \Si(x, \mu, \nu)\) is a \emph{pseudometric}, that is,
		\begin{itemize}
			\item \(\Si(x, \mu, \nu) = \Si(x, \nu, \mu)\);
			\item \(\Si(x, \mu, \nu) \leq \Si(x, \mu, \tilde\mu) + \Si(x, \tilde\mu, \nu)\) (triangle inequality).
		\end{itemize}
		
		\item For every \(x \in \Pi\), the pseudometric \(\Si(x, \cdot, \cdot)\) is bounded above by the Wasserstein-\(p\) metric: there exists a constant \(M \ge 0\) such that
		\[
		\Si(x, \mu, \nu) \le M \mcW_p(\mu, \nu).
		\]
		
		\item The map \(x \mapsto \Si(x, \mu, \nu)\) is uniformly Lipschitz: there exists a constant \(M' \ge 0\) such that
		\[
		|\Si(x, \mu, \nu) - \Si(y, \mu, \nu)| \le M' d(x, y).
		\]
	\end{enumerate}
	
	We say that a mean-field generator \(\mA : \mcp_p(\Pi) \to \mcg_p^0(\Pi)\) satisfies Hypothesis~\refApp\, if there exists $\al,\be\ge0$ such that for all \(x, y \in \Pi\) and \(\mu', \nu' \in \mcp_p(\Pi)\), the following estimate holds:
	\begin{align}\label{eq5.4:om-p-lip}
		\omega_p(x, y; \mA(\mu'), \mA(\nu')) \le \al\Si(x, \mu', \nu')^p + \frac{\be}{p}d(x, y)^p.
	\end{align}
\end{hypothesisApp}

\begin{remark}\label{rem:Asip-form}
	A prominent example of \(\Si\) is the Wasserstein distance itself: \(\Si(x, \mu, \nu) = \mcW_p(\mu, \nu)\). Another example is given by
	\[
	\Si(x, \mu, \nu) := \tilde d\big(\Psi(x, \mu), \Psi(x, \nu)\big),
	\]
	where \((\tilde \Pi,\tilde d)\) is a metric space, and \(\Psi : \Pi \times \mcp_p(\Pi) \to \tilde \Pi\) is a Lipschitz map. That is, there exists a constant \(C \ge 0\) such that for all \(x, y \in \Pi\) and \(\mu, \nu \in \mcp_p(\Pi)\),
	\[
	\tilde d\big(\Psi(x, \mu), \Psi(y, \nu)\big)^p \le C \left[ \f 1p d(x, y)^p + \mcC_p(\mu, \nu) \right].
	\]
	This example will be explored in the final section. 
\end{remark}

Let us now show that Hypothesis \refApp\, implies Hypothesis \refAp. 

\begin{lemma}
	Suppose a mean-field generator $\mA$ satisfies Hypothesis \refApp and let $\Xi = \Si^p$. Then  $\mA$ satisfies Hypothesis \refAp. 
\end{lemma}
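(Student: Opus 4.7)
My plan is to set $\Xi := \Si^p$ and verify, in order, the three structural conditions (i)--(iii) of Hypothesis \refAp, followed by the quantitative bound \eqref{eq5.4:temp} on $\om_c$. Before starting, I would fix the dictionary between the two semimetric conventions: because $c = c_p = \tfrac1p d^p$, we have $\mcC_c = \tfrac1p \mcW_p^p$, so estimates phrased in $\mcW_p^p$ and $d^p$ translate to estimates in $\mcC_c$ and $c$ by inserting factors of $p$.

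Condition (i) transfers directly: symmetry $\Xi(x,\mu,\nu)=\Xi(x,\nu,\mu)$ and vanishing $\Xi(x,\mu,\mu)=0$ are inherited from the pseudometric structure of $\Si(x,\cdot,\cdot)$ by raising to the $p$-th power.

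The main computation will be the $B$-relaxed triangle inequality in (ii). The plan is to chain three steps: (a) apply the pseudometric triangle inequality for $\Si$ through the auxiliary measure $\tilde\mu$; (b) use property (ii) of Hypothesis \refApp to bound $\Si(x,\mu,\tilde\mu) \le M\mcW_p(\mu,\tilde\mu)$; (c) use property (iii) to shift the base point in $\Si(x,\tilde\mu,\nu)$ from $x$ to $y$ at the cost of $M' d(x,y)$. This should yield
\[
\Si(x,\mu,\nu) \le M\mcW_p(\mu,\tilde\mu) + M' d(x,y) + \Si(y,\tilde\mu,\nu).
\]
Raising to the $p$-th power via the convexity inequality $(a_1+a_2+a_3)^p \le 3^{p-1}(a_1^p+a_2^p+a_3^p)$, and then converting through $d^p = pc$ and $\mcW_p^p = p\mcC_c$, will produce
\[
\Xi(x,\mu,\nu) \le B\bigl[c(x,y) + \mcC_c(\mu,\tilde\mu) + \Xi(y,\tilde\mu,\nu)\bigr]
\]
with $B = 3^{p-1}\max\{1,\,pM^p,\,p(M')^p\}$, enlarged if necessary to ensure $B>1$.

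For continuity (iii), I plan to reuse the same triangle/Lipschitz combination to get the joint Lipschitz estimate
\[
|\Si(x,\mu,\nu) - \Si(y,\mu',\nu')| \le M' d(x,y) + M\bigl[\mcW_p(\mu,\mu') + \mcW_p(\nu,\nu')\bigr],
\]
and, since $\mcW_p$ metrizes the Wasserstein-$c$ topology on $\mcP_c(\Pi) = \mcP_p(\Pi)$, conclude joint continuity of $\Si$ and hence of $\Xi=\Si^p$. The bound on $\om_c$ is then immediate from \eqref{eq5.4:om-p-lip}, since $\om_p = \om_c$ and the right-hand side $\al\Si^p + \tfrac{\be}{p} d^p$ already equals $\al\Xi + \be c$. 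I do not anticipate a genuine obstacle here; the only point requiring care is the consistent tracking of the factor $p$ when passing between the $(d,\mcW_p)$ and $(c,\mcC_c)$ conventions.
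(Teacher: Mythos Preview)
Your proposal is correct and follows essentially the same route as the paper: verify (i) directly from the pseudometric axioms, obtain (ii) by chaining the triangle inequality for $\Si(x,\cdot,\cdot)$, the bound $\Si\le M\mcW_p$, and the $x$-Lipschitz shift, then raise to the $p$-th power via $3^{p-1}$-convexity, and read off (iii) and \eqref{eq5.4:temp} from the same Lipschitz estimate. One small slip in your dictionary: with the paper's convention $\mcW_p := \mcC_p^{1/p}$ and $c_p=\tfrac1p d^p$, one has $\mcW_p^p = \mcC_c$ (not $p\,\mcC_c$), so the factor of $p$ appears only in front of $(M')^p$ and your constant should read $B=3^{p-1}\max\{1,\,M^p,\,p(M')^p\}$; this does not affect validity, only sharpness.
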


\begin{proof}
	Let us first verify that \(\Xi = \Si^p\) satisfies Conditions (i)--(iii) from Hypothesis \refAp.
	Condition (i) is immediate. Specifically,
	\[
	0 \leq \Xi(x,\mu,\mu) = \Si(x,\mu,\mu)^p \leq \mcW_p(\mu,\mu)^p = 0.
	\]
	For Condition (ii), we begin by noting that the function \(\Si(x,\cdot,\cdot)\) satisfies a triangle inequality, and hence the reversed triangle inequality:
	\[
	|\Si(x,\mu,\nu) - \Si(x,\tilde\mu,\nu)| \leq \Si(x,\mu,\tilde\mu).
	\]
	As a result, the map \((x,\mu) \mapsto \Si(x,\mu,\nu)\) is Lipschitz continuous with respect to the metric \(d \oplus \mcp_p\):
    \[
    |\Si(x,\mu,\nu) - \Si(y,\tilde\mu,\nu)| 
    \leq |\Si(x,\mu,\nu) - \Si(y,\mu,\nu)| + \Si(y,\mu,\tilde\mu)
    \leq M'd(x,y) + M\mcW_p(\mu,\tilde\mu),
    \]
	where $M,M'\ge 0$ are from Hypothesis \refApp. 
	Raising both sides to the power \(p\) and applying the inequality \(|a + b + c|^p \leq 3^{p-1}(|a|^p + |b|^p + |c|^p)\), we obtain:
	\begin{align*}
		\Xi(x,\mu,\nu)&=\Si(x,\mu,\nu)^p \le |\Si(y,\tilde \mu,\nu)+ M'd(x,y)+ M \mcW_p(\mu,\tilde \mu)|^p\\
		&\le 3^{p-1}\sqb{\Xi(y,\tilde\mu,\nu)+ (M')^pd(x,y)^p+ M^p\mcW_p(\mu,\tilde \mu)^p}.
	\end{align*}
	Hence, Condition (ii) holds with \(B = 3^{p-1} \max\{1,p(M')^p, M^p\}\).
	Condition (iii) follows directly from the Lipschitz continuity of \((x,\mu) \mapsto \Si(x,\mu,\nu)\), as established above.
    
    Finally, \eqref{eq5.4:om-p-lip} is equivalent to \eqref{eq5.4:temp} with $\Xi=\Si^p$ and $c=c_p$. This means that $\mcA$ satisfies Hypothesis {\refAp}. 
\end{proof}

Recall the notion of $\aleph_N$ from Definition \ref{def:aleph-N}. In this case, for $p\in [1,\infty)$, $N\ge 1$ and $\br\in \mcp_p(\Pi)$, we have
\begin{align}\label{def:beth}
    \aleph_N(\br;\Si^p)= \int _{\Pi^N} \Si\rb{y_1,\mu(\bs{y}_1'),\br} ^p d\br^{\otimes N}(\bs{y}).
\end{align}
We may now state the main exponential estimate in the framework of Wasserstein-$p$ spaces. Note that $\mcW_p$ below denotes the Wasserstein-$p$ metric on $\mcP_p(\Pi^N)$, the space of probability measures on the higher-dimensional space $\Pi^N$.

\begin{theorem}[Exponential estimate in Wasserstein-$p$ metric]\label{thm:main-wass}
	Suppose $(\Pi,d)$ is a metric space and $p\ge 1$. Let $\mA:\mcp_p(\Pi)\to \mcg_p^0(\Pi)$ be a mean-field generator satisfying Hypothesis {\refApp}. Let $\bs{\hat \mA}_N, \bs{\rho}_0,\bs{\rho}_t,\bs{\br}_t $ be given as in Theorem \ref{thm:chap5-main-thm}. For any $T\ge 0$, it holds
	\begin{align*}
		\sup_{t\in[0,T]} \mcW_p(\bs\rho_t, \bs{\br}_t )^p&\le \mcW_p(\bs\rho_0,\bs{\br}_0)^p e^{K T}+ C\zeta_K(T) \sup_{t\in[0,T]}\aleph_N(\br_t;\Si^p). 
	\end{align*}
	where $C,K>0$ depend on $p\ge1$ and the constants from Hypothesis {\refApp}, and $\zeta_K(t):= \f 1 K(e^{K t}-1)$.
\end{theorem}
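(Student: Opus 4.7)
The plan is to reduce Theorem \ref{thm:main-wass} to the already established Theorem \ref{thm:chap5-main-thm} by making the identifications $c = c_p$, where $c_p(x,y) = \tfrac{1}{p}d(x,y)^p$, and $\Xi = \Si^p$. The preceding lemma already shows that a mean-field generator $\mA:\mcp_p(\Pi)\to\mcg_p^0(\Pi)$ satisfying Hypothesis~\refApp\ also satisfies Hypothesis~\refAp\ with $\Xi=\Si^p$, and one checks directly that $(\Pi, c_p)$ satisfies Hypothesis~\refC\ whenever $(\Pi,d)$ does. Consequently all structural hypotheses needed to invoke Theorem \ref{thm:chap5-main-thm} in this setting are in place.

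Applying Theorem \ref{thm:chap5-main-thm} with these choices yields, for every $T\ge 0$,
\begin{align*}
\sup_{t\in[0,T]} \mcC_{\bs c}(\bs\rho_t,\bs\br_t)
&\le \mcC_{\bs c}(\bs\rho_0,\bs\br_0) e^{K T}
+ \al B\,\zeta_K(T)\sup_{t\in[0,T]}\aleph_N(\br_t;\Si^p),
\end{align*}
where $\bs c=\bs c_N$ is the tensorized semimetric associated with $c_p$, and $\al,\be,B,K$ come from Hypothesis~\refApp\ (via the preceding lemma).

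The final step is simply a change of units from the semimetric $\bs c$ to the Wasserstein-$p$ metric on $\Pi^N$. Since
\begin{align*}
\bs c(\Bx,\By) &= \frac{1}{N}\sum_{k=1}^N c_p(x_k,y_k) = \frac{1}{p}\cdot\frac{1}{N}\sum_{k=1}^N d(x_k,y_k)^p,
\end{align*}
one has the identification $\mcC_{\bs c}(\bs\mu,\bs\nu) = \tfrac{1}{p}\mcW_p(\bs\mu,\bs\nu)^p$ for the Wasserstein-$p$ metric on $\Pi^N$ built from the normalized product metric $\bs d(\Bx,\By)^p = \tfrac{1}{N}\sum_k d(x_k,y_k)^p$. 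Multiplying the displayed inequality through by $p$ then gives the claimed bound with $C = p\al B$ and the same $K$.

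There is no substantive obstacle in the argument beyond verifying that the various normalization conventions line up: the prefactor $\tfrac{1}{p}$ in $c_p$, the averaging factor $\tfrac{1}{N}$ in $\bs c_N$, and the definition of $\mcW_p$ on $\Pi^N$ must all be reconciled so that the single multiplicative constant $p$ converts the $\bs c$-cost estimate into the $\mcW_p^p$ estimate. Once this bookkeeping is carried out, Theorem~\ref{thm:main-wass} follows immediately from Theorem~\ref{thm:chap5-main-thm}.
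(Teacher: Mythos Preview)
Your proposal is correct and follows essentially the same approach as the paper: the paper's proof is a single sentence stating that since Hypothesis \refApp\ implies Hypothesis \refAp\ with $\Xi=\Si^p$ (via the preceding lemma), the result follows from Theorem~\ref{thm:chap5-main-thm}. Your version simply makes the bookkeeping explicit; note that under the paper's convention $\mcW_p^p = \mcC_{c_p}$ already (the $1/p$ is absorbed into $c_p$), so no multiplication by $p$ is actually needed and the constants match directly.
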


\begin{proof}
	Since Hypothesis \refApp\, implies Hypothesis \refAp with $\Xi=\Si^p$, the theorem follows as a consequence of Theorem \ref{thm:chap5-main-thm}. 
\end{proof}

\subsubsection{Pointwise propagation of chaos and other consequences of Theorem \ref{thm:chap5-main-thm}}
Let us now present the propagation of chaos result for abstract mean-field systems as a corollary of Theorem \ref{thm:chap5-main-thm}.
\begin{corollary}[Pointwise propagation of chaos]
    Assume the settings in Theorem \ref{thm:chap5-main-thm}. Assume that
    \begin{align*}
        \lim_{N\to\infty} \sup_{t\in[0,T]} \aleph_N(\br_t;\Xi)=0.
    \end{align*}
    Then the sequence $\scb{\Brho_t^N}_{N}$ exhibits pointwise propagation of infinite dimensional Wasserstein-$c$ chaos as $N\to\infty$. That is, if
    $\mcC_{\Bc}(\Brho_0^N,\br_0^{\oN})\xrightarrow[]{N\to\infty} 0$,
    then for fixed $T\ge0$, it holds
    $\mcC_{\Bc}(\Brho_t^N,\br_t^{\oN})\xrightarrow[]{N\to\infty} 0$
    for each $t\in[0,T]$.
\end{corollary}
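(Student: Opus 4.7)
The plan is to derive the corollary as a direct consequence of the exponential estimate in Theorem \ref{thm:chap5-main-thm}. The key observation is that, by Hypothesis \ref{hypo:generation-problem}, the $N$-particle generator $\BHA_N$ generates a probability semigroup on $\Pi^N$, so that $\Brho_t^N = \Brho_0^N e^{t\BHA_N}$ for all $t \ge 0$. Setting $\bs\rho_0 = \Brho_0^N$ in Theorem \ref{thm:chap5-main-thm} thus immediately identifies $\bs\rho_t = \Brho_t^N$ and yields the bound
\begin{align*}
    \sup_{t\in[0,T]} \mcC_{\Bc}(\Brho_t^N, \br_t^{\otimes N}) \le \mcC_{\Bc}(\Brho_0^N, \br_0^{\otimes N}) e^{K T} + \al B \zeta_K(T) \sup_{t\in[0,T]} \aleph_N(\br_t;\Xi),
\end{align*}
with the constants $\al, \be, B, K$ as in Theorem \ref{thm:chap5-main-thm}.

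From here the conclusion follows by passing to the limit $N\to\infty$. The first term on the right-hand side tends to zero because the prefactor $e^{KT}$ is a finite constant independent of $N$ and $\mcC_{\Bc}(\Brho_0^N, \br_0^{\otimes N}) \to 0$ by hypothesis. The second term tends to zero because $\al B \zeta_K(T)$ is likewise a constant independent of $N$, while $\sup_{t\in[0,T]} \aleph_N(\br_t;\Xi) \to 0$ by the standing assumption of the corollary. Consequently, $\sup_{t\in[0,T]} \mcC_{\Bc}(\Brho_t^N, \br_t^{\otimes N}) \to 0$, which is even stronger than the pointwise statement required; in particular, for every fixed $t \in [0,T]$, $\mcC_{\Bc}(\Brho_t^N, \br_t^{\otimes N}) \to 0$.

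There is no substantive obstacle here, as the work is entirely absorbed into the exponential estimate of Theorem \ref{thm:chap5-main-thm}; the corollary is merely a qualitative reading of that quantitative bound. The only items to verify are the (trivial) identification $\bs\rho_t = \Brho_t^N$ from the semigroup property and the fact that both terms in the estimate depend on $N$ only through quantities that vanish in the limit by hypothesis.
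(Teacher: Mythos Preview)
Your proof is correct and matches the paper's approach: the corollary is stated as an immediate consequence of Theorem~\ref{thm:chap5-main-thm}, and the paper does not even spell out a separate proof. Your argument---apply the exponential estimate and let $N\to\infty$ on each term---is exactly what is intended.
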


Additionally, let us provide a probabilistic version of the result above. Let $\cb{\bs X_t}_{t\ge0}$ be the Feller process associated to the $N$-particle generator $\bs{\hat\mcA}$, and $\cb{\bs \bX_t}_{t\ge0}$ be the i.i.d. process, where $\bs \bX_t=(\bX_t^1,\cdots,\bX_t^N)$, with each $\cb{\bX_t^k}_{t\ge0}$ being i.i.d. with common law $\scb{\br_t}_{t\ge0}$. Let $\mu_t^N,\bar\mu_t^N$ be their empirical measures:
\begin{align*}
    \mu_t^N = \frac{1}{N}\sum_{k=1}^N \de_{X_t^k},\quad \bar\mu_t^N=\frac{1}{N}\sum_{k=1}^N \de_{\bX_t^k},
\end{align*}
and assume that $\BX_0=\bs\bX_0$.
For fixed $T\ge0$, there is a coupling of $\bs X_t,\bs{\bX}_t$ such that
\begin{align*}
    \mbe[\mcC_c(\mu_t^N,\bar\mu_t^N)]\le \al B \zeta_{K}(T) \sup_{t\in[0,T]} \aleph_{N}(\br_t),\quad t\in[0,T].
\end{align*}
This follows from the following observation. If $\BX,\bs\bX$ are two $\Pi^N$-valued random variables with laws $\Brho,\Bbr$ respectively, and let $\mu(\BX), \mu(\bs\bX)$ be their empirical measures, then
\begin{align*}
    \inf \mbe[\mcC_c(\mu(\BX),\mu(\bs\bX))] = \inf_{\ga\in\Ga(\Brho,\Bbr)} \int_{\Pi^{2N}} \mcC_c(\mu(\Bx),\mu(\By))\,d\bs\ga(\Bx,\By) \le \mcC_{\Bc}(\Brho,\Bbr),
\end{align*}
where the infimum above is taken over all coupling of $\BX,\bs\bX$ with coupling law $\bs\ga$. This is because
\begin{align}\label{eq5.5:temp}
    \mcC_c(\mu(\Bx),\mu(\By)) &\le \int_{\Pi^2} c(u,v)\,d\mrb{\mu(\Bx)\otimes\mu(\By)}(u,v) \le \frac{1}{N}\sum_{k=1}^N c(x_k,y_k) = \Bc(\Bx,\By).
\end{align}
Integrating the above against the $\Bc$-optimal coupling $\bs\ga_0$ of $\Brho,\Bbr$, we find
\begin{align*}
    \inf_{\ga\in\Ga(\Brho,\Bbr)} \int_{\Pi^{2N}} \mcC_c(\mu(\Bx),\mu(\By))\,d\bs\ga(\Bx,\By) \le \int_{\Pi^{2N}} \Bc(\Bx,\By)\,d\bs\ga_0(\Bx,\By) = \mcC_{\Bc}(\Brho,\Bbr).
\end{align*}

\begin{remark}
    The exponential estimate from the main theorem above ensures pointwise propagation of chaos, provided that $\sup_{t\in[0,T]} \aleph_N(\br_t;\Xi)\to0$ as $N\to\infty$, but it is insufficient to guarantee pathwise propagation of chaos. Specifically, to state a pathwise result, one must first define a coupling (Markov) process $\scb{(\BX_t,\bs\bX_t)}_{t\ge0}$ whose marginal processes $\scb{\BX_t}_{t\ge0}$ and $\scb{\bs\bX_t}_{t\ge0}$ have laws $\scb{\Brho_t}_{t\ge0}$ and $\scb{\Bbr_t}_{t\ge0}$, respectively. Let $\mu_t^N$ and $\bar\mu_t^N$ be the empirical measures of $\BX_t$ and $\bs\bX_t$. Then, the quantitative pathwise propagation of chaos requires an exponential estimate of the form:
    \begin{align*}
        \mbe\msqb{\sup_{t\in[0,T]}\mcC_c(\mu_t^N,\bar\mu_t^N)} \le \rb{\mbe\msqb{\Bc(\BX_0,\bs\bX_0)}+\ep_N}e^{K T},
    \end{align*}
    where $\ep_N\to0$ as $N\to\infty$. Our result, which is pointwise in nature, is weaker than this. Specifically, it leads to the following
    \begin{align*}
        \sup_{t\in[0,T]}\mbe\msqb{\mcC_c(\mu_t^N,\bar\mu_t^N)} \le \rb{\mbe\msqb{\Bc(\BX_0,\bs\bX_0)}+\ep_N}e^{K T}.
    \end{align*}
    Note that in this case, the supremum is taken after the expectation. The above inequality holds for some coupling between the laws of $\BX_t$ and $\bs\bX_t$ for each fixed $t\in[0,T]$. However, this coupling does not necessarily have to be a process.
\end{remark}

\subsubsection{Strategy of proof}
Let us now explain the strategy for obtaining the exponential bound from Theorem \ref{thm:chap5-main-thm}. Unsurprisingly, the main step of the proof is to establish an integral inequality for $\mcC_{\bs c}(\bs\rho_t,\bs\br_t)$ of the form
\begin{align*}
	\mcC_{\bs c}(\bs\rho_t,\bs\br_t) \le \mcC_{\bs c}(\bs\rho_0,\bs\br_0) + \int_0^t\mrb{\ep_N+K\cdot\mcC_{\bs c}(\bs\rho_s,\bs\br_s)}\,ds,
\end{align*}
where $\ep_N,K\ge0$. Then the exponential bound follows from Gr\"{o}nwall's inequality.

To establish the integral bound above, we will use the Dini derivative $\om_{\bs c}$ (introduced in Section \ref{chap3}) between the flows
\[\bs\rho_t=\bs\rho_0 e^{t\bs{\hat\mcA}} \quad\text{and}\quad \bs\br_t=\br_t^{\otimes N}.\]
To proceed, we shall first identify the evolution problem that the tensorized measure $\bs\br_t$ solves. It takes the form
\begin{align}\label{eq5:expected-tensor-evolution}
    \left\{\begin{aligned}
        \partial_t \bs\br_t &= \bs\br_t \bs\mcM_t,\quad t\in(0,T),\\
        \bs\br_0 &= \br_0^{\otimes N},
    \end{aligned}\right.
\end{align}
where $\bs\mcM_t=\bs\mcM(\br_t)\in\GCZPN$ is the tensorized generator of the mean-field generator $\mcM(\br_t)$, which will be specified in the next section. In light of Lemma \ref{special-stability}, it holds
\begin{align*}
	\mcC_{\bs c}(\bs\rho_t,\bs\br_t) \le \mcC_{\bs c}(\bs\rho_0,\bs\br_0) + \int_0^t \ta(s,\bs\rho_s,\bs\br_s) \,ds,
\end{align*}
where $\ta(s,\bs\mu,\bs\nu)$ is an upper bound of $\om_{\bs c}(\Bmu,\Bnu;\bs{\hat\mcA},\bs\mcM_s)$ that has appropriate upper semicontinuity condition. The proof is then completed by obtaining an appropriate bound for $\om_{\bs c}(\cdot,\cdot;\bs{\hat\mcA},\bs\mcM_s)$ of the form
\begin{align*}
    \om_{\bs c}(\bs\rho_s,\bs\br_s;\bs{\hat\mcA},\bs\mcM_s) \le \ep_N+K\cdot\mcC_{\bs c}(\bs\rho_s,\bs\br_s).
\end{align*}


\subsection{Preliminaries: tensorization, optimal transport, and \texorpdfstring{$\Bc$}{c}-stable solutions}\label{sec5.3}
As mentioned in the proof strategy of the main result, we shall first explore the relationship between optimal transport, $c$-stable solutions, and the action of tensorization.

\subsubsection{Properties of tensorized measures}
Let us start with the following preliminary results.
\begin{lemma}\label{lem:cost-of-tensorized-measures}
    Let $\cb{\mu_k}_{k=1}^N, \cb{\nu_k}_{k=1}^N\subset\PCP$, and let $\bs\mu=\mu_1\otimes\cdots\otimes\mu_N$, $\bs\nu=\nu_1\otimes\cdots\otimes\nu_N\in\PCPN$. Then
    \begin{align*}
        \mcC_{\bs c}(\bs\mu,\bs\nu) \le \frac{1}{N}\sum_{k=1}^N \mcC_c(\mu_k,\nu_k).
    \end{align*}
\end{lemma}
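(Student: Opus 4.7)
The plan is to construct an explicit coupling of $\bs\mu$ and $\bs\nu$ by tensorizing optimal couplings of each pair $(\mu_k,\nu_k)$, and then bound $\mcC_{\Bc}(\bs\mu,\bs\nu)$ from above by the cost of this particular coupling.

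First, since $(\Pi,c)$ satisfies Hypothesis {\refC}, the cost $c$ is lower semicontinuous (as noted after \eqref{eq2.9:temp}), so for each $1\le k\le N$ there exists a $c$-optimal coupling $\ga_k\in\Ga(\mu_k,\nu_k)$ with
\[
\mcC_c(\mu_k,\nu_k)=\int_{\Pi^2}c(x,y)\,d\ga_k(x,y).
\]
Next I would form the tensor product measure $\bs\ga:=\ga_1\otimes\cdots\otimes\ga_N$ on $(\Pi^2)^N$, which after the natural identification $(\Pi^2)^N\cong \Pi^N\times\Pi^N$ (grouping all first coordinates and all second coordinates) becomes a probability measure on $\Pi^N\times\Pi^N$. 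A direct check of marginals shows $\bs\ga\in\Ga(\bs\mu,\bs\nu)$: the first marginal is $\mu_1\otimes\cdots\otimes\mu_N=\bs\mu$ and similarly for the second.

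The main computation is then to apply Fubini's theorem to evaluate the cost of $\bs\ga$:
\begin{align*}
\int_{\Pi^N\times\Pi^N}\bs c(\Bx,\By)\,d\bs\ga(\Bx,\By)
&=\int\frac{1}{N}\sum_{k=1}^N c(x_k,y_k)\,d\ga_1(x_1,y_1)\cdots d\ga_N(x_N,y_N)\\
&=\frac{1}{N}\sum_{k=1}^N\int_{\Pi^2}c(x_k,y_k)\,d\ga_k(x_k,y_k)
=\frac{1}{N}\sum_{k=1}^N\mcC_c(\mu_k,\nu_k).
\end{align*}
Since $\mcC_{\Bc}(\bs\mu,\bs\nu)$ is the infimum over all couplings in $\Ga(\bs\mu,\bs\nu)$, the desired inequality follows.

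There is no significant obstacle here; the only minor technical points are verifying the existence of optimal couplings $\ga_k$ (which is guaranteed by lower semicontinuity of $c$ and finiteness of $c$-moments of $\mu_k,\nu_k$), and confirming the marginal identities for $\bs\ga$, both of which are routine.
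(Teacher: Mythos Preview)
Your proposal is correct and follows essentially the same approach as the paper: both take optimal couplings $\ga_k$ of each pair $(\mu_k,\nu_k)$, form the tensor product $\bs\ga=\ga_1\otimes\cdots\otimes\ga_N$ as a coupling of $\bs\mu,\bs\nu$, and compute its $\bs c$-cost via Fubini to obtain the bound. Your write-up is in fact slightly more careful in justifying the existence of the $\ga_k$ and checking the marginal conditions, which the paper leaves implicit.
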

\begin{remark}
    We remark that an equality actually holds in Lemma \ref{lem:cost-of-tensorized-measures}, but we only require an inequality for our purpose.
\end{remark}
\begin{proof}[Proof of Lemma \ref{lem:cost-of-tensorized-measures}]
	For each $1\le k\le N$, let $\ga_k\in\mcP(\Pi^2)$ be a $c$-optimal coupling of $\mu_k,\nu_k$, that is,
	\begin{align*}
		\mcC_c(\mu_k,\nu_k) = \int_{\Pi^2} c(x,y)\,d\ga_k(x,y).
	\end{align*}
	Let $\bs\ga=\ga_1\otimes\cdots\otimes\ga_N\in\mcP((\Pi^N)^2)$ be the tensor product of the coupling measures $\ga_k$'s. Note that the marginal on the variables $(x_k,y_k)$ is given by $\ga_k$. It then follows that
	\begin{align*}
		\mcC_{\Bc}(\Bmu,\Bnu) &\le \int_{(\Pi^N)^2} \Bc(\Bx,\By)\,d\bs\ga(\Bx,\By) = \frac{1}{N}\sum_{k=1}^N \int_{(\Pi^N)^2} c(x_k,y_k)\,d\bs\ga(\Bx,\By)\\
		&= \frac{1}{N}\sum_{k=1}^N \int_{\Pi^2} c(x_k,y_k)\,d\ga_k(x_k,y_k) = \frac{1}{N}\sum_{k=1}^N \mcC_c(\mu_k,\nu_k).\qedhere
	\end{align*}
\end{proof}

\begin{corollary}\label{cor5.21:temp}
    \begin{enumerate}[(i)]
        \item If $\mu\in\PCP$, then $\mu^{\oN}\in\PCPN$.
        \item If $\scb{\mu_t}_{t\ge0}\in\CTPC$, then $\scb{\mu_t^{\oN}}_{t\ge0}\in\CTPCN$.
        \item Suppose $\cb{\mu_n}_n\subset\mcP_c(\Pi)$ and $\mu\in\mcP_c(\Pi)$ are such that $\mu_n\xrightarrow{c}\mu$ as $n\to\infty$. Then $\mu_n^{\otimes N} \xrightarrow{\Bc}\mu^{\otimes N}$ as $n\to\infty$.
        \item Suppose $\scb{\scb{\mu_t^{(n)}}_{t\ge0}}_n\subset\CTPC$ and $\cb{\mu_t}_{t\ge0}\in\CTPC$ are such that $\sup_{t\in[0,T]}\mcC_c(\mu_t^{(n)},\mu_t)\xrightarrow{n\to\infty}0$. Then $\sup_{t\in[0,T]}\mcC_{\Bc}\mrb{(\mu_t^{(n)})^{\otimes N},\mu_t^{\otimes N}} \xrightarrow{n\to\infty}0$.
    \end{enumerate}
\end{corollary}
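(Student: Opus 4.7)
All four parts of the corollary follow directly from Lemma \ref{lem:cost-of-tensorized-measures}, which gives the basic estimate $\mcC_{\Bc}(\mu_1\otimes\cdots\otimes\mu_N,\nu_1\otimes\cdots\otimes\nu_N)\le \frac{1}{N}\sum_{k=1}^N \mcC_c(\mu_k,\nu_k)$. The strategy is simply to apply this bound with the appropriate choice of factors in each case, reducing questions about $\mcC_{\Bc}$ on the $N$-fold product to questions about $\mcC_c$ on $\Pi$.

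For (i), I would fix $z\in\Pi$, let $\Bz=(z,\ldots,z)\in\Pi^N$, and apply Lemma \ref{lem:cost-of-tensorized-measures} with $\mu_k=\mu$ and $\nu_k=\delta_z$ for all $k$. This yields $\mcC_{\Bc}(\mu^{\otimes N},\delta_{\Bz})\le \mcC_c(\mu,\delta_z)=\inn{\mu,c(z,\cdot)}<\infty$ since $\mu\in\mcP_c(\Pi)$. Equivalently, one can compute directly using $\Bc(\Bz,\By)=\frac{1}{N}\sum_k c(z,y_k)$ and Fubini to obtain the same conclusion, confirming that $\mu^{\otimes N}$ has a finite $\Bc$-moment and hence belongs to $\mcP_{\Bc}(\Pi^N)$.

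For (ii), (iii) and (iv), the key observation is that when all $N$ factors on each side are the same, Lemma \ref{lem:cost-of-tensorized-measures} collapses to
\begin{align*}
    \mcC_{\Bc}(\mu^{\otimes N},\nu^{\otimes N})\le \mcC_c(\mu,\nu).
\end{align*}
Applying this with $(\mu,\nu)=(\mu_s,\mu_t)$ gives continuity of $t\mapsto \mu_t^{\otimes N}$ from continuity of $t\mapsto \mu_t$, proving (ii). Applying it with $(\mu,\nu)=(\mu_n,\mu)$ gives (iii), and taking the supremum over $t\in[0,T]$ in the estimate $\mcC_{\Bc}(\mu_t^{(n)\otimes N},\mu_t^{\otimes N})\le \mcC_c(\mu_t^{(n)},\mu_t)$ gives (iv).

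There is no real obstacle here since each statement is an immediate corollary of Lemma \ref{lem:cost-of-tensorized-measures}. The only care needed is the identity $\frac{1}{N}\sum_{k=1}^N \mcC_c(\mu,\nu)=\mcC_c(\mu,\nu)$ in the repeated-factor case, which is trivial. The proof will therefore be very short, essentially stating the reduction in each case.
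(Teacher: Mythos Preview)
Your proposal is correct and matches the paper's proof essentially line for line: the paper proves (i) by the direct Fubini computation you mention (with $\bs z=(z,\ldots,z)$), and proves (ii)--(iv) exactly by collapsing Lemma~\ref{lem:cost-of-tensorized-measures} to $\mcC_{\Bc}(\mu^{\otimes N},\nu^{\otimes N})\le \mcC_c(\mu,\nu)$ and reading off each conclusion.
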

\begin{proof}
    (i) Since $\mu\in\PCP$, there exists $z\in\Pi$ such that $\int_\Pi c(z,x)\,d\mu(x)<\infty$. Take $\bs z=(z,z,\cdots,z)$, then
    \begin{align*}
        \int_{\Pi^N} \Bc(\bs z,\Bx)\,d\mu^{\oN}(\Bx) = \frac{1}{N}\sum_{k=1}^N\int_\Pi c(z,x_k)\,d\mu(x_k)=\int_\Pi c(z,x)\,d\mu(x)<\infty.
    \end{align*}
    (ii) By Lemma \ref{lem:cost-of-tensorized-measures},
    \begin{align*}
        \mcC_{\bs c}(\mu_{t+h}^{\otimes N},\mu_t^{\otimes N}) \le \frac{1}{N}\sum_{k=1}^N \mcC_c(\mu_{t+h},\mu_t) = \mcC_c(\mu_{t+h},\mu_t) \xrightarrow{h\to0} 0.
    \end{align*}
    (iii) It again follows from Lemma \ref{lem:cost-of-tensorized-measures} that as $n\to\infty$,
    \begin{align*}
        \mcC_{\bs c}(\mu_n^{\otimes N},\mu^{\otimes N}) \le \frac{1}{N}\sum_{k=1}^N \mcC_c(\mu_n,\mu)=\mcC_c(\mu_n,\mu)\to 0.
    \end{align*}
    (iv) Similarly, as $n\to\infty$,
    \[\sup_{t\in[0,T]}\mcC_{\Bc}\mrb{(\mu_t^{(n)})^{\otimes N},\mu_t^{\otimes N}} \le \frac{1}{N}\sum_{k=1}^N \sup_{t\in[0,T]}\mcC_c(\mu_t^{(n)},\mu_t)=\sup_{t\in[0,T]}\mcC_c(\mu_t^{(n)},\mu_t)\to 0.\qedhere\]   
\end{proof}

\subsubsection{The \texorpdfstring{$N$}{N}-independent superposition generator}
Let us now take up the task outlined previously: construct the generator $\bs\mcM(\br_t)\in\GCZPN$ in the evolution equation \eqref{eq5:expected-tensor-evolution} that governs the tensorized measure $\Bbr_t=\br_t^{\oN}$. However, before that, let us start with a short discussion on tensorization of semigroups and generators.

For $1\le k\le N$, let $\scb{T_t^{(k)}}_{t\ge0}$ be a probability semigroup on a state space $\Pi$ with transition kernel $\scb{\ka_t^{(k)}}_{t\ge0}$ and generator $\mcA^{(k)}$, respectively. Their tensor product semigroup
\begin{align*}
    \bs{T}_t = T_t^{(1)}\otimes T_t^{(2)} \otimes\cdots\otimes T_t^{(N)}
\end{align*}
is the probability semigroup on $\Pi^N$ with transition kernel:
\begin{align*}
    \bs{\ka}_t(x_1,x_2,\cdots,x_N) = \ka_t^{(1)}(x_1)\otimes\ka_t^{(2)}(x_2)\otimes\cdots\otimes\ka_t^{(N)}(x_N).
\end{align*}
The generator $\bs\mcA\in\mcG(\Pi^N)$ of $\cb{\bs T_t}_{t\ge0}$, in this case, is given by
\begin{align}\label{eq:tensor-gen-discussion}
    \bs\mcA := \bs\mcA^{(1)}+\bs\mcA^{(2)}+\cdots+\bs\mcA^{(N)},
\end{align}
where
\begin{align*}
    \bs\mcA^{(1)} &:= \mcA^{(1)}\otimes I_2\otimes I_3\otimes\cdots\otimes I_N\\
    \bs\mcA^{(2)} &:= I_1\otimes \mcA^{(2)}\otimes I_3\otimes\cdots\otimes I_N\\
    &\,\,\,\,\,\vdots\\
    \bs\mcA^{(N)} &:= I_1\otimes I_2\otimes I_3\otimes\cdots\otimes \mcA^{(N)}.
\end{align*}
From the point of view of Feller processes, if $\scb{T_t^{(k)}}_{t\ge0}$ are the corresponding semigroups of the processes $\scb{X_t^{(k)}}_{t\ge0}$, respectively, then the tensor product semigroup $\cb{\bs{T}_t}_{t\ge0}$ is that of the process $\scb{(X_t^{(1)},X_t^{(2)},\cdots,X_t^{(N)})}_{t\ge0}$, assuming the processes are independent to each other. Finally, we can give the following definition.
\begin{definition}\label{def:independent-generator-mcM}
    Let $\MFG$ be a mean-field generator. Its \emph{$N$-independent superposition generator} is given by $\bs\mcM:\mcP_c(\Pi)\to\mcG(\Pi^N)$, where
    \begin{align*}
        \bs\mcM(\mu) &= \bs\mcM^{(1)}(\mu) + \bs\mcM^{(2)}(\mu) + \cdots + \bs\mcM^{(N)}(\mu),\\
        \bs\mcM^{(1)}(\mu) &= \mcA(\mu) \otimes I \otimes I \otimes \cdots \otimes I,\\
        \bs\mcM^{(2)}(\mu) &= I \otimes \mcA(\mu) \otimes I \otimes \cdots \otimes I,\\
        &\vdots\\
        \bs\mcM^{(N)}(\mu) &= I \otimes I \otimes I \otimes \cdots \otimes \mcA(\mu).
    \end{align*}
\end{definition}

Let us first show that the range of $\bs\mcM$ is in $\GCZPN$.
\begin{lemma}\label{lem5.24:temp}
    Suppose $\mcA\in\GCZP$, and let $\bs\mcA$ be the probability generator of the tensorized semigroup $\scb{(e^{t\mcA})^{\oN}}_{t\ge0}$. Then $\bs\mcA\in\GCZPN$.
\end{lemma}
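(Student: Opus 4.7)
My plan is to verify the two defining properties of $\GCZPN$ (Definition~\ref{def:gc0-generators}) for the tensorized generator $\bs\mcA$: namely, that (i) $\Bmu e^{t\bs\mcA}\in\PCPN$ for every $\Bmu\in\PCPN$ and $t\ge0$, and (ii) the curve $t\mapsto \Bmu e^{t\bs\mcA}$ is continuous in the Wasserstein-$\Bc$ topology. The key observation throughout is that since $e^{t\bs\mcA}=(e^{t\mcA})^{\otimes N}$, the $k$-th one-dimensional marginal of $\Bmu e^{t\bs\mcA}$ is exactly $\mu_k e^{t\mcA}$, where $\mu_k$ denotes the $k$-th marginal of $\Bmu$. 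Taking $\Bz=(z,\ldots,z)$, the $\Bc$-moment decomposes as
\begin{align*}
    \inn{\Bmu,\Bc(\Bz,\cdot)}=\frac{1}{N}\sum_{k=1}^N\inn{\mu_k,c(z,\cdot)},
\end{align*}
so $\Bmu\in\PCPN$ iff each $\mu_k\in\PCP$; the same decomposition applied to $\Bmu e^{t\bs\mcA}$, together with $\mu_k e^{t\mcA}\in\PCP$ (by $\mcA\in\GCZP$), gives (i).

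For (ii), I would use the $\Bc$-analogue of Proposition~\ref{prop:equiv-of-c-convergence}: it suffices to show weak convergence of $\Bmu e^{t_n\bs\mcA}\to \Bmu e^{t\bs\mcA}$ together with convergence of $\Bc$-moments, whenever $t_n\to t$. The $\Bc$-moment convergence is immediate from the marginal identity above and the fact that each $t\mapsto\inn{\mu_k e^{t\mcA},c(z,\cdot)}$ is continuous (Remark~\ref{rmk3.4:temp} applied to $\mcA\in\GCZP$). For weak convergence, note that for $\bs\Phi\in C_b(\Pi^N)$,
\begin{align*}
    \inn{\Bmu e^{t\bs\mcA},\bs\Phi}=\inn{\Bmu,e^{t\bs\mcA}\bs\Phi}=\int_{\Pi^N}\left(\int_{\Pi^N}\bs\Phi(\By)\,d\bs\ka_t(\Bx)(\By)\right)d\Bmu(\Bx),
\end{align*}
where $\bs\ka_t(\Bx)=\bigotimes_{k=1}^N \delta_{x_k}e^{t\mcA}$ is the transition kernel of $\bs\mcA$. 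The inner integral is pointwise bounded by $\|\bs\Phi\|_\infty$ (which is $\Bmu$-integrable since $\Bmu$ is a probability measure), so by dominated convergence it suffices to show that $\Bx\mapsto e^{t\bs\mcA}\bs\Phi(\Bx)$ is continuous in $t$ for each fixed $\Bx$.

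To establish this pointwise continuity, I would apply Lemma~\ref{lem:cost-of-tensorized-measures} to the tensor products $\bs\ka_{t_n}(\Bx)$ and $\bs\ka_t(\Bx)$ (whose $k$-th factors are $\delta_{x_k}e^{t_n\mcA}$ and $\delta_{x_k}e^{t\mcA}$ respectively), obtaining
\begin{align*}
    \mcC_{\Bc}(\bs\ka_{t_n}(\Bx),\bs\ka_t(\Bx))\le \frac{1}{N}\sum_{k=1}^N \mcC_c(\delta_{x_k}e^{t_n\mcA},\delta_{x_k}e^{t\mcA}).
\end{align*}
Each summand tends to $0$ as $t_n\to t$ by $\mcA\in\GCZP$, so $\bs\ka_{t_n}(\Bx)\xrightarrow{\Bc}\bs\ka_t(\Bx)$ and hence $\bs\ka_{t_n}(\Bx)\to\bs\ka_t(\Bx)$ weakly. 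Portmanteau's theorem then yields the pointwise convergence of $e^{t_n\bs\mcA}\bs\Phi(\Bx)$, completing the argument.

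The proof is essentially notational bookkeeping; no step presents a deep obstacle, since all the required ingredients (moment decomposition over marginals, continuity of flows in $\GCZP$, Lemma~\ref{lem:cost-of-tensorized-measures} for tensor products, and Proposition~\ref{prop:equiv-of-c-convergence}) are available. The only point requiring minor care is ensuring that Lemma~\ref{lem:cost-of-tensorized-measures}, as stated, applies to tensor products whose factors vary independently—this follows at once from its proof (take the tensor product of the factorwise optimal couplings).
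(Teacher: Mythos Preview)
Your proof is correct, and the core observation---that the $\Bc$-moment of $\Bmu e^{t\bs\mcA}$ decomposes as $\frac{1}{N}\sum_k\inn{\mu_k e^{t\mcA},c(z,\cdot)}$ via the marginals---is exactly what the paper uses. The difference is that the paper invokes Remark~\ref{rmk3.4:temp} directly: $\bs\mcA\in\GCZPN$ is equivalent to continuity of $t\mapsto\inn{\Bmu,e^{t\bs\mcA}\Bc(\Bz,\cdot)}$ alone, so once the marginal decomposition gives this, the proof is complete in two lines. You instead unpack the definition via Proposition~\ref{prop:equiv-of-c-convergence}, handling moment convergence (same as the paper) and weak convergence separately, the latter through Lemma~\ref{lem:cost-of-tensorized-measures}, dominated convergence, and Portmanteau. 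This extra work is correct but unnecessary: weak continuity of $t\mapsto\Bmu e^{t\bs\mcA}$ is automatic for any probability semigroup (strong continuity on $C_0$ plus preservation of total mass), which is precisely what Remark~\ref{rmk3.4:temp} is packaging. Your route is more self-contained; the paper's is shorter by appealing to the earlier characterization.
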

\begin{proof}
    Recall from Remark \ref{rmk3.4:temp} that $\bs\mcA\in\GCZPN$ if and only if it holds for all $\Bmu\in\PCPN$ that
    \begin{align*}
        t\mapsto\inn{\Bmu,e^{t\bs\mcA}\Bc(\bs z,\cdot)}
    \end{align*}
    is continuous. Since $\scb{e^{t\bs\mcA}}_{t\ge0}$ is the tensorized semigroup, we find
    \begin{align*}
        e^{t\bs\mcA}\Bc(\bs z,\cdot)(\Bx) = \frac{1}{N}\sum_{k=1}^N e^{t\mcA}c(z_k,\cdot)(x_k).
    \end{align*}
    For $1\le k\le N$, denote $\mu_k\in\mcP(\Pi)$ the $k$-th marginal of $\Bmu$. It is easy to verify that $\mu_k\in\PCP$. Hence,
    \begin{align}\label{eq5.6:temp}
        \inn{\Bmu,e^{t\bs\mcA}\Bc(\bs z,\cdot)} = \frac{1}{N}\sum_{k=1}^N \inn{\mu_k,e^{t\mcA}c(z_k,\cdot)}.
    \end{align}
    Since $\mcA\in\GCZP$ and $\mu_k\in\PCP$, the map $t\mapsto\inn{\mu_k,e^{t\mcA}c(z_k,\cdot)}$ is continuous. Therefore, we find that \eqref{eq5.6:temp} is continuous in $t$.
\end{proof}

\begin{corollary}\label{cor:mcM-is-gc0}
    The range of $N$-independent superposition generator $\bs\mcM$ is in $\GCZPN$. That is, for all $\rho\in\PCP$, we have $\bs\mcM(\rho)\in\GCZPN$.
\end{corollary}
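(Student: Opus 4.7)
The plan is to apply Lemma \ref{lem5.24:temp} directly. Fix $\rho \in \PCP$. By definition of a mean-field generator (Definition \ref{def4.1:mean-field-operator}), we have $\mcA(\rho) \in \GCZP$. The idea is to identify $\bs\mcM(\rho)$ as the generator of the tensorized probability semigroup $\scb{(e^{t\mcA(\rho)})^{\otimes N}}_{t\ge 0}$ and then invoke Lemma \ref{lem5.24:temp}.

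First I would verify the identification. Using the discussion surrounding \eqref{eq:tensor-gen-discussion}, with each factor $T_t^{(k)} = e^{t\mcA(\rho)}$ and each generator $\mcA^{(k)} = \mcA(\rho)$, the generator of the tensor product semigroup is the sum of the ``one-slot'' lifts
\begin{align*}
    \underbrace{I \otimes \cdots \otimes \mcA(\rho) \otimes \cdots \otimes I}_{\text{$\mcA(\rho)$ in slot $k$}}, \quad k=1,\dots,N,
\end{align*}
which matches exactly the definition of $\bs\mcM^{(k)}(\rho)$, and hence $\bs\mcM(\rho) = \sum_{k=1}^N \bs\mcM^{(k)}(\rho)$ generates $\scb{(e^{t\mcA(\rho)})^{\otimes N}}_{t\ge 0}$.

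Then I would invoke Lemma \ref{lem5.24:temp} with $\mcA$ replaced by $\mcA(\rho) \in \GCZP$. The lemma yields that the generator of the tensorized semigroup $\scb{(e^{t\mcA(\rho)})^{\otimes N}}_{t\ge 0}$, which we just identified as $\bs\mcM(\rho)$, belongs to $\GCZPN$. Since $\rho \in \PCP$ was arbitrary, the conclusion follows. Given that both the semigroup identification and the reference to Lemma \ref{lem5.24:temp} are essentially immediate, I do not anticipate any serious obstacle; the whole argument should fit in one short paragraph.
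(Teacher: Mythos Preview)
Your proposal is correct and matches the paper's proof essentially line for line: fix $\rho$, identify $\bs\mcM(\rho)$ as the generator of the tensorized semigroup $(e^{t\mcA(\rho)})^{\otimes N}$ via the discussion around \eqref{eq:tensor-gen-discussion}, and then invoke Lemma~\ref{lem5.24:temp} using $\mcA(\rho)\in\GCZP$.
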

\begin{proof}
    Fix $\rho\in\mcP_c(\Pi)$, and write $\mcM=\mcA(\rho)$, $\bs\mcM^{(k)}=\bs\mcM^{(k)}(\rho)$ and $\bs\mcM=\bs\mcM(\rho)$. We note that $\bs\mcM$ is a probability generator. In fact, $\bs\mcM$ is the generator of the tensor product semigroup, i.e., $e^{t\bs\mcM}=(e^{t\mcM})^{\oN}$, see \eqref{eq:tensor-gen-discussion} and Definiton \ref{def:independent-generator-mcM}. Then by Lemma \ref{lem5.24:temp}, since $\mcM\in\GCZP$, we have $\bs\mcM\in\GCZPN$.
\end{proof}

\begin{lemma}\label{lem5.26:temp}
    Let $\MFG$ be a mean-field generator that satisfies Hypothesis {\refA} and $\bs\mcM$ be the associated $N$-independent superposition generator. Then for all $\Bmu,\Bnu\in\mcP_{\Bc}(\Pi^N)$ and $\mu',\nu'\in\mcP_c(\Pi)$, it holds
    \[\om_{\Bc}(\Bmu,\Bnu;\bs\mcM(\mu'),\bs\mcM(\nu'))\le \al\mcC_c(\mu',\nu')+\be \mcC_{\Bc}(\Bmu,\Bnu),\]
    where $\al,\be\ge0$ are constants from Hypothesis {\refA}.
\end{lemma}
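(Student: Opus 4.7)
\emph{Plan.} The strategy is to reduce the $N$-dimensional Dini-derivative bound to the one-particle stability estimate already guaranteed by Hypothesis \refA, exploiting the fact that $\bs\mcM(\mu)$ is the tensor power of $\mcA(\mu)$. The argument will follow the chain: Hypothesis \refA\ $\Rightarrow$ one-particle exponential bound (Corollary \ref{cor3.20:chap3-main-cor}) $\Rightarrow$ tensor-power bound on Diracs in $\Pi^N$ (Lemma \ref{lem:cost-of-tensorized-measures}) $\Rightarrow$ measure-level bound on $\Pi^N$ (Lemma \ref{lem:pointwise-to-measure-lem}) $\Rightarrow$ Dini-derivative bound (the equivalence (b)$\Leftrightarrow$(c) of Corollary \ref{cor3.20:chap3-main-cor} applied on $\Pi^N$).

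Concretely, I would first fix $\mu',\nu'\in\PCP$ and invoke the (c)$\Leftrightarrow$(b) direction of Corollary \ref{cor3.20:chap3-main-cor} on the pair $(\mcA(\mu'),\mcA(\nu'))$, which by Hypothesis \refA\ lies in $\Exp(\al\mcC_c(\mu',\nu'),\be)$. This produces the one-particle bound
\begin{align*}
	\mcC_c\bigl(\de_x e^{t\mcA(\mu')},\de_y e^{t\mcA(\nu')}\bigr) \le e^{\be t} c(x,y) + \al\,\mcC_c(\mu',\nu')\,\zeta_\be(t),\qquad x,y\in\Pi,\ t\ge 0.
\end{align*}

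Next, I would lift this to $\Pi^N$. By Definition \ref{def:independent-generator-mcM} the semigroup $e^{t\bs\mcM(\mu)}$ is the $N$-fold tensor power of $e^{t\mcA(\mu)}$, so $\de_{\Bx}e^{t\bs\mcM(\mu')}$ factors as a product of $N$ one-site flows. Lemma \ref{lem:cost-of-tensorized-measures} then combines with the one-particle bound to give
\begin{align*}
	\mcC_{\Bc}\bigl(\de_{\Bx}e^{t\bs\mcM(\mu')},\de_{\By}e^{t\bs\mcM(\nu')}\bigr) \le \frac{1}{N}\sum_{k=1}^N \mcC_c\bigl(\de_{x_k}e^{t\mcA(\mu')},\de_{y_k}e^{t\mcA(\nu')}\bigr) \le e^{\be t}\Bc(\Bx,\By) + \al\,\mcC_c(\mu',\nu')\,\zeta_\be(t).
\end{align*}
Integrating this pointwise inequality against a $\Bc$-optimal coupling of $(\Bmu,\Bnu)$, as in Lemma \ref{lem:pointwise-to-measure-lem}, upgrades it to
\begin{align*}
	\mcC_{\Bc}\bigl(\Bmu e^{t\bs\mcM(\mu')},\Bnu e^{t\bs\mcM(\nu')}\bigr) \le e^{\be t}\,\mcC_{\Bc}(\Bmu,\Bnu) + \al\,\mcC_c(\mu',\nu')\,\zeta_\be(t).
\end{align*}

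Finally, Corollary \ref{cor:mcM-is-gc0} places $(\bs\mcM(\mu'),\bs\mcM(\nu'))$ in $\GCZPN^2$, so Corollary \ref{cor3.20:chap3-main-cor} applies on the product space. The estimate above is exactly its condition (b) with parameters $(\al\mcC_c(\mu',\nu'),\be)$, and the equivalent condition (c) delivers the desired bound on $\om_{\Bc}$. I do not anticipate any real obstacle: the proof is a clean assembly of (i) Hypothesis \refA, (ii) the tensor-product inequality of Lemma \ref{lem:cost-of-tensorized-measures}, (iii) the pointwise-to-measure Lemma \ref{lem:pointwise-to-measure-lem}, and (iv) the duality of stability conditions in Corollary \ref{cor3.20:chap3-main-cor}. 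The only mild verification is the tensor identity $e^{t\bs\mcM(\mu)}=(e^{t\mcA(\mu)})^{\otimes N}$, which is immediate from Definition \ref{def:independent-generator-mcM}.
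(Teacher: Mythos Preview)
Your proposal is correct and matches the paper's own proof essentially step for step: both use the tensor identity $e^{t\bs\mcM(\mu)}=(e^{t\mcA(\mu)})^{\otimes N}$, apply Lemma \ref{lem:cost-of-tensorized-measures} to reduce to one-particle flows, invoke Hypothesis \refA\ (via Corollary \ref{cor3.20:chap3-main-cor}) for the one-particle bound, and conclude by the equivalence of stability conditions in Corollary \ref{cor3.20:chap3-main-cor} on $\Pi^N$ (with Corollary \ref{cor:mcM-is-gc0} supplying the needed $\GCZPN$ membership). The only cosmetic difference is that the paper stops at condition (a) of Corollary \ref{cor3.20:chap3-main-cor} and invokes (a)$\Leftrightarrow$(c) directly, whereas you insert the intermediate measure-level step (b) via Lemma \ref{lem:pointwise-to-measure-lem}; this extra step is harmless but unnecessary.
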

\begin{proof}
	By Corollary \ref{cor:mcM-is-gc0}, we have $\bs\mcM(\rho)\in\GCZPN$ for all $\rho\in\PCP$. Hence, by Corollary \ref{cor3.20:chap3-main-cor}, it suffices to establish:
	\begin{align*}
		\mcC_{\Bc}(\de_{\Bx}e^{t\bs\mcM(\mu')},\de_{\By}e^{t\bs\mcM(\nu')}) \le e^{\be t}\Bc(\Bx,\By) + \al \mcC_c(\mu',\nu')\zeta_{\be}(t).
	\end{align*}
	Since for each $\mu',\nu'\in\mcP_c(\Pi)$, $\cb{e^{t\bs\mcM(\mu')}}_{t\ge0}$, $\cb{e^{t\bs\mcM(\nu')}}_{t\ge0}$ are tensorized probability semigroups, we again have
	\begin{align*}
		\de_{\Bx} e^{t\bs\mcM(\mu')} &= \de_{x_1}e^{t\mcA(\mu')} \otimes \de_{x_2}e^{t\mcA(\mu')} \otimes \cdots \otimes \de_{x_N}e^{t\mcA(\mu')},\\
		\de_{\By} e^{t\bs\mcM(\nu')} &= \de_{y_1}e^{t\mcA(\nu')} \otimes \de_{y_2}e^{t\mcA(\nu')} \otimes \cdots \otimes \de_{y_N}e^{t\mcA(\nu')}.
	\end{align*}
	Applying Lemma \ref{lem:cost-of-tensorized-measures} above, we arrive at
	\begin{align*}
		\mcC_{\Bc}(\de_{\Bx} e^{t\bs\mcM(\mu')},\de_{\By} e^{t\bs\mcM(\nu')}) &\le \frac{1}{N}\sum_{k=1}^N \mcC_c(\de_{x_k} e^{t\mcA(\mu')},\de_{y_k} e^{t\mcA(\nu')})\\
		&\le \frac{1}{N}\sum_{k=1}^N \rb{e^{\be t}c(x_k,y_k)+\al\mcC_c(\mu',\nu')\zeta_{\be}(t)}\\
		&= e^{\be t}\Bc(\Bx,\By) + \al \mcC_c(\mu',\nu')\zeta_{\be}(t).
	\end{align*}
	We have used the fact that $\mcA$ satisfies Hypothesis {\refA} in the second step.
\end{proof}

\subsubsection{Evolution problem of the tensorized mean-field measure}
Finally, here is the main result of this section, which is the evolution equation that governs the tensorized measure $\Bbr_t=\br_t^{\oN}$, as outlined earlier during our discussion on the strategy for proving propagation of chaos.
\begin{proposition}
    Let $\MFG$ be a mean-field generator that satisfies Hypothesis {\refA}. Let $\cb{\br_t}_{t\ge0}\in C([0,T];\mcP_c(\Pi))$ be a $c$-stable solution of the mean-field equation \eqref{eq4.6:mean-field-equation}. Then $\cb{\Bbr_t}_{t\ge0}\in C([0,T];\mcP_{\Bc}(\Pi^N))$ defined by $\bs{\br}_t = \br_t^{\oN}$ is a $\Bc$-stable solution to the following initial value problem: 
    \begin{align}\label{eq:tensorized-mf-prob}
        \left\{\begin{aligned}
            \partial_t \bs{\br}_t &= \bs{\br}_t \bs{\mcM}(\br_t),\quad t\in(0,T),\\
            \bs{\br}_0 &= \br_0^{\otimes N}\in\mcP_{\Bc}(\Pi^N).
        \end{aligned}\right.
    \end{align}
\end{proposition}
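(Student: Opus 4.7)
The plan is to verify the definition of $\Bc$-stable solution directly: show that for every PCA sequence $\{\bs{\mcM}^{(n)}\}_n$ of the curve $t\mapsto \bs{\mcM}(\br_t)$, the corresponding strong dual solutions $\Brho^{(n)} = \mbs_0(\br_0^{\otimes N};\bs{\mcM}^{(n)})$ converge uniformly in $\mcC_{\Bc}$ to $\Bbr_t = \br_t^{\otimes N}$. First, I would check that $t\mapsto \bs{\mcM}(\br_t)$ belongs to $\CEXPN([0,T];\GCZPN)$, so that Corollary \ref{cor4.19temp} (applied on the product space) guarantees existence and uniqueness of the $\Bc$-stable solution. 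This follows from Lemma \ref{lem5.26:temp}, which gives
\[
\om_{\Bc}(\Bmu,\Bnu;\bs{\mcM}(\br_t),\bs{\mcM}(\br_s)) \le \al\,\mcC_c(\br_t,\br_s) + \be\,\mcC_{\Bc}(\Bmu,\Bnu),
\]
combined with the uniform continuity of $t\mapsto\br_t$ on the compact interval $[0,T]$ in the Wasserstein-$c$ semimetric.

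The crucial structural observation is that the tensorial form in Definition \ref{def:independent-generator-mcM} yields $e^{t\bs{\mcM}(\mu)} = (e^{t\mcA(\mu)})^{\otimes N}$, hence for any $\nu,\mu\in\PCP$ and any $t\ge 0$,
\[
\nu^{\otimes N}\,e^{t\bs{\mcM}(\mu)} = \bigl(\nu\, e^{t\mcA(\mu)}\bigr)^{\otimes N}.
\]
Given a partition $\De_n$ with associated time-change $\tau_n$, set $\bs{\mcM}^{(n)}(t) = \bs{\mcM}(\br_{\tau_n(t)})$ and $\mcA^{(n)}(t) = \mcA(\br_{\tau_n(t)})$. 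Then $\{\mcA^{(n)}\}_n$ is a PCA sequence of $t\mapsto\mcA(\br_t)$. By the recursive ``gluing'' construction in the proof of Proposition \ref{lem4.10:exist-unique-for-rpc}, applied separately in $\Pi$ and $\Pi^N$, the above tensor identity propagates through each constant interval, giving by induction
\[
\Brho^{(n)}_t = \bigl(\rho^{(n)}_t\bigr)^{\otimes N}, \quad \text{where } \rho^{(n)} := \mbs_0(\br_0;\mcA^{(n)}).
\]

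To finish, I would invoke the fixed-point characterization from Section \ref{sec4.5}: since $\br$ is a $c$-stable solution of \eqref{eq4.6:mean-field-equation}, it coincides with $\mbs(\br_0;\bs{\mcA}(\br_\cdot))$, the unique $c$-stable solution of the linearized problem with prescribed curve $\br$. Consequently $\rho^{(n)} \to \br$ uniformly in $\mcC_c$ as $n\to\infty$. By Corollary \ref{cor5.21:temp}(iv), tensorization is continuous with respect to uniform $\mcC_c$-convergence, so
\[
\sup_{t\in[0,T]} \mcC_{\Bc}\bigl(\Brho^{(n)}_t,\Bbr_t\bigr) \;=\; \sup_{t\in[0,T]} \mcC_{\Bc}\bigl((\rho^{(n)}_t)^{\otimes N},\br_t^{\otimes N}\bigr) \;\xrightarrow[n\to\infty]{}\; 0.
\]
Since this holds for every PCA sequence of $t\mapsto\bs{\mcM}(\br_t)$, the curve $\Bbr$ is by definition the $\Bc$-stable solution of \eqref{eq:tensorized-mf-prob}.

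The main obstacle is the bookkeeping needed to verify the tensor identity $\Brho^{(n)}_t = (\rho^{(n)}_t)^{\otimes N}$ across an arbitrary partition; this is a straightforward induction once one observes that the semigroup $e^{t\bs{\mcM}(\mu)}$ factorizes as an $N$-fold tensor product of $e^{t\mcA(\mu)}$, but it is the step that ties together the piecewise-constant approximation scheme on $\Pi^N$ with the mean-field dynamics on $\Pi$.
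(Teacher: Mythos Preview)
Your proposal is correct and follows essentially the same approach as the paper: first verify $\ExpN$-continuity of $t\mapsto\bs{\mcM}(\br_t)$ via Lemma \ref{lem5.26:temp}, then use the tensor factorization $e^{t\bs{\mcM}(\mu)}=(e^{t\mcA(\mu)})^{\otimes N}$ to identify the PCA solutions on $\Pi^N$ as $N$-fold tensor powers of the PCA solutions on $\Pi$, and conclude via Corollary \ref{cor5.21:temp}(iv) together with the $c$-stability of $\br$. Your explicit mention of the fixed-point characterization and the inductive verification of the tensor identity across partition intervals are minor expository additions, but the argument is the same.
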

\begin{proof}
	Let $\BMM_t= \BMM(\br_t)$. The proof has two main steps: (1) show that $\cb{\BMM_t}_{t\ge0}$ is $\ExpN$-continuous, and hence Corollary \ref{cor4.19temp} guarantees the evolution problem \eqref{eq:tensorized-mf-prob} admits a unique $\Bc$-stable solution for every initial data; (2) show that $\Bbr_t=\br_t^{\oN}$ is a $\Bc$-stable solution of \eqref{eq:tensorized-mf-prob}.
	
	(1) $\cb{\BMM_t}_{t\ge0}$ is $\ExpN$-continuous. This is because by Lemma \ref{lem5.26:temp},
	\begin{align*}
		\om_{\Bc}(\Bmu,\Bnu;\bs\mcM_t,\bs\mcM_s)\le \al\mcC_c(\br_t,\br_s)+\be \mcC_{\Bc}(\Bmu,\Bnu).
	\end{align*}
	Since $(t,s)\mapsto\mcC_c(\br_t,\br_s)$ is continuous on $[0,T]^2$, which is a compact domain in $\mbr^2$, it is uniformly continuous. That is, $\mcC_c(\br_t,\br_s)$ is small whenever $|t-s|$ is small (independent of the actual values of $t,s$). Hence, $\cb{\BMM_t}_{t\ge0}$ is $\ExpN$-continuous.
	
	(2) $\cb{\Bbr_t}_{t\ge0}$ given by $\bs{\br}_t = \br_t^{\oN}$ is a $\Bc$-stable solution of \eqref{eq:tensorized-mf-prob}. Let $\scb{\scb{\BMM_t^{(n)}}_{t\ge0}}_n$ be a PCA sequence of $\scb{\BMM_t}_{t\ge0}$. From the definition of the $N$-independent superposition generator, $\BMM_t^{(n)}$, also takes the form
	\begin{align*}
		\bs\mcM_t^{(n)} &= \bs\mcM_t^{(n,1)} + \bs\mcM_t^{(n,2)} + \cdots + \bs\mcM_t^{(n,N)},\\
		\bs\mcM_t^{(n,1)} &= \mcA_t^{(n)} \otimes I \otimes I \otimes \cdots \otimes I,\\
		\bs\mcM_t^{(n,2)} &= I \otimes \mcA_t^{(n)} \otimes I \otimes \cdots \otimes I,\\
		&\vdots\\
		\bs\mcM_t^{(n,N)} &= I \otimes I \otimes I \otimes \cdots \otimes \mcA_t^{(n)},
	\end{align*}
	where $\scb{\scb{\mcA_t^{(n)}}_{t\ge0}}_n$ is a PCA sequence of $\scb{\mcA(\br_t)}_{t\ge0}$. For each $n$, let $\scb{\br_t^{(n)}}_{t\ge0}$ be the solution of the evolution problem with generators $\scb{\BMM_t^{(n)}}_{t\ge0}$. On the interval of which $t\mapsto\BMM_t^{(n)}$ is constant, $\BMM_t^{(n)}$ is the infinitesimal generator of a probability semigroup. Hence, we see that the PCA solutions $\Bbr_t^{(n)}$ must be given by
	\begin{align*}
		\Bbr_t^{(n)} = (\br_t^{(n)})^{\oN},
	\end{align*}
	where $\scb{\br_t^{(n)}}_{t\ge0}$ is the solution of the evolution problem with generator $\scb{\mcA_t^{(n)}}_{t\ge0}$.
 
    Since $\cb{\br_t}_{t\ge0}$ is a $c$-stable solution of \eqref{eq4.6:mean-field-equation}, it follows that $\br_t^{(n)}\to\br_t$ in $\CTPC$. By Corollary \ref{cor5.21:temp}, we have $\Bbr_t^{(n)}\to\Bbr_t$ in $C([0,T];\mcP_{\Bc}(\Pi^N))$. In total, for every PCA sequence $\scb{\BMM_t^{(n)}}$ of $\BMM_t$, the solutions $\scb{\Bbr_t^{(n)}}$ of the corresponding evolution equation converges to $\Bbr_t$. Therefore, $\Bbr_t$ is a $\Bc$-stable solution.
\end{proof}

\subsection{Proof of Theorem \ref{thm:chap5-main-thm}}\label{sec5.4}
We shall now return to the proof of the main theorem. Recall that our goal is to control the $\Bc$-optimal cost between the flows $\Brho_t=\Brho_0 e^{t\bs{\hat\mcA}}$ and $\Bbr_t=\br_t^{\oN}$, where we have shown that $\Bbr_t$ is the $\Bc$-stable solution of \eqref{eq:tensorized-mf-prob}. Thus, Lemma \ref{special-stability} is applicable, which yields an integral inequality. The remaining part of the proof is to ``close'' the inequality here, that is, to establish the bound of the form
\begin{align*}
	\om_{\Bc}(\Bmu,\Bnu;\BHA,\BMM(\br_t)) \le \ep_N(\br_t) + K \mcC_{\Bc}(\Bmu,\Bnu).
\end{align*}
To achieve so, we observe that $\BHA,\BMM(\rho)$ have a similar superposition structure:
\begin{align*}
	\BHA=\sum_{k=1}^N \BHA^{(k)},\quad \BMM(\rho)=\sum_{k=1}^N \BMM^{(k)}(\rho).
\end{align*}
A natural approach is to first establish a bound on $\om_{\Bc}(\cdot,\cdot;\BHA^{(k)},\BMM^{(k)}(\rho))$ for each $1\le k\le N$, then obtain a bound for their superposition by exploiting the subadditive nature of Dini derivative $\om_{\Bc}$ (Theorem \ref{thm:om-subadditivity}). The precise steps are detailed in the coming two results.

\begin{lemma}\label{lem5.28:temp}
    Assume the settings from Theorem \ref{thm:chap5-main-thm}, where $\MFG$ is a mean-field generator that satisfies Hypothesis {\refAp} with some $\al,\be\ge0$.
    For $1\le k\le N$, let $\bs{\hat\mcA}^{(k)}$ be given by \eqref{eq:bold-hat-mcA-1} and \eqref{eq:bold-hat-mcA-k}, and $\bs{\mcM}^{(k)}$ be from Definition \ref{def:independent-generator-mcM}.
    Then for each $1\le k\le N$, it holds for all $\Bx,\By\in\Pi^N$ and $\rho\in\mcP_c(\Pi)$ that
    \begin{align*}
        \om_{\Bc}(\Bx,\By;\BHA^{(k)},\BMM^{(k)}(\rho)) \le \frac{1}{N}\sqb{\al \Xi(x_k,\mu(\Bx_k'),\rho)+\be c(x_k,y_k)},
    \end{align*}
    where $\mu(\Bx_k')=\frac{1}{N-1}\sum_{j\neq k}\de_{x_j}$.
\end{lemma}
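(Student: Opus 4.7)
By the permutation symmetry built into the definitions of $\BHA^{(k)}$ via \eqref{eq:bold-hat-mcA-k} and of $\BMM^{(k)}(\rho)$ via Definition \ref{def:independent-generator-mcM}, it suffices to prove the bound for $k=1$; the general case follows by relabelling coordinates. The plan is to compute the two Markov flows emanating from $\de_{\Bx}$ and $\de_{\By}$ explicitly, exploit the fact that all but one coordinate is frozen to reduce the $\Bc$-optimal cost to a one-dimensional problem, and then apply Hypothesis \refAp\ at the infinitesimal level.

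First, from the explicit form of the transition kernel \eqref{eq5.2:temp} associated to $\BHA^{(1)}$, and the analogous tensor-product description of the semigroup generated by $\BMM^{(1)}(\rho)=\mcA(\rho)\otimes I\otimes\cdots\otimes I$, I would write
\begin{align*}
\de_{\Bx}e^{t\BHA^{(1)}} &= \bigl(\de_{x_1}e^{t\mcA(\mu(\Bx_1'))}\bigr)\otimes\de_{x_2}\otimes\cdots\otimes\de_{x_N},\\
\de_{\By}e^{t\BMM^{(1)}(\rho)} &= \bigl(\de_{y_1}e^{t\mcA(\rho)}\bigr)\otimes\de_{y_2}\otimes\cdots\otimes\de_{y_N}.
\end{align*}
Because the marginals of any coupling must match the Dirac factors in coordinates $2,\ldots,N$, every $\bs\ga\in\Ga(\de_{\Bx}e^{t\BHA^{(1)}},\de_{\By}e^{t\BMM^{(1)}(\rho)})$ is forced to take the form $\ga_1\otimes\de_{(x_2,y_2)}\otimes\cdots\otimes\de_{(x_N,y_N)}$ for some $\ga_1\in\Ga(\de_{x_1}e^{t\mcA(\mu(\Bx_1'))},\de_{y_1}e^{t\mcA(\rho)})$. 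Integrating $\Bc=\tfrac{1}{N}\sum_j c(x_j,y_j)$ against such a coupling and minimizing over $\ga_1$ gives the exact identity
\begin{align*}
\mcC_{\Bc}\bigl(\de_{\Bx}e^{t\BHA^{(1)}},\de_{\By}e^{t\BMM^{(1)}(\rho)}\bigr) = \frac{1}{N}\mcC_c\bigl(\de_{x_1}e^{t\mcA(\mu(\Bx_1'))},\de_{y_1}e^{t\mcA(\rho)}\bigr) + \frac{1}{N}\sum_{j=2}^{N} c(x_j,y_j).
\end{align*}

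Subtracting $\Bc(\Bx,\By)=\tfrac1N c(x_1,y_1)+\tfrac1N\sum_{j\ge 2}c(x_j,y_j)$, dividing by $t$, and passing to the limsup as $t\searrow 0$, the frozen coordinates contribute nothing, and I obtain
\begin{align*}
\om_{\Bc}(\Bx,\By;\BHA^{(1)},\BMM^{(1)}(\rho)) = \frac{1}{N}\,\om_c\bigl(x_1,y_1;\mcA(\mu(\Bx_1')),\mcA(\rho)\bigr).
\end{align*}
Finally, Hypothesis \refAp\ applied to the mean-field arguments $\mu'=\mu(\Bx_1')$ and $\nu'=\rho$ yields the pointwise bound $\om_c(x_1,y_1;\mcA(\mu(\Bx_1')),\mcA(\rho))\le \al\,\Xi(x_1,\mu(\Bx_1'),\rho)+\be\,c(x_1,y_1)$, which gives the claimed estimate after dividing by $N$.

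The argument is essentially calculational and does not encounter a serious obstacle; the only point requiring a little care is the exact-reduction step for the $\Bc$-optimal cost, where one must verify that freezing $N-1$ coordinates forces the coupling structure above so that the infimum genuinely decouples. Everything else is a direct application of the definition of $\om_{\Bc}$ combined with Hypothesis \refAp.
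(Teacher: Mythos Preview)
Your proposal is correct and follows the same overall reduction as the paper: exploit permutation symmetry to take $k=1$, identify the two Dirac-flow tensor structures, collapse to the one-dimensional cost, and invoke Hypothesis~\refAp. The only difference is that you establish the \emph{exact} identity
\[
\mcC_{\Bc}\bigl(\de_{\Bx}e^{t\BHA^{(1)}},\de_{\By}e^{t\BMM^{(1)}(\rho)}\bigr)=\tfrac1N\,\mcC_c\bigl(\de_{x_1}e^{t\mcA(\mu(\Bx_1'))},\de_{y_1}e^{t\mcA(\rho)}\bigr)+\tfrac1N\sum_{j\ge 2}c(x_j,y_j)
\]
(which is valid here because all but one factor is a Dirac mass, forcing the coupling structure as you describe), and then pass directly to $\om_c$ and apply \eqref{eq5.4:temp}. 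The paper instead uses only the inequality of Lemma~\ref{lem:cost-of-tensorized-measures}, then invokes the integral form of Theorem~\ref{thm:chap3-main-thm} to bound $\mcC_c(\de_{x_1}e^{t\mcA(\mu)},\de_{y_1}e^{t\mcA(\rho)})$, and finally takes the $t\searrow 0$ limit of the resulting time-integral using the continuity of $s\mapsto e^{s\mcA(\mu)}\Xi(\cdot,\mu,\rho)(x_1)$. Your route is shorter and avoids that last limit computation; the paper's route has the mild advantage that it never needs the equality step, only the upper bound, which is what Lemma~\ref{lem:cost-of-tensorized-measures} provides in general.
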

\begin{proof}	
    Fix any $\rho\in\PCP$.
    By the symmetric structure of $\bs{\hat\mcA}^{(k)}$ and $\bs\mcM^{(k)}(\rho)$, it suffices to consider $k=1$. Let us recall the notation
    \begin{align*}
        \Bx=(x_1,x_2,\cdots,x_N)=(x_1;\Bx_1'),\quad \By=(y_1,y_2,\cdots,y_N)=(y_1;\By_1'),
    \end{align*}
    where $\Bx_1'=(x_2,\cdots,x_N), \By_1'=(y_2,\cdots,y_N)\in\Pi^{N-1}$. For notational simplicity, we shall use the shorthand notation $\mu=\mu(\Bx_1')$. Notice that by the structure of $\bs{\hat\mcA}^{(1)}$ and $\bs\mcM^{(1)}(\rho)$ (see \eqref{eq5.2:temp} and Definition \ref{def:independent-generator-mcM}), we have
    \begin{align*}
        \de_{\Bx}e^{t\bs{\hat\mcA}^{(1)}} &= \de_{x_1}e^{t\mcA(\mu)}\otimes\de_{x_2}\otimes\cdots\otimes\de_{x_N},\\
        \de_{\By}e^{t\bs\mcM^{(1)}(\rho)} &= \de_{y_1}e^{t\mcA(\rho)}\otimes\de_{y_2}\otimes\cdots\otimes\de_{y_N}.
    \end{align*}
    Then by Lemma \ref{lem:cost-of-tensorized-measures},
    \begin{align*}
        \mcC_{\bs{c}}\mrb{\de_{\Bx}e^{t\bs{\hat\mcA}^{(1)}},\de_{\By}e^{t\bs\mcM^{(1)}(\rho)}} &\le \frac{1}{N} \mcC_c\mrb{\de_{x_1}e^{t\mcA(\mu)},\de_{y_1}e^{t\mcA(\rho)}}+ \frac{1}{N} \sum_{k\ge 2} c(x_k,y_k).
    \end{align*}
    Using Hypothesis {\refAp} for the mean-field generator $\mcA$ and Theorem \ref{thm:chap3-main-thm}, we can bound the first term on the right-hand side above to get
    \begin{align*}
        \mcC_{\bs{c}}\mrb{\de_{\Bx}e^{t\bs{\hat\mcA}^{(1)}},\de_{\By}e^{t\bs\mcM^{(1)}(\rho)}} &\le \frac{1}{N} \msqb{e^{\be t} c(x_1,y_1) + \al\int_0^t e^{\be(t-s)}e^{s\mcA(\mu)}\Xi(\cdot,\mu,\rho)(x_1)\,ds}\\
        &\quad + \frac{1}{N} \sum_{k\ge 2} c(x_k,y_k).
    \end{align*}
    Subtracting $\bs{c}(\Bx,\By)$ and dividing by $t$, then taking limsup as $t\searrow0$, we find
    \begin{align*}
        &\om_{\Bc}\mrb{\Bx,\By;\bs{\hat\mcA}^{(1)},\bs\mcM^{(1)}(\rho)}\\
        &\le \limsup_{t\searrow0}\frac{1}{Nt} \msqb{(e^{\be t}-1) c(x_1,y_1) + \al\int_0^t e^{\be(t-s)}e^{s\mcA(\mu)}\Xi(\cdot,\mu,\rho)(x_1)\,ds}\\
        &\le \frac{\be}{N} c(x_1,y_1) + \limsup_{t\searrow0}\frac{\al}{Nt}\int_0^t e^{\be(t-s)}e^{s\mcA(\mu)}\Xi(\cdot,\mu,\rho)(x_1)\,ds.
    \end{align*}
    Notice that $e^{s\mcA(\mu)}\Xi(\cdot,\mu,\rho)(x_1) \to \Xi(x_1,\mu,\rho)$ as $s\searrow0$ since $\Xi$ is bounded continuous (see Remark \ref{rmk5.8:temp}). Thus, we find
    \begin{align*}
        \lim_{t\searrow0}\frac{e^{\be t}}{t}\int_0^t e^{-\be s}e^{s\mcA(\mu)}\Xi(x_1,\mu,\rho)\,ds =\lim_{s\searrow0}e^{-\be s}e^{s\mcA(\mu)}\Xi(x_1,\mu,\rho) = \Xi(x_1,\mu(\Bx_1'),\rho),
    \end{align*}
    recalling that $\mu=\mu(\Bx_1')$.
    This then leads to
    \[\om_{\Bc}\mrb{\Bx,\By;\bs{\hat\mcA}^{(1)},\bs\mcM^{(1)}(\rho)} \le \frac{1}{N}\sqb{\al\Xi(x_1,\mu(\Bx_1'),\rho)+\be c(x_1,y_1)}.\qedhere\]
\end{proof}

\begin{corollary}\label{cor5.29:temp}
    Assume the settings of Theorem \ref{thm:chap5-main-thm}, where $\MFG$ is a mean-field generator satisfying Hypothesis {\refAp} for some $\be\ge0$. Let $\bs{\hat\mcA}=\sum_{k=1}^N\BHA^{(k)}$ be the associated $N$-particle generator from Definition \ref{def:N-particle-generator} and $\bs{\mcM}=\sum_{k=1}^N\bs{\mcM}^{(k)}$ be the associated $N$-independent superposition generator from Definition \ref{def:independent-generator-mcM}.
    Then it holds for all $\bs{\mu},\bs{\nu}\in\mcP_{\bs{c}}(\Pi^N)$ and $\rho\in\mcP_c(\Pi)$ that
    \begin{align*}
        \om_{\bs{c}}(\bs{\mu},\bs{\nu};\bs{\hat{\mcA}},\bs{\mcM}(\rho))
        \le \frac{\al}{N} \sum_{k=1}^N \int_{\Pi^N} \Xi(x_k,\mu(\Bx_k'),\rho)\,d\bs{\mu}(\Bx) + \be \mcC_{\bs{c}}\mrb{\bs{\mu},\bs{\nu}}.
    \end{align*}
\end{corollary}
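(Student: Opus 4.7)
The proof naturally splits into two moves: first a pointwise (Dirac-to-Dirac) bound obtained by summing the single-component estimates from Lemma \ref{lem5.28:temp} via the subadditivity of $\omega_{\bs c}$, and then a lift to arbitrary $\bs\mu,\bs\nu\in\mcP_{\bs c}(\Pi^N)$ via the duality portion of Theorem \ref{thm:chap3-main-thm}. Concretely, I will set
\[
\ta_k(\Bx,\By) := \tfrac{1}{N}\bigl[\al\,\Xi(x_k,\mu(\Bx_k'),\rho)+\be\, c(x_k,y_k)\bigr], \qquad 1\le k\le N,
\]
so that Lemma \ref{lem5.28:temp} reads $\omega_{\bs c}(\Bx,\By;\BHA^{(k)},\bs\mcM^{(k)}(\rho))\le \ta_k(\Bx,\By)$ for every $\Bx,\By\in\Pi^N$.

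The first step is to invoke Theorem \ref{thm:om-subadditivity} to conclude
\[
\omega_{\bs c}(\Bx,\By;\BHA,\bs\mcM(\rho))\le \sum_{k=1}^N\ta_k(\Bx,\By)= f(\Bx)+\be\, \bs c(\Bx,\By),
\]
where $f(\Bx):=\tfrac{\al}{N}\sum_{k=1}^N\Xi(x_k,\mu(\Bx_k'),\rho)$. To apply that theorem I need to check: (a) each $\BHA^{(k)}$ and each $\bs\mcM^{(k)}(\rho)$ lies in $\GCZPN$, as well as their sums $\BHA$ and $\bs\mcM(\rho)$ — Hypothesis \ref{hypo:generation-problem} delivers this for the $\BHA$-side, and for the $\bs\mcM$-side the tensorized structure combined with Corollary \ref{cor:mcM-is-gc0} and the one-component analogue of Lemma \ref{lem5.24:temp} does the job; (b) common cores exist for the two families — Hypothesis \ref{hypo:generation-problem} provides $\bs\mcD$ for the hat-family, and the algebraic tensor product of a core of $\mA(\rho)$ with constants serves as a common core for the $\bs\mcM^{(k)}(\rho)$'s inside $D(\bs\mcM(\rho))$; (c) each $\ta_k$ is upper semicontinuous (in fact continuous, by continuity of $\Xi$ and $c$ and the continuity of $\Bx\mapsto\mu(\Bx_k')$ in the Wasserstein-$c$ topology) and dominated by a sum $f_k\oplus g_k$ with $f_k,g_k\in C_{b,\bs c}(\Pi^N)$.

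The second step is to apply the implication (d) $\Rightarrow$ (c) of Theorem \ref{thm:chap3-main-thm} with $\bs{\hat\mcA},\bs\mcM(\rho)\in\GCZPN$, the function $f$ above, $g\equiv 0$, and the constant $\be$. This requires $f\in C_{b,\bs c}(\Pi^N)$. Continuity of $f$ in $\Bx$ is clear from Hypothesis \refAp(iii) together with the fact that $\Bx\mapsto\mu(\Bx_k')$ is continuous from $\Pi^{N}$ into $\mcP_c(\Pi)$. For the $\bs c$-bound, Remark \ref{rmk5.8:temp} gives $\Xi(x_k,\mu(\Bx_k'),\rho)\le B\,\mcC_c(\mu(\Bx_k'),\rho)$, and by the $B$-relaxed triangle inequality applied to $\mcC_c$ together with $\mcC_c(\mu(\Bx_k'),\de_z)=\tfrac{1}{N-1}\sum_{j\neq k} c(z,x_j)$, one obtains a dominating expression of the form $M_0+M_1\bs c(\bs z,\Bx)$. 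The conclusion of (c) is precisely
\[
\omega_{\bs c}(\bs\mu,\bs\nu;\bs{\hat\mcA},\bs\mcM(\rho))\le \langle\bs\mu,f\rangle+\be\,\mcC_{\bs c}(\bs\mu,\bs\nu)=\tfrac{\al}{N}\sum_{k=1}^N\int_{\Pi^N}\Xi(x_k,\mu(\Bx_k'),\rho)\,d\bs\mu(\Bx)+\be\,\mcC_{\bs c}(\bs\mu,\bs\nu),
\]
which is the claim.

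The main technical friction is not in the two-step structure itself, which is essentially routine bookkeeping once the tools of Section \ref{chap3} are in hand, but in the verification of the hypotheses of Theorem \ref{thm:om-subadditivity} and of the $C_{b,\bs c}$-membership needed for Theorem \ref{thm:chap3-main-thm}. The only point needing a little care is the common-core condition for the tensorized family $\{\bs\mcM^{(k)}(\rho)\}$; this is handled by choosing a core $\mcD_0\subset D(\mA(\rho))$ and letting $\bs\mcD_0$ be the linear span of $\phi_1\otimes\cdots\otimes\phi_N$ with each $\phi_i\in \mcD_0\cup\{1\}$, which sits inside $D(\bs\mcM^{(k)}(\rho))$ for every $k$ and is a core for $\bs\mcM(\rho)$ by standard tensor-product semigroup theory.
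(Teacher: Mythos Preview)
Your proposal is correct and follows essentially the same route as the paper: apply Lemma \ref{lem5.28:temp} componentwise, use the subadditivity Theorem \ref{thm:om-subadditivity} (with the core hypotheses handled exactly as you describe, via Hypothesis \ref{hypo:generation-problem} on the $\BHA$-side and a tensor-product core on the $\bs\mcM$-side), then lift from Diracs to general $\bs\mu,\bs\nu$ via Theorem \ref{thm:chap3-main-thm}(d)$\Rightarrow$(c). The paper packages the $C_{b,\bs c}$-membership check into a separate Lemma \ref{lem5.30:temp}, whereas you sketch it directly from Remark \ref{rmk5.8:temp} and the relaxed triangle inequality; the content is the same.
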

\begin{proof}
    Fix any $\rho\in\PCP$.
    For each $1\le k\le N$, let
    \begin{align*}
        \bs F_k(\Bx) = \al\Xi(x_k,\mu(\Bx_k'),\rho),\quad \bs F(\Bx) = \frac{1}{N} \sum_{k=1}^N \bs F_k(\Bx),
    \end{align*}
    and
    \begin{align*}
        \bs\ta_k(\Bx,\By) = \frac{1}{N} \bs F_k(\Bx)+\frac{\be}{N} c(x_k,y_k) = \frac{1}{N}\sqb{\al\Xi(x_k,\mu(\Bx_k'),\rho)+\be c(x_k,y_k)}.
    \end{align*}
    By Lemma \ref{lem5.28:temp}, we have
    \begin{align*}
        \om_{\Bc}(\Bx,\By;\BHA^{(k)},\BMM^{(k)}(\rho)) \le \bs\ta_k(\Bx,\By).
    \end{align*}
    By the continuity assumptions on $\Xi$ and $c$, $\bs\ta_k$ is an upper semicontinuous function.
    We claim that $\bs\ta_k$ satisfies the condition given in Theorem \ref{thm:om-subadditivity}, that is, for some $\bs\Phi,\bs\Psi\in C_{b}^{\Bc}(\Pi^N)$, it holds
    \begin{align*}
        \bs\ta_k(\Bx,\By)&=\frac 1 N\bs F_k(\Bx)+ \frac{\be}{N}c(x_k,y_k)\le \bs \Phi(\Bx)+\bs \Psi(\By),\qquad (\Bx,\By) \in \Pi^{2N}.
    \end{align*}
    This is because $\bs F_k\in C_{b}^{\Bc}(\Pi^N)$ (see Lemma \ref{lem5.30:temp} below) and $c$ satisfies the relaxed triangle inequality.
    
    Applying the subadditivity theorem for $\om_{\Bc}$ (Theorem \ref{thm:om-subadditivity})\footnote{The core for $\BHA$ required for Theorem \ref{thm:om-subadditivity} comes from Hypothesis \ref{hypo:generation-problem}. For $\bs\mcM(\rho)$, the core can be taken to be $\mcD:=D(\mcA(\rho))^{\oN}$. It is clear that $\mcD\in D(\bs\mcM^{(k)}(\rho))$ for each $k$, and one can verify that $\mcD$ is indeed a core.}, we find
    \begin{align*}
        \om_{\Bc}\mrb{\Bx,\By;\bs{\hat\mcA},\bs\mcM(\rho)} \le \sum_{k=1}^N\bs\ta_k(\Bx,\By) = \bs F(\Bx)+\be \Bc(\Bx,\By).
    \end{align*}
    Finally, by Theorem \ref{thm:chap3-main-thm}, we obtain
    \begin{align*}
        \om_{\bs{c}}(\bs{\mu},\bs{\nu};\bs{\hat{\mcA}},\bs{\mcM}(\rho))
        \le \int_{\Pi^N} \bs F(\Bx)\,d\bs{\mu}(\Bx) + \be \mcC_{\bs{c}}\mrb{\bs{\mu},\bs{\nu}},
    \end{align*}
    which is the desired result.
\end{proof}

\begin{lemma}\label{lem5.30:temp}
    Given $\rho\in\mcp_c(\Pi)$, define $\bs F_1(\Bx)= \Xi (x_1,\mu(\Bx_1'),\rho).$
    Then $\bs F_1\in C_{b}^{\Bc}(\Pi^N)$. The same conclusion holds if ``$1$'' is replaced by any $k\in\{1,\cdots, N\}$. 
\end{lemma}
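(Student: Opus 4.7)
The plan is to verify the two defining properties of $C_b^{\Bc}(\Pi^N)$ for $\bs F_1$: continuity with respect to the $\Bc$-topology on $\Pi^N$, and a growth bound of the form $|\bs F_1(\Bx)| \le M_0 + M_1 \Bc(\bs z, \Bx)$ for some reference point $\bs z \in \Pi^N$. Throughout, fix $z \in \Pi$ and set $\bs z = (z,\dots,z) \in \Pi^N$, so that $\Bc(\bs z, \Bx) = \tfrac{1}{N}\sum_{k=1}^N c(z, x_k)$.

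For continuity, the strategy is to factor the map $\Bx \mapsto \Xi(x_1, \mu(\Bx_1'), \rho)$ through $\Pi \times \mcP_c(\Pi)$. Suppose $\Bx_n \to \Bx$ in $\Pi^N$ (which, by Hypothesis (C) applied componentwise to $\Bc$, is equivalent to $c(x_n^k, x^k) \to 0$ for each $k$). Then $x_n^1 \xrightarrow{c} x^1$ trivially. For the empirical measure, use the deterministic coupling $\tfrac{1}{N-1}\sum_{k\ge 2} \de_{(x_n^k, x^k)}$ between $\mu(\Bx_{n,1}')$ and $\mu(\Bx_1')$ to obtain
\[
\mcC_c\bigl(\mu(\Bx_{n,1}'), \mu(\Bx_1')\bigr) \le \frac{1}{N-1}\sum_{k=2}^N c(x_n^k, x^k) \longrightarrow 0.
\]
Hence $(x_n^1, \mu(\Bx_{n,1}'), \rho) \to (x^1, \mu(\Bx_1'), \rho)$ in the product topology of $\Pi \times \mcP_c(\Pi)^2$, and the continuity of $\Xi$ from Hypothesis \refAp(iii) yields $\bs F_1(\Bx_n) \to \bs F_1(\Bx)$.

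For the growth bound, the key idea is to invoke Condition (ii) of Hypothesis \refAp\ with $y = z$ and $\tilde\mu = \rho$. Using $\Xi(z,\rho,\rho) = 0$ from Condition (i), we get
\[
\bs F_1(\Bx) = \Xi(x_1, \mu(\Bx_1'), \rho) \le B\bigl[c(x_1, z) + \mcC_c(\mu(\Bx_1'), \rho)\bigr].
\]
Next, bound $\mcC_c(\mu(\Bx_1'), \rho)$ by splitting through $\de_z$ via the $B$-relaxed triangle inequality for $\mcC_c$ (Proposition \ref{prop:hypoC-for-P_c(Pi)}(C2)):
\[
\mcC_c(\mu(\Bx_1'), \rho) \le B\bigl[\mcC_c(\mu(\Bx_1'), \de_z) + \mcC_c(\de_z, \rho)\bigr] \le B\Bigl[\frac{1}{N-1}\sum_{k=2}^N c(x_k, z) + M_\rho\Bigr],
\]
where $M_\rho := \mcC_c(\de_z, \rho) = \int c(z,y)\, d\rho(y) < \infty$ since $\rho \in \mcP_c(\Pi)$. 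Combining the two inequalities and comparing to $\Bc(\bs z, \Bx) = \tfrac{1}{N}\sum_{k=1}^N c(z, x_k)$ gives
\[
\bs F_1(\Bx) \le B^2 M_\rho + C_N \cdot \Bc(\bs z, \Bx),
\]
for an explicit constant $C_N$ (e.g.\ $C_N = N \cdot \max\{B,\, B^2/(N-1)\}$, which is finite for $N \ge 2$). Setting $M_0 = B^2 M_\rho$ and $M_1 = C_N$ completes the verification.

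For arbitrary $k$, the same argument applies verbatim after relabelling coordinates, since the definition of $\bs F_k$ and the semimetric $\Bc$ are symmetric in the indices. No real obstacle arises; the only point requiring care is to use the $B$-relaxed triangle inequality (rather than the ordinary one) in bounding $\mcC_c(\mu(\Bx_1'), \rho)$, which is why the constants carry the factor $B^2$ rather than $B$.
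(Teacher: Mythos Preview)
Your proof is correct and follows essentially the same approach as the paper: both invoke Condition~(ii) of Hypothesis~\refAp\ to obtain the growth bound, and control the optimal-cost term via the obvious empirical coupling. The only minor differences are that you choose $\tilde\mu=\rho$ (exploiting $\Xi(z,\rho,\rho)=0$) and then split $\mcC_c(\mu(\Bx_1'),\rho)$ through $\de_z$, whereas the paper takes $\tilde\mu=\mu(\bs z_1')$ directly, avoiding the extra triangle-inequality step at the cost of a less explicit $M_0$; you also verify continuity explicitly, which the paper omits.
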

\begin{proof}
    To show $\bs F_1\in C_{b}^{\Bc}(\Pi^N)$, we find some $\bs z\in \Pi^N$ and $M_0,M_1\ge 0$ such that 
    \begin{align*}
        \bs F_1(\Bx) &\le M_0 + M_1\Bc(\bs z,\Bx),\qquad \Bx \in \Pi^N.
    \end{align*}
    Fix some $\bs z\in \Pi^N$. By Condition (ii) from Hypothesis {\refAp}, we have 
    \begin{align*}
        \Xi(x_1,\mu(\Bx_1'),\rho)&\le B[c(x_1,z_1)+ \mcc_c(\mu(\Bx_1'),\mu(\bs z_1'))+ \Xi(z_1,\mu(\bs z_1'),\rho)]\\
        &\le B c(x_1,z_1)+\frac{1}{N-1}\sum_{k\ge 2}c(x_k,z_k)+ \Xi(z_1,\mu(\bs z_1'),\rho)]\\
        &\le M_1 \Bc(\bs z,\Bx)+ M_0.
    \end{align*}
    Note that $M_1$ depends on $N$ while $M_0$ depends on $\bs z$, but it does not matter here. 
\end{proof}

We may now complete the proof of Theorem \ref{thm:chap5-main-thm}.
\begin{proof}[Proof of Theorem \ref{thm:chap5-main-thm}]
    The main step of the proof is to establish an integral inequality for $\mcC_{\bs c}(\bs\rho_t,\bs\br_t)$, and then apply Gr\"{o}nwall's inequality. First, recall that $\bs\br_t$ is a $\Bc$-stable solution of 
    \begin{align*}
        \partial_t \bs{\br}_t = \bs{\br}_t \bs{\mcM}(\br_t).
    \end{align*}
    We shall use the stability estimate for optimal cost between $\Brho_t = \Brho_0 e^{t\BHA}$ and a $c$-stable solution $\bs\br_t$ (Lemma \ref{special-stability}), where we have the integral inequality of the form
    \begin{align*}
        \mcC_{\bs c}(\bs\rho_t,\bs\br_t) \le \mcC_{\bs c}(\bs\rho_0,\bs\br_0) + \int_0^t \ta(s,\Brho_s,\Bbr_s)\,ds,
    \end{align*}
    where $\ta$ will be specified later.
    Our goal now is to use Corollary \ref{cor5.29:temp} to find a suitable $\ta$. By the corollary, we have
    \begin{align*}
        \om_{\bs{c}}(\bs{\mu},\bs{\nu};\bs{\hat{\mcA}},\bs{\mcM}(\br_s))
        \le \frac{\al}{N} \sum_{k=1}^N \int_{\Pi^N} \Xi(x_k,\mu(\Bx_k'),\br_s)\,d\bs{\mu}(\Bx) + \be \mcC_{\bs{c}}\mrb{\bs{\mu},\bs{\nu}}.
    \end{align*}
    For convenience, let ($\ast$) denote the first term on the right-hand side above, without the factor $\al$. We want to obtain a more convenient bound for ($\ast$). Let $\bs\ga_s$ be the $\bs c$-optimal coupling of $\bs\mu$ and $\bs\br_s$, then
    \begin{align*}
        (\ast) = \frac{1}{N} \sum_{k=1}^N \int_{\Pi^{2N}} \Xi(x_k,\mu(\Bx_k'),\br_s)\,d\bs{\ga}_s(\Bx,\By).
    \end{align*}
    Now we apply Hypothesis {\refAp}, that is,
    \begin{align*}
        \Xi(x,\mu,\rho) \le B\msqb{c(x,y)+\mcC_c(\mu,\nu) + \Xi(y,\nu,\rho)},
    \end{align*}
    with $(x,y)=(x_k,y_k)$ and $(\mu,\nu,\rho)=(\mu(\Bx_k'),\mu(\By_k'),\br_s)$. This leads to
    \begin{align}\label{eq5.8:temp}
        (\ast) &\le \frac{B}{N} \sum_{k=1}^N \int_{\Pi^{2N}} \sqb{c(x_k,y_k)+\mcC_c(\mu(\Bx_k'),\mu(\By_k')) + \Xi(y_k,\mu(\By_k'),\br_s)}\,d\bs{\ga}_s(\Bx,\By)\nonumber\\
        &= B \int_{\Pi^{2N}} \Bc(\Bx,\By)\,d\bs{\ga}_s(\Bx,\By)+\frac{B}{N} \sum_{k=1}^N \int_{\Pi^{2N}} \mcC_c(\mu(\Bx_k'),\mu(\By_k'))\,d\bs{\ga}_s(\Bx,\By)\nonumber\\
        &\quad + \frac{B}{N} \sum_{k=1}^N \int_{\Pi^N} \Xi(y_k,\mu(\By_k'),\br_s)\,d\bs{\br}_s(\By).
    \end{align}
    Notice that by Definition \ref{def:aleph-N}, the integral in the last term of \eqref{eq5.8:temp} is
    \begin{align*}
        \int_{\Pi^N} \Xi(y_k,\mu(\By_k'),\br_s)\,d\bs{\br}_s(\By) = \aleph_{N}(\br_s) \le \sup_{\tau\in[0,T]}\aleph_{N}(\br_\tau),
    \end{align*}
    For the second term in \eqref{eq5.8:temp}, we have calculated in \eqref{eq5.5:temp} that
    \begin{align*}
        \mcC_c(\mu(\Bx_k'),\mu(\By_k')) &\le \frac{1}{N-1} \sum_{j=1,j\neq k}^N c(x_j,y_j).
    \end{align*}
    Summing this up and dividing by $N$ yields
    \begin{align*}
        \frac{1}{N} \sum_{k=1}^N \mcC_c(\mu(\Bx_k'),\mu(\By_k')) \le \frac{1}{N(N-1)} \sum_{k=1}^N \sum_{j=1,j\neq k}^N c(x_j,y_j) = \frac{1}{N} \sum_{k=1}^N c(x_k,y_k) = \bs c(\Bx,\By).
    \end{align*}
    Inserting this into the second term of \eqref{eq5.8:temp}, we find
    \begin{align*}
        \frac{B}{N} \sum_{k=1}^N \int_{\Pi^{2N}} \mcC_c(\mu(\Bx_k'),\mu(\By_k'))\,d\bs{\ga}_s(\Bx,\By) \le B\int_{\Pi^{2N}}\bs c(\Bx,\By)\,d\bs\ga_s(\Bx,\By) = B\mcC_{\bs c}(\bs\mu,\bs\br_s).
    \end{align*}
    The first term of \eqref{eq5.8:temp} is also $B\mcC_{\bs c}(\bs\mu,\bs\br_s)$.
    In total, we find
    \begin{align*}
        (\ast) \le 2B\mcC_{\Bc}(\Bmu,\Bbr_s) + B \sup_{\tau\in[0,T]}\aleph_{N}(\br_\tau).
    \end{align*}
    Hence, we have the bound
    \begin{align*}
        \om_{\bs{c}}(\bs{\mu},\bs{\nu};\bs{\hat{\mcA}},\bs{\mcM}(\br_s)) &\le \be \mcC_{\bs{c}}\mrb{\bs{\mu},\bs{\nu}} + 2\al B\mcC_{\bs c}(\bs\mu,\bs\br_s) + \al B \sup_{\tau\in[0,T]}\aleph_{N}(\br_\tau)
        =: \ta(s,\Bmu,\Bnu),
    \end{align*}
    where $\ta$ is upper semicontinuous. Since $\scb{\bs\br_s}_{s\in[0,T]}$ is a bounded curve in $\PCPN$, $\ta$ satisfies the condition of Lemma \ref{special-stability} by the relaxed triangle inequality.
    Moreover,
    \begin{align*}
    	\ta(s,\bs\rho_s,\bs\br_s) = \be \mcC_{\bs{c}}\mrb{\bs\rho_s,\bs\br_s} + 2\al B\mcC_{\bs c}(\bs\rho_s,\bs\br_s) + \al B \sup_{\tau\in[0,T]}\aleph_{N}(\br_\tau).
    \end{align*}
    Then by Lemma \ref{special-stability}, we obtain
    \begin{align*}
        \mcC_{\bs c}(\bs\rho_t,\bs\br_t) \le \mcC_{\bs c}(\bs\rho_0,\bs\br_0) + \int_0^t \sqb{(\be+2\al B) \mcC_{\bs c}(\bs\rho_s,\bs\br_s) + \al B\sup_{\tau\in[0,T]} \aleph_{N}(\br_\tau)}\,ds.
    \end{align*}
    Finally, by Gr\"{o}nwall's inequality,
    \begin{align*}
        \mcC_{\bs c}(\bs\rho_t,\bs\br_t) \le \mcC_{\bs c}(\bs\rho_0,\bs\br_0) e^{(\be+2\al B)t} + \al B\zeta_{\be+2\al B}(t)\cdot\sup_{s\in[0,T]} \aleph_{N}(\br_s).
    \end{align*}
    Taking supremum over $t\in[0,T]$ yields the desired result.
\end{proof}
\section{L\'{e}vy-type mean-field systems and propagation of chaos}\label{chap6}

\subsection{Overview}
In this final section, we study the \emph{L\'{e}vy-type mean-field systems and their propagation of chaos} as a special case of the abstract theory built in the previous sections. Some special cases of this class of systems have been studied by various authors, including \cite{frikha2021well} and \cite{Cavallazzi2023}. Notable recent results are due to Cavallazzi \cite{Cavallazzi2023}, who considered a mean-field model arising from Mckean-Vlasov SDEs driven by L\'evy jump processes. The L\'{e}vy-type mean-field systems also encompass the classical McKean-Vlasov diffusion as a special case, which has been extensively studied in the literature (see, for instance, \cite{McKean1969}, \cite{gartner1988mckean}, \cite{andreis2018mckean}, \cite{lacker2018}, \cite{Chaintron_2022_2}). We will demonstrate how our results imply the (pointwise) propagation of chaos for both the McKean–Vlasov diffusion and Cavallazzi’s model later in Section \ref{sec6.4}.

Throughout this section, the state space will be $\Pi=\mbr^d$, the $d$-dimensional Euclidean space (or $\mathbb{T}^d$, the $d$-dimensional torus), equipped with the standard Euclidean metric and the squared semimetric:
\begin{align*}
	d(x,y)=|x-y|=\rb{\sum_{k=1}^d |x_k-y_k|^2}^{1/2}, \quad c(x,y) = \frac{1}{2}d(x,y)^2 = \frac{1}{2}|x-y|^2.
\end{align*}
Notice that $(\mbr^d,c)$ satisfies Hypothesis {\refC} from Section \ref{subsec2.3.2:hypoC}. 
In this setting, by Theorem \ref{thm:main-wass}, establishing propagation of chaos for mean-field system reduces to verifying Hypothesis {\refAppt}.

Recall the space $\mcP_c(\mbr^d)$ of probability measures with finite $c$-moment from Definition \ref{def:P_c(Pi)}. In this section, we shall denote it as $\PCR$, with $2$ replacing $c$ in the notation. Specifically, $\PCR$ consists of all probability measures $\mu$ on $\mbr^d$ such that the following holds for some (equivalently, for all) $z\in\mbr^d$:
\begin{align*}
	\int_{\mbr^d} \frac{1}{2}|x-z|^2\,d\mu(x) <\infty.
\end{align*}
The $c$-optimal cost $\mcC_c$, denoted as $\mcC_2$ in this section, is defined by
\begin{align*}
	\mcC_2(\mu,\nu)=\mcC_c(\mu,\nu) = \inf_{\ga\in\Ga(\mu,\nu)} \int_{\mbr^{2d}} \frac{1}{2}|x-y|^2\,d\ga(x.y),
\end{align*}
which gives semimetric on the space $\PCR$. In this case, the square root of the cost
\begin{align*}
    \mcl{W}_2(\mu,\nu)=\mcC_2(\mu,\nu)^{1/2}
\end{align*}
defines a (complete, separable) metric on $\mcP_2(\mbr^d)$, which is called the \emph{Wasserstein-$2$ metric}. Furthermore, the Dini derivative $\om_c$ is denoted as $\om_2$, and the spaces $C_{b,c}(\mbr^d), \mcG_c^0(\mbr^d), \Exp$ are denoted as $C_{b,2}(\mbr^d), \GCZR, \Expt$, respectively. In short, the convention throughout this section is to replace $c$ with $2$ in all related notations.

Let us now comment on our choice of the case $p = 2$. In fact, under suitable conditions, the result of this section can be generalized to any $p \geq 1$ (and potentially even $p \leq 1$). However, we focus on the case $p = 2$ for two reasons. First, in this case, the induced Wasserstein metric exhibits desirable properties, following from the fact that the Legendre transform of $\frac{1}{2}|x|^2$ is itself. As a result, the Dini derivative $\omega_2$ between two Markov flows, in some cases, has an explicit formula. Second, $p = 2$ is the smallest $p \geq 1$ for which any pair $(\mathcal{A}, \mathcal{B})$ of appropriate L\'{e}vy-type probability generators, including diffusion operators, are comparable in the sense that $(\mathcal{A}, \mathcal{B}) \in \mathrm{Exp}_p$ (see Example \ref{example3.23:temp}).


\subsubsection{L\'evy generators on \texorpdfstring{$\mbr^d$}{R\^d}}

Let $\SPSD(\mbr)$ be the set of all symmetric nonnegative semidefinite $d\times d$ real-valued matrices, $\mcM_d(\mbr)$ be the set of all $d\times d$ real-valued matrices, and $\La(\mbr^d)$ be the set of all \emph{L\'{e}vy measures}, that is, $\Ta\in\La(\mbr^d)$ is a positive measure on $\mbr^d$ that satisfies
\begin{align*}
	\int_{\mbr^d\setminus\{0\}} \min\mcb{|y|^2,1}\,d\Ta(y) <\infty,\qquad \Ta(\{0\})=0.
\end{align*}
A \emph{L\'evy generator} on $\mbr^d$ is a (typically unbounded) operator on $C_0(\mbr^d)$ of the form (for all $\phi\in C_0^2(\mbr^d)$):
\begin{align}\label{eq:levy-operator}
    \mcA(\phi) &= \mcA^\nabla(\phi) + \mcA^\De(\phi) + \mcA^J(\phi),\\
    &:= b \cdot \nabla \phi(x) + \f12 \tr[a D^2 \phi](x) + \int_{\mbr^d \setminus \{0\}} \left[ \phi(x + y) - \phi(x) -\mbo_{B_1(0)}(y) y \cdot \nabla \phi(y) \right] d\Ta(y). \label{LK-decomp}
\end{align}
where $b\in \mbr^d$, $a\in \mcl{S}_d^{\ge 0}(\mbr)$ and $\Ta \in \La(\mbr^d)$.
The triplet $(b,a,\Ta)$ is called the \emph{L\'{e}vy triplet} associated to $\mcA$. Operators of the form $\mcA^\nabla, \mcA^\De, \mcA^J$ above will be called \emph{drift}, \emph{diffusion}, and \emph{jump operators}, respectively.
Let us denote $\GLVRnt\subset \mcg(\mbr^d)$ the \emph{family of all L\'evy generators}. Throughout, the Greek letter \(\Lambda\) is reserved for any notation pertaining to ``\emph{L\'evy}''-related structures or quantities.

The class of L\'evy generators arises as generators of \emph{L\'{e}vy processes} on $\mbr^d$, that is, Markov processes on $\mbr^d$ that have stationary and independent increments. 
A well-known fact is: if $\cb{X_t}_{t\ge0}$ is a Feller process on $\mbr^d$ that is translation invariant, meaning it holds for all $x,h\in\mbr^d$ and $t\ge0$:
\begin{align*}
	\mbe^x[\Phi(X_t+h)] = \mbe^{x+h}[\Phi(X_t)],
\end{align*}
then its generator is given by \eqref{LK-decomp} for some triplet $(b,a,\Ta)$ (see \cite[Chapter 2]{bottcher2013} or \cite[Theorem 6.8]{schilling2016introduction}). The converse also holds true. 

In this section, we assume that the given L\'evy measure \(\Ta\) admits a \emph{finite second moment}, that is,
\begin{align*}
	\int_{\mbr^d\setminus \{0\}} |y|^2\, d\Ta(y) < \infty.
\end{align*}
We shall denote $\LVM\subset\LVMnt$ the subset of L\'evy measures with finite second moment.
Accordingly, we denote by \(\GLVR \subset \GLVRnt\) the subset of all L\'evy generators whose associated L\'evy measures belong to \(\LVM\). In particular, the (truncated) first moment
\[
m_1 := \int_{B_1(0)^c} y\, d\Ta(y) \in \mbr^d
\]
is finite (componentwise). By adding and subtracting the linear term \(m_1 \cdot \nabla \phi(x)\) in \eqref{LK-decomp}, we obtain the \emph{global (untruncated) form} of a L\'evy generator, which acts on test functions \(\phi \in C_0^2(\mbr^d)\):\footnote{A L\'evy generator admits this global form whenever \(\int_{B_1(0)^c} |y|\, d\Ta(y) < \infty\).}
\begin{align}
	(\mA \phi)(x)
	&:= (\mA^\nabla \phi)(x) + (\mA^\De \phi)(x) + (\mA^J \phi)(x) \nonumber\\
	&:= b \cdot \nabla \phi(x) + \f12 \tr[a D^2 \phi](x) + \int_{\mbr^d \setminus \{0\}} \left[ \phi(x + y) - \phi(x) - y \cdot \nabla \phi(x) \right] d\Ta(y). \label{LK-rep2}
\end{align}
Although this global form is less common in the literature due to its more restrictive integrability requirement, \emph{we adopt it throughout this section} as it offers a cleaner representation and simplifies several conditions, especially in the context of coupling and moment estimates.

\subsubsection{L\'{e}vy-type generators on \texorpdfstring{$\mbr^d$}{R\^d}}
\emph{L\'evy-type generators} are a natural generalization of L\'evy generators. They define a class of probability generators on \(\mbr^d\) constructed by ``gluing'' together a family of L\'evy generators that vary with position. Formally, a L\'evy-type generator is specified by a family \(\mA=\{\mA(x)\}_{x \in \mbr^d} \subset \GLVRnt\), where each \(\mA(x)\) is a L\'evy generator acting on functions \(\phi \in C_0^2(\mbr^d)\). The operator \(\mA\) is then defined pointwise by
\begin{align*}
	(\mA \phi)(x) := (\mA(x)\phi)(x).
\end{align*}
Alternatively, we may view a L\'evy-type generator as a measurable map
\begin{align*}
	\mA: \mbr^d \to \GLVRnt \quad \text{or more specifically} \quad \mA: \mbr^d \to \GLVR,
\end{align*}
if we require each \(\mA_x\) to correspond to a L\'evy triplet with a finite second moment.

A L\'evy-type generator may also be expressed in the L\'evy--Khintchine decomposition form \eqref{LK-rep2}, with an \(x\)-dependent L\'evy triplet \(\{(b(x), a(x), \Ta(x))\}_{x \in \mbr^d}\). In this section, we assume that the generator map \(\mA : \mbr^d \to \GLVR\), i.e., each L\'evy measure \(\Ta(x)\) admits a finite second moment and this property holds uniformly in \(x\):
\begin{align*}
	\sup_{x \in \mbr^d} \int_{\mbr^d \setminus \{0\}} |y|^2\, \Ta(x, dy) < \infty.
\end{align*}
Under this assumption, the L\'evy-type generator admits a well-defined global form. For any test function \(\phi \in C_0^2(\mbr^d)\), it acts as
\begin{align*}
    (\mA\phi)(x) &= \mcA^\nabla(\phi)(x) + \mcA^\Delta(\phi)(x) + \mcA^J(\phi)(x) \\
    &= b(x) \cdot \nabla \phi(x) + \frac{1}{2} \tr\big(a(x) D^2 \phi(x)\big) + \int_{\mbr^d \setminus \{0\}} \left[\phi(x + y) - \phi(x) - y \cdot \nabla \phi(x) \right] \Ta(x, dy).
\end{align*}
Here, \(b : \mbr^d \to \mbr^d\) is a drift vector field, \(a : \mbr^d \to \mcl{S}_d^{\ge 0}(\mbr)\) is a measurable field of symmetric, nonnegative definite matrices, and \(\Ta : \mbr^d \to \LVM\) is a L\'evy measure field, i.e., a family \(\{\Ta(x)\}_{x \in \mbr^d}\) of L\'evy measures indexed by the spatial location.

The class of L\'{e}vy-type operators are extensively studied in the theory of Markov processes on $\mbr^d$. The well-known theorem due to Courr\`{e}ge states if $(\mcA,D(\mcA))$ is the generator of a Feller process on $\mbr^d$ such that $C_c^\infty(\mbr^d)\subset D(\mcA)$, then $\mcA$ must take the form of \eqref{eq:levy-operator} for some L\'{e}vy triplet $\scb{(b(x),a(x),\Ta(x))}_{x\in\mbr^d}$ (see \cite[Theorem 2.21]{bottcher2013}). However, the necessary condition for the triplets to guarantee that $\mcA$ is a probability generator, to our very best knowledge, is not known. We point the reader to \cite[Chapter 3]{bottcher2013} for a detailed discussion on some sufficient conditions, that are hard to state here.

Throughout this section, we always assume that the L\'{e}vy triplet $\cb{(b(x),a(x),\Ta(x))}_{x\in\mbr^d}$ is such that the associated generator $\mcA$ is in $\GCZR$, that is, $\mcA$ generates a probability semigroup $\scb{e^{t\mcA}}_{t\ge0}$ such that for all $\mu\in\PCR$, $t\mapsto\mu e^{t\mcA}$ is a continuous curve in $\PCR$.


\subsubsection{L\'{e}vy-type mean-field generators}
A \emph{L\'{e}vy-type mean-field generator} on \(\mathbb{R}^d\) is a mean-field generator
\[
\mathcal{A} : \mathcal{P}_2(\mathbb{R}^d) \to \GCZR,
\]
such that for each \(\mu \in \mathcal{P}_2(\mathbb{R}^d)\), the operator \(\mathcal{A}(\mu)\) is a Lévy-type generator.
As discussed earlier, the Lévy-type generator \(\mathcal{A}(\mu)\) for a given mean-field $\mu$ may be viewed as a map
\[
\mathcal{A}(\mu) : \mathbb{R}^d \to \GLVR,
\]
assigning to each spatial point \(x \in \mathbb{R}^d\) a generator of a Lévy process. Therefore, a L\'{e}vy-type mean-field generator can equivalently be interpreted as a family of L\'evy generators indexed jointly by the spatial variable \(x \in \mathbb{R}^d\) and the mean-field parameter \(\mu \in \mathcal{P}_2(\mathbb{R}^d)\). With a slight abuse of notation, this may be expressed as
\[
\mathcal{A} : \mathbb{R}^d \times \mathcal{P}_2(\mathbb{R}^d) \to \GLVR.
\]
To explain our notation, $\mA=\{\mA(\mu)\}_{\mu}=\{\mA(x,\mu)\}_{x,\mu}$ is a L\'evy-type mean-field generator, $\mA(\mu)=\{\mA(x,\mu)\}_{x}$ for $\mu\in\mcp_2(\mbr^d)$ is a L\'evy-type generator, and $\mA(x,\mu)$ for $x\in \mbr^d,\mu\in\mcp_2(\mbr^d)$ is a L\'evy generator. 
In particular, for \(\phi \in C_0^2(\mathbb{R}^d)\) and \(x \in \mathbb{R}^d\),
\begin{align*}
	\mA(\phi;\mu)(x)&= [\mA(\mu)\phi](x)=[\mA(x,\mu)\phi](x).
\end{align*}

As before, the generator $\mA$ acting on test functions \(\phi \in C_0^2(\mbr^d)\) may be expressed via the global L\'evy--Khintchine decomposition through the triplet $(b,a,\Ta)$:
\begin{align}\label{eq:levy-mean-field-generator}
    \mA(\mu) =\mA^\nabla(\mu)+\mA^\De (\mu)+\mA^J(\mu), \quad \mbox{ or }\quad \mcA(x,\mu) = \mcA^\nabla(x,\mu) + \mcA^\Delta(x,\mu) + \mcA^J(x,\mu),
\end{align}
where $\mA^\De(\mu),\mA^\nabla(\mu),\mA^J(\mu)$ are L\'evy-type generators given in the (global) form
\begin{align}
    \mcA^\nabla(\phi;\mu)(x) &=[\mA^\nabla(x,\mu)\phi](x) = b(x,\mu) \cdot \nabla \phi(x), \label{eq6.4:temp} \\
    \mcA^\Delta(\phi;\mu)(x) &=[\mA^\De(x,\mu)\phi](x) =\frac{1}{2} \sum_{i,j=1}^d a_{ij}(x,\mu)\, \partial_{ij} \phi(x), \label{eq6.5:temp} \\
    \mcA^J(\phi;\mu)(x) &= [\mA^J(x,\mu)\phi](x) =\int_{\mbr^d \setminus \{0\}} \left[ \phi(x + y) - \phi(x) - y \cdot \nabla \phi(x) \right] \Ta(x, dy, \mu). \label{eq6.6:temp}
\end{align}
Here, the coefficients are all measure-dependent fields:
\begin{align*}
    b(\cdot,\cdot)&: \mbr^d \times \mcp_2(\mbr^d) \to \mbr^d, \\
    a(\cdot,\cdot)&: \mbr^d \times \mcp_2(\mbr^d) \to \mcS_d^{\ge 0}(\mbr), \\
    \Ta(\cdot,\cdot)&: \mbr^d \times \mcp_2(\mbr^d) \to \LVM.
\end{align*}
The interpretation of the L\'evy measure field \(\Ta\) is as follows: for each \(x \in \mbr^d\) and \(\mu \in \mcp_2(\mbr^d)\), the map \(\Ta(x, \mu)\) is a L\'evy measure on \(\mbr^d \setminus \{0\}\); we write \(\Ta(x, E, \mu)\) to denote its value on a measurable set \(E\), and \(\Ta(x, dy, \mu)\) when used as an integrator in expressions such as \eqref{eq6.6:temp}.

To apply our main theorems (well-posedness of mean-field equations, propagation of chaos) to the present case, it reduces to verifying hypotheses given in these theorems, namely, Hypotheses {\refA}, {\refAppt} and \ref{hypo:generation-problem}. Since a L\'{e}vy-type generator is the superposition of three generators (diffusion, drift and jump), by the subadditivity theorem (Theorem \ref{thm:om-subadditivity}), we can first prove Hypotheses {\refA}, {\refAppt} for each of them.
\begin{remark}\label{rmk:core-twice-differentiable}
	We remark that for any $\mu$, the space $C_0^2(\mbr^d)$, consisting of twice continuously differentiable functions that vanishes that infinity, is a core of the L\'{e}vy-type generator $\mcA(\mu)$ and is dense in the domains of $\mcA^\nabla(\mu)$, $\mcA^\De(\mu)$, $\mcA^J(\mu)$.
\end{remark}

\begin{remark}
    We now address the issue of generation for the operator \(\mA(\mu)\), a crucial component of Hypothesis~\refA. Throughout this section, we make the standing assumption that \emph{for every \(\mu \in \mcp_2(\mathbb{R}^d)\), the operator \(\mA(\mu) = \{\mA(x,\mu)\}_{x \in \mathbb{R}^d}\) is a probability generator on \(\mathbb{R}^d\)}. It is likely that, under suitable Lipschitz-type conditions on the map \(x \mapsto \mA(x,\mu)\)—for instance, Condition~\eqref{eq:wass-gen-lipschitz-W2}—one could establish that \(\mA(\mu)\) generates a probability semigroup. However, we do not pursue this direction here, as the analysis would require technical developments beyond the scope of this work.
\end{remark}

\subsubsection{Plan and organization}

The main step in applying Theorem~\ref{thm:main-wass} is to verify Hypothesis~{\refAppt} for the L\'evy-type mean-field generator \(\mA = \{\mA(x, \mu)\}_{x,\mu}\). In the following subsection, we investigate sufficient—and in some cases, necessary—conditions under which a L\'evy-type mean-field generator satisfies Hypothesis~{\refAppt}. These conditions are formulated in terms of the mean-field L\'evy generators \(\{\mA(x, \mu)\}_{x,\mu}\), and hence in terms of the associated L\'evy triplets.

In Section~\ref{sec6.3}, we state the well-posedness of the mean-field equation and establish a propagation of chaos result for L\'evy-type mean-field systems. Finally, in Section~\ref{sec6.4}, we apply our general framework to several examples from the literature, illustrating how our abstract results recover or extend existing models.

\subsection{Stability estimate for L\'{e}vy-type mean-field generators}\label{sec6.2}

In this subsection, we formulate criteria under which a L\'evy-type mean-field generator satisfies Hypothesis~{\refAppt}, expressed in terms of the associated L\'evy triplets. To this end, we introduce a ``metric-like'' functional on the space \(\mcg_2^\La(\mathbb{R}^d)\) of L\'evy generators (equivalently, on the set of L\'evy triplets) with finite second moment. We then derive sufficient—and in some cases, necessary—conditions for the \(\Expt\)-condition to hold between two generators \(\mcA\) and \(\mcB\), leading to verifiable criteria for Hypothesis~{\refAppt}.

\subsubsection{A Wasserstein-type metric on the space $\GLVR$}

Let us now introduce a ``metric-like'' functional on the space \(\GLVR\). Given \(\mA, \mB \in \GLVR\), let \((b, a, \Ta)\) and \((\tilde b, \tilde a, \tilde\Ta)\) denote the corresponding L\'evy triplets, expressed in the global form~\eqref{LK-rep2}. 
We define the functional \(\mcW_\mcg : \GLVR\times\GLVR \to [0,\infty)\) by
\[
\mcW_\mcg(\mA, \mB)^2 := \frac{1}{2} |b - \tilde b|^2 + \mcW_\mcS(a, \tilde a)^2 + \WLA(\Ta, \tilde\Ta)^2,
\]
where \(\mcW_\mcS\) denotes the \emph{Bures–Wasserstein} metric on covariance matrices, and \(\WLA\) is the \emph{L\'evy–Wasserstein “metric,”} to be introduced in the following paragraphs.

The Bures--Wasserstein distance between two nonnegative definite matrices \(a, \tilde a \in \mcl{S}_d^{\ge 0}(\mathbb{R})\) is defined by
\begin{align}\label{def:BW-dist}
	\BWD(a, \tilde a)^2 := \frac{1}{2} \Big[ \tr(a) + \tr(\tilde a) - 2\,\tr\big( (\tilde a^{1/2} a\, \tilde a^{1/2})^{1/2} \big) \Big].
\end{align}
This quantity arises naturally in the expression for the Wasserstein-2 distance between Gaussian measures on \(\mathbb{R}^d\). 
More precisely, let \(\mu \sim \mathrm{Normal}(x_0, a)\) and \(\nu \sim \mathrm{Normal}(y_0, \tilde a)\) be two Gaussian measures with mean vectors \(x_0, y_0 \in \mathbb{R}^d\) and covariance matrices \(a, \tilde a \in \mcl{S}_d^{\ge 0}(\mathbb{R})\). A result due to Givens and Shortt (see~\cite[Proposition 7]{givens1984class}) gives:
\begin{align*}
	\mcW_2(\mu, \nu)^2 = \mcc_2(\mu, \nu) = \frac{1}{2} |x_0 - y_0|^2 + \BWD(a, \tilde a)^2.
\end{align*}

Let us now introduce the \emph{L\'evy--Wasserstein metric}, which is a variation of the Wasserstein-2 metric on the space of L\'evy measures \(\Lambda_2(\mathbb{R}^d)\) with finite second moment.

\begin{definition}[Transport cost between L\'evy measures]\label{def:levy-trans}
	Given two L\'evy measures \(\Theta, \tilde\Theta \in \Lambda_2(\mathbb{R}^d)\), we define their (squared) transport cost \(\WLA\) by
	\begin{align}\label{def:levy-cost}
		\WLA(\Theta, \tilde\Theta)^2 := \inf_{\Omega \in \Gamma_\Lambda(\Theta, \tilde\Theta)} \int_{\mathbb{R}^d \times \mathbb{R}^d} \frac{1}{2} |x - y|^2 \, \Omega(dx, dy),
	\end{align}
	where the infimum is taken over all \emph{coupling L\'evy measures} \(\Omega \in \Gamma_\Lambda(\Theta, \tilde\Theta)\), that is, all L\'evy measures \(\Omega \in \Lambda_2(\mathbb{R}^{2d})\) satisfying the following marginal condition: for every measurable set \(E \subset \mathbb{R}^d\) that \emph{does not contain a neighborhood of the origin},
	\[
	\Omega(E \times \mathbb{R}^d) = \Theta(E), \qquad \Omega(\mathbb{R}^d \times E) = \tilde\Theta(E).
	\]
	Equivalently, for every test function \(\phi \in C_{b,2}(\mathbb{R}^d)\) satisfying \(|\phi(x)| \le C|x|^2\) for some \(C \ge 0\), it holds that
	\[
	\int_{\mathbb{R}^d \times \mathbb{R}^d} \phi(x)\, \Omega(dx, dy) = \int_{\mathbb{R}^d} \phi(x)\, \Theta(dx), \qquad 
	\int_{\mathbb{R}^d \times \mathbb{R}^d} \phi(y)\, \Omega(dx, dy) = \int_{\mathbb{R}^d} \phi(y)\, \tilde\Theta(dy).
	\]
\end{definition}

The functional \(\WLA\) introduced in Definition~\ref{def:levy-trans} is a natural \emph{variation} of the classical Wasserstein-2 distance, adapted to the space \(\Lambda_2(\mathbb{R}^d)\) of L\'evy measures with finite second moment. We refer to this functional as the \emph{L\'evy--Wasserstein metric}, although we do not, at this stage, establish that it satisfies all the properties of a metric. To the best of our knowledge, this notion has not previously appeared in the literature. We remark that related ideas, such as optimal transport costs between two unbalanced measures, have been studied in some cases, for example in \cite{kondratyev2016new}, \cite{chizat2018interpolating} and \cite{chizat2018unbalanced}.

The transport cost \eqref{def:levy-cost} is structurally analogous to the classical Wasserstein-2 distance between probability measures. However, a key distinction lies in the notion of \emph{coupling}. For probability measures, a coupling is a joint measure whose marginals agree with the given measures on all measurable sets. In contrast, for L\'evy measures—which may have infinite total mass and exhibit singularities at the origin—we define couplings in a weaker sense: the marginal constraints are only required to hold on measurable sets that \emph{exclude} a neighborhood of the origin. This relaxation avoids the singular behavior near zero and permits a well-defined notion of coupling, even when the total mass near the origin is not finite or does not match.

The following result shows that any coupling L\'evy measure \(\Upsilon\) of $\Ta,\tilde\Ta$ in the sense of Definition~\ref{def:levy-trans} naturally induces a coupling between the corresponding L\'evy semigroups. This connection provides a probabilistic interpretation of the L\'evy couplings via coupled processes.

\begin{proposition}\label{prop:gen-coup}
	Let \(\Theta, \tilde\Theta \in \Lambda_2(\mathbb{R}^d)\), and let \(\Upsilon \in \Gamma_\Lambda(\Theta, \tilde\Theta)\) be a coupling L\'evy measure. Let \(\mA, \mB \in \GLVR\) be two pure jump generators associated with \(\Theta\) and \(\tilde\Theta\), respectively, and let \(\mathcal{J} \in \mcg_2^\Lambda(\mathbb{R}^{2d})\) be the pure jump generator associated with \(\Upsilon\).
	Then, for any \(x, y \in \mathbb{R}^d\) and \(t \ge 0\), the measure \(\delta_{(x, y)} e^{t \mathcal{J}} \in \mcp_2(\mathbb{R}^{2d})\) is a coupling of the marginal measures \(\delta_x e^{t \mA}, \delta_y e^{t \mB} \in \mcp_2(\mathbb{R}^d)\).
\end{proposition}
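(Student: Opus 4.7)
The strategy is to identify $\de_{(x,y)} e^{t\mathcal{J}}$ as a coupling by showing that $\mathcal{J}$ ``projects'' onto $\mA$ and $\mB$ on test functions depending only on the first or second $\mbr^d$-coordinate, and then appealing to uniqueness of the dual Cauchy problem. The algebraically central step is to establish, for every $\phi \in C_0^2(\mbr^d)$,
\[
    \mathcal{J}(\phi \otimes 1)(x,y) = (\mA\phi)(x), \qquad \mathcal{J}(1 \otimes \phi)(x,y) = (\mB\phi)(y),
\]
where $(\phi \otimes 1)(x',y') := \phi(x')$ and $(1 \otimes \phi)(x',y') := \phi(y')$.

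To verify these identities, fix $\phi \in C_0^2(\mbr^d)$ and consider the Taylor remainder $\psi_{x,\phi}(u) := \phi(x+u) - \phi(x) - u \cdot \nabla\phi(x)$. Taylor's theorem gives $|\psi_{x,\phi}(u)| \le \tfrac12 \|D^2\phi\|_\infty |u|^2$ when $|u|\le 1$, while for $|u|>1$ the trivial bound $|\psi_{x,\phi}(u)| \le 2\|\phi\|_\infty + |u|\,\|\nabla\phi\|_\infty \le C(\phi) |u|^2$ holds. Hence $\psi_{x,\phi} \in C_{b,2}(\mbr^d)$ with $|\psi_{x,\phi}(u)| \le C(\phi)|u|^2$, so the equivalent marginal condition stated in Definition \ref{def:levy-trans} applies directly and produces
\[
    \int_{\mbr^{2d}} \psi_{x,\phi}(u)\, \Upsilon(du,dv) = \int_{\mbr^d} \psi_{x,\phi}(u)\, \Ta(du).
\]
Substituting this identity into the global L\'evy--Khintchine form of $\mathcal{J}$ applied to $\phi \otimes 1$ immediately yields $\mathcal{J}(\phi \otimes 1)(x,y) = (\mA\phi)(x)$; the symmetric argument with $\tilde\Ta$ in place of $\Ta$ handles $1 \otimes \phi$.

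Finally, let $\bs\mu_t := \de_{(x,y)} e^{t\mathcal{J}}$, which lies in $\mcp_2(\mbr^{2d})$ because $\Upsilon$ has finite second moment, and denote by $\nu_t$ its first marginal. Since $\phi \otimes 1$ does not vanish at infinity and hence lies outside $D(\mathcal{J})$, Dynkin's formula cannot be applied directly. I would therefore use the spatial cutoff $\Phi_R := \phi \otimes \chi_R \in C_c^2(\mbr^{2d}) \subset D(\mathcal{J})$ with $\chi_R \in C_c^\infty(\mbr^d)$ equal to $1$ on $B_R(y)$ and having uniform bounds on $\chi_R$, $\nabla \chi_R$, $D^2 \chi_R$, apply Dynkin to $\Phi_R$, and pass to the limit using the pointwise convergence $\mathcal{J}\Phi_R(x',y') \to (\mA\phi)(x')$ together with uniform domination by the integrable bound $C(\phi)[1 + |u|^2 + |v|^2]$ with respect to $\Upsilon$. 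The limit equation is the weak Fokker--Planck equation $\partial_t \inn{\nu_t,\phi} = \inn{\nu_t, \mA\phi}$ for every $\phi \in C_0^2(\mbr^d)$, with $\nu_0 = \de_x$. Since $C_0^2(\mbr^d)$ is a core for $\mA$ (Remark \ref{rmk:core-twice-differentiable}), uniqueness of $c$-stable solutions (Corollary \ref{cor4.19temp}) forces $\nu_t = \de_x e^{t\mA}$, and the symmetric argument identifies the second marginal with $\de_y e^{t\mB}$. The principal obstacle is precisely this cutoff passage: the domain issue $\phi \otimes 1 \notin D(\mathcal{J})$ forces the localization, and showing that the resulting cutoff-induced error terms vanish uniformly as $R \to \infty$ is exactly where the finite-second-moment hypothesis on $\Upsilon$ is used in an essential way.
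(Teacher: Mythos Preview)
Your first step---computing $\mathcal{J}(\phi\otimes 1)$ via the Taylor remainder $\psi_{x,\phi}$ and invoking the marginal condition from Definition~\ref{def:levy-trans}---is exactly what the paper does. The divergence is in the second step. The paper stays on the \emph{primal} side: it observes that both $e^{t\mathcal{J}}(\varphi\otimes 1)$ and $e^{t\mA}\varphi\otimes 1$ are classical solutions of the Cauchy problem $\partial_t\Psi_t=\mathcal{J}\Psi_t$ with the same initial datum $\varphi\otimes 1\in C_b^2(\mbr^{2d})$, and concludes by uniqueness. Because $\mathcal{J}$ is a L\'evy generator, the semigroup is convolution with the law of the L\'evy process and extends naturally to $C_b^2$, so the domain issue you flag (that $\varphi\otimes 1\notin C_0$) is handled for free by the translation-invariant structure; no cutoff is needed. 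Your route via cutoffs $\phi\otimes\chi_R$, Dynkin's formula, and the \emph{dual} Fokker--Planck equation for the marginal $\nu_t$ is correct and more explicit about domains, but considerably heavier.

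One genuine slip: your final appeal to Corollary~\ref{cor4.19temp} is misplaced. That corollary gives uniqueness of \emph{$c$-stable} solutions, defined as limits of piecewise-constant approximation schemes, and you have not shown that $\nu_t$ is a $c$-stable solution---only that it satisfies the integrated weak equation $\inn{\nu_t,\phi}=\inn{\nu_0,\phi}+\int_0^t\inn{\nu_s,\mA\phi}\,ds$ for $\phi$ in the core $C_0^2(\mbr^d)$. What you actually need is the classical fact that for a constant generator $\mA$ of a $C_0$-semigroup, such weak (equivalently, strong dual) solutions are unique and given by $\nu_t=\nu_0 e^{t\mA}$; this is the content of the remark following Definition~4.7 (extended from the core to $D(\mA)$ by graph-norm density), not Corollary~\ref{cor4.19temp}.
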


\begin{proof}
    We first verify that $\mcl{J}$ satisfies the following marginal condition: for all $\varphi \in C_{b,2}^2(\mathbb{R}^d)$, it holds
    \begin{align}\label{eq:marg-cond}
        \mcl{J}(\varphi \otimes 1) = \mA\varphi \otimes 1, \qquad \mcl{J}(1 \otimes \varphi) = 1 \otimes \mB\varphi.
    \end{align}
    Indeed, for any such $\varphi$, define
    \begin{align*}
        \Phi(x', y'; x, y) &:= (\varphi \otimes 1)(x + x', y + y') - (\varphi \otimes 1)(x, y) - (x', y') \cdot \nabla_{x, y}(\varphi \otimes 1)(x, y) \\
        &= \varphi(x + x') - \varphi(x) - x' \cdot \nabla \varphi(x) =: \tilde{\varphi}(x'; x).
    \end{align*}
    Since $\varphi \in C_{b,2}^2(\mathbb{R}^d)$, Taylor's theorem implies $|\tilde{\varphi}(x'; x)| \le C |x'|^2$ for some constant $C \ge 1$.

    By the definition of the coupling L\'evy measure (Definition~\ref{def:levy-trans}), we compute
    \begin{align*}
        \mcl{J}[\varphi \otimes 1](x, y)
        &= \int_{\mathbb{R}^{2d}} \Phi(x', y'; x, y) \, \Upsilon(dx', dy') \\
        &= \int_{\mathbb{R}^{2d}} \tilde{\varphi}(x'; x) \, \Upsilon(dx', dy') 
        = \int_{\mathbb{R}^d} \tilde{\varphi}(x'; x) \, \Theta(dx') = \mA\varphi(x).
    \end{align*}
    The same reasoning shows that $\mcl{J}(1 \otimes \varphi) = 1 \otimes \mB\varphi$.

    Now, to show that $\delta_{(x, y)} e^{t\mcl{J}}$ is a coupling of $\delta_x e^{t\mA}$ and $\delta_y e^{t\mB}$, it suffices to prove that for all $\varphi \in C_0^2(\mathbb{R}^d)$,
    \begin{align*}
        e^{t\mcl{J}}(\varphi \otimes 1)(x, y) = \langle \delta_{(x, y)} e^{t\mcl{J}}, \varphi \otimes 1 \rangle = \langle \delta_x e^{t\mA}, \varphi \rangle = e^{t\mA} \varphi(x) = (e^{t\mA} \varphi \otimes 1)(x, y),
    \end{align*}
    that is,
    \[
    e^{t\mcl{J}}(\varphi \otimes 1) = e^{t\mA} \varphi \otimes 1.
    \]
    The analogous identity for $1 \otimes \varphi$ and $\mB$ follows by the same argument, so we prove only for the generator $\mA$.

    Consider the Cauchy problem:
    \begin{align*}
        \left\{
        \begin{aligned}
            \partial_t \Psi_t &= \mcl{J} \Psi_t, \qquad t \ge 0, \\
            \Psi_0 &= \varphi \otimes 1.
        \end{aligned}
        \right.
    \end{align*}
    Since $\mcl{J}$ generates a $C_0$-semigroup, this problem admits a unique classical solution for any initial data in $C^2_b(\mathbb{R}^{2d})$. Clearly, $\Psi_t = e^{t\mcl{J}}(\varphi \otimes 1)$ is such a solution.
    We now verify that $\Psi_t := e^{t\mA} \varphi \otimes 1$ is also a solution:
    \begin{align*}
        \partial_t (e^{t\mA} \varphi \otimes 1) = \mA e^{t\mA} \varphi \otimes 1 = \mcl{J}(e^{t\mA} \varphi \otimes 1),
    \end{align*}
    where the last equality follows from \eqref{eq:marg-cond}. Since both solutions share the same initial condition and the solution is unique, we conclude that they coincide.
    This proves the claim.
\end{proof}

Given the structural similarity between the minimization problem \eqref{def:levy-cost} and the classical optimal transport problem, it is natural to raise the following questions:
\begin{itemize}
	\item Does a minimizer for \eqref{def:levy-cost} exist?
	\item Does \(\WLA\) define a true metric on \(\Lambda_2(\mathbb{R}^d)\)?
\end{itemize}
While these questions parallel classical results in optimal transport, they are technically more subtle in the setting of L\'evy measures due to infinite activity and local singularities. A systematic investigation of the mathematical properties of \(\WLA\), including existence, uniqueness, and topological implications, will be undertaken in future work.

As a preliminary step, we provide the following remark regarding the well-posedness of the cost functional---in particular, its finiteness for arbitrary pairs of L\'evy measures.

\begin{remark}
	Let us briefly comment on the finiteness of the transport cost. For any pair of L\'evy measures $\Ta,\tilde\Ta \in \La_2(\mbr^d)$, the set of coupling L\'evy measures $\Ga_\La(\Ta,\tilde\Ta)$ is always nonempty. Indeed, it contains the \emph{trivial coupling}
	\begin{align*}
		\Om = \Ta \otimes \de_0 + \de_0 \otimes \tilde\Ta \in \La_2(\mbr^{2d}),
	\end{align*}
	where mass from each marginal is coupled independently at the origin. This construction corresponds to the independent coupling generator $\mcl{J} = \mA \otimes I + I \otimes \mB \in \mcg_2^\La(\mbr^{2d})$, where $\mA, \mB \in \mcg_2^\La(\mbr^d)$ are the pure jump generators associated to $\Ta$ and $\tilde\Ta$, respectively.
	Since the integrand in \eqref{def:levy-cost} satisfies
	\[
	\frac{1}{2} |x - y|^2 \le |x|^2 + |y|^2,
	\]
	it follows that $\mcW_\La(\Ta,\tilde\Ta)$ is always nonnegative and finite for any $\Ta,\tilde\Ta \in \La_2(\mbr^d)$. 
\end{remark}

We now define the \emph{L\'evy--Wasserstein generator metric} on the space \(\GLVR\) of L\'evy generators with finite second moment.

\begin{definition}
	Let \(\mA, \mB \in \GLVR\) be L\'evy generators in the global form~\eqref{LK-rep2}, with associated triplets \((b,a, \Ta)\) and \((\tilde b, \tilde a, \tilde \Ta)\), respectively. We define the \emph{L\'evy--Wasserstein generator distance} \(\WGLV : \GLVR\times\GLVR \to [0,\infty)\) by
	\[
	\WGLV(\mA, \mB)^2 := \frac{1}{2} |b - \tilde b|^2 + \BWD(a, \tilde a)^2 + \WLA(\Ta, \tilde \Ta)^2,
	\]
	where \(\BWD\) is the Bures--Wasserstein distance defined in~\eqref{def:BW-dist}, and \(\WLA\) is the L\'evy-Wasserstein metric introduced in Definition~\ref{def:levy-trans}.
\end{definition}

\begin{remark}
	Whether \(\WGLV\) defines a genuine metric ultimately depends on whether \(\WLA\) is itself a metric on \(\Lambda_2(\mathbb{R}^d)\). Since \(\WGLV\) is the square root of the sum of squared distances between drift vectors (in the Euclidean norm), diffusion matrices (via the Bures--Wasserstein distance), and jump measures (via the L\'evy transport cost \(\WLA\)), its metric properties—such as the triangle inequality and definiteness—are determined by the corresponding properties of \(\WLA\). A detailed investigation will be carried out in future work.
\end{remark}

\subsubsection{Stability estimate of L\'evy and L\'evy-type generators}
To relate Hypothesis~{\refAppt} with the L\'evy--Wasserstein generator metric \(\mcW_\mcg\), we first establish conditions under which a pair of L\'evy or L\'evy-type generators \(\mcA, \mcB\) satisfies the \(\Expt\)-condition. This problem can be simplified by treating the drift, diffusion, and jump components separately.
The following result provides explicit expressions or bounds for \(\omega_2\) in each case.

\begin{proposition}\label{prop:levy-bdd}
	Let $\mA,\mB\in\mcg_2^\La(\mbr^d)$ be two L\'evy generators. 
	
	(i) If $\mA,\mB$ are drift generators with drift vectors $b,\tilde b\in \mbr^d$, then 
	\begin{align*}
		\om_2(x,y;\mA,\mB)&= (b-\tilde b)^\top (x-y). 
	\end{align*}
	
	(ii) If $\mA,\mB$ are diffusion generators with diffusion matrices $a,\tilde a\in \SPSD(\mbr^d)$, then 
	\begin{align*}
		\om_2(x,y;\mA,\mB)= \mcW_{\mcS}(a,\tilde a). 
	\end{align*}
	
	(iii) If $\mA,\mB$ are pure jump generators in the global form \eqref{LK-rep2} with L\'evy measures $\Ta,\tilde \Ta$, then 
	\begin{align*}
		\om_2(x,y;\mA,\mB)\le  \mcW_{\La}(\Ta,\tilde \Ta). 
	\end{align*}
\end{proposition}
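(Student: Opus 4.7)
The plan is to treat each of the three cases independently, since in each case the semigroup $\{e^{t\mA}\}_{t\ge 0}$ admits an explicit or tractable description, allowing a direct expansion of $\mcC_2(\delta_x e^{t\mA},\delta_y e^{t\mB})$ in $t$ near $t=0$ and then the extraction of the Dini derivative $\om_2$ from Definition \ref{def:om-definition}. In each case I would first identify the curve $t\mapsto \delta_x e^{t\mA}$ explicitly, then write $\mcC_2(\delta_x e^{t\mA}, \delta_y e^{t\mB})$ either via its explicit minimizer (parts (i), (ii)) or via a candidate coupling built from Proposition \ref{prop:gen-coup} (part (iii)).

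For (i), the drift generator $b\cdot \nabla$ generates the translation semigroup $e^{t\mA}\phi(x)=\phi(x+tb)$, hence $\delta_x e^{t\mA}=\delta_{x+tb}$. The unique coupling of two Dirac masses is their product, so $\mcC_2(\delta_{x+tb},\delta_{y+t\tilde b})=\tfrac12|x-y+t(b-\tilde b)|^2$. Expanding in powers of $t$, one reads off
$\om_2(x,y;\mA,\mB)=\tfrac{d}{dt}\big|_{t=0^+}\tfrac12|x-y+t(b-\tilde b)|^2=(b-\tilde b)^\top(x-y).$

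For (ii), the diffusion generator $\tfrac12\sum a_{ij}\partial_{ij}$ generates the Gaussian convolution semigroup, so $\delta_x e^{t\mA}\sim \mathrm{Normal}(x,\,ta)$ and $\delta_y e^{t\mB}\sim \mathrm{Normal}(y,\,t\tilde a)$. By the Givens--Shortt formula for the optimal cost between Gaussians (restated just after \eqref{def:BW-dist}),
$\mcC_2(\delta_x e^{t\mA}, \delta_y e^{t\mB})=\tfrac12|x-y|^2+\BWD(ta,t\tilde a)^2.$
Since $\BWD$ is $1/2$-homogeneous under scaling of both arguments (as is immediate from its definition \eqref{def:BW-dist}, because both the traces and the interleaved term $\tr((\tilde a^{1/2}a\tilde a^{1/2})^{1/2})$ scale linearly in $t$), one has $\BWD(ta,t\tilde a)^2=t\,\BWD(a,\tilde a)^2$, and differentiating at $t=0$ yields the claim.

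For (iii), the strategy is to exploit the coupling construction of Proposition \ref{prop:gen-coup}: for any coupling L\'evy measure $\Upsilon\in\Gamma_\La(\Ta,\tilde\Ta)$, the associated pure jump generator $\mcl J\in\mcg_2^\La(\mbr^{2d})$ produces a probability measure $\delta_{(x,y)}e^{t\mcl J}$ on $\mbr^{2d}$ that couples $\delta_x e^{t\mA}$ and $\delta_y e^{t\mB}$. Hence, taking $\Psi(x,y)=\tfrac12|x-y|^2$ as the cost,
\[
\mcC_2(\delta_x e^{t\mA},\delta_y e^{t\mB})\le \int_{\mbr^{2d}}\Psi(x',y')\,d(\delta_{(x,y)}e^{t\mcl J})(x',y')=:f(t).
\]
The key computation is that although $\Psi$ is unbounded, its action under $\mcl J$ is particularly simple: expanding
$\Psi(x+x',y+y')-\Psi(x,y)-(x',y')\cdot\nabla\Psi(x,y)=\tfrac12|x'-y'|^2,$
the first-order and mixed terms cancel, leaving $\mcl J\Psi(x,y)=\int_{\mbr^{2d}}\tfrac12|x'-y'|^2\,d\Upsilon(x',y')$, which is \emph{constant} in $(x,y)$. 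The plan is then to justify that $f(t)$ is differentiable with $f'(0)=\mcl J\Psi$, and moreover $f(t)=\Psi(x,y)+t\,\mcl J\Psi$ for all $t\ge 0$ (as all further $\mcl J$-iterations vanish). Taking the infimum of $f'(0)=\int\tfrac12|x'-y'|^2\,d\Upsilon$ over $\Upsilon\in\Gamma_\La(\Ta,\tilde\Ta)$ and using the definition \eqref{def:levy-cost} then delivers $\om_2(x,y;\mA,\mB)\le \mcW_\La(\Ta,\tilde\Ta)^2$.

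The main technical obstacle is the last justification: $\Psi(x,y)=\tfrac12|x-y|^2$ is not bounded and therefore lies outside $C_0^\sim(\mbr^{2d})$, so one cannot directly invoke the generation identity $\tfrac{d}{dt}e^{t\mcl J}\Psi=\mcl J e^{t\mcl J}\Psi$ from standard semigroup theory. I would handle this either by truncation --- approximating $\Psi$ by a monotone sequence $\Psi_n\in C_{b,2}^2(\mbr^{2d})$ with $\Psi_n\nearrow \Psi$ and $\mcl J\Psi_n\to \mcl J\Psi$, and then passing to the limit using dominated/monotone convergence together with the finite second-moment property of the jump semigroup --- or by appealing to a martingale-problem/Dynkin-formula argument for the underlying compound Poisson--type process associated with $\mcl J$, which yields the identity $\mbe[\Psi(Z_t^{x,y})]=\Psi(x,y)+\int_0^t \mbe[\mcl J\Psi(Z_s^{x,y})]\,ds$ for the jump process $Z^{x,y}$ starting at $(x,y)$. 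Either route reduces to verifying integrability against the second moment of $\Upsilon$, which is built into $\Upsilon\in\La_2(\mbr^{2d})$.
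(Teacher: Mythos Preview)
Your proposal is correct and follows the paper's proof essentially line for line in all three parts: the translation semigroup for (i), the Givens--Shortt formula plus the homogeneity $\BWD(ta,t\tilde a)^2=t\,\BWD(a,\tilde a)^2$ for (ii), and the coupling from Proposition \ref{prop:gen-coup} together with the cancellation $\mcl J c_2=\int\tfrac12|x'-y'|^2\,d\Upsilon$ for (iii). The paper is in fact terser than you in (iii)---it simply passes $t\searrow 0$ in $\tfrac1t\big[(e^{t\mcl J}c_2)(x,y)-c_2(x,y)\big]$ to obtain $(\mcl J c_2)(x,y)$ without dwelling on the unboundedness of $c_2$---so your additional care on that point is an improvement rather than a deviation.
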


\begin{remark}
	It is a natural question to ask whether the inequality from (iii) is, in fact, an equality.
\end{remark}

\begin{proof}[Proof of Proposition \ref{prop:levy-bdd}]
	(i) If $\mA,\mB$ are drift generators with drift vectors $b,\tilde b$, then $\de_x e^{t\mA}=\de_{x+bt}, \de_{y} e^{t\mB}= \de_{y+\tilde b t}$, in which case
	\begin{align*}
		\mcc_2(\de_x e^{t\mA},\de_y e^{t\mB})&= \frac 12 |x-y+(b-\tilde b)t|^2= c_2(x,y)+ t(b-\tilde b)^\top(x-y)+t^2 c_2(b,\tilde b) . 
	\end{align*}
	Subtracting $c_2(x,y)=\frac 12 |x-y|^2$ both sides, dividing $t$ and passing $t\searrow 0$, it leads to 
	\begin{align*}
		\om_2(x,y;\mA,\mB)= \limsup_{t\searrow 0} \frac{\mcc_2(\de_x e^{t\mA},\de_y e^{t\mB})-c_2(x,y)}{t} = (b-\tilde b)^\top(x-y).
	\end{align*}
	
	(ii) For $t\ge0$, let $\mu_t=\de_x e^{t\mcA}$ and $\nu_t=\de_y e^{t\mcB}$. Then $\mu_t,\nu_t$ are Gaussian normal measures on $\mbr^d$, particularly $\mu_t\sim\mathrm{normal}(x,ta)$, $\nu_t\sim\mathrm{normal}(y,t\tilde a)$. As stated above, the Wasserstein-$2$ cost between these two Gaussian measures is given by
	\begin{align*}
		\mcC_2(\mu_t,\nu_t) = \frac{1}{2}|x-y|^2 + \BWD(ta,t\tilde a)^2= \frac{1}{2}|x-y|^2 + t \BWD(a,\tilde a)^2.
	\end{align*}
	The homogeneity $\BWD(ta,t\tilde a)^2=t \BWD(a,\tilde a)^2$ follows directly from \eqref{def:BW-dist}. 
	In this case, $t\mapsto\mcC_2(\mu_t,\nu_t)$ is differentiable, and thus
	\begin{align*}
		\om_2(x,y;\mcA,\mcB) = \left.\drv{}{t}\right\vert_{t=0} \mcC_2(\mu_t,\nu_t) = \BWD(a,\tilde a)^2.
	\end{align*}
	
	(iii) Let $\mA,\mB$ be the pure jump operators with L\'evy measures $\Ta,\tilde\Ta\in\La_2(\mbr^d)$ respectively, and $\Upsilon\in \La_2(\mbr^{2d})$ be a coupling L\'evy measure of $\Ta,\tilde\Ta$. Let $\mcl{J}\in \mcg_2^\La(\mbr^{2d})$ be the pure jump operator associated with kernel $\Upsilon$. By Proposition \ref{prop:gen-coup}, for any $(x,y)\in \mbr^{2d}$, $\de_{(x,y)}e^{t\mcl{J}}\in \mcp_2(\mbr^{2d})$ is a coupling of $\de_x e^{t\mA},\de_y e^{t\mB}$. Thus,
\begin{align*}
	\mcc_2(\de_x e^{t\mA},\de_y e^{t\mB}) &\le \int_{\mbr^{2d}} c_2(x,y)\,d(\de_{(x,y)} e^{t\mcl{J}}) = (e^{t\mcl{J}}c_2)(x,y).
\end{align*}
Subtracting $c_2(x,y)$, dividing by $t$ and passing $t \searrow 0$, we find 
\begin{align*}
	\om_2(x,y;\mA,\mB) &\le (\mcl{J}c_2)(x,y) \\
	&= \int_{\mbr^{2d}} \sqb{c_2(x+x',y+y') - c(x,y) - (x,y)^\top \nabla_{x,y} c_2(x',y')} \, d\Upsilon(x',y').
\end{align*}
Since $c_2(x,y) = \frac{1}{2} |x - y|^2$, the integrand above reduces to $c_2(x', y')$. Hence we have
\begin{align*}
	\om_2(x,y;\mA,\mB) &\le \int_{\mbr^{2d}} \frac{1}{2} |x' - y'|^2 \, d\Upsilon(x', y').
\end{align*}
The inequality holds for all $\Upsilon \in \Ga_\La(\Ta, \tilde\Ta)$. Taking the infimum over all L\'evy couplings gives the desired bound.
\end{proof}

Summing up the drift, diffusion, and jump components of pairs of L\'evy generators, and using the subadditivity of $\om_2$ w.r.t. the generators, we arrive at the following.

\begin{corollary}\label{cor:om-Wg-bdd}
	Let $\mA,\mB\in\mcg_2^\La(\mbr^d)$. Then 
	\begin{align*}
		\om_2(x,y;\mA,\mB)\le \mcW_\mcg(\mA,\mB)^2+ \frac 12 |x-y|^2.
	\end{align*}
\end{corollary}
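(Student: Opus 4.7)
The plan is to decompose each L\'evy generator into its drift, diffusion, and pure jump components and then combine the three estimates of Proposition~\ref{prop:levy-bdd} via the subadditivity of $\om_c$ (Theorem~\ref{thm:om-subadditivity}). Write
\[
\mA = \mA^\nabla + \mA^\De + \mA^J, \qquad \mB = \mB^\nabla + \mB^\De + \mB^J,
\]
where the summands are determined by the L\'evy triplets $(b,a,\Ta)$ and $(\tilde b,\tilde a,\tilde\Ta)$, respectively. A common core $\mcD\subset C_0^2(\mbr^d)$ for both generators is available by Remark~\ref{rmk:core-twice-differentiable}, and $\mcD$ also lies in the domain of each of $\mA^\nabla,\mA^\De,\mA^J$ (and the corresponding pieces of $\mB$), so the hypotheses of Theorem~\ref{thm:om-subadditivity} are met.

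Applying Proposition~\ref{prop:levy-bdd} to each matched pair, I would obtain
\[
\om_2(x,y;\mA^\nabla,\mB^\nabla)=(b-\tilde b)^\top(x-y),\quad
\om_2(x,y;\mA^\De,\mB^\De)=\BWD(a,\tilde a)^2,\quad
\om_2(x,y;\mA^J,\mB^J)\le \WLA(\Ta,\tilde\Ta)^2.
\]
Next, I would define upper semicontinuous bounding functions $\ta_1(x,y)=(b-\tilde b)^\top(x-y)$, $\ta_2(x,y)=\BWD(a,\tilde a)^2$, $\ta_3(x,y)=\WLA(\Ta,\tilde\Ta)^2$, each of which is of the form $f\oplus g$ for some pair $f,g\in C_{b,c}(\mbr^d)$ (the constants work for $\ta_2,\ta_3$; for $\ta_1$ one can use $f(x)=(b-\tilde b)^\top x$, $g(y)=-(b-\tilde b)^\top y$, both in $C_{b,2}(\mbr^d)$ since they are dominated by $C(1+|x|^2)$). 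Invoking Theorem~\ref{thm:om-subadditivity} then yields
\[
\om_2(x,y;\mA,\mB)\le (b-\tilde b)^\top(x-y)+\BWD(a,\tilde a)^2+\WLA(\Ta,\tilde\Ta)^2.
\]

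Finally, a single application of Young's inequality bounds the cross term:
\[
(b-\tilde b)^\top(x-y)\le \tfrac12|b-\tilde b|^2+\tfrac12|x-y|^2,
\]
which, together with the definition of $\WGLV$, gives the desired estimate. The only nonroutine step is the verification that the upper semicontinuous bounds $\ta_i$ satisfy the domination hypothesis of Theorem~\ref{thm:om-subadditivity}; the drift bound $\ta_1$ in particular is the only one that is not already bounded, but it lies in $C_{b,2}(\mbr^d)\oplus C_{b,2}(\mbr^d)$ by the quadratic growth of the $c$-bounded class. Beyond that, the argument is a direct assembly of the pointwise estimates already established in Proposition~\ref{prop:levy-bdd}.
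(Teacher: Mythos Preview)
Your proof is correct and follows exactly the same route as the paper: decompose via L\'evy--Khintchine, apply Proposition~\ref{prop:levy-bdd} componentwise, combine by the subadditivity Theorem~\ref{thm:om-subadditivity}, and finish with the elementary inequality $(b-\tilde b)^\top(x-y)\le \tfrac12|b-\tilde b|^2+\tfrac12|x-y|^2$. If anything, you are more careful than the paper in explicitly verifying the core and domination hypotheses needed for Theorem~\ref{thm:om-subadditivity}.
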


\begin{proof}
    The L\'evy-Khintchine decomposition let us write L\'evy generators as the sum of their drift, diffusion and jump parts (in the global form \eqref{LK-rep2}): $\mA = \mA^\nabla+\mA^\De+\mA^J$, $\mB = \mB^\nabla+\mB^\De+\mB^J$.
    Applying the bounds from Proposition \ref{prop:levy-bdd} to each part of the generators, and utilizing the subadditivity of $\om_2$ (Theorem \ref{thm:om-subadditivity}), we find
    \begin{align*}
        \om_2(x,y;\mA,\mB)&\le (b-\tilde b)^\top(x-y)+ \BWD(a,\tilde a)^2+\WLA(\Ta,\tilde\Ta)^2,
    \end{align*}
    where $(b,a,\Ta)$ and $(\tilde b,\tilde a,\tilde \Ta)$ are the L\'evy triplets of $\mA,\mB$, respectively. Applying the Cauchy inequality $w^\top u\le \frac 12 (|w|^2+|v|^2)$ gives the claimed bound.
\end{proof}

In the corollary above, we established a stability estimate for pairs of L\'evy generators in terms of the generator metric \(\mcW_\mcg\). We now extend this estimate to L\'evy-type generators, under a pointwise control assumption on the generator distance.

\begin{corollary}\label{cor:levy-type-bdd}
    Let $\mA=\{\mA(x)\}_{x\in \mbr^d}$, $\mB=\{\mB(x)\}_{x\in \mbr^d}$ be two L\'evy-type generators. Suppose it holds for some constants $\al,\be \ge 0$ and all $x,y\in \mbr^d$:
    \begin{align}\label{eq:WG-bdd}
        \mcW_\mcg(\mA(x),\mB(y))&\le \al + \frac {\be}{2}|x-y|^2. 
    \end{align}
    Then $\mA,\mB$ satisfies $\Expt$-condition, particularly,
    \begin{align}\label{eq:om2-temp}
        \om_2(x,y;\mA,\mB)&\le \al + \f {(\be+1)} 2|x-y|^2.
    \end{align}
\end{corollary}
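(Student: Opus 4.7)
\textbf{Proof proposal for Corollary~\ref{cor:levy-type-bdd}.}

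The plan is to reduce the L\'evy-type estimate to the already-established L\'evy estimate (Corollary~\ref{cor:om-Wg-bdd}) by a pointwise ``freezing'' argument. The bridge between the two is the equivalence of conditions (d) and (e) in Theorem~\ref{thm:chap3-main-thm}, which translates bounds on $\omega_2$ into bounds on $\mA\phi(x_0)+\mB\psi(y_0)$ at global minima of $c_2-(\phi\oplus\psi)$, and vice versa.

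First, I would invoke Theorem~\ref{thm:chap3-main-thm} to reformulate the target inequality \eqref{eq:om2-temp} in the dual form: it suffices to show that for every pair $\phi\in D(\mA)$, $\psi\in D(\mB)$ such that the function $c_2-(\phi\oplus\psi)$ attains a global minimum at some point $(x_0,y_0)\in(\mbr^d)^2$, one has
\[
\mA\phi(x_0)+\mB\psi(y_0)\;\le\;\al+\tfrac{\be+1}{2}|x_0-y_0|^2.
\]
By Remark~\ref{rmk:core-twice-differentiable}, it is enough to consider test functions $\phi,\psi\in C_0^2(\mbr^d)$, which is a common core for $\mA$, $\mB$ and for each frozen L\'evy generator $\mA(z),\mB(z)\in\GLVR$.

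The freezing step is then transparent: by the very definition of a L\'evy-type generator,
\[
\mA\phi(x_0)=[\mA(x_0)\phi](x_0),\qquad \mB\psi(y_0)=[\mB(y_0)\psi](y_0),
\]
so the value of the L\'evy-type operator at $(x_0,y_0)$ depends only on the frozen L\'evy generators $\mA(x_0)$ and $\mB(y_0)$. Now apply Corollary~\ref{cor:om-Wg-bdd} to this fixed pair of L\'evy generators to obtain
\[
\omega_2(x,y;\mA(x_0),\mB(y_0))\;\le\;\mcW_\mcg(\mA(x_0),\mB(y_0))^2+\tfrac{1}{2}|x-y|^2,
\]
and invoke the reverse direction (d)$\Rightarrow$(e) of Theorem~\ref{thm:chap3-main-thm} for these frozen generators. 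The very same pair $(\phi,\psi)$ still has $c_2-(\phi\oplus\psi)$ attaining its global minimum at $(x_0,y_0)$, and evaluating condition (e) at that point yields
\[
[\mA(x_0)\phi](x_0)+[\mB(y_0)\psi](y_0)\;\le\;\mcW_\mcg(\mA(x_0),\mB(y_0))^2+\tfrac{1}{2}|x_0-y_0|^2.
\]

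To finish, feed the hypothesis \eqref{eq:WG-bdd} (read as a bound on $\mcW_\mcg^2$, which is the natural quantity appearing in Corollary~\ref{cor:om-Wg-bdd}) into the right-hand side: $\mcW_\mcg(\mA(x_0),\mB(y_0))^2\le \al+\tfrac{\be}{2}|x_0-y_0|^2$, and combine with the extra $\tfrac{1}{2}|x_0-y_0|^2$ above to obtain exactly $\al+\tfrac{\be+1}{2}|x_0-y_0|^2$. The resulting pointwise bound on $\omega_2(x,y;\mA,\mB)$ in the L\'evy-type setting is then upgraded, via the equivalence (d)$\Leftrightarrow$(c) of Corollary~\ref{cor3.20:chap3-main-cor}, to the $\Expt(\al,\be+1)$-condition. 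I do not anticipate a genuine obstacle here; the only subtlety is confirming that the global-minimum point for $c_2-(\phi\oplus\psi)$ used in Theorem~\ref{thm:chap3-main-thm} for $(\mA,\mB)$ transports without modification to the frozen problem, which is immediate since $(x_0,y_0)$ is the \emph{same} point in both settings.
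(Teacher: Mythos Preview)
Your proposal is correct and follows essentially the same route as the paper's proof: both pass to the dual condition (e) of Theorem~\ref{thm:chap3-main-thm}/Corollary~\ref{cor3.20:chap3-main-cor}, use the freezing identity $\mA\phi(x_0)=[\mA(x_0)\phi](x_0)$ and $\mB\psi(y_0)=[\mB(y_0)\psi](y_0)$, apply Corollary~\ref{cor:om-Wg-bdd} to the frozen pair $(\mA(x_0),\mB(y_0))$, and then return via (e)$\Rightarrow$(d). Your parenthetical remark that \eqref{eq:WG-bdd} must be read as a bound on $\mcW_\mcg^2$ is also the correct reading, consistent with how the corollary is invoked downstream.
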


\begin{proof}
	By Corollary \ref{cor3.20:chap3-main-cor} and Corollary \ref{cor:om-Wg-bdd}, \eqref{eq:WG-bdd} implies the following: for all $\phi\in D(\mA),\psi\in D(\mB)$ such that $c_2-(\phi\oplus\psi)$ achieves a global minimum at $(x_0,y_0)\in\Pi^2$, it holds
	\begin{align*}
		(\mA(x_0)\phi)(x_0)+ (\mB(y_0)\psi)(y_0)\le\al + \f {(\be+1)} 2|x_0-y_0|^2. 
	\end{align*}
	Observe next $\mA\phi(x_0)=(\mA(x_0)\phi)(x_0),\mB\psi(y_0)=(\mB(y_0)\psi)(y_0)$. Hence, the above concludes for all such test functions $\phi,\psi$, we have 
	\begin{align*}
		\mA\phi(x_0)+\mB\psi(y_0)\le \al + \frac{(\be+1)}{2}|x_0-y_0|^2.
	\end{align*}
	Corollary \ref{cor3.20:chap3-main-cor} then implies \eqref{eq:om2-temp}.
\end{proof}

\subsubsection{Sufficient conditions for Hypothesis \texorpdfstring{\refAppt}{(A,Sigma,2)}}

We have now introduced a ``metric-like'' functional $\mcW_\mcg$ on the space $\GLVR$. 
Recall that a L\'evy-type mean-field generator can be viewed as a map 
\[
\mA : \mathbb{R}^d \times \PCR \to \GLVR,
\]
that is, from a product of metric spaces into a space equipped with this metric-like structure. 
This perspective enables us to express a sufficient condition for Hypothesis {\refAppt} in a concise and natural way: namely, as a Lipschitz continuity condition of the map $\mA$ with respect to the product metric on $\mathbb{R}^d \times \PCR$ and the functional $\WGLV$ on $\GLVR$.

We impose the following two conditions on a mean-field L\'evy generator \(\mA = \{\mA(x,\mu)\}_{x,\mu}\). Let \(\Si:\mathbb{R}^d \times \mcp_2(\mathbb{R}^d)^2 \to [0, \infty)\) be a function satisfying the assumptions of Hypothesis~{\refAppt}.
The first condition, which implies Hypothesis~\refAppt, requires: there exist constants \(\alpha, \beta \ge 0\) such that
\begin{equation}\label{eq:wass-gen-lipschitz}
	\WGLV(\mA(x,\mu), \mA(y,\nu))^2 \le \frac{\beta}{2} |x - y|^2 + \alpha\, \Si(x,\mu,\nu)^2. 
	\tag{A, $\Si$, $\Lambda_2$}
\end{equation}
The second condition, which implies Hypothesis~\refA, is a special case of the above with $\Si = \mcW_2$:
\begin{equation}\label{eq:wass-gen-lipschitz-W2}
	\WGLV(\mA(x,\mu), \mA(x,\nu)) \le \f \be 2|x-y|^2+\alpha\, \mcW_2(\mu,\nu)^2. 
	\tag{A, $\Lambda_2$}
\end{equation}

\begin{corollary}\label{cor:chap6-small-corollary}
	Let $\Si:\mbr^d\times\PCR^2\to[0,\infty)$ be a function that satisfies the conditions given in Hypothesis {\refAppt} and $\mcA:\mbr^d\times \PCR\to\GLVR$ be a L\'{e}vy-type mean-field generator given by \eqref{eq:levy-mean-field-generator}. 
	
	(i) If $\mA$ satisfies \eqref{eq:wass-gen-lipschitz}, then $\mA$ satisfies Hypothesis {\refAppt}.
	
	(ii) If $\mA$ satisfies \eqref{eq:wass-gen-lipschitz-W2} instead, then $\mA$ satisfies Hypothesis {\refA}.
	
\end{corollary}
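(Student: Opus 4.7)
The plan for part (i) is to verify the equivalent condition (e) of Theorem \ref{thm:chap3-main-thm} for the mean-field L\'evy-type generators $\mcA(\mu')$ and $\mcA(\nu')$, with the choices $f(x) := \al\,\Si(x,\mu',\nu')^2$, $g \equiv 0$, and $\be' := \be + 1$. Once (e) is established, Theorem \ref{thm:chap3-main-thm} will immediately yield the bound
\[
    \om_2(x,y;\mcA(\mu'),\mcA(\nu')) \le \al\,\Si(x,\mu',\nu')^2 + \f{\be+1}{2}|x-y|^2,
\]
which is Hypothesis \refAppt\ with parameters $(\al,\be+1)$. Admissibility of $f$ (i.e., $f \in C_{b,2}(\mbr^d)$) is automatic: by Hypothesis \refApp(iii), $x \mapsto \Si(x,\mu',\nu')$ is Lipschitz, and the elementary inequality $(a+b)^2 \le 2a^2 + 2b^2$ dominates $f(x)$ by an affine function of $c_2(z,x)$.

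The key step is a ``freezing'' argument. Fix any $\phi \in D(\mcA(\mu'))$, $\psi \in D(\mcA(\nu'))$ such that $c_2 - (\phi \oplus \psi)$ attains a global minimum at some $(x_0,y_0) \in (\mbr^d)^2$. By the very definition of a L\'evy-type generator, one has
\[
    \mcA(\mu')\phi(x_0) = [\mcA(x_0,\mu')\phi](x_0), \qquad \mcA(\nu')\psi(y_0) = [\mcA(y_0,\nu')\psi](y_0),
\]
where $\tilde\mA_1 := \mcA(x_0,\mu')$ and $\tilde\mA_2 := \mcA(y_0,\nu')$ are \emph{spatially constant} L\'evy generators in $\GLVR$. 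Evaluating the assumption \eqref{eq:wass-gen-lipschitz} at the single point $(x_0,y_0)$ gives
\[
    \WGLV(\tilde\mA_1,\tilde\mA_2)^2 \le \f{\be}{2}|x_0-y_0|^2 + \al\,\Si(x_0,\mu',\nu')^2,
\]
and Corollary \ref{cor:om-Wg-bdd} then yields, for \emph{all} $x,y \in \mbr^d$,
\[
    \om_2(x,y;\tilde\mA_1,\tilde\mA_2) \le \underbrace{\al\,\Si(x_0,\mu',\nu')^2 + \tfrac{\be}{2}|x_0-y_0|^2}_{=:\tilde\al} + c_2(x,y).
\]
This is the condition (d) of Corollary \ref{cor3.20:chap3-main-cor} for the pair $(\tilde\mA_1,\tilde\mA_2)$ with constant $\tilde\al$ and factor $1$ on $c_2$. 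Invoking the equivalence (d) $\Leftrightarrow$ (e) and specializing to our chosen $\phi,\psi$, whose test function $c_2 - (\phi \oplus \psi)$ has minimum at $(x_1,y_1) = (x_0,y_0)$, we conclude
\[
    \mcA(\mu')\phi(x_0) + \mcA(\nu')\psi(y_0) = \tilde\mA_1\phi(x_0) + \tilde\mA_2\psi(y_0) \le \tilde\al + c_2(x_0,y_0) = \al\,\Si(x_0,\mu',\nu')^2 + (\be+1)\,c_2(x_0,y_0),
\]
which is exactly the condition (e) of Theorem \ref{thm:chap3-main-thm} with the data above, completing part (i).

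For part (ii), observe that $\Si := \mcW_2$ trivially satisfies Hypothesis \refApp: it is a metric on $\PCR$, is independent of $x$ (so the Lipschitz constant in (iii) may be taken to be $0$), and is bounded above by itself in (ii). Hence \eqref{eq:wass-gen-lipschitz-W2} is the special case of \eqref{eq:wass-gen-lipschitz} with $\Si = \mcW_2$, and part (i) gives
\[
    \om_2(x,y;\mcA(\mu'),\mcA(\nu')) \le \al\,\mcW_2(\mu',\nu')^2 + (\be+1)\,c_2(x,y) = \al\,\mcC_2(\mu',\nu') + (\be+1)\,c_2(x,y),
\]
using the identity $\mcW_2^2 = \mcC_2$. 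This is precisely the $\Expt(\al\,\mcC_2(\mu',\nu'),\be+1)$-condition, which by Corollary \ref{cor3.20:chap3-main-cor} is equivalent to Hypothesis \refA. The only subtle point in the entire argument is the freezing-and-reactivation of the L\'evy generators at the minimizer $(x_0,y_0)$: one must notice that freezing the spatial dependence produces a true L\'evy generator (to which Corollary \ref{cor:om-Wg-bdd} applies), and that the resulting bound---even though it carries $(x_0,y_0)$ as both a parameter and an evaluation point---correctly reproduces an $x$-dependent $f$ in the condition (e) of Theorem \ref{thm:chap3-main-thm} once one specializes the minimizer $(x_1,y_1)$ back to $(x_0,y_0)$.
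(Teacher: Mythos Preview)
Your proof is correct and follows essentially the same freezing argument as the paper: freeze the L\'evy-type generators at the minimizer $(x_0,y_0)$, apply Corollary~\ref{cor:om-Wg-bdd} to the resulting constant L\'evy generators, and feed the conclusion back into condition~(e) of the main equivalence theorem. The paper packages this step as a direct appeal to Corollary~\ref{cor:levy-type-bdd}; you instead unroll that corollary's proof and appeal directly to Theorem~\ref{thm:chap3-main-thm}, which is arguably cleaner here since the right-hand side $\al\,\Si(x,\mu',\nu')^2$ is genuinely $x$-dependent (whereas Corollary~\ref{cor:levy-type-bdd} is stated only for a constant $\al$), so your version handles that point more transparently.
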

\begin{proof}
	(i) Fix $\mu,\nu\in \mcp_2(\Pi)$ and view $\mA(\mu),\mA(\nu)\in\mcg_2^0(\mbr^d)$ with $\mA(\mu)=\{\mA(x,\mu)\}_{x\in \mbr^d}$, $\mA(\nu)=\{\mA(x,\nu)\}_{x\in \mbr^d}$ as two L\'evy-type generators. By Corollary \ref{cor:levy-type-bdd}, \eqref{eq:wass-gen-lipschitz} implies the following bound:
	\begin{align*}
		\om_2(x,y;\mA(\mu),\mA(\nu))\le \frac{(\be+1)}{2}|x-y|^2 + \al \Si(x,\mu,\nu)^2. 
	\end{align*}
	This is \eqref{eq5.4:om-p-lip} with $p=2$, except $\be+1$ in place of $\be$. 
	
	(ii) The proof of (ii) is identical to (i). 
\end{proof}

\subsection{Pointwise propagation of chaos of L\'{e}vy-type mean-field systems}\label{sec6.3}
In the previous subsection, we explored sufficient conditions, namely \eqref{eq:wass-gen-lipschitz}, under which a L\'{e}vy-type mean-field generator $\mcA:\mbr^d\times\PCR\to\GLVR$ satisfies Hypothesis {\refAppt}. Specifically, it is a continuity assumption on $\mcA(x,\mu)$ with respect to both $x$ and $\mu$, measured using the ``metric-like'' functional $\WGLV$. 
This allows us to apply results in Section \ref{chap5}, particularly Theorem \ref{thm:main-wass}, to establish pointwise propagation of chaos for L\'{e}vy-type mean-field systems whose generators satisfy this continuity assumption. We shall demonstrate this in this subsection.

\subsubsection{Exponential estimate of Wasserstein-\texorpdfstring{$2$}{2} distance}
Now we combine all the results from the previous subsection and Section \ref{chap5} to obtain the exponential estimate for $\mcW_2$-distance between $\bs\rho_t=\bs\rho_0 e^{t\bs{\hat\mcA}}$, a Markov flow under the $N$-particle generator $\bs{\hat\mcA}$ associated to a L\'{e}vy-type mean-field generator $\mcA$, and $\bs\br_t= \br_t^{\otimes N}$, the tensor product of the associated mean-field solution $\br_t$.

\begin{theorem}\label{thm:levy-poc-aleph}
	Let $\mcA:\mbr^d\times\PCR\to\GLVR$ be a L\'{e}vy-type mean-field generator. Let $\Si:\mbr^d\times\PCR^2\to[0,\infty)$ be a function that satisfies the conditions given in Hypothesis {\refAppt}. Assume that Lipschitz condition \eqref{eq:wass-gen-lipschitz} holds. Given $N\ge 1$, let $\BHA_N$ be the $N$-particle generator associated to $\mcA$ (see Definition \ref{def:N-particle-generator}) and assume that Hypothesis \ref{hypo:generation-problem} holds.
	\begin{enumerate}[(i)]
		\item For every $\br_0\in\PCR$, the associated mean-field evolution problem
		\begin{align}\label{eq6.16:temp}
			\left\{\begin{aligned}
				\partial_t \br_t &= \br_t \mcA(\br_t),\quad t\in(0,T),\\
				\br_0 &\in \PCR,
			\end{aligned}\right.
		\end{align}
		admits a unique $c_2$-stable solution, where $c_2(x,y)=\frac{1}{2}|x-y|^2$.
		\item Let $\scb{\br_t}_{t}\in C([0,\infty);\PCR)$ be a $c_2$-stable solution of the mean-field problem \eqref{eq6.16:temp}, and $\bs\rho_0\in\PCRN$. We denote
		\begin{align*}
			\bs{\rho}_t = \bs{\rho}_0 e^{t\bs{\hat{\mcA}}_N}, \quad \bs{\br}_t = \br_t^{\otimes N}.
		\end{align*}
		Then for any $T\ge0$, it holds for some constants $C,K>0$ depending on $\Si$ and constants from Lipschitz condition \eqref{eq:wass-gen-lipschitz} that
		\begin{align*}
			\sup_{t\in[0,T]} \mcW_2^2\mrb{\bs{\rho}_t,\bs{\br}_t} \le \mcW_2^2\mrb{\bs{\rho}_0,\bs{\br}_0} e^{K T} + C \zeta_{K}(T) \sup_{t\in[0,T]} \aleph_N(\br_t;\Si^2),
		\end{align*}
		where $\aleph_{N}$ is given in Definition \ref{def:aleph-N}, and $\zeta_{K}(T):=K^{-1} (e^{K T}-1)$.
	\end{enumerate}
\end{theorem}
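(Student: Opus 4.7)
The plan is to reduce both parts of the theorem to the abstract machinery developed in Sections \ref{chap4} and \ref{chap5}, by checking that the Lipschitz condition \eqref{eq:wass-gen-lipschitz} on the L\'evy--Khintchine triplet is strong enough to invoke Theorems \ref{thm4.3:chap4-main-result} and \ref{thm:main-wass}. The key translation step between the pointwise generator distance $\WGLV$ and the hypotheses of those theorems has already been performed in Corollary \ref{cor:chap6-small-corollary}, so the remaining work is largely bookkeeping.

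For part (i), I would first observe that if $\mcA$ satisfies \eqref{eq:wass-gen-lipschitz} with a function $\Si$ obeying the conditions of Hypothesis \refAppt, then Condition (ii) of that hypothesis, namely $\Si(x,\mu,\nu) \le M\mcW_2(\mu,\nu)$, combined with \eqref{eq:wass-gen-lipschitz} gives
\begin{align*}
    \WGLV(\mcA(x,\mu),\mcA(y,\nu))^2 \le \frac{\be}{2}|x-y|^2 + \al M^2\,\mcW_2(\mu,\nu)^2,
\end{align*}
which is precisely the stronger Lipschitz condition \eqref{eq:wass-gen-lipschitz-W2}. Corollary \ref{cor:chap6-small-corollary}(ii) then shows that $\mcA$ satisfies Hypothesis \refA, and Theorem \ref{thm4.3:chap4-main-result} immediately yields the unique $c_2$-stable solution $\scb{\br_t}_{t\ge 0} \in C([0,\infty);\PCR)$ of \eqref{eq6.16:temp}.

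For part (ii), I would apply Corollary \ref{cor:chap6-small-corollary}(i) directly to conclude that \eqref{eq:wass-gen-lipschitz} implies Hypothesis \refAppt for $\mcA$. Together with the standing assumption that Hypothesis \ref{hypo:generation-problem} holds for the $N$-particle generator $\BHA_N$, this places us in exactly the setting of Theorem \ref{thm:main-wass} with $p = 2$, which delivers
\begin{align*}
    \sup_{t\in[0,T]} \mcW_2(\bs\rho_t,\bs\br_t)^2 \le \mcW_2(\bs\rho_0,\bs\br_0)^2 e^{KT} + C\,\zeta_K(T)\sup_{t\in[0,T]} \aleph_N(\br_t;\Si^2),
\end{align*}
as claimed. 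The constants $C$ and $K$ can be tracked explicitly: they depend on $\al,\be$ from \eqref{eq:wass-gen-lipschitz}, on the bounds $M,M'$ from Hypothesis \refAppt, and on the relaxed-triangle constant $B$ produced in the proof of Corollary \ref{cor:chap6-small-corollary}(i).

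I do not anticipate a genuine obstacle, since the analytical content has been front-loaded into Section \ref{sec6.2}. The crucial inputs are Proposition \ref{prop:levy-bdd}, which evaluates $\om_2$ for the drift, diffusion, and jump pieces separately in terms of the Euclidean, Bures--Wasserstein, and L\'evy--Wasserstein costs; the subadditivity result Theorem \ref{thm:om-subadditivity}, which glues these three estimates back together into the pointwise bound $\om_2 \le \WGLV^2 + \tfrac12|x-y|^2$ of Corollary \ref{cor:om-Wg-bdd}; and Corollary \ref{cor:levy-type-bdd}, which promotes this to the $x$-dependent L\'evy-type setting. The only caveat is that generation of the $N$-particle semigroup is not derived here but assumed through Hypothesis \ref{hypo:generation-problem}; verifying it for concrete models---as will be done in Section \ref{sec6.4}---requires independent arguments outside the abstract framework.
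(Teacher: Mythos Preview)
Your proposal is correct and matches the paper's own proof, which simply reads ``This follows from Corollary \ref{cor:chap6-small-corollary} and Theorem \ref{thm:main-wass}.'' You have spelled out the same reduction in more detail, including the minor observation for part (i) that \eqref{eq:wass-gen-lipschitz} implies \eqref{eq:wass-gen-lipschitz-W2} via the bound $\Si \le M\mcW_2$; the paper leaves this implicit (one could equally well note that Hypothesis \refAppt\ implies Hypothesis \refA\ through Remark \ref{rmk5.10:temp}).
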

\begin{proof}
	This follows from Corollary \ref{cor:chap6-small-corollary} and Theorem \ref{thm:main-wass}.
\end{proof}

In literature on McKean-Vlasov diffusion, it is well known that Lipschitz continuity of the drift and diffusion coefficients, with respect to the Euclidean and Frobenius norms, respectively, is sufficient to ensure propagation of chaos. The result presented here generalizes this principle to a broader class of systems. In particular, it applies to L\'evy-type mean-field systems, which include McKean-Vlasov diffusion as a special case. In this setting, the corresponding condition is a Lipschitz bound on the L\'evy-type mean-field generator $\mcA$ with respect to $\WGLV$, which serves as a ``metric-like'' functional on the space $\GLVR$.

\subsubsection{Estimate for \texorpdfstring{$\aleph_N(\br;\mcC_2)$}{aleph\_N} and pointwise propagation of chaos}
From Theorem \ref{thm:levy-poc-aleph}(ii), the propagation of chaos follows if it holds
\begin{align*}
	\lim_{N\to\infty}\sup_{t\in[0,T]}\aleph_N(\br_t;\Si^2)=0.
\end{align*}
Let us now discuss the above convergence in the case where $\Si$ is given by
\begin{align*}
	\Si(x,\mu,\nu) = \mcW_2(\mu,\nu),\quad\text{for all }x\in\Pi, \mu,\nu\in\PCP.
\end{align*}
In this case, we have
\begin{align*}
	\aleph_N(\br)=\aleph_N(\br;\mcW_2^2) = \int_{(\mbr^d)^N} \mcC_2(\mu(\Bx_1'),\br)\,d\br^{\oN}(\Bx).
\end{align*}
We recall that 
\begin{align*}
	\Bx=(x_1,x_2,\cdots,x_N)\in(\mbr^d)^N,\quad \Bx_1' = (x_2,\cdots,x_N)\in (\mbr^d)^{N-1}.
\end{align*}
In the probabilistic point of view, we have
\begin{align*}
	\aleph_N(\br) = \mbe\msqb{\mcC_2(\mu(\BX_{N-1}),\br)},
\end{align*}
where $\BX_{N-1}=(X_2,\cdots,X_N)$ is the vector of i.i.d. $\Pi$-valued random variables with the common law $X_k\sim\br$. Fournier and Guillin \cite{fournier2015rate} obtained the following estimate of this quantity above, where they consider the $p$-cost $\mcC_p$ for $p\ge1$, but we shall only state the result for $p=2$.

\begin{theorem}[\cite{fournier2015rate}]\label{thm:fournier-convergence-rate}
	Let $\br\in\mcP(\mbr^d)$. Assume that $M_q(\br):=\int_{\mbr^d} |x|^q\,d\br(x)<\infty$ for some $q>2$. Then there exists a constant $L$ depending only on $d$ and $q$ such that for all $N\ge2$, it holds $\aleph_N(\br)\le \ep_{d,q}(N-1)$, where
	\begin{align*}
		\ep_{d,q}(N) =
		\begin{cases}
			LM^{2/q}_q(\br)\rb{N^{-1/2}+N^{-(q-2)/q}}, &\text{if }d<4\text{ and }q\neq4,\\
			LM^{2/q}_q(\br)\rb{N^{-1/2}\log(1+N)+N^{-(q-2)/q}}, &\text{if }d=4\text{ and }q\neq4,\\
			LM^{2/q}_q(\br)\rb{N^{-2/d}+N^{-(q-2)/q}}, &\text{if }d>4\text{ and }q\neq d/(d-2).
		\end{cases}
	\end{align*}
\end{theorem}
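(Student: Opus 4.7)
The plan is to adapt the dyadic-partition strategy of Fournier and Guillin. Since $\aleph_N(\br;\mcW_2^2)$ coincides with $\mbe[\mcW_2^2(\mu_{N-1},\br)]$, where $\mu_{N-1}$ denotes the empirical measure of $N-1$ i.i.d.\ samples from $\br$, the task reduces to bounding this expected squared Wasserstein-2 cost by constructing an explicit (random) transport plan between $\mu_{N-1}$ and $\br$ and estimating its expected cost.

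The main idea is to partition space into a hierarchy of dyadic cubes and match mass level by level. At each scale $\ell = 0, 1, \ldots, L$, one partitions $\mbr^d$ into cubes $\{Q_{\ell,k}\}$ of side $\sim 2^{-\ell}$ and bounds the expected scale-$\ell$ contribution to $\mcW_2^2$ by
\[
(2^{-\ell})^2 \cdot \mbe\left[\sum_k \left|\mu_{N-1}(Q_{\ell,k}) - \br(Q_{\ell,k})\right|\right].
\]
The key probabilistic input is that each $\mu_{N-1}(Q_{\ell,k})$ is a normalized sum of $N-1$ Bernoulli indicators, hence $\mbe|\mu_{N-1}(Q_{\ell,k}) - \br(Q_{\ell,k})| \le \sqrt{\br(Q_{\ell,k})/(N-1)}$. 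Applying Cauchy--Schwarz in $k$ together with $\sum_k \br(Q_{\ell,k}) \le 1$ and a covering-count bound $\#\{Q_{\ell,k}\} \lesssim 2^{d\ell}$ for cubes intersecting a bounded region produces an expected scale-$\ell$ contribution of order $2^{-\ell(2 - d/2)}/\sqrt{N-1}$. To control cubes far from the origin, one uses the moment assumption to bound the $\br$-mass at distance $r$ by $r^{-q} M_q(\br)$, which produces the ``tail'' contribution responsible for the $N^{-(q-2)/q}$ term after the appropriate balance with the truncation radius.

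The main technical obstacle is summing across scales, where the geometric factor $2^{\ell(d/2-2)}$ exhibits three distinct regimes. For $d < 4$ the series is summable and dominated by the coarsest scale $\ell = 0$, producing the $N^{-1/2}$ rate; for $d > 4$ the series diverges and is dominated by the finest scale $\ell = L$, which must be balanced against the unresolved residual at that level to yield $N^{-2/d}$; and for $d = 4$ the sum is marginally divergent in $\ell$, producing the logarithmic factor $\log(1+N)$. The exceptional values $q = 4$ when $d < 4$, and $q = d/(d-2)$ when $d > 4$, are excluded precisely because these are the critical cases where the dyadic (bulk) rate and the moment (tail) rate coincide and would introduce an additional logarithmic correction absent from the stated form. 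Combining the bulk and tail pieces, optimizing both the truncation radius and the maximal scale $L = L(d,q)$ in $N$, and absorbing all dependence on dimension and moment order into a single constant $L$ then recovers the stated bound $\ep_{d,q}(N-1)$.
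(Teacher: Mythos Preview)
The paper does not supply a proof of this theorem: it is stated as a direct citation of Fournier and Guillin \cite{fournier2015rate}, with no argument given. Your sketch is therefore not competing with anything in the paper itself, but rather outlining the proof from the cited reference. That said, what you describe is indeed the Fournier--Guillin strategy (dyadic partition, multinomial fluctuation bound per cell, Cauchy--Schwarz over cells, truncation at a radius balanced against the $q$-th moment tail, and summation over scales with the three regimes $d<4$, $d=4$, $d>4$), so as a sketch of that external proof your outline is accurate.
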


As a consequence, we have the following pointwise propagation of chaos result for the L\'{e}vy-type mean-field systems.
\begin{theorem}\label{thm:levy-poc-fournier}
	Assume the settings of Theorem \ref{thm:levy-poc-aleph}. Assume also that the 
    Lipschitz condition \eqref{eq:wass-gen-lipschitz-W2} holds. For any $T\ge0$, suppose that the solution $\scb{\br_t}_{t\ge0}$ of the mean-field equation has a finite $q$ moment for some fixed $q>2$, that is,
	\begin{align*}
		C_T := \sup_{t\in[0,T]} \int_{\mbr^d} |x|^q\,d\br_t(x) <\infty.
	\end{align*}
	If $\Brho_0=\Bbr_0$, then it holds for some constants $C,K>0$ depending on $\Si$ and constants from Lipschitz condition \eqref{eq:wass-gen-lipschitz-W2} that
	\begin{align*}
		\mcW_2^2(\Brho_t,\Bbr_t) \le C\zeta_{K}(T)\ep_{d,q}(N-1),\quad t\in[0,T],
	\end{align*}
	where $\ep_{d,q}(N)$ is given by Theorem \ref{thm:fournier-convergence-rate}, with $M_q=C_T$. Particularly, pointwise propagation of chaos holds for the $N$-particle system generated by $\BHA_N$ as $N\to\infty$.
\end{theorem}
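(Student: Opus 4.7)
The plan is to assemble Theorem \ref{thm:levy-poc-aleph}(ii) and the Fournier--Guillin quantitative rate (Theorem \ref{thm:fournier-convergence-rate}). The first step is to recognize that the hypothesis \eqref{eq:wass-gen-lipschitz-W2} of the present theorem is exactly the instance of the stronger Lipschitz condition \eqref{eq:wass-gen-lipschitz} obtained by choosing $\Si(x,\mu,\nu)=\mcW_2(\mu,\nu)$. This choice trivially satisfies the structural requirements of Hypothesis {\refAppt}: symmetry and the triangle inequality for $\Si(x,\cdot,\cdot)$ are inherited from $\mcW_2$, $\Si$ is bounded above by $\mcW_2$ with constant $M=1$, and the Lipschitz constant in $x$ is $M'=0$ since $\Si$ does not depend on $x$. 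Thus all hypotheses of Theorem \ref{thm:levy-poc-aleph}(ii) are satisfied with this $\Si$.

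Second, I would apply Theorem \ref{thm:levy-poc-aleph}(ii) with the initial condition $\Brho_0=\Bbr_0$, so that $\mcW_2^2(\Brho_0,\Bbr_0)=0$ and the first term of the exponential estimate drops out. This yields, for some constants $C,K>0$ depending on the parameters $\al,\be$ from \eqref{eq:wass-gen-lipschitz-W2} and on the constant $B$ associated with $\Si=\mcW_2$,
\begin{align*}
\sup_{t\in[0,T]}\mcW_2^2(\Brho_t,\Bbr_t)\le C\zeta_K(T)\sup_{t\in[0,T]}\aleph_N(\br_t;\mcW_2^2).
\end{align*}
Unfolding Definition \ref{def:aleph-N} with $\Xi=\mcW_2^2$, and writing $\mu(\By_1')=\tfrac{1}{N-1}\sum_{k=2}^N\de_{y_k}$, gives
\begin{align*}
\aleph_N(\br_t;\mcW_2^2)=\int_{(\mbr^d)^N}\mcC_2\bigl(\mu(\By_1'),\br_t\bigr)\,d\br_t^{\otimes N}(\By)=\mbe\bigl[\mcC_2(\hat\mu_{N-1}^{(t)},\br_t)\bigr],
\end{align*}
where $\hat\mu_{N-1}^{(t)}$ denotes the empirical measure of $N-1$ i.i.d.\ samples from $\br_t$.

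Third, this last quantity is precisely what the Fournier--Guillin result controls. Since the hypothesis $\sup_{t\in[0,T]}M_q(\br_t)\le C_T$ is uniform in $t$, Theorem \ref{thm:fournier-convergence-rate} gives a uniform bound $\aleph_N(\br_t;\mcW_2^2)\le\ep_{d,q}(N-1)$ on $[0,T]$, with the constant $L$ absorbing $C_T^{2/q}$. Substituting this into the previous display produces the stated inequality, and since $\ep_{d,q}(N-1)\to 0$ as $N\to\infty$, pointwise propagation of chaos follows.

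The proof is essentially a synthesis; there is no major obstacle beyond the checks already made. If I had to identify a subtle point, it is the compatibility of $\Si=\mcW_2$ with Hypothesis {\refAppt}, and the fact that the quantity $\aleph_N(\br_t;\mcW_2^2)$ appearing naturally in our abstract framework coincides exactly with the integrated empirical Wasserstein error studied by Fournier and Guillin. The strict inequality $q>2$ in the hypothesis is precisely what is needed to invoke their rate and obtain a quantitative, dimension-dependent decay.
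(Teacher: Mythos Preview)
Your proposal is correct and matches the paper's approach: the paper does not give a formal proof of this theorem, but the surrounding discussion makes clear it is obtained exactly as you describe---specialize to $\Si=\mcW_2$ (so that $\aleph_N(\br_t;\Si^2)=\mbe[\mcC_2(\hat\mu_{N-1}^{(t)},\br_t)]$), apply Theorem~\ref{thm:levy-poc-aleph}(ii) with $\Brho_0=\Bbr_0$, and bound the remaining $\aleph_N$ term uniformly in $t$ via Theorem~\ref{thm:fournier-convergence-rate} using the moment assumption $M_q(\br_t)\le C_T$. Your verification that $\Si=\mcW_2$ satisfies the structural conditions of Hypothesis~\refAppt\ is the only point the paper leaves entirely to the reader, and you handle it correctly.
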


\begin{remark}
    In the existing literature, pathwise propagation of chaos for the classical McKean–Vlasov model is typically established under a Lipschitz condition on the diffusion matrix field $a(x,\mu)$ with respect to the Frobenius norm. In contrast, our result demonstrates that pointwise propagation of chaos holds under a weaker assumption, namely, a Lipschitz condition measured in terms of the Bures–Wasserstein distance between diffusion matrices. This raises a natural and interesting question: does pathwise propagation of chaos still hold under this weaker Lipschitz condition, even in the absence of a jump component?
\end{remark}

\subsection{Application of Theorem \ref{thm:levy-poc-fournier} to some existing results}\label{sec6.4}
In this final subsection, we aim to apply Theorem \ref{thm:levy-poc-fournier} to an example: a mean-field model arising from stochastic differential equations driven by both Brownian motion and L\'evy processes. This model is a special case of L\'evy-type mean-field systems, and we present it here as illustrative applications of our general framework.

\subsubsection{L\'evy-driven McKean-Vlasov diffusion}\label{sec6.4.1}
Let us consider the McKean-Vlasov diffusion, driven independently by Brownian motions $\{B_t\}$ and (mean zero) pure jump processes $\{Z_t\}_{t}$, which is described in the SDE form:
\begin{align}\label{eq6.21:temp}
	X_t^i &= X_0^i + \int_0^t b(X_s^i,\mu_{\bs X_s^N})\,ds + \int_0^t \si(X_s^i,\mu_{\bs X_s^N})\,dB_s^i+\int_0^t \eta(X_s^i,\mu_{\bs X_s^N})\,dZ_s^i,\quad 1\le i\le N,
\end{align}
where $N\ge 1$, $\mu_{\bs{x}}=N^{-1}\sum_{k=1}^N \de_{x_k}\in \mcp_2(\mbr^d)$ with $\bs{x}=(x_1,\cdots, x_N)\in (\mbr^d)^{N}$,
\begin{itemize}
	\item $b:\mbr^d\times \PCR\to\mbr^d$ is a vector field,
	\item $\si,\eta:\mbr^d\times \PCR\to \mcl{M}_d(\mbr)$ are ($d\times d$ real-valued) matrix fields,
	\item $\scb{\scb{B_t^i}_{t\ge0}}_{1\le i\le N}$ are i.i.d. copies of the standard Brownian motion,
	\item $\{ \{Z_t^i\}_{t \ge 0} \}_{1 \le i \le N}$ are a family of i.i.d. pure jump L\'{e}vy processes with L\'evy measure $\Om\in\LVM$. Specifically, the process admits the generator, which is a global pure jump operator:
    \begin{align*}
        \mA^J\varphi(x)= \int_{\mbr^d} \sqb{\varphi(x+x')-\varphi(x)-x'\cdot \nabla \phi(x)} d\Om(x'),\qquad \varphi\in C_0^2(\mbr^d). 
    \end{align*}
\end{itemize}

Notice that when $\eta\equiv0$, \eqref{eq6.21:temp} reduces to the classical McKean-Vlasov diffusion. It is proven in the vast literature that under appropriate assumption on the continuity of the vector field $b$ and matrix fields $\si,\eta$, the process $\{\{X_{t}^i\}_{1\le i\le N}\}_{t\ge 0}$ exhibits (pathwise) propagation of chaos as the number of particles $N\to\infty$. In particular, the following globally Lipschitz condition is considered: there exists $K>0$ such that for all $x,y\in\mbr^d$, $\mu,\nu\in\PCR$,
\begin{align}\label{eq:lip-bound}
	|b(x,\mu)-b(y,\nu)|+\norm{\si(x,\mu)-\si(y,\nu)}_{\mcF} \le K\mrb{\mcW_2(\mu,\nu) + |x-y|}.
\end{align}
This Lipschitz condition can be found in literature such as \cite[Section 2.2.2]{Chaintron_2022_1}, \cite{funaki1984certain}, \cite{sznitman1991topics} and \cite{ERNY2022192}.
Apart from the globally Lipschitz condition, various alternative assumptions have been explored in the literature as well, including, for example, Lipschitz in the total variation norm, as well as some coercivity and monotonicity conditions. Moreover, more general settings have been studied, such as those involving time-dependent coefficients $b$ and $\sigma$. Interested readers may refer to \cite{Leonard1986}, \cite{gartner1988mckean}, \cite{crisan2014conditional}, \cite{campi2017} and \cite{lacker2018}.

On the other hand, when $\si\equiv0$, Cavallazzi \cite{Cavallazzi2023} studied this model and considered the same globally Lipschitz condition \eqref{eq:lip-bound} (with $\eta$ in place of $\si$).
Along with some appropriate assumptions, Cavallazzi showed that the mean-field process $\{\bar{X}_t\}_{t \ge 0}$ exists.
Under this Lipschitz assumption, the author then established the well-posedness of the system and proved pathwise propagation of chaos, using the coupling method. However, their result on propagation of chaos was restricted to the case where the model has a constant coefficient matrix $\eta$.
In the context of the present abstract framework, the associated mean-field generator for Cavallazzi's model, in the global form, is given by:
\begin{align}
	&\mathcal{A}(\phi; \mu)(x) \nonumber\\
	&= b(x, \mu) \cdot \nabla \phi(x) + \int_{\mathbb{R}^d\setminus\{0\}} \left[ \phi(x + \eta(x, \mu) y) - \phi(x) - (\eta(x, \mu) y) \cdot \nabla \phi(x) \right] \Om(dy) \nonumber \\
	&= b(x, \mu) \cdot \nabla \phi(x) + \int_{\mathbb{R}^d\setminus\{0\}} \left[ \phi(x + y) - \phi(x) - y \cdot \nabla \phi(x)  \right] (\eta(x, \mu)_\sharp \Om)(dy). \label{eq:Cav-op}
\end{align}
Note that $\eta y$ denotes the matrix multiplication between $\eta$ and the vector $y$, and $\eta_\sharp \Ta$ is the pushforward of the L\'{e}vy measure $\Ta$ by the matrix $\eta$.
In fact, their setting actually allows for time-dependent coefficients $b$ and $\eta$, but we shall not go into details here.

In both cases, propagation of chaos for these models is typically established in pathwise sense in the literature, which is stronger than the pointwise propagation of chaos we prove, but with a more restrictive condition of the fields $(b,\si,\eta)$. However, the novelty of our result lies in its generality: our unified theorem applies broadly to a wide class of L\'evy-type mean-field systems, beyond the specific examples considered in prior work.

In this example, we shall consider the case where $\Si(x,\mu,\nu)=\mcW_2(\mu,\nu)$.
Let us name the continuity condition of the triplet $(b,\si,\eta)$ with respect to $x,\mu$ precisely. For convenience, let us denote $\mcl{V}=\mbr^d\times \mcl{M}_d(\mbr)\times \mcl{M}_d(\mbr)$ and define the norm $\|\cdot\|_{\mcl{V}}:\mcl{V}\to[0,\infty)$ by
\begin{align}\label{def:v-norm}
	\|(b,\si,\eta)\|_{\mcl{V}}^2&:= \frac 12 |b|^2+\frac 12 \|\si\|_{\mcl{F}}^2 + \frac 12 \|\eta\|_\mcl{F}^2. 
\end{align}
Specifically, $(\mcl{V},\|\cdot\|_{\mcl{V}})$ is the direct sum of the Euclidean space and the space of matrices with Frobenius norm. 
We shall view the vector-matrix field $(b,\si,\eta)(x,\mu)$ in \eqref{eq6.21:temp} as a map from $\mbr^d\times \mcl{P}_2(\mbr^d)$ to the space $\mcl{V}$. Let us impose the following Lipschitz condition: for some $\al,\be\ge 0$, for all $x,y\in\mbr^d$, $\mu,\nu\in\PCR$,
\begin{align}\label{cont-cond}
	\|(b,\si,\eta)(x,\mu)-(b,\si,\eta)(y,\nu) \|_{\mcl{V}}^2 \le \al\mcl{W}_2(\mu,\nu)^2+\f\be2|x-y|^2.
\end{align}
We claim that this is sufficient to guarantee the condition of Corollary \ref{cor:chap6-small-corollary}, particularly, Condition \eqref{eq:wass-gen-lipschitz-W2}. 
We can then apply Theorem \ref{thm:levy-poc-fournier} to show that propagation of chaos holds.

First, the mean-field generator associated to the SDEs \eqref{eq6.21:temp} is given by \eqref{eq:levy-mean-field-generator}--\eqref{eq6.6:temp}, where the diffusion matrix and L\'evy measure field are given by 
\begin{align}\label{def:pushforward}
    a(x,\mu)&= \si(x,\mu)\si(x,\mu)^\top, \qquad \Ta(x,\mu)= \eta(x,\mu)_\sharp \Omega. 
\end{align}
Specifically $\Ta(x,\mu)$ is the pushforward L\'evy measure of $\Om$ by the linear transform $\si(x,\mu)$. 


Before we proceed to the main discussion, let us start with useful estimates for the Bures-Wasserstein distance of matrices of the form $a=\si\si^\top$, and (squared) transport cost $\WLA$ of a fixed L\'evy measure $\Om\in\LVM$ pushforwarded by different matrices.
\begin{lemma}\label{lem6.16:temp}
	Let $\si,\tilde\si\in\mcM_d(\mbr)$ and suppose $a=\si\si^\top$, $\tilde a=\tilde\si\tilde\si^\top$. Then it holds
	\begin{align*}
		\BWD(a,\tilde a)^2 \le \frac{1}{2}\norm{\si-\tilde\si}_{\mcF}^2.
	\end{align*}
	Furthermore, let $\Om\in\LVM$, then it holds
	\begin{align*}
		\WLA(\si_\sharp \Om,\tilde \si_\sharp \Om)^2 &\le \frac 12 \|\si-\tilde\si\|_{\mcl{F}}^2 \int_{\mbr^d} |z|^2 d\Om(z). 
	\end{align*}
\end{lemma}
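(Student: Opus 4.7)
My plan is to prove each part by constructing an explicit (near-)optimal coupling and reducing the problem to a direct calculation.

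\textbf{Part 1 (Bures--Wasserstein bound).} I would use the probabilistic interpretation of $\BWD$. Let $Z$ be a standard $d$-dimensional normal random vector, and set $X := \sigma Z$, $\tilde X := \tilde\sigma Z$. Then $X\sim \mathrm{Normal}(0,a)$ and $\tilde X\sim \mathrm{Normal}(0,\tilde a)$ since $\sigma\sigma^\top = a$ and $\tilde\sigma\tilde\sigma^\top = \tilde a$. The joint law of $(X,\tilde X)$ thus furnishes a specific coupling of these two Gaussian measures. By the Givens--Shortt identity recalled in Section~6 (taking $x_0=y_0=0$), we have $\BWD(a,\tilde a)^2 = \mcC_2(\mathrm{Normal}(0,a),\mathrm{Normal}(0,\tilde a))$, and the infimum definition of $\mcC_2$ yields
\[
\BWD(a,\tilde a)^2 \le \mbe\!\left[\tfrac{1}{2}|X-\tilde X|^2\right]
= \tfrac{1}{2}\,\mbe\!\left[|(\sigma-\tilde\sigma)Z|^2\right]
= \tfrac{1}{2}\,\tr\!\big((\sigma-\tilde\sigma)(\sigma-\tilde\sigma)^\top\big)
= \tfrac{1}{2}\|\sigma-\tilde\sigma\|_{\mcF}^2,
\]
where the middle equality uses $\mbe[ZZ^\top]=I_d$. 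This is the desired bound.

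\textbf{Part 2 ($\mcW_\Lambda$ bound).} The natural candidate coupling is the pushforward of $\Omega$ by the map $T:\mbr^d\to\mbr^{2d}$, $T(z)=(\sigma z,\tilde\sigma z)$. Precisely, I would define $\Upsilon$ as the restriction of $T_\sharp \Omega$ to $\mbr^{2d}\setminus\{0\}$, which removes any atom that might occur at the origin (arising from $z\in\ker\sigma\cap\ker\tilde\sigma$). The main steps are:
(i) verify that $\Upsilon\in\Lambda_2(\mbr^{2d})$, which follows from $\int(|\sigma z|^2+|\tilde\sigma z|^2)\,d\Omega(z)\le (\|\sigma\|_{\mcF}^2+\|\tilde\sigma\|_{\mcF}^2)\int|z|^2\,d\Omega(z)<\infty$;
(ii) check the marginal condition of Definition~\ref{def:levy-trans}: for any measurable $E\subset\mbr^d$ not containing a neighborhood of the origin,
\[
\Upsilon(E\times\mbr^d)=\Omega\big(\{z\neq 0:\sigma z\in E\}\big)=(\sigma_\sharp\Omega)(E),
\]
and analogously for the second marginal, so $\Upsilon\in\Gamma_\Lambda(\sigma_\sharp\Omega,\tilde\sigma_\sharp\Omega)$;
(iii) compute the cost by the change-of-variable formula,
\[
\WLA(\sigma_\sharp\Omega,\tilde\sigma_\sharp\Omega)^2
\le \int_{\mbr^{2d}} \tfrac{1}{2}|u-v|^2\,d\Upsilon(u,v)
= \int_{\mbr^d} \tfrac{1}{2}|(\sigma-\tilde\sigma)z|^2\,d\Omega(z),
\]
and bound the integrand pointwise by $\frac{1}{2}\|\sigma-\tilde\sigma\|_{\mathrm{op}}^2|z|^2\le \frac{1}{2}\|\sigma-\tilde\sigma\|_{\mcF}^2|z|^2$, using that the operator norm is dominated by the Frobenius norm.

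\textbf{Main obstacle.} The only real subtlety is the marginal bookkeeping in step (ii): one must ensure that restricting $T_\sharp\Omega$ away from the origin does not spoil the marginal identities on sets avoiding a neighborhood of $0$. This is clean because $T^{-1}(E\times\mbr^d)\setminus\{0\}=\{z\neq 0:\sigma z\in E\}$, which is precisely what $(\sigma_\sharp\Omega)(E)$ records. Once the coupling is verified, both inequalities reduce to elementary linear-algebra estimates (the operator--Frobenius comparison and $\mbe[ZZ^\top]=I$), so no further analysis is needed.
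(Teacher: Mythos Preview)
Your proposal is correct and follows essentially the same approach as the paper: for both inequalities you construct the coupling by pushing forward the common source (the standard Gaussian in Part 1, the L\'evy measure $\Omega$ in Part 2) under the map $z\mapsto(\sigma z,\tilde\sigma z)$, then bound the transport cost by the integral of $\tfrac12|(\sigma-\tilde\sigma)z|^2$. The only cosmetic difference is that the paper verifies the L\'evy coupling via the test-function characterization of Definition~\ref{def:levy-trans} (functions $\phi$ with $|\phi(x)|\le C|x|^2$ automatically vanish at the origin, so no explicit restriction away from $0$ is needed), whereas you use the set-based marginal condition and handle the atom at the origin directly; both are equally valid.
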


\begin{proof}
	As seen in the proof of Proposition \ref{prop:levy-bdd}(ii) earlier, $\BWD(a,\tilde a)^2$
	is the squared Wasserstein-$2$ distance $\mcC_2(\mu,\nu)$ between Gaussian measures $\mu\sim\mathrm{normal}(0,a)$ and $\nu\sim\mathrm{normal}(0,\tilde a)$. It remains to show that
	\begin{align*}
		\BWD(a,\tilde a)^2 =\mcC_2(\mu,\nu)\le\frac{1}{2}\norm{\si-\tilde\si}_{\mcF}^2.
	\end{align*}	
	Let $\rho\sim\mathrm{normal}(0,I)$ be the standard Gaussian measure. Then $\mu=\si_{\#}\rho$, $\nu=\tilde\si_{\#}\rho$, where $\si_{\#}\rho, \tilde\si_{\#}\rho$ denotes the pushforward measures of $\rho$ by the linear transform $\si,\tilde\si$. This provides a coupling between $\mu,\nu$, and hence
	\begin{align*}
		\mcC_2(\mu,\nu) \le \int_{\mbr^d} \frac{1}{2}|\si x-\tilde\si x|^2\,d\rho(x) = \int_{\mbr^d} \frac{1}{2}|(\si -\tilde\si) x|^2\,d\rho(x) = \frac{1}{2}\norm{\si-\tilde\si}_{\mcF}^2.
	\end{align*}
	In the last step, we used the fact that $\int_{\mbr^d} |Ax|^2\,d\rho(x)=\norm{A}_{\mcF}^2$.

    To prove the second bound, let $\Upsilon\in\La_2(\mbr^{2d})$ be the L\'evy coupling (see Definition \ref{def:levy-trans}) of $\si_\sharp \Omega$, $\tilde \si_\sharp \Omega$ defined by
	\begin{align*}
		\int_{\mbr^{2d}} \Phi(x,y)\,\Upsilon(dx,dy)&= \int_{\mbr^{d}} \Phi\mrb{\si z,\tilde\si z}\,d\Omega(z)
	\end{align*}
	for any bounded measurable $\Phi:\mbr^d\times\mbr^d\to\mbr$. 
	Particularly, $\Upsilon$ is supported on the set $\{(\si z,\tilde \si z):z\in\supp(\Omega)\}\subset \mbr^{2d}$. 
	To verify that this is a coupling, let $\phi \in C_{b,2}(\mathbb{R}^d)$ be a continuous function satisfying $|\phi(x)|\le L|x|^2$ for some $L\ge 0$, then it holds
	\begin{align*}
		\int_{\mathbb{R}^d \times \mathbb{R}^d} \phi(x)\, \Upsilon(dx, dy) =\int_{\mbr^{d}} \phi\mrb{\si z}\,d\Omega(z)= \int_{\mathbb{R}^d} \phi(z)\, \si_\sharp \Omega(dz).
	\end{align*}
	Similarly,
	\[\int_{\mathbb{R}^d \times \mathbb{R}^d} \phi(y)\, \Upsilon(dx, dy) = \int_{\mathbb{R}^d} \phi(z)\, \tilde\si_\sharp \Omega(dz).\]
	This shows that $\Upsilon$ is indeed a L\'evy coupling of $\si_\sharp \Omega$, $\tilde \si_\sharp \Omega$.
	We then find
	\begin{align*}
		\WLA(\si_\sharp \Omega, \tilde \si_\sharp \Omega)^2 &\le \f12 \int_{\mbr^d\times\mbr^d} |x-y|^2\,\Upsilon(dx,dy) \le \frac 12 \int_{\mbr^d} |\si z-\tilde\si z|^2\,d\Omega(z)\\
		&\le \frac 12 \int_{\mbr^d} \|\si-\tilde\si\|_{\mcF}^2|z|^2\,d\Omega(z) = \frac 12 \|\si-\tilde\si\|_{\mcF}^2 \int_{\mbr^d}|z|^2\,d\Omega(z).\qedhere
	\end{align*}
\end{proof}

Next, we present a lemma that Lipschitz bounds the $\WGLV$ of mean-field generators $\mcA$ w.r.t. the $\|\cdot\|_{\mcl{V}}$ norm introduced in \eqref{def:v-norm}.
\begin{lemma}\label{lem:W-less-than-D}
	Let $\mcA$ be the mean-field generator \eqref{eq:levy-mean-field-generator}--\eqref{eq6.6:temp}, \eqref{def:pushforward} associated to the L\'evy-driven McKean-Vlasov diffusion \eqref{eq6.21:temp} with $\Om\in\LVM$, vector field and matrix fields $(b,\si,\eta):\mbr^d\times \PCR \to \mcl{V}$. It holds for some constant $D\ge 0$ (depending on $\Om$) that
	\begin{align*}
		\WGLV(\mcA(x,\mu),\mcA(y,\nu))^2 &\le D \|( b,\si,\eta)(x,\mu)-(b,\si,\eta)(y,\nu)\|_{\mcl{V}}^2.
	\end{align*}	
\end{lemma}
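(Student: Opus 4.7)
The claim follows by unpacking the definition of $\WGLV$ and applying Lemma~\ref{lem6.16:temp} to each of the three components separately. Let me sketch how I would carry this out.

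First, I would write out the L\'evy triplets associated with $\mcA(x,\mu)$ and $\mcA(y,\nu)$ as prescribed by \eqref{eq:levy-mean-field-generator}--\eqref{eq6.6:temp} and \eqref{def:pushforward}, namely $(b(x,\mu), \si(x,\mu)\si(x,\mu)^\top, \eta(x,\mu)_\sharp \Om)$ and $(b(y,\nu), \si(y,\nu)\si(y,\nu)^\top, \eta(y,\nu)_\sharp \Om)$. By the definition of $\WGLV$, the left-hand side decomposes into three terms:
\begin{align*}
\WGLV(\mcA(x,\mu),\mcA(y,\nu))^2 &= \tfrac{1}{2}|b(x,\mu)-b(y,\nu)|^2 \\
&\quad + \BWD(\si(x,\mu)\si(x,\mu)^\top, \si(y,\nu)\si(y,\nu)^\top)^2 \\
&\quad + \WLA(\eta(x,\mu)_\sharp \Om,\, \eta(y,\nu)_\sharp \Om)^2.
\end{align*}

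Next, I would bound each term against the corresponding $\mcl{V}$-component. The drift term already matches $\tfrac{1}{2}|b(x,\mu)-b(y,\nu)|^2$ exactly. For the diffusion term, the first bound of Lemma~\ref{lem6.16:temp} gives immediately
\[
\BWD(\si(x,\mu)\si(x,\mu)^\top, \si(y,\nu)\si(y,\nu)^\top)^2 \le \tfrac{1}{2}\|\si(x,\mu)-\si(y,\nu)\|_{\mcF}^2.
\]
For the jump term, the second bound of Lemma~\ref{lem6.16:temp}, applied with the fixed measure $\Om \in \LVM$, gives
\[
\WLA(\eta(x,\mu)_\sharp \Om,\, \eta(y,\nu)_\sharp \Om)^2 \le \tfrac{1}{2}\|\eta(x,\mu)-\eta(y,\nu)\|_{\mcF}^2 \int_{\mbr^d}|z|^2\,d\Om(z).
\]
Here the integral $M := \int_{\mbr^d}|z|^2\,d\Om(z)$ is finite precisely because $\Om \in \LVM$, i.e.\ has finite second moment.

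Combining these three estimates and pulling out the largest constant, I would set $D := \max\{1, M\}$, which depends only on $\Om$. Then
\[
\WGLV(\mcA(x,\mu),\mcA(y,\nu))^2 \le D\sqb{\tfrac{1}{2}|b(x,\mu)-b(y,\nu)|^2 + \tfrac{1}{2}\|\si(x,\mu)-\si(y,\nu)\|_{\mcF}^2 + \tfrac{1}{2}\|\eta(x,\mu)-\eta(y,\nu)\|_{\mcF}^2},
\]
and by the definition \eqref{def:v-norm} of $\|\cdot\|_{\mcl{V}}$ the right-hand side is exactly $D\|(b,\si,\eta)(x,\mu)-(b,\si,\eta)(y,\nu)\|_{\mcl{V}}^2$. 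Since both ingredient bounds are already packaged in Lemma~\ref{lem6.16:temp}, no genuine obstacle arises; the argument is essentially a term-by-term comparison, and the only quantitative input from the L\'evy measure $\Om$ is its second moment, which enters through the constant $D$.
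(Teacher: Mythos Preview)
Your proof is correct and follows essentially the same approach as the paper's: decompose $\WGLV^2$ into its three components, apply the two estimates from Lemma~\ref{lem6.16:temp} to the diffusion and jump parts, and take $D=\max\{1,\int|z|^2\,d\Om(z)\}$. The paper's argument is identical up to naming the second-moment constant $D'$ instead of $M$.
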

\begin{proof}
	By Lemma \ref{lem6.16:temp}, for any $x,y\in\mbr^d$, $\mu,\nu\in\PCR$,
	\begin{align*}
		\BWD\mrb{a(x,\mu),a(y,\nu)}^2 \le \frac{1}{2}\norm{\si(x,\mu)-\si(y,\nu)}_{\mcF}^2
	\end{align*}
	and
	\begin{align*}
		\WLA(\eta(x,\mu)_\sharp \Omega, \eta(y,\nu)_\sharp \Omega) \le \frac{D'}{2} \|\eta(x,\mu) -\eta(y,\nu)\|_{\mcF}^2,\quad D':= \int_{\mbr^d}|z|^2\,d\Omega(z).
	\end{align*}
	Summing up these estimates, we then find
	\begin{align*}
		\WGLV(\mcA(x,\mu),\mcA(y,\nu))^2 &= \f12 |b-b|^2 + \BWD(a(x,\mu),a(y,\nu))^2 + \WLA(\eta(x,\mu)_\sharp \Omega, \eta(y,\nu)_\sharp \Omega)^2\\
		&\le \f12 |b-b|^2 + \frac{1}{2}\norm{\si(x,\mu)-\si(y,\nu)}_{\mcF}^2 +\frac {D'}2 \|\eta(x,\mu) -\eta(y,\nu)\|_{\mcF}^2\\
		&\le D \|( b,\si,\eta)(x,\mu)-(b,\si,\eta)(y,\nu)\|_{\mcl{V}}^2.
	\end{align*}
	where $D=\max\scb{1,D'}$. 
\end{proof}

Suppose the Lipschitz continuity condition \eqref{cont-cond} is satisfied.
Then by Lemma \ref{lem:W-less-than-D}, the mean-field generator $\mcA$ of the form \eqref{eq:levy-mean-field-generator}--\eqref{eq6.6:temp}, \eqref{def:pushforward} satisfies
\begin{align*}
	\WGLV(\mcA(x,\mu),\mcA(y,\nu))^2 \le D\sqb{\f \be 2|x-y|^2+\al\mcW_2(\mu,\nu)^2},
\end{align*}
which is Condition \eqref{eq:wass-gen-lipschitz-W2}. Hence, by Corollary \ref{cor:chap6-small-corollary}, $\mcA$ satisfies Hypothesis {\refA}.

Furthermore, let $\bs\rho_0\in\PCRN$ and denote $\bs{\rho}_t = \bs{\rho}_0 e^{t\bs{\hat{\mcA}}_N}$, $\bs{\br}_t = \br_t^{\otimes N}$.
Recall our notation that $\BHA_N$ is the $N$-particle generator associated to $\mcA$ (we assume Hypothesis \ref{hypo:generation-problem} holds), $\scb{\br_t}_{t}\in C([0,\infty);\PCR)$ be a $c$-stable solution of the mean-field evolution problem associated to $\mcA$.
Assume additionally that for any $T\ge0$, $\scb{\br_t}_{t\ge0}$ has a finite $q$ moment for some fixed $q>2$, that is,
\begin{align*}
	C_T:=\sup_{t\in[0,T]} \int_{\mbr^d} |x|^q\,d\br_t(x) <\infty,
\end{align*}
Applying Theorem \ref{thm:levy-poc-fournier}, if $\Brho_0=\Bbr_0$, then it holds for some $C,K>0$ depending on $\Om\in\LVM$ and constants from the Lipschitz continuity condition \eqref{cont-cond} that
\begin{align*}
	\mcW_2^2(\Brho_t,\Bbr_t) \le C\zeta_{K}(T)\ep_{d,q}(N-1),\quad t\in[0,T],
\end{align*}
where $\zeta_{K}(T):=K^{-1}(e^{K T}-1)$, $\ep_{d,q}(N)$ is given in Theorem \ref{thm:fournier-convergence-rate}, with $M_q=C_T$. Particularly, the L\'evy-driven McKean-Vlasov diffusion exhibits pointwise propagation of chaos as $N\to\infty$.
Note that the convergence rate is controlled by $\ep_{d,q}(N-1)$, which decays roughly at the rate of $N^{-1/d}$, which becomes unfavorable as the dimension $d$ increases.


\subsubsection{L\'evy-driven McKean-Vlasov of average form}

In this final discussion, we examine a special case of the L\'evy-driven McKean--Vlasov diffusion \eqref{eq6.21:temp}, where the coefficients take an \emph{average form}. This structure leads to a stronger form of the continuity condition in terms of the function \(\Xi\), allowing us to obtain improved convergence rates of order \(O(N^{-1})\). Specifically, the mean-field generator under consideration is given by \eqref{eq:levy-mean-field-generator}--\eqref{eq6.6:temp} and \eqref{def:pushforward}, where \(\Om \in \LVM\), and the coefficients 
\[
(b, \sigma, \eta): \mathbb{R}^d \times \mathcal{P}_2(\mathbb{R}^d) \to \mathbb{R}^d \times \mathcal{M}_d(\mathbb{R}) \times \mathcal{M}_d(\mathbb{R})
\]
are given in the average form:
\begin{align*}
	b(x,\mu) &= \sum_{i=1}^d b_i(x,\mu) e_i, \quad \text{where } b_i(x,\mu) = \int_{\mathbb{R}^d} \tilde{b}_i(x,z)\, d\mu(z), \\
	a(x,\mu) &= \sigma(x,\mu)\sigma(x,\mu)^\top, \quad \text{with } \sigma_{ij}(x,\mu) = \int_{\mathbb{R}^d} \tilde{\sigma}_{ij}(x,z)\, d\mu(z), \\
	\eta_{ij}(x,\mu) &= \int_{\mathbb{R}^d} \tilde{\eta}_{ij}(x,z)\, d\mu(z).
\end{align*}
Here, \(\tilde{b}: \mathbb{R}^d \times \mathbb{R}^d \to \mathbb{R}^d\) is a vector field, and \(\tilde{\sigma}, \tilde{\eta}: \mathbb{R}^d \times \mathbb{R}^d \to \mathcal{M}_d(\mathbb{R})\) are matrix-valued functions. When the jump component is absent, such average-form McKean--Vlasov models have appeared in the literature; see for example \cite[Section~2.1]{meleard1996asymptotic}, \cite[Chapter~1]{sznitman1991topics}, \cite{MR4421344}, \cite{ERNY2022192}, \cite{Carmona2013}.

We again view the triple \((\tilde{b}, \tilde{\sigma}, \tilde{\eta})\) as a function from \(\mathbb{R}^d \times \mathbb{R}^d\) to the space \(\mathcal{V} = \mathbb{R}^d \times \mathcal{M}_d(\mathbb{R}) \times \mathcal{M}_d(\mathbb{R})\), equipped with the norm \(\|\cdot\|_{\mathcal{V}}\) defined in \eqref{def:v-norm}. We assume the following uniform Lipschitz condition: there exists a constant \(M \ge 0\) such that, for all \(x,x',y,y'\in \mathbb{R}^d\),
\begin{align}\label{eq:average-form-lipschitz}
	\|(\tilde{b}, \tilde{\sigma}, \tilde{\eta})(x,y) - (\tilde{b}, \tilde{\sigma}, \tilde{\eta})(x',y')\|^2_{\mathcal{V}} \le \frac{M}{2} \left( |x - x'|^2 + |y - y'|^2 \right).
\end{align}
This entrywise Lipschitz condition is common in the study of McKean--Vlasov models and is often used to derive propagation of chaos results with convergence rates of order \(O(N^{-1})\). Our objective here is to demonstrate how this improved rate emerges naturally within our general framework.

To this end, and in order to apply Theorem~\ref{thm:main-wass}, we consider the following functional \(\Sigma: \mathbb{R}^d \times \mathcal{P}_2(\mathbb{R}^d)^2 \to [0,\infty)\). Define the auxiliary quantities \(\Sigma_b\), \(\Sigma_\sigma\), and \(\Sigma_\eta\) by
\begin{align*}
	\Sigma_b(x,\mu,\nu)^2 &:= \left|b(x,\mu) - b(x,\nu)\right|^2 = \sum_{i=1}^d \left| \int_{\mathbb{R}^d} \tilde{b}_i(x,z)\, d(\mu - \nu)(z) \right|^2, \\
	\Sigma_\sigma(x,\mu,\nu)^2 &:= \|\sigma(x,\mu) - \sigma(x,\nu)\|_{\mathcal{F}}^2 = \sum_{i,j=1}^d \left| \int_{\mathbb{R}^d} \tilde{\sigma}_{ij}(x,z)\, d(\mu - \nu)(z) \right|^2, \\
	\Sigma_\eta(x,\mu,\nu)^2 &:= \|\eta(x,\mu) - \eta(x,\nu)\|_{\mathcal{F}}^2 = \sum_{i,j=1}^d \left| \int_{\mathbb{R}^d} \tilde{\eta}_{ij}(x,z)\, d(\mu - \nu)(z) \right|^2,
\end{align*}
and set
\[
\Sigma(x,\mu,\nu)^2 := \Sigma_b(x,\mu,\nu)^2 + \Sigma_\sigma(x,\mu,\nu)^2 + \Sigma_\eta(x,\mu,\nu)^2.
\]
Specifically, $\Si$ is exactly in the form of Remark \ref{rem:Asip-form}.
We will next verify that \(\Sigma\) satisfies Conditions~(i)--(iii) in Hypothesis~\refApp\ for \(p=2\), assuming that the Lipschitz condition \eqref{eq:average-form-lipschitz} holds.

For notational simplicity in the computation, let us introduce the notation \(\tau := (b, \sigma, \eta) \in \mathcal{V}\). In particular,
\[
\tilde{\tau}(x,z) = (\tilde{b}, \tilde{\sigma}, \tilde{\eta})(x,z),\qquad \tau(x,\mu) = (b, \sigma, \eta)(x,\mu) = \int_{\mbr^d} \tilde \tau(x,z)\,d\mu(z). 
\]
Since \(\mathcal{V} \cong \mathbb{R}^d \times \mathcal{M}_d(\mathbb{R}) \times \mathcal{M}_d(\mathbb{R}) \cong \mathbb{R}^{d + 2d^2}\), we may identify \(\tau\) as a vector \((\tau_\ell)_{\ell=1}^{d + 2d^2}\), and the norm \(\|\tau\|_{\mathcal{V}}\) becomes the standard Euclidean norm on \(\mathbb{R}^{d + 2d^2}\), namely,
\[
\|\tau\|_{\mathcal{V}}^2 = \|(b,\si,\eta)\|_{\mcl{V}}^2 = \sum_{\ell=1}^{d + 2d^2} \tau_\ell^2.
\]

Condition~(i) from Hypothesis~\refAppt\ is immediate, as the map \((\tau, \tau') \mapsto \|\tau - \tau'\|_{\mathcal{V}}\) defines a metric on the space \(\mathcal{V}\).
We now verify Condition~(ii). Fix \(\mu, \nu \in \mathcal{P}_2(\mathbb{R}^d)\), and let \(\gamma \in \Gamma(\mu, \nu)\) be an optimal coupling with respect to the cost \(c_2(z,z') := \frac{1}{2} |z - z'|^2\). Using the notation introduced earlier, we compute
\begin{align*}
	\Sigma(x,\mu,\nu)^2 &= |\tau(x,\mu) - \tau(x,\nu)|^2 
	= \sum_{\ell=1}^{d + 2d^2} \left| \int_{\mathbb{R}^d} \tilde{\tau}_\ell(x,z)\, d(\mu - \nu)(z) \right|^2 \\
	&= \sum_{\ell=1}^{d + 2d^2} \left| \int_{\mathbb{R}^{2d}} \left( \tilde{\tau}_\ell(x,z) - \tilde{\tau}_\ell(x,z') \right) d\gamma(z, z') \right|^2.
\end{align*}
Taking the square root on both sides and applying the Minkowski integral inequality and Lipschitz condition \eqref{eq:average-form-lipschitz} yield
\begin{align*}
	\Sigma(x,\mu,\nu)
	&\le \int_{\mathbb{R}^{2d}} \left( \sum_{\ell=1}^{d + 2d^2} \left| \tilde{\tau}_\ell(x,z) - \tilde{\tau}_\ell(x,z') \right|^2 \right)^{1/2} d\gamma(z, z') \\
	&= \int_{\mathbb{R}^{2d}} \| \tilde{\tau}(x,z) - \tilde{\tau}(x,z') \|_{\mathcal{V}}\, d\gamma(z, z')\\
	&\le \sqrt{\frac{M}{2}} \int_{\mathbb{R}^{2d}} |z - z'|\, d\gamma(z, z') 
	\le \sqrt{M} \left( \int_{\mathbb{R}^{2d}} \f 12|z - z'|^2\, d\gamma(z, z') \right)^{1/2} \\
	&= \sqrt{M}\, \mathcal{W}_2(\mu, \nu).
\end{align*}
This verifies Condition~(ii).

We now verify Condition~(iii). Recall the notation $\tau(x,\mu)=\int_{\mbr^d}\tilde \tau(x,z)d\mu(z)$,
which takes values in the space \(\mathcal{V} \simeq \mathbb{R}^{d + 2d^2}\). Note that by definition, \(\Sigma(x, \mu, \nu) = \|\tau(x, \mu) - \tau(x, \nu)\|_{\mcl{V}}\). Using the reverse triangle inequality, we estimate
\begin{align*}
	|\Sigma(x, \mu, \nu) - \Sigma(y, \mu, \nu)| 
	&= \left| \|\tau(x, \mu) - \tau(x, \nu)\|_{\mcl{V}} - \|\tau(y, \mu) - \tau(y, \nu)\|_{\mcl{V}} \right| \\
	&\le \| \tau(x, \mu) - \tau(y, \mu) + \tau(y, \nu) - \tau(x, \nu) \|_{\mcl{V}} \\
	&\le \| \tau(x, \mu) - \tau(y, \mu) \|_{\mcl{V}} + \| \tau(x, \nu) - \tau(y, \nu) \|_{\mcl{V}}.
\end{align*}
We estimate each term on the right-hand side. Following from the same computation above, we have
\begin{align}
	\| \tau(x, \mu) - \tau(y, \mu) \|_{\mathcal{V}}
	&= \left\| \int_{\mathbb{R}^d} \left( \tilde{\tau}(x, z) - \tilde{\tau}(y, z) \right) d\mu(z) \right\|_{\mathcal{V}} \notag \\
	&\le \left( \sum_{\ell=1}^{d + 2d^2} \left| \int_{\mathbb{R}^d} \left( \tilde{\tau}_\ell(x, z) - \tilde{\tau}_\ell(y, z) \right) d\mu(z) \right|^2 \right)^{1/2} \notag \\
	&\le \int_{\mathbb{R}^d} \left( \sum_{\ell=1}^{d + 2d^2} \left| \tilde{\tau}_\ell(x, z) - \tilde{\tau}_\ell(y, z) \right|^2 \right)^{1/2} d\mu(z) \notag \\
	&= \int_{\mathbb{R}^d} \| \tilde{\tau}(x, z) - \tilde{\tau}(y, z) \|_{\mathcal{V}} \, d\mu(z) 
	\le \frac{M}{\sqrt{2}} |x - y|. \label{eq:lip-comp}
\end{align}
The same estimate holds with \(\nu\) in place of \(\mu\). Combining these, we obtain the Lipschitz bound
\[
|\Sigma(x, \mu, \nu) - \Sigma(y, \mu, \nu)| \le \sqrt{2} M |x - y|.
\]
This completes the verification of Condition~(iii).

Finally, we verify that the mean-field L\'evy generator \(\mA = \{ \mA(x, \mu) \}_{x, \mu}\) satisfies the Lipschitz-type condition given in~\eqref{eq:wass-gen-lipschitz}. By Lemma~\ref{lem:W-less-than-D}, it suffices to establish the following bound for all \(x, y \in \mathbb{R}^d\) and \(\mu, \nu \in \mathcal{P}_2(\mathbb{R}^d)\):
\begin{align*}
	\| (b, \sigma, \eta)(x, \mu) - (b, \sigma, \eta)(y, \nu) \|_{\mathcal{V}}^2
	\le M |x - y|^2 + 2\Sigma(x, \mu, \nu)^2.
\end{align*}
Using the shorthand \(\tau = (b, \sigma, \eta)\), and recalling that \(\| \cdot \|_{\mathcal{V}}\) is the norm of a Hilbert (inner product) space, the parallelogram-type inequality $\| \tau + \tau' \|_{\mathcal{V}}^2 \le 2\| \tau \|_{\mathcal{V}}^2 + 2\| \tau' \|_{\mathcal{V}}^2$
implies
\[
\| \tau(x, \mu) - \tau(y, \nu) \|_{\mathcal{V}}^2
\le 2\| \tau(x, \mu) - \tau(x, \nu) \|_{\mathcal{V}}^2 + 2\| \tau(x, \nu) - \tau(y, \nu) \|_{\mathcal{V}}^2.
\]
The first term is simply \(2\Sigma(x, \mu, \nu)^2\), and the second term has already been estimated in~\eqref{eq:lip-comp}. Hence, we conclude:
\[
\| \tau(x, \mu) - \tau(y, \nu) \|_{\mathcal{V}}^2
\le 2\Sigma(x, \mu, \nu)^2 + M |x - y|^2,
\]
which confirms the condition.

Finally, let us show that this specific choice of \(\Si\) leads to the optimal \(O(N^{-1})\) decay rate in the quantity $\aleph_N(\br) = \aleph_N(\br;\Si^2)$ as defined in Definition \ref{def:aleph-N}. We compute:
\begin{align*}
	\aleph_N(\br;\Si^2)
	&= \int_{(\mathbb{R}^d)^N} \Si(x_1, \mu(\bs{x}_1'), \br)^2 \, d\br^{\otimes N}(\bs{x}) \\
	&= \sum_{\ell=1}^{d+2d^2} \int_{(\mathbb{R}^d)^N} 
	\left| \frac{1}{N-1} \sum_{k=2}^N \tilde\tau_\ell(x_1, x_k) - \int_{\mathbb{R}^d} \tilde\tau_\ell(x_1, z) \, d\br(z) \right|^2 d\br^{\otimes N}(\bs{x}).
\end{align*}
For each fixed \(x_1\), the expression inside the square represents the variance of an empirical average of i.i.d.\ random variables. By a standard estimate for i.i.d.\ samples, its expectation equals the variance of \(\tilde \tau_\ell(x_1, \cdot)\) divided by the sample size \(N - 1\). Using the independence of \(x_2, \dots, x_N\) under \(\br^{\otimes N}\), we obtain:
\begin{align*}
	\aleph_N(\br;\Si^2)
	&= \sum_{\ell=1}^{d+2d^2} \frac{1}{(N-1)^2} \sum_{k=2}^N \int_{\mathbb{R}^d} \mathrm{Var}_{\br}(\tilde\tau_\ell(x_1, \cdot)) \, d\br(x_1) \\
	&= \frac{1}{N-1} \sum_{\ell=1}^{d+2d^2} \int_{\mathbb{R}^d} \mathrm{Var}_{\br}(\tilde\tau_\ell(x_1, \cdot)) \, d\br(x_1).
\end{align*}
This shows that \(\aleph_N(\br;\Si^2) = O(N^{-1})\), provided the variances of the fields \(\tilde \tau_\ell\) are integrable against \(\br\). A sufficient condition for this is that the kernel \(\tilde \tau = (\tilde b, \tilde \si, \tilde \eta)\) belongs to \(L^2(\br \otimes \br)\), i.e., 
\[
\int_{\mathbb{R}^d \times \mathbb{R}^d} \| \tilde \tau(x,z) \|_{\mcl{V}}^2 \, d\br(x)\, d\br(z) < \infty.
\]
For instance, if $\tilde \tau$ satisfies the quadratic bound
\begin{align}\label{eq:tau-quad}
    \|\tilde \tau(x,z)\|_{\mcl{V}}^2 \le M'(1+|x|^2+|z|^2), 
\end{align}
for some $M'\ge0$,
then the above holds as $\br\in \mcp_2(\mbr^d)$. Hence we find the bound
\begin{align*}
    \aleph_N(\br,\Si^2)&\le \f{1}{N-1} \int_{\mbr^d} \|\tilde \tau(x,z)\|_{\mcl{V}}^2 \,d\br(x)\,d\br(z)\le \f{M'}{N-1}\sqb{1+2\int_{\mbr^d} |x|^2 \,d\br(x) }. 
\end{align*}

Let us now apply the bound above with \(\br = \br_t\), where \(\{\br_t\}_{t \ge 0}\) denotes the solution to the mean-field equation \eqref{eq6.16:temp}. By Theorem~\ref{thm4.3:chap4-main-result}, the map \(t \mapsto \br_t\) belongs to \(C([0,\infty); \mcp_2(\mathbb{R}^d))\), and thus the second moment of \(\br_t\) is uniformly bounded on compact intervals. As a consequence, we obtain the bound
\[
\sup_{t \in [0, T]} \aleph_N(\br_t, \Sigma^2) \le \frac{M'}{N - 1}\rb{1 + 2\sup_{t \in [0, T]} \int_{\mathbb{R}^d} |x|^2 \, d\br_t(x)} = \frac{M_T}{N - 1},
\]
where
\[
M_T := M'\rb{1+2\sup_{t \in [0, T]} \int_{\mathbb{R}^d} |x|^2 \, d\br_t(x)} < \infty.
\]

In total, by Theorem \ref{thm:levy-poc-aleph}, we have the following pointwise propagation of chaos with the specific rate of convergence $O(N^{-1})$.
\begin{theorem}\label{thm:levy-main2}
	Assume the settings of Theorem \ref{thm:levy-poc-aleph}. Assume the mean-field generator takes the form \eqref{eq:levy-mean-field-generator}--\eqref{eq6.6:temp}, \eqref{def:pushforward}, with $\Om\in \La_2(\mbr^d)$ and with $\tilde\tau = (\tilde b,\tilde\si,\tilde\eta):\mbr^d\times \mbr^d\to \mcl{V}$ satisfying the Lipschitz condition \eqref{eq:average-form-lipschitz} and \eqref{eq:tau-quad} for some $M,M'\ge 0$. Then the following estimate holds
	\begin{align*}
		\sup_{t\in[0,T]} \mcW_2^2(\bs\rho^N_t,\Bbr^N_t)\le \mcW_2^2(\bs\rho^N_0,\Bbr^N_0) e^{K T}+ \frac{C\zeta_K(T) M_T}{N-1},
	\end{align*}
	where $C,K> 0$ are constants depending on $\Om,M,M'$, $M_T\ge0$ is given as above, and $\zeta_K(t)=\f 1 K(e^{Kt}-1)$.
\end{theorem}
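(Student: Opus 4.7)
The proof will proceed by applying Theorem \ref{thm:levy-poc-aleph} with the specific choice
\[
\Si(x,\mu,\nu)^2 := \|\tau(x,\mu)-\tau(x,\nu)\|_{\mcl{V}}^2,\qquad \tau=(b,\sigma,\eta),
\]
and then estimating the quantity $\aleph_N(\br_t;\Si^2)$ in terms of $1/(N-1)$. The discussion immediately preceding the theorem statement essentially provides all the required verifications, so the plan is to assemble those pieces and apply the exponential estimate from Theorem \ref{thm:levy-poc-aleph}.

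First I would verify that $\Si$ satisfies conditions (i)--(iii) in Hypothesis \refApp\ with $p=2$. Condition (i) is automatic since $\|\cdot\|_{\mcl V}$ is a metric on the vector space $\mcl V$. For conditions (ii) and (iii), one uses the Minkowski integral inequality on the representation $\tau(x,\mu)=\int_{\mbr^d}\tilde\tau(x,z)\,d\mu(z)$ together with the entrywise Lipschitz bound \eqref{eq:average-form-lipschitz}: picking an optimal $c_2$-coupling $\ga\in\Ga(\mu,\nu)$ yields $\Si(x,\mu,\nu)\le \sqrt{M}\,\mcW_2(\mu,\nu)$, and integrating the Lipschitz bound in the spatial variable against $\mu$ or $\nu$ yields the uniform Lipschitz estimate $|\Si(x,\mu,\nu)-\Si(y,\mu,\nu)|\le \sqrt{2}M|x-y|$. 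Next, I would verify the generator-level Lipschitz condition \eqref{eq:wass-gen-lipschitz}. By Lemma \ref{lem:W-less-than-D}, it suffices to control $\|\tau(x,\mu)-\tau(y,\nu)\|_{\mcl V}^2$, which, via the Hilbert-space parallelogram inequality, splits into $2\Si(x,\mu,\nu)^2 + 2\|\tau(x,\nu)-\tau(y,\nu)\|_{\mcl V}^2$; the second term is bounded by $M|x-y|^2$ exactly as in display \eqref{eq:lip-comp}. Hence Theorem \ref{thm:levy-poc-aleph} is applicable with this $\Si$, yielding the exponential estimate with remainder $C\zeta_K(T)\sup_{t\in[0,T]}\aleph_N(\br_t;\Si^2)$.

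The main step is then to evaluate $\aleph_N(\br;\Si^2)$. Because $\tau(x,\mu)$ is linear in $\mu$ (it is a $\mu$-average of $\tilde\tau(x,\cdot)$), one has
\[
\Si(x,\mu(\bs y_1'),\br)^2 = \sum_{\ell=1}^{d+2d^2}\left|\frac{1}{N-1}\sum_{k=2}^N\tilde\tau_\ell(x,y_k) - \int_{\mbr^d}\tilde\tau_\ell(x,z)\,d\br(z)\right|^2.
\]
Integrating against $\br^{\otimes N}$ and exploiting that $y_2,\dots,y_N$ are i.i.d.\ with common law $\br$ reduces each inner expression, for fixed $x=y_1$, to the variance of an empirical mean of i.i.d.\ random variables; a standard second-moment calculation gives
\[
\aleph_N(\br;\Si^2)=\frac{1}{N-1}\sum_{\ell}\int_{\mbr^d}\Var_{\br}\!\bigl(\tilde\tau_\ell(x,\cdot)\bigr)\,d\br(x).
\]
Finally, the quadratic bound \eqref{eq:tau-quad} gives a uniform $L^2(\br\otimes\br)$ bound on $\tilde\tau$, so that
\[
\aleph_N(\br;\Si^2) \le \frac{M'}{N-1}\left(1+2\int_{\mbr^d}|x|^2\,d\br(x)\right).
\]
Taking the supremum over $t\in[0,T]$ and using that $t\mapsto \br_t\in C([0,T];\mcP_2(\mbr^d))$ has a finite uniform second moment (by the well-posedness part of Theorem \ref{thm:levy-poc-aleph}) gives $\sup_{t\in[0,T]}\aleph_N(\br_t;\Si^2)\le M_T/(N-1)$. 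Inserting this into the estimate from Theorem \ref{thm:levy-poc-aleph} yields the claimed bound.

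The main obstacle I expect is not technical but bookkeeping: ensuring that $\Si$ really does fit into Hypothesis \refApp\ (in particular the pseudometric and Lipschitz structure) so that the $O(1/N)$ rate can be extracted via the variance identity. The quadratic bound \eqref{eq:tau-quad} plays a twofold role, guaranteeing both that $\br_t\in\mcP_2(\mbr^d)$ gives finite $\int\|\tilde\tau\|_{\mcl V}^2\,d\br\otimes d\br$ and that the constants $M_T$ are controlled uniformly on $[0,T]$; this is what closes the estimate and produces the explicit $1/(N-1)$ rate of convergence.
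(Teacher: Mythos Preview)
Your proposal is correct and follows essentially the same approach as the paper: the paper's proof consists precisely of the verifications carried out in the discussion preceding the theorem statement (checking that $\Si(x,\mu,\nu)=\|\tau(x,\mu)-\tau(x,\nu)\|_{\mcl V}$ satisfies Hypothesis \refAppt, verifying \eqref{eq:wass-gen-lipschitz} via Lemma \ref{lem:W-less-than-D}, computing $\aleph_N(\br;\Si^2)$ as a variance of i.i.d.\ empirical means, and bounding it by $M_T/(N-1)$ using \eqref{eq:tau-quad}), followed by an application of Theorem \ref{thm:levy-poc-aleph}. Your proposal identifies and assembles these pieces in the same order with the same estimates.
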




\begin{thebibliography}{10}

\bibitem{albi2019leader}
G.~Albi, M.~Bongini, F.~Rossi, and F.~Solombrino.
\newblock Leader formation with mean-field birth and death models.
\newblock {\em Mathematical Models and Methods in Applied Sciences},
  29(04):633--679, 2019.

\bibitem{alfonsi2018}
A.~Alfonsi, J.~Corbetta, and B.~Jourdain.
\newblock {Evolution of the Wasserstein distance between the marginals of two
  Markov processes}.
\newblock {\em Bernoulli}, 24(4A):2461--2498, 2018.

\bibitem{MR3050280}
L.~Ambrosio and N.~Gigli.
\newblock A user’s guide to optimal transport.
\newblock In B.~Piccoli and M.~Rascle, editors, {\em Modelling and Optimisation
  of Flows on Networks}, volume 2062 of {\em Lecture Notes in Mathematics},
  pages 1--155. Springer, Berlin, Heidelberg, 2013.

\bibitem{andreis2018mckean}
L.~Andreis, P.~Dai~Pra, and M.~Fischer.
\newblock {McKean–Vlasov limit for interacting systems with simultaneous
  jumps}.
\newblock {\em Stochastic Analysis and Applications}, 36(6):960--995, 2018.

\bibitem{Bessenyei2014ACP}
M.~Bessenyei and Z.~P\'{a}les.
\newblock A contraction principle in semimetric spaces.
\newblock {\em arXiv: Functional Analysis}, 2014.
\newblock https://arxiv.org/abs/1401.1709.

\bibitem{bogachev2021optimal}
V.~Bogachev and S.~Popova.
\newblock Optimal transportation of measures with a parameter.
\newblock {\em arXiv: Functional Analysis}, 2021.
\newblock https://arxiv.org/abs/2111.13014.

\bibitem{bottcher2013}
B.~B{\"o}ttcher, R.~Schilling, and J.~Wang.
\newblock {\em L\'{e}vy Matters III: L\'{e}vy-Type Processes: Construction,
  Approximation and Sample Path Properties}, volume 2099 of {\em Lecture Notes
  in Mathematics}.
\newblock Springer, Cham, Switzerland, 2013.

\bibitem{campi2017}
L.~Campi and M.~Fischer.
\newblock {N}-player games and mean field games with absorption.
\newblock arXiv: Probability, 2017.
\newblock https://arxiv.org/abs/1612.03816v2.

\bibitem{Carmona2013}
R.~Carmona, F.~Delarue, and A.~Lachapelle.
\newblock Control of {McKean–Vlasov} dynamics versus mean field games.
\newblock {\em Mathematics and Financial Economics}, 7:131--166, 2013.

\bibitem{Cavallazzi2023}
T.~Cavallazzi.
\newblock {Well-posedness and propagation of chaos for L\'{e}vy-driven
  McKean-Vlasov SDEs under Lipschitz assumptions}.
\newblock {\em arXiv: Probability}, 2023.
\newblock https://arxiv.org/abs/2301.08594.

\bibitem{Chaintron_2022_1}
L.-P. Chaintron and A.~Diez.
\newblock Propagation of chaos: A review of models, methods and applications.
  {I}. {M}odels and methods.
\newblock {\em Kinetic and Related Models}, 15(6):895--1015, 2022.

\bibitem{Chaintron_2022_2}
L.-P. Chaintron and A.~Diez.
\newblock Propagation of chaos: A review of models, methods and applications.
  {II}. {A}pplications.
\newblock {\em Kinetic and Related Models}, 15(6):1017--1173, 2022.

\bibitem{chizat2018interpolating}
L.~Chizat, G.~Peyr{\'e}, B.~Schmitzer, and F.-X. Vialard.
\newblock An interpolating distance between optimal transport and
  {F}isher--{R}ao metrics.
\newblock {\em Foundations of Computational Mathematics}, 18:1--44, 2018.

\bibitem{chizat2018unbalanced}
L.~Chizat, G.~Peyr{\'e}, B.~Schmitzer, and F.-X. Vialard.
\newblock Unbalanced optimal transport: {D}ynamic and {K}antorovich
  formulations.
\newblock {\em Journal of Functional Analysis}, 274(11):3090--3123, 2018.

\bibitem{chrzkaszcz2019two}
K.~Chrzaszcz, J.~Jachymski, and F.~Turobo{\'s}.
\newblock Two refinements of {F}rink’s metrization theorem and fixed point
  results for {L}ipschitzian mappings on quasimetric spaces.
\newblock {\em Aequationes mathematicae}, 93:277--297, 2019.

\bibitem{crisan2014conditional}
D.~Crisan, T.~G. Kurtz, and Y.~Lee.
\newblock Conditional distributions, exchangeable particle systems, and
  stochastic partial differential equations.
\newblock {\em Annales de l'Institut Henri Poincar\'{e}, Probabilit\'{e}s et
  Statistiques}, 50(3):946--974, 2014.

\bibitem{dai2018viscosity}
S.~Dai and O.~Menoukeu-Pamen.
\newblock Viscosity solution for optimal stopping problems of {F}eller
  processes.
\newblock {\em arXiv: Optimization and Control}, 2018.
\newblock https://arxiv.org/abs/1803.03832.

\bibitem{DAWSON1991293}
D.~Dawson and X.~Zheng.
\newblock Law of large numbers and central limit theorem for unbounded jump
  mean-field models.
\newblock {\em Advances in Applied Mathematics}, 12(3):293--326, 1991.

\bibitem{engel2000one}
K.-J. Engel and R.~Nagel.
\newblock {\em One-Parameter Semigroups for Linear Evolution Equations}, volume
  194 of {\em Graduate Texts in Mathematics}.
\newblock Springer New York, 2000.

\bibitem{engel2006short}
K.-J. Engel and R.~Nagel.
\newblock {\em A Short Course on Operator Semigroups}.
\newblock Universitext. Springer New York, 2006.

\bibitem{ERNY2022192}
X.~Erny.
\newblock Well-posedness and propagation of chaos for {M}ckean–{V}lasov
  equations with jumps and locally {L}ipschitz coefficients.
\newblock {\em Stochastic Processes and their Applications}, 150:192--214,
  2022.

\bibitem{evans2010partial}
L.~C. Evans.
\newblock {\em Partial differential equations}, volume~19.
\newblock American Mathematical Society, 2010.

\bibitem{fleming2006controlled}
W.~H. Fleming and H.~M. Soner.
\newblock {\em Controlled Markov Processes and Viscosity Solutions}, volume~25
  of {\em Stochastic Modelling and Applied Probability}.
\newblock Springer New York, 2006.

\bibitem{fournier2013pathwise}
N.~Fournier.
\newblock On pathwise uniqueness for stochastic differential equations driven
  by stable l\'{e}vy processes.
\newblock {\em Annales de l'I.H.P. Probabilit\'{e}s et Statistiques},
  49(1):138--159, 2013.

\bibitem{fournier2015rate}
N.~Fournier and A.~Guillin.
\newblock On the rate of convergence in {W}asserstein distance of the empirical
  measure.
\newblock {\em Probability theory and related fields}, 162(3):707--738, 2015.

\bibitem{fournier2016nanbu}
N.~Fournier and S.~Mischler.
\newblock {Rate of convergence of the Nanbu particle system for hard potentials
  and Maxwell molecules}.
\newblock {\em The Annals of Probability}, 44(1):589 -- 627, 2016.

\bibitem{frikha2021well}
N.~Frikha and L.~Li.
\newblock Well-posedness and approximation of some one-dimensional
  {L}\'{e}vy-driven non-linear {SDEs}.
\newblock {\em Stochastic Processes and their Applications}, 132:76--107, 2021.

\bibitem{funaki1984certain}
T.~Funaki.
\newblock A certain class of diffusion processes associated with nonlinear
  parabolic equations.
\newblock {\em Zeitschrift f\"{u}r Wahrscheinlichkeitstheorie und verwandte
  Gebiete}, 67(3):331--348, 1984.

\bibitem{gartner1988mckean}
J.~G{\"a}rtner.
\newblock On the {M}ckean-{V}lasov limit for interacting diffusions.
\newblock {\em Mathematische Nachrichten}, 137(1):197--248, 1988.

\bibitem{givens1984class}
C.~R. Givens and R.~M. Shortt.
\newblock A class of {W}asserstein metrics for probability distributions.
\newblock {\em Michigan Mathematical Journal}, 31(2):231--240, 1984.

\bibitem{golse2016dynamics}
F.~Golse.
\newblock On the dynamics of large particle systems in the mean field limit.
\newblock In A.~Muntean, J.~Rademacher, and A.~Zagaris, editors, {\em
  Macroscopic and Large Scale Phenomena: Coarse Graining, Mean Field Limits and
  Ergodicity}, volume~3 of {\em Lecture Notes in Applied Mathematics and
  Mechanics}, pages 1--144. Springer Cham, 2016.

\bibitem{graham1997stochastic}
C.~Graham and S.~Méléard.
\newblock Stochastic particle approximations for generalized boltzmann models
  and convergence estimates.
\newblock {\em The Annals of Probability}, 25(1):115--132, 1997.

\bibitem{grunbaum1971propagation}
F.~A. Gr{\"u}nbaum.
\newblock Propagation of chaos for the {B}oltzmann equation.
\newblock {\em Archive for Rational Mechanics and Analysis}, 42:323--345, 1971.

\bibitem{hagood2006recovering}
J.~W. Hagood and B.~S. Thomson.
\newblock Recovering a function from a {D}ini derivative.
\newblock {\em The American Mathematical Monthly}, 113(1):34--46, 2006.

\bibitem{hauray2014kac}
M.~Hauray and S.~Mischler.
\newblock On {K}ac's chaos and related problems.
\newblock {\em Journal of Functional Analysis}, 266(10):6055--6157, 2014.

\bibitem{jabin2014review}
P.-E. Jabin.
\newblock A review of the mean field limits for {V}lasov equations.
\newblock {\em Kinetic and Related models}, 7(4):661--711, 2014.

\bibitem{jabin2016mean}
P.-E. Jabin and Z.~Wang.
\newblock Mean field limit and propagation of chaos for vlasov systems with
  bounded forces.
\newblock {\em Journal of Functional Analysis}, 271(12):3588--3627, 2016.

\bibitem{jabin2017mean}
P.-E. Jabin and Z.~Wang.
\newblock Mean field limit for stochastic particle systems.
\newblock In N.~Bellomo, P.~Degond, and E.~Tadmor, editors, {\em Active
  Particles, Volume 1: Advances in Theory, Models, and Applications}, Modeling
  and Simulation in Science, Engineering and Technology, pages 379--402.
  Birkh\"{a}user Cham, 2017.

\bibitem{john2004partial}
F.~John.
\newblock {\em Partial Differential Equations}, volume~1 of {\em Applied
  Mathematical Sciences}.
\newblock Springer, New York, 4th edition, 1982.

\bibitem{jordan1998variational}
R.~Jordan, D.~Kinderlehrer, and F.~Otto.
\newblock The variational formulation of the {F}okker--{P}lanck equation.
\newblock {\em SIAM Journal on Mathematical Analysis}, 29(1):1--17, 1998.

\bibitem{jourdain1998propagation}
B.~Jourdain and S.~M{\'e}l{\'e}ard.
\newblock Propagation of chaos and fluctuations for a moderate model with
  smooth initial data.
\newblock {\em Annales de l'Institut Henri Poincare (B) Probability and
  Statistics}, 34(6):727--766, 1998.

\bibitem{kac1956foundations}
M.~Kac.
\newblock Foundations of kinetic theory.
\newblock In J.~Neyman, editor, {\em Proceedings of the Third Berkeley
  Symposium on Mathematical Statistics and Probability, Volume III}, pages
  171--197. University of California Press, 1956.

\bibitem{kato1953integration}
T.~Kato.
\newblock Integration of the equation of evolution in a banach space.
\newblock {\em Journal of the Mathematical Society of Japan}, 5(2):208--234,
  1953.

\bibitem{kondratyev2016new}
S.~Kondratyev, L.~Monsaingeon, and D.~Vorotnikov.
\newblock A new optimal transport distance on the space of finite {R}adon
  measures.
\newblock {\em Advances in Differential Equations}, 21(11/12):1117--1164, 2016.

\bibitem{peter1986law}
P.~Kotelenez.
\newblock {Law of Large Numbers and Central Limit Theorem for Linear Chemical
  Reactions with Diffusion}.
\newblock {\em The Annals of Probability}, 14(1):173 -- 193, 1986.

\bibitem{lacker2018}
D.~Lacker.
\newblock On a strong form of propagation of chaos for {M}ckean-{V}lasov
  equations.
\newblock {\em Electronic Communications in Probability}, 23:1--11, 2018.

\bibitem{LEE2015131}
P.~W. Lee.
\newblock On the {J}ordan--{K}inderlehrer--{O}tto scheme.
\newblock {\em Journal of Mathematical Analysis and Applications},
  429(1):131--142, 2015.

\bibitem{Leonard1986}
C.~L\'{e}onard.
\newblock Une loi des grands nombres pour des systèmes de diffusions avec
  interaction et à coefficients non bornés.
\newblock {\em Annales de l'I.H.P. Probabilités et statistiques},
  22(2):237--262, 1986.

\bibitem{leonard1995large}
C.~L\'{e}onard.
\newblock On large deviations for particle systems associated with spatially
  homogeneous boltzmann type equations.
\newblock {\em Probability Theory and Related Fields}, 101:1--44, 1995.

\bibitem{liggett2010continuous}
T.~M. Liggett.
\newblock {\em Continuous Time Markov Processes}, volume 113 of {\em Graduate
  Studies in Mathematics}.
\newblock American Mathematical Society, 2010.

\bibitem{lim2020quan}
T.~S. Lim, Y.~Lu, and J.~H. Nolen.
\newblock Quantitative propagation of chaos in a bimolecular chemical
  reaction-diffusion model.
\newblock {\em SIAM Journal on Mathematical Analysis}, 52(2):2098--2133, 2020.

\bibitem{mckean1966class}
H.~P. McKean.
\newblock A class of {M}arkov processes associated with nonlinear parabolic
  equations.
\newblock {\em Proceedings of the National Academy of Sciences of the United
  States of America}, 56(6):1907--1911, 1966.

\bibitem{McKean1969}
H.~P. McKean.
\newblock Propagation of chaos for a class of non-linear parabolic equations.
\newblock In A.~K. Aziz, editor, {\em Lecture Series in Differential Equations,
  Volume 2}, number~19 in Van Nostrand Mathematical Studies, pages 177--194.
  Van Nostrand Reinhold Company, 1969.

\bibitem{meleard1996asymptotic}
S.~M{\'e}l{\'e}ard.
\newblock Asymptotic behaviour of some interacting particle systems;
  {M}ckean-{V}lasov and {B}oltzmann models.
\newblock In D.~Talay and L.~Tubaro, editors, {\em Probabilistic Models for
  Nonlinear Partial Differential Equations}, volume 1627 of {\em Lecture Notes
  in Mathematics}, pages 42--95. Springer, Berlin, Heidelberg, 1996.

\bibitem{mischler2013kac}
S.~Mischler.
\newblock Kac’s chaos and {K}ac’s program.
\newblock S\'{e}minaire Laurent Schwartz — EDP et applications (2012-2013),
  Talk no. 22, 17 p., 2013.
\newblock http://www.numdam.org/articles/10.5802/slsedp.48.

\bibitem{mischler2013kac2}
S.~Mischler and C.~Mouhot.
\newblock Kac’s program in kinetic theory.
\newblock {\em Inventiones mathematicae}, 193:1--147, 2013.

\bibitem{mischler2015new}
S.~Mischler, C.~Mouhot, and B.~Wennberg.
\newblock A new approach to quantitative propagation of chaos for drift,
  diffusion and jump processes.
\newblock {\em Probability Theory and Related Fields}, 161:1--59, 2015.

\bibitem{MR4421344}
Y.~Mishura and A.~Veretennikov.
\newblock Existence and uniqueness theorems for solutions of
  {M}c{K}ean-{V}lasov stochastic equations.
\newblock {\em Theory of Probability and Mathematical Statistics}, 103:59--101,
  2020.

\bibitem{santambrogio2015optimal}
F.~Santambrogio.
\newblock {\em Optimal Transport for Applied Mathematicians}, volume~87 of {\em
  Progress in Nonlinear Differential Equations and Their Applications}.
\newblock Birkh\"{a}user, 2015.

\bibitem{schilling2016introduction}
R.~L. Schilling.
\newblock An introduction to {L}\'{e}vy and {F}eller processes. advanced
  courses in {M}athematics - {CRM} {B}arcelona 2014.
\newblock arXiv: Probability, 2016.
\newblock https://arxiv.org/abs/1603.00251.

\bibitem{sznitman1991topics}
A.-S. Sznitman.
\newblock Topics in propagation of chaos.
\newblock In P.-L. Hennequin, editor, {\em \'{E}cole d'Et\'{e} de
  Probabilit\'{e}s de Saint-Flour XIX - 1989}, volume 1464 of {\em Lecture
  Notes in Mathematics}, pages 165--251. Springer Berlin, Heidelberg, 1991.

\bibitem{MR2459454}
C.~Villani.
\newblock {\em Optimal Transport: Old and New}, volume 338 of {\em Grundlehren
  der mathematischen Wissenschaften}.
\newblock Springer, Berlin, Heidelberg, 2009.

\end{thebibliography}


\end{document}